\documentclass[10pt]{amsart}

\pdfoutput=1 

\usepackage[utf8]{inputenc}
\usepackage[T1]{fontenc}
\usepackage[backend=biber,
isbn=false,
doi=false,
maxbibnames=5,
giveninits=true,
style=alphabetic,
citestyle=alphabetic]{biblatex}
\usepackage{url}
\setcounter{biburllcpenalty}{7000}
\setcounter{biburlucpenalty}{8000}
\renewbibmacro{in:}{}
\bibliography{clifford.bib}

\numberwithin{equation}{section}

\usepackage{xcolor}
\definecolor{e-mail}{rgb}{0,.40,.80}
\definecolor{reference}{rgb}{0,.40,.80}
\definecolor{citation}{rgb}{0,.40,.80}

\PassOptionsToPackage{hyphens}{url}
\usepackage[colorlinks=true,
linkcolor=reference,
citecolor=citation,
urlcolor=e-mail]{hyperref}
\usepackage[all, 2cell]{xy}
\UseTwocells
\usepackage{mathtools}
\usepackage{tikz-cd}
\usepackage{amssymb}
\usepackage{eucal}
\usepackage[capitalize]{cleveref}
\usepackage{fullpage}

\usepackage{relsize}

\binoppenalty=10000
\relpenalty=10000

\newtheorem{thm}{Theorem}[section]
\crefname{thm}{Theorem}{theorems}

\newtheorem{prop}[thm]{Proposition}
\crefname{prop}{Proposition}{propositions}

\newtheorem{cor}[thm]{Corollary}
\newtheorem{lm}[thm]{Lemma}
\newtheorem{conjecture}[thm]{Conjecture}
\theoremstyle{definition}
\newtheorem{defn}[thm]{Definition}

\newtheorem{notation}[thm]{Notation}
\theoremstyle{remark}
\newtheorem{remark}[thm]{Remark}
\newtheorem{example}[thm]{Example}

\newcommand{\cA}{\mathcal{A}}
\newcommand{\cB}{\mathcal{B}}

\newcommand{\cM}{\mathcal{M}}

\newcommand{\cO}{\mathcal{O}}

\newcommand{\cR}{\mathcal{R}}

\newcommand{\Det}{\mathrm{det}}
\newcommand{\bKZ}{\mathrm{bKZ}}
\newcommand{\KZ}{\mathrm{KZ}}
\newcommand{\bCas}{\mathrm{bCas}}

\newcommand{\B}{\mathrm{B}}

\newcommand{\C}{\mathbb{C}}

\newcommand{\U}{\mathrm{U}}
\newcommand{\Z}{\mathbb{Z}}

\newcommand{\rY}{\mathrm{Y}}
\newcommand{\GL}{\mathrm{GL}}

\newcommand{\tw}{\mathrm{tw}}

\newcommand{\fS}{\mathfrak{S}}
\newcommand{\fP}{\mathfrak{P}}
\newcommand{\fR}{\mathfrak{R}}
\renewcommand{\sp}{\mathfrak{sp}}
\renewcommand{\b}{\mathfrak{b}}
\newcommand{\g}{\mathfrak{g}}
\newcommand{\h}{\mathfrak{h}}
\newcommand{\n}{\mathfrak{n}}
\renewcommand{\sl}{\mathfrak{sl}}

\renewcommand{\k}{\mathfrak{k}}
\newcommand{\so}{\mathfrak{so}}
\newcommand{\fo}{\mathfrak{o}}

\newcommand{\rO}{\mathrm{O}}
\newcommand{\rX}{\mathrm{X}}
\newcommand{\Cas}{\mathrm{Cas}}
\newcommand{\reg}{\mathrm{reg}}
\newcommand{\Span}{\mathrm{span}}

\newcommand{\Beta}{\mathrm{B}}
\newcommand{\act}{\mathrm{act}}

\newcommand{\End}{\mathrm{End}}
\newcommand{\ev}{\mathrm{ev}}

\newcommand{\gen}{\mathrm{gen}}

\newcommand{\Hom}{\mathrm{Hom}}

\newcommand{\id}{\mathrm{id}}

\newcommand{\SL}{\mathrm{SL}}

\newcommand{\wt}{\mathrm{wt}}

\newcommand{\gl}{\mathfrak{gl}}

\newcommand{\Cl}{\mathrm{Cl}}
\newcommand{\SO}{\mathrm{SO}}

\newcommand{\bl}{\mathbf{l}}

\newcommand{\Usig}{U(\mathfrak{g})[\sigma]}

\newcommand{\defterm}[1]{\textbf{\emph{#1}}}
\newcommand{\adj}[2]{
	\xymatrix{
		#1 \ar@<.5ex>[r] & #2 \ar@<.5ex>[l]
	}
}

\usepackage[foot]{amsaddr}

\makeatletter
\renewcommand{\email}[2][]{%
  \ifx\emails\@empty\relax\else{\g@addto@macro\emails{,\space}}\fi%
  \@ifnotempty{#1}{\g@addto@macro\emails{\textrm{(#1)}\space}}%
  \g@addto@macro\emails{#2}%
}
\makeatother

\begin{document}
	
	\title{Orthogonal Howe duality and dynamical (split) symmetric pairs}

    \author{Elijah Bodish}
    \address{Department of Mathematics, Indiana University Bloomington, Rawles Hall, Bloomington, IN, 47405, USA, ORCiD 0000-0003-1499-5136}
    \email{ebodish@iu.edu}

    \author{Artem Kalmykov}
    \address{Department of Mathematics, McGill University, Burnside Hall, Montreal, Quebec, H3A 0B9, Canada, ORCiD 0000-0002-0139-6831}
    \email{artem.kalmykov@mcgill.ca}
 
	\begin{abstract}
	   Inspired by Etingof--Varchenko's dynamical Weyl group for Lie algebras and Reshetikhin-Stokman's boundary fusion operators for split symmetric pairs, we introduce a dynamical Weyl group for split symmetric pairs. We then turn to the study of $(\so_{2n},\rO_m)$-duality and prove that the standard Knizhnik-Zamolodchikov and dynamical operators (both differential and difference) on the $\so_{2n}$-side are exchanged with the symmetric pair analogs, for $\rO_m\subset \GL_m$, on the $\rO_m$-side. 
	\end{abstract}
\maketitle
	
	\section{Introduction}
	
	Schur--Weyl duality states that the commuting actions of the general linear group $\GL_n$ and the symmetric group $S_m$ on the tensor power $(\C^n)^{\otimes m}$ generate each other's commutant. Replacing $(\C^n)^{\otimes m}$ with $(\Lambda^{\bullet}(\C^n))^{\otimes m}$, there are commuting actions of $\GL_n$ and $\GL_m$ on $\fP_{nm} = \Lambda^{\bullet}(\C^n \otimes \C^m)$  such that each group generates the other's commutant. This is known as skew Howe $(\gl_n,\gl_m)$-duality \cite{Howe-perspectives}. 
	
	Skew Howe duality exhibits \emph{bispectral duality}. Operators depending on spectral parameters on the $\gl_n$ side are exchanged with operators depending on the dynamical parameters (lying on the Cartan subalgebra) on the $\gl_m$ side\footnote{Strictly speaking, bispectrality usually means that the operators depend on both types of variables, and the duality exchanges them; in what follows, we will use the term in this weaker sense.}. For example, the quantum affine $R$-matrix for $\gl_n$ is exchanged with the dynamical quantum Weyl group for $\gl_m$ \cite{DalipiFelder}.
 
    There is also skew Howe $(\so_{2n}, \rO_m)$-duality \cite{Howe-perspectives}. The even-dimensional orthogonal Lie algebra $\so_{2n}$ has a spin representation, which can be identified with $\Lambda^{\bullet}(\C^n)$ as a vector space. On the other hand, the orthogonal group $\rO_m$ acts naturally on the vector space $\C^m$, hence on the exterior algebra $\Lambda^{\bullet}(\C^m)$. Together, the actions of $\so_{2n}$ and $\rO_m$ on $\Lambda^{\bullet}(\C^n \otimes \C^m)$ commute and generate each other's commutant.
	
	It turns out that, unlike in the case of $(\gl_n,\gl_m)$-duality, the two sides of orthogonal Howe duality are not symmetric. Rather, the $\rO_m$-side should be interpreted as the symmetric pair $\GL_m^{\theta}\subset \GL_m$, where $\theta$ is the involution on $\GL_m$ such that $\rO_m=\GL_m^{\theta}$. This is a known phenomenon, especially manifest in the $q$-version of the orthogonal Howe duality, where replacing $\so_{2n}$ with the Drinfeld--Jimbo quantum group $\U_q(\so_{2n})$ forces the group $\rO_m$ to be replaced by an $\iota$quantum group \cite{wenzl2020dualities}.

    To generalize bispectral duality from $(\gl_n,\gl_m)$ to $(\so_{2n}, \rO_m)$, we first construct a dynamical Weyl group for a split\footnote{The associated Satake diagram has all white nodes and no folding.} symmetric pair $\k\subset \g$. Working over $\C$, this means $\g$ is simple and $\k$ is the fixed points of the Chevalley involution swapping generators $e_{\alpha_i}\leftrightarrow -f_{\alpha_i}$ and acting by $-1$ on $\h$. 
	
	\begin{remark}
		We \textbf{do not} use $\iota$quantum groups in this work. Although we will occasionally discuss $\iota$quantum groups to motivate certain constructions.
	\end{remark}

    \begin{remark}
        In the interest of clarity we choose to not refer to the constructions in this paper as ``coideal", ``quantum symmetric pair", or "$\iota$quantum". By analogy with integrable systems, we will instead indicate the corresponding constructions with the adjective \textbf{``boundary''} \cite{CherednikQKZ}. For consistency, we will also refer to the ``restricted" or ``relative" Weyl group of a symmetric pair \cite{Bump-Liegps-book} as the boundary Weyl group.
    \end{remark}
	
	\subsection{Boundary Weyl group} 
	
	We begin with the boundary (non-dynamical) Weyl group. Consider the real group $G= SL_2(\mathbb{R})$, the subgroup $T\subset G$ of diagonal matrices $\alpha^{\vee}(t):=\begin{bmatrix} t & 0\\ 
		0 & t^{-1}
	\end{bmatrix}$, for $t\in (0,\infty)$, and the subgroup $K=SO(2)\subset G$ consisting of rotation matrices $\rho(\theta):=\begin{bmatrix} \cos(\theta) & -\sin(\theta)\\ 
		\sin(\theta) & \cos(\theta)
	\end{bmatrix}$, for $\theta\in [0, 2\pi)$. The Weyl group is defined as $W=N_G(T)/T$, and the element $\tilde{s} = \begin{bmatrix} 0 & -1\\ 
		1 & 0
	\end{bmatrix}\in G$ maps to a generator $s\in W\cong \mathbb{Z}/2$. Note that $\tilde{s} = \rho(\pi/2)\in K$, so $\tilde{s}$ acts in representations of $K$. This is the boundary Weyl group action.
 
    To understand the boundary Weyl group action on representations of $K$ algebraically, the building block is the symmetric pair $\so_2\subset \sl_2$. Here $\so_2$ is the fixed points of the involution of $\sl_2$ which swaps $e$ with $-f$ and sends $h$ to $-h$. Note that $\so_2$ is spanned by the element $b:=f-e$. Writing $b|_{\C^2}$ to denote $b$ acting in $\C^2$, one finds that
	\[
	\exp \left( \theta\cdot b|_{\C^2} \right) = \begin{bmatrix}
		\cos(\theta) & -\sin(\theta)  \\ \sin(\theta) & \cos(\theta)
	\end{bmatrix} \qquad \text{and} \qquad \exp\left(\frac{\pi}{2}\cdot b|_{\C^2}\right) = \begin{bmatrix} 0 & -1 \\ 1 & 0 \end{bmatrix} = \tilde{s}.
	\]
    We define the \emph{boundary simple reflection} operator to be $\exp\left( \frac{\pi b}{2} \right)$.
	
    The Iwasawa decomposition says in particular that $\g$ contains a Borel subalgebra $\b$ such that $\g = \k\oplus \b$. A general split symmetric pair $\k\subset \g$ contains a pair isomorphic to $\so_2\subset \sl_2$ for every simple root with respect to $\b$. We can then define simple reflection generators of the boundary Weyl group accordingly. In Proposition \ref{prop:boundary_nondyn_braid_rel}, we prove the boundary Weyl group operators satisfy braid relations by using compatibility of the boundary Weyl group operators for $\k\subset \g$ with the Weyl group operators for $\g$.

    \subsection{Boundary dynamical Weyl group}

    The main purpose of the first part of the paper is the construction of a boundary analog of the usual Dynamical Weyl group defined by Tarasov-Varchenko \cite{EtingofVarchenkoDynamicalWeyl}. The construction is based on the following observation.
	
	The Lie algebra $\g=\sl_2$ contains the subalgebra $\k = \so_2$, generated by $b=f-e$, and the subalgebra $\mathfrak{b}$, generated by $\{h,e\}$. The two bases $\{f, h, e\}$ and $\{b, h, e\}$ correspond to the usual triangular decomposition of $\g$ and the Iwasawa decomposition of $\g$ respectively. These two decompositions give two bases for the Verma module $M_{\lambda}:=\U(\g)\otimes_{\U(\b)}\C_{\lambda}$, namely
    \[
    \{f^i\otimes 1_{\lambda}\}_{i\in \mathbb{Z}_{ \ge 0}} \qquad \text{and} \qquad \{b^i\otimes 1_{\lambda}\}_{i\in \mathbb{Z}_{\ge 0}}.
    \]
    The first basis is adapted to studying nontrivial $\sl_2$-submodules of $M_{\lambda}$, which are used by Etingof--Varchenko to define the usual Dynamical Weyl group \cite{EtingofVarchenkoDynamicalWeyl}. The second basis makes it clear that restricting $M_{\lambda}$ to $\U(\k)$ yields a free module of rank one, so for any $\k$-module $X$ we have an isomorphism $\Hom_{\k}(M_{\lambda}|_{\k}, X)\cong X$.
    
    The interplay of the two bases allows us to define for every simple root $\alpha_i$  the \emph{boundary dynamical simple reflection} operator $A^{\k,s_i}_X(\lambda)$ acting on any integrable $\k$-representation $X$. These operators are boundary analogs of the standard dynamical operators $A^{\g,s_i}_{V}(\lambda)$ from \emph{loc. cit.} (denoted by $A_{s_i,V}(\lambda)$ there) acting on any integrable $\g$-module $V$. In particular, we show the following.
    \begin{thm}[Theorem \ref{thm:boundary_braid_relations}]\label{thm:mainthm1}
        The operators $A^{\k,s_i}_X(\lambda)$ satisfy the dynamical braid relation
    \begin{equation}
    \label{eq::intro_boundary_dyn_braid_relation}
		\ldots A^{\k,s_{i}}_X(s_{j}s_{i}\cdot \lambda)A^{\k,s_{j}}_X(s_{i}\cdot \lambda) A^{\k,s_{i}}_X(\lambda) = \ldots A^{\k,s_{j}}_X(s_{i}s_{j}\cdot \lambda)A^{\k,s_{i}}_X(s_{j}\cdot \lambda) A^{\k,s_{j}}_X(\lambda),
	\end{equation}
    where the number of factors on each side of the equation is the order of $(s_i s_j)$ in $W$, the Weyl group of $\g$.
    \end{thm}

	

    A key feature of the usual dynamical Weyl group is the tensor compatibility with the dynamical fusion operators introduced in \cite{EtingofVarchenkoExchange}, see Proposition \ref{P:weyl-fusion-compatible}. Our proof of \eqref{eq::intro_boundary_dyn_braid_relation} relies on a parallel tensor compatibility between the boundary dynamical Weyl group and the boundary fusion operator introduced in \cite{ReshetikhinStokman}. The precise statement appears in Proposition \ref{prop:boundary_tensor}. Along the way, we compute the boundary fusion operator in particular examples and derive a boundary version of the \emph{ABRR equation} \cite{ABRR}.
    

    \begin{remark}
        The boundary fusion operator gives rise to a \emph{dynamical $K$-matrix}, parallel to how the usual fusion operator defines a dynamical $R$-matrix. For more details see \cite[Section 6.4.3]{DeClercq-thesis}.
    \end{remark}
	
    \begin{remark}
		The boundary fusion operator and boundary dynamical Weyl group is defined in terms of structure constants for intertwining operators between Verma modules (and their tensor products with finite dimensional modules) for $\g$ and finite dimensional modules for $\k$. Unsurprisingly, the formulas turn out to be closely related to real groups. In Section \ref{subsect:k-B-operator}, we introduce a boundary analog of the \emph{$B$-operators}, used to define the dynamical difference equation, which turn out to coincide with the intertwining operators between principal series representations of real groups \cite{Wallach}. Despite having purely algebraic origin, the definition of boundary dynamical Weyl group operators requires analytic functions. 
	\end{remark}

    \subsection{Simplified series expressions}
    
        The usual Weyl group operators for simple reflections are expressed in terms of the triple exponential formula
        \[
        \tilde{s_i}^{\g} = \exp(-e_{\alpha_i})\exp(f_{\alpha_i})\exp(-e_{\alpha_i}).
        \]
        This is the $q=1$ version of formulas for the quantum Weyl group discovered by Lusztig, see \cite[Chapter 8]{JantzenLectures} for an elementary exposition. There is a simplified series for the usual generating reflection operators \cite[Lemma 4.2]{Chuang-Rouquier}. 
        
        The boundary Weyl group operators for generating reflections are expressed in terms of the exponential formula 
        \[
        \tilde{s_i}^{\k} = \exp\left(\frac{\pi b_{\alpha_{i}}}{2}\right).
        \]
        In Proposition \ref{prop:bweyl_series}, we give a new simplified series for the generating boundary Weyl group operators. This is the $q=1$ version of the $\iota$quantum Weyl group operators proposed in \cite{Zhang-Weinan-thesis}.
        
        The simplified series expression for the boundary Weyl group operators have dynamical analogs. In Definition \ref{D:simplified-dynamical-series} and Theorem \ref{T:simplified-dynamical-series} we describe simplified series for (normalized) boundary dynamical simple reflections. 
        
        \begin{remark}
        The coefficients in the simplified series for boundary dynamical simple reflections are rational functions of $\lambda$. However, we are required to restrict to integral eigenvalues of $b_{\alpha_i}$, and use two series: one for even eigenvalues and one for odd eigenvalues.
        \end{remark}
	
	\subsection{Quantum bispectral duality}
	
	The Yangian $\rY(\gl_n)$ \cite{MolevYangians} acts on $\Lambda^{\bullet}(\C^n)(z)$ via the evaluation homomorphism to $\U(\gl_n)$, which acts on $\Lambda^{\bullet}(\C^n)$, composed with a twist by $z\in \C$. The Yangian is a Hopf algebra, so $\rY(\gl_n)$ also acts on $\Lambda^{\bullet}(\C^n)(z_1)\otimes \Lambda^{\bullet}(\C^n)(z_2)$, which as a vector space is identified with $\Lambda^{\bullet}(\C^n\otimes \C^2)$. In particular, the rational $R$-matrix $\cR^{\gl_n}(z_1-z_2)$ acts. 
	
	On the other hand, the vector space $\Lambda^{\bullet}(\C^n\otimes \C^2)$ is identified with $\Lambda^{\bullet}(\C^2)^{\otimes n}$ as an $\sl_2$-module. It follows that the dynamical Weyl group, and the dynamical Weyl group $B$-operator $\mathcal{B}_{\alpha}^{\sl_2}(\langle \lambda, \alpha^{\vee}\rangle)$ from equation \eqref{eq::b_operator}, also act on $\Lambda^{\bullet}(\C^n\otimes \C^2)$. 
 
	A consequence of \cite[Theorem 4.4]{TarasovUvarov}, which we restate in Theorem \ref{thm::gl_quantum_bispectrality}, see \cite{DalipiFelder} for the trigonometric analog, is that the two operators
	\[
	\cR^{\gl_n}(z_1-z_2)\quad \mathlarger{\mathlarger{\circlearrowright}}\quad  \Lambda^{\bullet}(\C^n\otimes \C^2) \quad \mathlarger{\mathlarger{\circlearrowleft}} \quad \mathcal{B}_{\alpha}^{\sl_2}(-z_1+z_2)
	\]
	agree, up to a scalar factor depending on $z_1-z_2$. In fact more is true. There is a difference analog of the KZ connection giving rise to \emph{quantized Knizhnik-Zamolodchikov} (simply \emph{qKZ}) difference operators (see \cite{EtingofFrenkelKirillov} for a review). They are constructed from the rational $R$-matrices. The $qKZ$ difference equation corresponds under $(\gl_n, \gl_m)$-duality to \emph{dynamical difference operators} defined using the dynamical Weyl group $B$-operators.
	
	The (extended) Yangian $\rX(\so_{2n})$ acts\footnote{It is well known that there is no evaluation homomorphism for the Yangian outside of type $A$. However, see Section \ref{ss:sotn-Yangian}.} on the spinor representation $\fP_n(z)$, for $z\in \C$. Thus, there is a spinorial $R$-matrix\footnote{This is the $R$-matrix operator $\check{\cR}^{\so_{2n}}(z_1-z_2)$ from Definition \ref{def:spinorial_r_operator}, post-composed with the permutation operator.} $\cR^{\so_{2n}}(z_1-z_2)$ acting on $\fP_n(z_1)\otimes \fP_n(z_2)$, which as a vector space is $\fP_n^{\otimes 2}\cong \fP_{n\cdot 2}$. On the other hand, $\fP_{n\cdot 2}$ is identified with $\Lambda^{\bullet}(\C^n\otimes \C^2)\cong \Lambda^{\bullet}(\C^2)^{\otimes n}$. Therefore, $\fP_{n\cdot 2}$ carries an action of $\so_2\subset\sl_2$, the associated boundary dynamical Weyl group operator, and the boundary dynamical Weyl group $B$-operator $\mathcal{B}_{\alpha}^{\so_2}(t)$, see Definition \ref{def:boundary_b_operator}.
	
	The algebra $\so_{2\cdot 2}$ acts on $\fP_{2\cdot m}$, so the usual dynamical Weyl group for $\so_4$ acts as well. In particular, we have the dynamical Weyl group $B$-operators corresponding to the simple root $\epsilon_1+\epsilon_2$. The $\so_{2\cdot 2}$ action commutes with the action of $\rO_m= \GL_m^{\theta}$, and the spectral analog of this symmetric pair is given by \emph{twisted Yangian} \cite{MolevYangians}. In Section \ref{ss:twisted-Yang}, we introduce a (variant of the) twisted $R$-matrix $\mathcal{R}^{\gl_m, t_2}(z_1-z_2)$ which acts on $\fP_{2\cdot m}\cong \Lambda^{\bullet}(\C^m)(z_1)\otimes \Lambda^{\bullet}(\C^m)(z_2)$.
	
	Our main results on quantum bispectral duality for $(\so_{2n},\rO_m)$-duality is summarized as follows. 
	
	\begin{thm}[Propositions \ref{prop:so_r_operator_equal_bweyl} and \ref{prop:twisted_rmatrix_equal_dyn_operator} and Theorems \ref{thm:so_dynamical_bkz_bispectrality} and \ref{thm:orthogonal_duality_qkz_bdyn}]
		The operators
		\[
		\cR^{\so_{2n}}(z_1-z_2)\quad \mathlarger{\mathlarger{\circlearrowright}} \quad \fP_{n\cdot 2} \quad \mathlarger{\mathlarger{\circlearrowleft}} \quad \mathcal{B}_{\alpha}^{\so_2}(-z_1+z_2)
		\]
		and
		\[
		\mathcal{B}^{\so_4}_{\epsilon_1+\epsilon_2}(-z_1+z_2-1)\quad \mathlarger{\mathlarger{\circlearrowright}} \quad \fP_{2\cdot m} \quad \mathlarger{\mathlarger{\circlearrowleft}} \quad \cR^{\gl_m,t_2}(z_1-z_2)
		\]
		respectively, agree, up to scalar factors depending on $z_1-z_2$. Moreover, the qKZ equations for $\so_{2n}$ coincide with the boundary analog of the dynamical difference equations for $\rO_m$, and the dynamical difference equations for $\so_{2n}$ coincide with the boundary qKZ equations\footnote{The difference analog of the boundary KZ connection gives rise to \emph{boundary qKZ equations} \cite{CherednikQKZ} which are constructed from the $\theta$-twisted $R$-matrix.} for $\rO_m$.
	\end{thm}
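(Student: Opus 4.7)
The theorem bundles four separate results: two ``local'' identifications of operators on two-fold tensor products (the propositions) and two ``global'' identifications of systems of difference equations (the theorems). My plan is to establish the local statements first, and then deduce the global ones by assembling the tensor factors.

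For the first local identity, on $\fP_{n\cdot 2} \cong \fP_n(z_1)\otimes \fP_n(z_2) \cong \Lambda^{\bullet}(\C^2)^{\otimes n}$, I would compare $\cR^{\so_{2n}}(z_1-z_2)$ with $\mathcal{B}^{\so_2}_{\alpha}(-z_1+z_2)$. Both operators are characterized up to scalar by their covariance: the spinorial $R$-matrix is $\rX(\so_{2n})$-covariant (via the spinor action from Section \ref{ss:sotn-Yangian}), and the boundary dynamical $B$-operator is characterized by the intertwining relation with $\U(\sl_2)$ that enters Definition \ref{def:boundary_b_operator}. Under Howe duality the commutants of these two actions on $\fP_{n\cdot 2}$ generate each other, so it suffices to evaluate both operators on a single ``vacuum'' vector (the image of $1 \in \Lambda^0$) and match the resulting scalar as a function of $z_1-z_2$. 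The paper's boundary ABRR equation gives a recursion for $\mathcal{B}^{\so_2}_{\alpha}$ on each weight space that matches, term by term, the ABRR recursion used to construct $\cR^{\so_{2n}}(z_1-z_2)$ on each spinor-weight component; the normalizing scalar then drops out of the comparison.

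The second local identity, $\mathcal{B}^{\so_4}_{\epsilon_1+\epsilon_2}(-z_1+z_2-1) \sim \cR^{\gl_m,t_2}(z_1-z_2)$ on $\fP_{2\cdot m}$, proceeds by the same pattern but with the roles of ``$\so$-side'' and ``$\GL$-side'' interchanged. Here the $\sl_2$-triple in $\so_4$ generated by $e_{\epsilon_1+\epsilon_2}, f_{\epsilon_1+\epsilon_2}$ acts on $\fP_{2\cdot m}$ and commutes with the twisted Yangian action of $\rY(\gl_m,t_2)$; the commutants again generate each other by orthogonal Howe duality. I would evaluate both sides on the vacuum, use Yangian/twisted-Yangian covariance to propagate to the full space, and track the $-1$ shift in the dynamical argument to the standard crossing parameter in the twisted Yangian normalization discussed in Section \ref{ss:twisted-Yang}.

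Once the local identities hold, the two global statements follow by the now-standard Tarasov--Uvarov factorization argument, adapted to the boundary setting. The qKZ equation for $\so_{2n}$ on $\fP_n(z_1)\otimes\cdots\otimes \fP_n(z_m)$ is built from products $\prod_{j\neq i} \cR^{\so_{2n}}_{ij}(z_i-z_j)$ composed with a Cartan shift by $\exp(\kappa z_i h_i)$; under the iterated Howe isomorphism $\fP_{n\cdot m}\cong \Lambda^{\bullet}(\C^m)^{\otimes n}$ each $R$-factor becomes a boundary dynamical $B$-operator by the first identity, and the Cartan shift becomes the shift of the dynamical $\rO_m$-parameter, producing exactly the boundary dynamical difference system. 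Symmetrically, using the second identity, the $\so_{2n}$ dynamical difference system (assembled from $\mathcal{B}^{\so_4}_{\alpha}$ across the $\so_{2n}$ simple roots) becomes the boundary qKZ system of \cite{CherednikQKZ} for the twisted Yangian on the $\rO_m$ side.

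The main obstacle is the first local identity. Unlike type $A$, the Yangian $\rY(\so_{2n})$ has no ordinary evaluation homomorphism, so $\cR^{\so_{2n}}(z_1-z_2)$ must be produced via the extended Yangian $\rX(\so_{2n})$ acting on spinors, and matching this with a purely Lie-algebraic dynamical object for $\so_2\subset \sl_2$ requires careful tracking of the spinor parity grading and of the meromorphic prefactor as an analytic function of $z_1-z_2$. Once this rank-two identification and its twisted-Yangian analogue are in hand, the passage to the global qKZ/dynamical correspondences is essentially combinatorial, parallel to the Tarasov--Uvarov proof in the $(\gl_n,\gl_m)$ case.
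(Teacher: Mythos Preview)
Your overall architecture---prove the two local identifications on two-fold tensor products, then assemble them into the global qKZ/dynamical correspondences---matches the paper exactly, and the global step is indeed just plugging the local identities into the definitions of the difference operators, as in Tarasov--Uvarov.

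However, your approaches to both local identities diverge from the paper's, and in the first case there is a genuine gap. For $\cR^{\so_{2n}}(t)$ versus $\cB^{\so_2}_{\alpha}(-t)$, the paper does not run any ABRR-type recursion. The boundary ABRR equation \eqref{E:boundary-ABRR} characterizes the \emph{fusion operator} $G(\lambda)$, not the boundary $B$-operator; and the spinorial $R$-matrix is not constructed by an ABRR recursion at all. Instead, the paper invokes Theorem~\ref{thm:spinorial_r_operator} (cited from Karakhanyan--Kirschner), which gives the spinorial $R$-operator explicitly as an inverse beta function in $b^{\langle 2\rangle}_{12}$, and then observes that this is literally the same beta function appearing in Definition~\ref{def:boundary_b_operator}. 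The actual work in the proof of Proposition~\ref{prop:so_r_operator_equal_bweyl} is tracking the sine prefactor: one computes $\tilde{s}^{\so_2}_{12} = P_{12}(-1)^{\deg_1\deg_2+\deg_2}$ explicitly on monomials and rewrites the sine as the scalar factor $C^{\langle m\rangle}_{\alpha\beta}(t)$. Your covariance-plus-vacuum argument could in principle be made to work, but the step where you claim the two recursions ``match term by term'' is not supported by anything in the paper.

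For the second local identity, your approach is viable but the paper's is much slicker. Rather than establishing twisted-Yangian covariance from scratch, the paper observes that the spinor Fourier transform $\tau_j^{\fP}$ of \eqref{eq::fourier_transform_spinor} simultaneously (i) sends the root $\epsilon_i-\epsilon_j$ of $\so_{2n}$ to $\epsilon_i+\epsilon_j$, hence conjugates $\cB^{\langle n\rangle}_{\epsilon_i-\epsilon_j}$ to $\cB^{\langle n\rangle}_{\epsilon_i+\epsilon_j}$, and (ii) implements the shifted Cartan involution on $\gl_m^{(j)}$, hence conjugates $\cR^{\langle m\rangle}_{ij}(t)$ to $\cR^{t_2,\langle m\rangle}_{ij}(t-1)$ (with the $-1$ shift arising from the $\delta_{\alpha\beta}$ in $\tau(E_{\alpha\beta})=-E_{\beta\alpha}+\delta_{\alpha\beta}$). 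Applying $\mathrm{Ad}_{\tau_j^{\fP}}$ to both sides of the already-known type-$A$ identity (Proposition~\ref{thm:son_dyn_om_spec_1}, which is just Tarasov--Uvarov) gives Proposition~\ref{prop:twisted_rmatrix_equal_dyn_operator} in one line. You should use this reduction rather than rebuilding the twisted-Yangian argument.
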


    For the second part of the duality, see also the related work \cite{NazarovKhoroshkin}.
	
	\subsection{Classical bispectral duality}
	
	A common philosophy is to degenerate from quantum to classical by replacing difference equations with differential equations. Under this dictionary, the qKZ difference equation is replaced with the classical KZ equation and the related \emph{Knizhnik-Zamolodchikov} (or \emph{KZ}) \emph{connection} from conformal field theory \cite{KnizhnikZamolodchikov}. For a simple Lie algebra $\g$ and finite dimensional $\g$-modules $V_1, \dots, V_m$, one obtains the KZ connection on the trivial bundle $V_1\otimes \dots \otimes V_m$ over $\C^m$, with poles on small diagonals.
 
    Analogously, one can replace the dynamical difference equations with the \emph{Casimir connection} (for instance, see \cite{FelderMarkovTarasovVarchenko}). In this case, for any simple Lie algebra $\g$ and any finite dimensional module $V$, one obtains the Casimir connection on the trivial bundle $V$ on $\mathfrak{h}$, with poles on root hyperplanes. 
	
	The $\gl_n$ action on $\fP_{nm}\cong \Lambda^{\bullet}(\C^n)^{\otimes m}$ induces the KZ connection for $\fP_{nm}$ on $\C^n$. There is also the $\gl_m$ action on $\fP_{nm}$ and so we have the Casimir connection for $\fP_{nm}$ over $\mathfrak{h}_{\gl_m}$. We indicate this schematically by writing
	\begin{equation}
    \label{eq::intro_gl_duality_connections}
    	\nabla_{\gl_n}^{\mathrm{KZ}}\quad \mathlarger{\mathlarger{\circlearrowright}} \quad \fP_{n\cdot m} \quad \mathlarger{\mathlarger{\circlearrowleft}} \quad \nabla_{\gl_m}^{\mathrm{Cas}}.
	\end{equation}
	Amazingly, by identifying $\C^m$ with $\h_{\gl_m}$, these two connections match under $(\gl_n,\gl_m)$-duality \cite{TarasovUvarov}, see \cite{TarasonVarchenkoDuality} for analogous result in the context of symmetric Howe $(\gl_n,\gl_m)$-duality. A precise statement is given in Theorem \ref{eq::gl_classical_bispectrality}.
	
	The boundary analog of the KZ connection is a particular case of the so-called \emph{cyclotomic KZ connection} introduced in \cite{EnriquezEtingof} for a finite-order automorphism of a Lie algebra. To match other notations, we will refer to the cyclotomic KZ-connection for an order $2$ automorphism $\theta:\g\rightarrow \g$ as the boundary KZ connection, denoted $\nabla_{\g^{\theta}\subset \g}^{\mathrm{bKZ}}$. 
 
    One of the outcomes of our studying the $(\so_{2n},\rO_m)$-duality was the discovery of the following boundary analog of the Casimir connection. 
	
	\begin{thm}[Definition \ref{D:boundary-Casimir} and Theorem \ref{thm::bcasimir_flat}]
		Let $\k\subset \g$ be a split symmetric pair and let $X$ be an integrable $\k$-module. The boundary Casimir connection $\nabla^{\mathrm{bCas}}_{\k\subset \g}$ is a flat connection for the trivial $X$ bundle on $\h$. 
	\end{thm}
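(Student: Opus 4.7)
The plan is to compute the curvature of $\nabla^{\bCas}_{\k\subset\g}$ directly and show it vanishes, mimicking the strategy for the classical Casimir connection. Writing $\nabla^{\bCas}_{\k\subset\g} = d - \omega$ with $\omega = \sum_{\alpha > 0} C^{\k,\g}_\alpha\, d\log\langle\alpha,\cdot\rangle$ for appropriate $\k$-valued boundary Casimir operators $C^{\k,\g}_\alpha$, the closedness of each logarithmic one-form reduces flatness to the vanishing of $\omega\wedge\omega$, i.e.\ of
\[
\sum_{\alpha,\beta > 0} [C^{\k,\g}_\alpha,\, C^{\k,\g}_\beta]\, d\log\langle\alpha,\cdot\rangle \wedge d\log\langle\beta,\cdot\rangle.
\]
The first step is to observe that $[C^{\k,\g}_\alpha, C^{\k,\g}_\beta]$ depends only on the rank-$\le 2$ sub-root system spanned by $\alpha$ and $\beta$, and then to run the standard Arnold-type partial-fraction argument to reduce the curvature vanishing to a rank-$2$ identity for each rank-$2$ sub-symmetric pair $\k'\subset\g'$ of $\k\subset\g$, of the form
\[
\Bigl[\,\sum_{\gamma \in \Phi^+(\g')} C^{\k',\g'}_\gamma,\; C^{\k',\g'}_\alpha\,\Bigr] = 0 \qquad \text{for every } \alpha \in \Phi^+(\g').
\]

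The rank-$2$ cases to check are $A_1 \times A_1$, $A_2$, $B_2/C_2$, and $G_2$. The $A_1 \times A_1$ case is immediate because the two boundary Casimirs live in commuting copies of $\U(\so_2)$. For the remaining three cases I would argue uniformly by using the Iwasawa decomposition $\g' = \k' \oplus \b'$ to express $\sum_\gamma C^{\k',\g'}_\gamma$ as the restriction to $\U(\k')$ of the quadratic $\g'$-Casimir plus a correction in $\U(\b')\cdot\U(\k')$ that vanishes on vectors in a $\k'$-integrable module; since the $\g'$-Casimir is central, this yields the desired centrality. The commutation of the individual $C^{\k',\g'}_\alpha$ with the total sum then reduces to finite computations with the boundary generators $b_\alpha = f_\alpha - e_\alpha$, carried out in the Iwasawa-basis formalism developed in earlier sections.

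The main obstacle is the rank-$2$ verification in the non-simply-laced cases, where the normalizations of $C^{\k,\g}_\alpha$ for long versus short roots differ and the partial-fraction collapse becomes delicate; one must track these coefficients carefully to obtain the clean identity above rather than a nonzero telescoping remainder. A useful independent check is that the resulting flatness statement is compatible with, and on $\sl_2$-subdiagrams forced by, the tensor compatibility between the boundary fusion and boundary dynamical Weyl operators established in Proposition~\ref{prop:boundary_tensor}; any miscalibration of normalizations would already be visible at that level, so pinning them down there provides the correct input to complete the rank-$2$ computation.
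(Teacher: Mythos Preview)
Your overall architecture is sound and standard: reducing flatness of $\nabla^{\bCas}=\kappa\,d-\sum_{\alpha>0}B_\alpha^2\,d\log\alpha$ to the rank-two commutation identities $[\sum_{\gamma\in\Phi^+(\g')}B_\gamma^2,\,B_\alpha^2]=0$ is correct. The gap is in your proposed proof of this rank-two identity. You want to relate $\sum_\gamma B_\gamma^2$ to the quadratic Casimir of $\g'$ via Iwasawa, with a correction in $\U(\b')\cdot\U(\k')$ that ``vanishes on vectors in a $\k'$-integrable module.'' But the connection is defined on $\k$-modules $X$ that are \emph{not} $\g$-modules: neither the $\g'$-Casimir nor any element of $\U(\b')$ acts on $X$, so this argument has no meaning in the setting where the statement lives. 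Even when $X$ happens to be a $\g'$-module, writing $C_{\g'}=\sum x_i^2-\sum B_\gamma^2+\sum(E_\gamma^2+F_\gamma^2)$ does not exhibit $\sum B_\gamma^2$ as central plus something acting by zero; the terms $\sum x_i^2$ and $\sum(E_\gamma^2+F_\gamma^2)$ do not have that property.

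There is a correct one-line fix in the spirit of your proposal: the $\{B_\gamma\}_{\gamma>0}$ are a $\kappa_\g$-orthogonal basis of $\k$ with $\kappa_\g(B_\gamma,B_\gamma)=-2$ independent of $\gamma$ (since $\kappa_\g(E_\gamma,F_\gamma)=1$), so $\sum_{\gamma\in\Phi^+(\g')}B_\gamma^2$ is, up to scalar, the Casimir of the subalgebra $\k'=\mathrm{span}\{B_\gamma:\gamma\in\Phi^+(\g')\}$ with respect to the $\k'$-invariant form $\kappa_\g|_{\k'}$, hence central in $\U(\k')$. No case-by-case analysis or Iwasawa correction is needed.

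The paper takes a different route. Rather than reducing to rank-two subsystems and invoking centrality, it proves directly (Lemma~\ref{lm:bcasimir_alpha_beta_commutator}) the stronger $\alpha$-string identity $\sum_{j\in S(\alpha,\beta)}[B_\alpha^2,B_{\beta+j\alpha}^2]=0$, via the structure-constant relation $N_{\alpha,\beta+j\alpha}=N_{-\alpha,\beta+(j+1)\alpha}$ coming from $\theta$-invariance of the Killing form. This feeds into a residue computation (Proposition~\ref{prop::bcasimir_flat_commute}) showing $[T(\lambda,\mu),T(\nu,\mu)]=0$. Both approaches avoid case-by-case checks; the paper's route is closer to the Felder--Markov--Tarasov--Varchenko treatment of the ordinary Casimir connection, while the corrected version of yours is the Kohno-style argument via the $\k'$-Casimir.
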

	
	\begin{remark}
		In analogy with \cite{ToledanoLaredoKohnoDrinfeldWeylGroup}, we make Conjecture \ref{conjecture:bcas_monodromy}: the monodromy of $\nabla_{\k\subset \g}^{\mathrm{bCas}}$ is related to the $q$-analog of the boundary Weyl group for the quantum symmetric pair $\U_q^{\iota}(\k)\subset \U_q(\g)$.     
	\end{remark}
	
	The following theorem summarizes our main results on classical bispectral duality for $(\so_{2n}, \rO_m)$-duality.
	
	\begin{thm}[Theorem \ref{prop::so_bkz_cas_duality} and Theorem \ref{thm:orthogonal_duality_bcas_kz}]
    \label{thm:orthogonal_classical_duality}
		The connections
		\[
		\nabla_{\so_{2n}}^{\mathrm{KZ}}\quad \mathlarger{\mathlarger{\circlearrowright}} \quad \fP_{n\cdot m} \quad \mathlarger{\mathlarger{\circlearrowleft}} \quad \nabla_{\gl_m^{\theta}\subset \gl_m}^{\mathrm{bCas}}
		\]
		and
		\[
		\nabla_{\so_{2n}}^{\mathrm{Cas}}\quad \mathlarger{\mathlarger{\circlearrowright}} \quad \fP_{n\cdot m} \quad \mathlarger{\mathlarger{\circlearrowleft}} \quad \nabla_{\gl_m^{\theta}\subset \gl_m}^{\mathrm{bKZ}}
		\]
		respectively, agree, after making suitable identifications, under $(\so_{2n}, \rO_m)$-duality.
	\end{thm}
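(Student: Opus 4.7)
The plan is to prove both dualities by expressing all four connections as explicit $\End(\fP_{n\cdot m})$-valued meromorphic 1-forms and matching them term-by-term after applying the standard Howe duality identifications $\C^{m}\cong \h_{\gl_m}^{*}$ and $\C^{n}\cong \h_{\so_{2n}}^{*}$, so that dynamical parameters on one side become spectral parameters on the other and vice versa. The architecture mirrors the $(\gl_n,\gl_m)$ case of \cite{TarasovUvarov} recalled in Theorem \ref{eq::gl_classical_bispectrality}: once the Howe duality decomposition $\fP_{n\cdot m}\cong \fP_n^{\otimes m}\cong \Lambda^{\bullet}(\C^m)^{\otimes n}$ is fixed and the Casimir tensors $\Omega^{\so_{2n}}$ and $\Omega^{\gl_m}$ are expressed as bilinears in the commuting actions on the Clifford (fermionic) Fock space $\fP_{n\cdot m}$, the match reduces to a local identity in $\End(\fP_{n\cdot m})$.

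\textbf{First duality ($\KZ_{\so_{2n}}\leftrightarrow \bCas_{\gl_m^{\theta}\subset \gl_m}$).} Expand
\[
\nabla^{\KZ}_{\so_{2n}} \;=\; d \;-\; \kappa\sum_{1\le i<j\le m}\frac{\Omega^{\so_{2n}}_{ij}}{z_i-z_j}\,d(z_i-z_j),
\]
with $\Omega^{\so_{2n}}_{ij}$ the normalized Casimir tensor acting on the $i$-th and $j$-th spinor factors of $\fP_n^{\otimes m}$. Using the Clifford model of the $\so_{2n}$-action on $\fP_{n\cdot m}$, each $\Omega^{\so_{2n}}_{ij}$ rewrites as a combination of $E^{\gl_m}_{ab}E^{\gl_m}_{ba}$-type products together with their $\theta$-twists $\theta(E^{\gl_m}_{ab})E^{\gl_m}_{ba}$, because $\so_{2n}$ preserves a symmetric form on $\C^n\oplus (\C^n)^{*}$ whose Howe-dual pattern on the $\gl_m$-side is precisely the $+1$-eigenspace of $\theta$. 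Under the identification $z_i\leftrightarrow \lambda_i$, this recovers the defining 1-form of $\nabla^{\bCas}_{\gl_m^{\theta}\subset \gl_m}$ from Definition \ref{D:boundary-Casimir}.

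\textbf{Second duality ($\Cas_{\so_{2n}}\leftrightarrow \bKZ_{\gl_m^{\theta}\subset \gl_m}$).} Write
\[
\nabla^{\Cas}_{\so_{2n}} \;=\; d \;-\; \sum_{\alpha\in \Phi^{+}(\so_{2n})}\frac{C_\alpha}{\langle \alpha,\nu\rangle}\,d\alpha,
\]
and split the sum over the two families of positive roots $\epsilon_i-\epsilon_j$ and $\epsilon_i+\epsilon_j$ for $1\le i<j\le n$. Under the identification $\nu_i\leftrightarrow w_i$, these two families correspond exactly to the polar loci $w_i=w_j$ and $w_i=-w_j$ of the cyclotomic KZ connection of \cite{EnriquezEtingof}. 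Translating $C_{\epsilon_i-\epsilon_j}$ and $C_{\epsilon_i+\epsilon_j}$ into operators on $\Lambda^{\bullet}(\C^m)^{\otimes n}$ through the same Clifford-model identity as in the first duality, the pole at $w_i=w_j$ contributes $\Omega^{\gl_m}_{ij}$ while the pole at $w_i=-w_j$ contributes its $\theta$-twist $(\theta\otimes \id)(\Omega^{\gl_m})_{ij}$, matching the defining 1-form of $\nabla^{\bKZ}_{\gl_m^{\theta}\subset \gl_m}$.

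\textbf{Main obstacle.} The principal difficulty is normalization bookkeeping: the exact constants on the Casimir tensors, the scalar shifts of dynamical parameters (analogous to the $-1$ shift appearing in Theorem \ref{thm:orthogonal_duality_qkz_bdyn}), and the treatment of diagonal Cartan terms. In particular the $\theta$-fixed Cartan of $\gl_m$ is strictly smaller than $\h_{\gl_m}$, so one must carefully distinguish components of $\mu\in \h_{\gl_m}$ that are genuinely dynamical for $\gl_m^{\theta}\subset \gl_m$ from those that merely rescale the connection. A conceptual subtlety is that $\fP_n$ is not irreducible as an $\so_{2n}$-module but splits into even/odd halves; one needs to verify that the matched connections preserve this parity decomposition compatibly with the $\Z/2$-grading of $\Lambda^{\bullet}(\C^m)^{\otimes n}$. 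Once these points are settled, the core input is the Howe-duality identity expressing the $\so_{2n}$-Casimir tensor as a quadratic operator built from the commuting $\gl_m$-action and its $\theta$-twist, which is a classical limit of the $R$- and $K$-matrix identities already used in Propositions \ref{prop:so_r_operator_equal_bweyl} and \ref{prop:twisted_rmatrix_equal_dyn_operator}.
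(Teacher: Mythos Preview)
Your overall approach matches the paper's: both dualities are proved by direct term-by-term comparison of the numerators in the Clifford model of $\fP_{nm}$, exactly in the spirit of Remark \ref{R:down-to-earth} for $(\gl_n,\gl_m)$. The paper's proof of Theorem \ref{prop::so_bkz_cas_duality} is three short manipulations with $\gamma^{\dagger}_{i\alpha},\gamma_{j\beta}$, and Theorem \ref{thm:orthogonal_duality_bcas_kz} is stated as analogous.

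There is one genuine omission in your second duality. The cyclotomic KZ connection of \cite{EnriquezEtingof} carries, in addition to poles at $z_i=\pm z_j$, a pole at $z_i=0$ whose residue is the quadratic Casimir $\Omega^{(i)}_{\so_m}$ of the fixed-point subalgebra acting on the $i$-th tensor factor; this is the characteristic ``fixed point'' term of a cyclotomic connection. Correspondingly, the paper takes the $\so_{2n}$ Casimir connection in a gauge-equivalent form \eqref{eq::so_casimir_connection} that includes a term $-\frac{M_{ii}^2}{2\lambda_i}$, and the third of the three checks in the proof of Theorem \ref{prop::so_bkz_cas_duality} is precisely the identity $\Omega^{(i)}_{\so_m}-m^2/8 = -\tfrac12 M_{ii}^2$ on $\fP_{nm}$. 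Splitting only over the two root families $\epsilon_i\pm\epsilon_j$ misses this; without it you recover a $\Z/2$-folded ordinary KZ connection, not the boundary one.

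Two of your stated obstacles are not issues. The boundary Casimir connection of Definition \ref{D:boundary-Casimir} lives on $\h_{\gl_m}$ (the Cartan of $\g$, not of $\k=\g^\theta$), so the smallness of $\h_{\gl_m}^\theta$ is irrelevant. And the parity splitting of $\fP_n$ plays no role: every operator in these connections is even in the Clifford grading and hence automatically respects the decomposition. Finally, in your first duality the numerator on the $\rO_m$ side is simply $B_{\alpha\beta}^2=(E_{\beta\alpha}-E_{\alpha\beta})^2$, not a general mixture of $EE$ and $\theta$-twisted terms; the twisted Casimir $(1\otimes\tau)\Omega_{\gl_m}$ appears only in the second duality, at the pole $z_i+z_j=0$ of the boundary KZ.
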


    For any $\g$, the standard KZ connection is closely related to \emph{Gaudin Hamiltonians} \cite{GaudinBook} acting on a tensor product of finite dimensional $\mathfrak{g}$-modules $V_1\otimes \ldots \otimes V_m$. The Gaudin Hamiltonians, and their higher analogs constructed with the \emph{Feigin-Frenkel center} \cite{FFR}, depend on $m$ distinct points $z_1, \dots, z_m$ in $\mathbb{C}$. 
    
    The (higher) Gaudin Hamiltonians generate  a commutative subalgebra $\mathcal{A}^{\g}(z_1,\ldots,z_m) \subset \End(V_1\otimes \ldots \otimes V_m)$. By \cite{RybnikovShiftOfArgument}, this is a particular case of a commutative subalgebra $\cA_{\lambda}^{\g} (z_1,\ldots,z_m)$ which also depends on a dynamical parameter $\lambda\in \h^*$. When $m=1$, it gives the other special case: the quantum shift of argument algebra $\cA_{\lambda}^{\mathfrak{g}}$, which is generated by purely dynamical Hamiltonians closely related to the Casimir connection for $\mathfrak{g}$. 
    
    It turns out that the aforementioned $(\gl_n,\gl_m)$-duality yields a \emph{bispectral duality} between the (higher) Gaudin Hamiltonians. Precisely this is the following equality 
    \[
        \cA^{\gl_n}_{(\lambda_1,\ldots,\lambda_n)}(z_1,\ldots,z_m) = \cA^{\gl_m}_{(z_1,\ldots,z_m)}(\lambda_1,\ldots,\lambda_n) \subset \End(\fP_{nm}), 
    \]
    generalizing \eqref{eq::intro_gl_duality_connections}, see \cite{HuangMukhin} and \cite{MTV} for the symmetric case. Moreover, by \emph{loc. cit.}, this result can be viewed as a generalization of the classical Capelli identity. 
    
    Similarly, on the $\mathfrak{so}_{2n}$-side of $(\mathfrak{so}_{2n}, \rO_m)$-duality one can consider $\cA_\lambda^{\mathfrak{so}_{2n}}(z_1, \dots, z_n)\subset \End(\fP_{nm})$. Note that $\lambda \in \h^*_{\mathfrak{so}_{2n}}$. We expect that on the $O_m$-side of $(\mathfrak{so}_{2n},\rO_m)$-duality this algebra of operators can be generated by Hamiltonians constructed in a way which uses the symmetric pair $\rO_m\subset \mathrm{GL}_m$.
    
    A candidate construction for such operators would come from the \emph{cyclotomic} analog of the higher Gaudin Hamiltonians defined in \cite{VicedoYoung} and \cite{VicedoYoundIrregularSing}. We conjecture that the cyclotomic Hamiltonians for $\rO_m\subset \mathrm{GL}_m$ coincide, under $(\mathfrak{so}_{2n},\rO_m)$-duality, with the purely dynamical Hamiltonians for $\mathfrak{so}_{2n}$. Some evidence is that the construction of \emph{loc. cit.} applied to a particular element of the center gives operators closely related to the boundary KZ connection which is dual to the Casimir connection on the $\so_{2n}$-side by \cref{thm:orthogonal_classical_duality}.
    
    However, it is less clear how to construct operators on the $\rO_m\subset \GL_m$-side which correspond to the (higher) Gaudin Hamiltonians on the $\mathfrak{so}_{2n}$-side of $(\mathfrak{so}_{2n},\rO_m)$-duality. A naive guess, based on other bispectral duality results, is that this construction should stem from a boundary analog of the quantum shift of argument algebra $\mathcal{A}^{\mathfrak{k}\subset \mathfrak{g}}_{\lambda}$ where $\lambda\in \mathfrak{h}^*$ for $\mathfrak{h}\subset \mathfrak{g}$ a Cartan.
    
    Beyond bispectral duality results, it would be interesting to extend some general results about the usual higher Gaudin Hamiltonians to the cyclotomic setting, and the $(\so_{2n},\rO_m)$-duality could provide a useful insight into such problems.  An instance of a general result is that, under certain conditions on $\lambda$ and $z_1,\ldots,z_m$, the algebra $\cA^{\g}_{\lambda}(z_1,\ldots,z_m)$ acts with simple spectrum on the tensor product of irreducible finite-dimensional representations, see \cite[Corollary 5]{FFR}. We do not know if there is a similar result for the cyclotomic Gaudin subalgebras. If there is such simple spectrum result, a further line of inquiry would be to relate the cyclotomic Gaudin subalgebras to a boundary analog of crystals \cite{Watanabe} as in \cite{HKRW}.
  
	
	\subsection{Structure of the paper}
	In Section \ref{s:lie-theory} we introduce the necessary notations, in particular we introduce split symmetric pairs $\k\subset \g$. In \cref{sect:boundary_braid_group}, we recall the braid group action on finite-dimensional $\g$-modules, then introduce the boundary analog of the braid group action on integrable representations of $\k$. In \cref{sect:dynamical_twist}, we recall the dynamical fusion construction of \cite{EtingofVarchenkoExchange}, then in \cref{sect:boundary_fusion}, we introduce and study a boundary analog of dynamical fusion. In \cref{subsect:sl2_dw_diag}, we recall the construction of the dynamical Weyl group \cite{EtingofVarchenkoDynamicalWeyl}, then in Section \ref{s:boundary-dynamical-Weyl} we study the boundary dynamical Weyl group. In \cref{sect:Conections}, we recall some basic facts on connections and introduce a boundary analog of the Casimir connection. In \cref{sect::gl_duality}, we recall the bispectral $(\gl_n,\gl_m)$-duality. Finally, in \cref{sect:orthogonal_howe}, we study the orthogonal bispectral duality.

        \subsection*{Acknowledgments}
	
	This project started as a collaboration with Hunter Dinkins and we are thankful for his input. We benefited from discussions with Pavel Etingof, Valerio Toledano Laredo, Leonid Rybnikov, Jasper Stokman, and Milen Yakimov. Thanks also to Stefan Kolb, and Weiqiang Wang for helpful correspondence about the rank $1$ formulas discussed in Remark \ref{R:iquantum-was-there-first}. E.B. was partially supported by the NSF MSPRF-2202897.
	
\section{Lie theory notation}\label{s:lie-theory}
	
	We begin be establishing conventions for the rest of the paper. Here we follow \cite[Section 2]{ReshetikhinStokman}.

    \subsection{Cartan involution}
	Let $\g_{\mathbb{R}}$ be a real simple Lie algebra with Killing form $\kappa_{\mathbb{R}}(-,-)$. There is a \emph{Cartan involution} $\theta_{\mathbb{R}}:\g_{\mathbb{R}}\rightarrow \g_{\mathbb{R}}$, which by definition is a Lie algebra automorphism of order $2$, such that $-\kappa_{\mathbb{R}}(-,\theta_{\mathbb{R}}(-))$ is positive definite. Cartan involutions are unique up to conjugation by an inner automorphism of $\g_{\mathbb{R}}$. 
	
	We fix a Cartan involution $\theta_{\mathbb{R}}$. There is a corresponding Cartan decomposition 
	\[
	\g_{\mathbb{R}} = \k_{\mathbb{R}} \oplus \mathfrak{p}_{\mathbb{R}},
	\]
	where $\k_{\mathbb{R}}=\g_{\mathbb{R}}^{\theta}$ and $\theta_{\mathbb{R}}$ acts on $\mathfrak{p}_{\mathbb{R}}$ by $-1$. 
 
    We further assume that the Lie algebra $\g_{\mathbb{R}}$ is \emph{split}, i.e., $\mathfrak{p}_{\mathbb{R}}$ contains a Cartan subalgebra $\h_{\mathbb{R}}\subset \g_{\mathbb{R}}$. The restriction of $\kappa_{\mathbb{R}}(-,-)$ to $\h_{\mathbb{R}}$ is positive definite. In particular, $\kappa_{\mathbb{R}}(-,-)$ identifies $\h_{\mathbb{R}}$ with $\h_{\mathbb{R}}^*$ and there is an induced inner product $\kappa^*(-,-)$ on $\h_{\mathbb{R}}^*$. Write $\langle-,-\rangle_{\mathbb{R}}$ for the natural pairing between $\h_{\mathbb{R}}^*$ and $\h_{\mathbb{R}}$.
	
	Let $\g, \k, \mathfrak{p}$, and $\h$ denote the complexifications of $\g_{\mathbb{R}}$, $\k_{\mathbb{R}}$, $\mathfrak{p}_{\mathbb{R}}$, and $\h_{\mathbb{R}}$. We have $\theta = \C\otimes \theta_{\mathbb{R}}$, an order two automorphism of $\g$ such that $\g^{\theta} = \k$ and $\mathfrak{p}$ is the $-1$ eigenspace of $\theta$. We also write $\kappa(-,-)$ and $\langle-,-\rangle$ for the complexifications of $\kappa_{\mathbb{R}}(-,-)$ and $\langle -,-\rangle_{\mathbb{R}}$.

    \subsection{Roots and generators}\label{ss:roots-and-gens}
 
	The Lie algebra $\g$ is simple with Cartan subalgebra $\h$ and corresponding root space decomposition
	\[
	\g= \h\oplus \bigoplus_{\alpha\in R}\g_{\alpha}.
	\]
    For $\alpha \in R$, we denote by $t_{\alpha}\in \h$ the element corresponding to $\alpha\in \h^*$ under the identification $\h\xrightarrow{\cong}\h^*$ induced by $\kappa$, i.e., $t_{\alpha}\in \h$ is the unique element such that
    \[
    \kappa(t_{\alpha}, h) = \langle\alpha,h\rangle \qquad \text{for all $h\in \h$}.
    \]
    We have 
    \[
    [x, y] = \kappa(x, y)t_{\alpha}
    \]
    for $x\in \g_{\alpha}$ and $y\in \g_{-\alpha}$.
 
 The root space decomposition restricts to a decomposition
	\[
	\g_{\mathbb{R}} = \h_{\mathbb{R}}\oplus \bigoplus_{\alpha\in R} (\g_{\mathbb{R}}\cap \g_{\alpha}).
	\]
	This means $\alpha$ is real valued on $\h_{\mathbb{R}}^*$ and $t_{\alpha}\in \h_{\mathbb{R}}$. We fix generators $E_{\alpha}\in \g_{\mathbb{R}}\cap \g_{\alpha}$, for all $\alpha \in R$, such that $[E_{\alpha},E_{-{\alpha}}] = t_{\alpha}$, i.e., $\kappa_{\mathbb{R}}(E_{\alpha}, E_{-\alpha})=1$, and $\theta_{\mathbb{R}}(E_{\alpha}) = - E_{-\alpha}$ (it is possible by \cite[\S 25.2]{Humphreys}). Note that 
	\[
	\k_{\mathbb{R}} = \mathbb{R}\cdot \{E_{\alpha} - E_{-\alpha}\}_{\alpha \in R} \qquad \text{and} \qquad \mathfrak{p}_{\mathbb{R}} =\h_{\mathbb{R}}\oplus \mathbb{R} \cdot \{E_{\alpha} + E_{-\alpha}\}_{\alpha \in R}. 
	\]
	
    Next, we fix a choice of positive roots $R_+\subset R$. Let $\Pi = \{\alpha_i\}\subset R_+$ be the associated set of simple roots. We have,
	\[
	\g = \n_- \oplus \h \oplus \n,
 \]
 where
 \[
 \n_-  = \bigoplus_{\alpha\in R_+} \g_{-\alpha}, \qquad \h = \bigoplus_{\alpha_i\in \Pi}\C\cdot t_{\alpha_i} ,\qquad \text{and} \qquad \n = \bigoplus_{\alpha \in R_+} \g_{\alpha}.
	\]
    We have generators
	\begin{equation}
		\label{eq::root_gens_sl2}
		E_{\alpha} \in \g_{\alpha} \qquad \text{and} \qquad F_{\alpha} := E_{-\alpha} \in \g_{-\alpha},
	\end{equation}
	for $\alpha\in R_+$, satisfying 
	\[
	[E_{\alpha}, F_{\alpha}] = t_{\alpha}.
	\]

	\begin{notation}
	For $\sl_2$ there is one positive root $\alpha$. We will write
    \[
    \n_- = \C\cdot F, \qquad \h=\C\cdot t, \qquad \text{and} \qquad \n_+ = \C\cdot E,
    \]
    where $E= E_{\alpha}$, $t= t_{\alpha}$, and $F= F_{\alpha}$.
	\end{notation}

    \subsection{Iwasawa decomposition}
 
    The Lie algebra $\g_{\mathbb{R}}$ has the following \emph{Iwasawa decomposition}: 
    \[
    \g_{\mathbb{R}} = \k_{\mathbb{R}}\oplus \h_{\mathbb{R}} \oplus \bigoplus_{\alpha \in R_+}(\g_{\mathbb{R}}\cap \g_{\alpha}).
    \]
    Thus, $\g = \k\oplus \b$, where $\b = \h\oplus \n_+$ is the Borel subalgebra and $\k = \g^{\theta}$ are the $\theta$ fixed points in $\mathfrak{g}$. Explicitly, 
	\[
	\k = \bigoplus_{\alpha\in R_+} \C \cdot B_{\alpha},	
	\]
	where 
	\begin{equation}
		\label{eq::b_gens_sl2}
		B_{\alpha} = F_{\alpha} + \theta(F_{\alpha}) = F_{\alpha} - E_{\alpha}.
	\end{equation}
 
	\begin{notation}
		For $\sl_2$, we denote the generator of $\k = \so_2$ simply by $B := F - E$.   
	\end{notation}

    \subsection{Chevalley involution and generators}
    \label{subsect:chevalley_involution}

    To relate the generators $E_{\alpha}$, $F_{\alpha}$ and $t_{\alpha}$ to the standard Chevalley generators, set
    \[
    \alpha^{\vee}:=\frac{2}{\kappa(t_{\alpha},t_{\alpha})}t_{\alpha},
    \qquad \text{so}\qquad \langle \beta, \alpha^{\vee}\rangle = \frac{2\kappa^*(\alpha,\beta)}{\kappa^*(\alpha,\alpha)},
    \]
    and set
    \[
    e_{\alpha} :=\sqrt{\frac{2}{\kappa(t_{\alpha},t_{\alpha})}}E_{\alpha} \qquad \text{and} \qquad f_{\alpha} :=\sqrt{\frac{2}{\kappa(t_{\alpha},t_{\alpha})}}F_{\alpha}, \qquad \text{so} \qquad [e_{\alpha}, f_{\alpha}]= \alpha^{\vee}.
    \]
    We also write $b_{\alpha}:=f_{\alpha} - e_{\alpha}$.

    \begin{remark}
    Note that $\theta \colon \g \rightarrow \g$ is determined on simple root generators by
	\[
	\theta(e_{\alpha_i}) = -f_{\alpha_i}, \qquad \theta(f_{\alpha_i}) = - e_{\alpha_i}, \qquad \text{and} \qquad \theta|_{\h_i} = -\id_{\h_i}.
	\]
    Thus, $\theta$ is conjugate to the standard Chevalley involution, which swaps $e_{\alpha_i}$ and $f_{\alpha_i}$ and acts as $-1$ on $\h$, the existence of which is a Corollary of Serre's presentation of $\g$. In general, Chevalley involution refers to the automorphism of $\g$ corresponding to the Cartan involution of the split real form. 
    \end{remark}

    \subsection{Integrable representations}\label{ss:integrable}
 
     	The Lie algebra $\k$ is reductive which follows from the classification result \cite[Appendix C]{KnappLieGroups}. Fix a connected reductive group $K$ with the Lie algebra $\k$. We call a $\k$-representation \defterm{integrable} if it comes from a representation of $K$. By a ``finite-dimensional representation of $\g$'' we understand a \emph{Harish-Chandra $(\g,K)$-module}, i.e., a $\g$-module which is integrable over $\k$. In what follows, we will not refer to $K$ explicitly.

        We are studying the symmetric pair $\k\subset \g$. The tensor product of an integrable $\k$-module with a finite dimensional $\g$-module is integrable. Thus, we think of the category of integrable $\k$-modules as a module category over finite dimensional $\g$-modules via the coproduct
        \[
        \Delta\colon \U(\k)\rightarrow \U(\k) \otimes \U(\g), \qquad x\mapsto x\otimes 1 + 1\otimes x, \qquad \text{for $x\in \k$}.
        \]
        Our choice to ignore that $\Delta$ has image in $\U(\k)\otimes \U(\k)$ becomes clearer once we introduce dynamical symmetric pairs.
        
    \subsection{Weights and representations}

    Write $\{ \omega_i\}\subset \h^*$ for the fundamental weights, which induce coordinates on $\h^*$:
	\[
	\h^* \ni \lambda = \lambda_1 \omega_1 + \ldots + \lambda_n \omega_n, \qquad \lambda_i = \langle \lambda, \alpha_i^{\vee}\rangle. 
	\]
	We also denote by 
	\[
	X=\bigoplus_{i=1}^n \mathbb{Z}\omega_i \qquad \text{and} \qquad X_+=\bigoplus_{i=1}^n\mathbb{Z}_{\ge 0}\omega_i
	\]
	the set of integral weights and dominant integral weights respectively. 
 
    Denote by $W$ the Weyl group of $\g$, with reflection generators $\{s_{\alpha}\}_{\alpha\in R}$, where $s_{\alpha}$ acts on $\h_{\mathbb{R}}^*$ by reflecting through the hyperplane $H_{\alpha} = \{\lambda | \kappa^*_{\mathbb{R}}(\lambda, \alpha) = 0\}$. We extend this action to $\h^*$. The Weyl group $W$ is generated by simple reflection generators $\{s_i:=s_{\alpha_i}\}_{\alpha_i\in \Pi}$. Recall that $\rho:=\sum_{i=1}^n \omega_i$. The dot action of $W$ on $\h^*$ is defined by:
	\begin{equation}\label{E:dot-action}
		w\cdot \lambda := w(\lambda + \rho) - \rho, \ w\in W.
	\end{equation}

	It is convenient to work with the scalar multiple of the Killing form on. Let $\alpha_{\text{short}}$ be a short root and define 
        \[
        (-,-):=\frac{2}{\kappa^*(\alpha_{\text{short}},\alpha_{\text{short}})}\kappa(-,-).
        \]
    This is a non-degenerate $\g$-invariant form on $\g$. We also write $(-,-)$ to denote the induced form on $\h^*$, which is $W$-invariant and for any short root $\alpha$ satisfies
        \[
        (\alpha, \alpha) = (t_{\alpha}, t_{\alpha}) = \frac{2}{\kappa^*(\alpha, \alpha)}\kappa(t_{\alpha},t_{\alpha}) = 2.
        \]
    In fact, since $W$ acts irreducibly on $\h^*$ these two properties characterize $(-,-)$.

	\section{Boundary Weyl groups}\label{sect:boundary_braid_group}

    A (split) symmetric pair $\k\subset \g$ will have an associated boundary Weyl group and braid group action on integrable $\k$-modules. We recall the approach to the usual Weyl group, then expound on the parallel approach to the boundary Weyl group. 
	
	\subsection{Braid group action for Weyl group}
	
	By definition, the Weyl group $W$ acts on the Cartan subalgebra $\h \subset \g$. There is also an action of the braid group of $W$ on finite-dimensional representations of $\g$. For all the statements and proofs, we refer the reader to \cite{Lusztig} for the $q$-version. 
	
	Let $\g = \sl_2$. Consider the operator
	\[
	\tilde{s}^{\sl_2} := \exp(-e) \exp(f) \exp(-e),
	\]
	lying in a certain completion of $\U(\sl_2)$. Its action is well-defined on any finite-dimensional $\sl_2$-module $V$. Moreover, the Weyl group operator is group-like, i.e., $\Delta(\tilde{s}^{\sl_2}) = \tilde{s}^{\sl_2} \otimes \tilde{s}^{\sl_2}$. \begin{example}
	    For instance, if $V = V(1)$ is the standard vector representation, then $\tilde{s}^{\sl_2} = \begin{bmatrix}
		0 & -1 \\ 1 & 0
	\end{bmatrix}.$
	\end{example} 
	
	For a general simple Lie algebra $\g$, define
	\[
	\tilde{s}^{\g}_{\alpha_i} := \exp(-e_{{\alpha_i}}) \exp(f_{\alpha_i}) \exp(-e_{\alpha_i}),
	\]
	for any simple root $\alpha_i$. In what follows, we will also use the notation $\tilde{s}^{\g}_{i} := \tilde{s}^{\g}_{\alpha_i}$ Their action is well-defined on any finite-dimensional representation of $\g$. For any pair of simple roots $\alpha_i,\alpha_j$, they satisfy the \emph{braid relation}
	\begin{equation}
		\label{eq::usual_weyl_braid_relation}
		\ldots \tilde{s}^{\g}_{\alpha_i} \tilde{s}^{\g}_{\alpha_j} \tilde{s}^{\g}_{\alpha_i} = \ldots \tilde{s}^{\g}_{\alpha_j} \tilde{s}^{\g}_{\alpha_i} \tilde{s}^{\g}_{\alpha_j},
	\end{equation}
	depending on the root system of rank 2 that they generate. In particular, they define a braid group action on any finite-dimensional representation of $\g$.
	
	For any element $w\in W$, let $w = s_{i_1} \ldots s_{i_k}$ be a reduced decomposition. Define $\widetilde{w}:= \tilde{s}_{i_1} \ldots \tilde{s}_{i_k}$. By \eqref{eq::usual_weyl_braid_relation}, this element does not depend on the reduced decomposition. 
	
	\begin{remark}
		\label{remark:simply_connected_group}
		Let $G$ be the simply connected group with the Lie algebra $\g$. Then the operators $\widetilde{w}$ for $w\in W$ can be considered as elements of $G$. In particular, the group-like property $\Delta(\widetilde{w}) = \widetilde{w}\otimes \widetilde{w}$ is automatic. The group $G$ acts on $\g$ by the adjoint representation, thus by viewing $\widetilde{w}\in G$ we have $\mathrm{Ad}_{\widetilde{w}}:\g\rightarrow \g$. This agrees with the action of $\widetilde{w}$ on $\g$ obtained from viewing $\g$ as the adjoint representation of $\g$.
	\end{remark}

	\subsection{Braid group action for boundary Weyl group: \texorpdfstring{$\so_2\subset \sl_2$}{so(2) inside sl(2)}}
	\label{subsect:boundary_weyl_group}
	
	The action of 
	\begin{equation}
		\label{eq::weyl_operator_sl2}
		\tilde{s}^{\sl_2} = \begin{bmatrix} 0 & -1\\ 1 & 0\end{bmatrix} \in SL_2
	\end{equation}
	on a finite dimensional $\sl_2$-module is realized by the operator $ \exp(-e)\exp(f)\exp(-e)\cdot$. Since $\tilde{s}^{\sl_2}\in SO(2)$, it is natural to consider $\tilde{s}^{\sl_2}$ acting on $V|_{\so_2}$ and try to express the action in terms of some operators depending on $b\in \so_2$.  
	
	\begin{example}
		\label{example:boundary_weyl_sl2_vector_rep}
		Consider the $\sl_2$-module $V(1)$, with weight basis $v_+$ and $v_-$. We have $b= f-e\in \so_2$ acting by $b\cdot v_+ = v_-$ and $b\cdot v_- = -v_+$. Thus, $V(1)|_{\so_2} = \C\{v_+-iv_-\}\oplus \C\{v_++iv_-\}\cong \C_{i}\oplus \C_{-i}$. In this representation, we have $ \tilde{s}=b$, so $\tilde{s}$ acts on $\C_i$ by $i$ and on $\C_{-i}$ by $-i$. 
	\end{example}
	
	The previous example is a bit misleading. Since $b\in \so_2$ is a Lie algebra element it acts on tensor products by $b\otimes 1+ 1\otimes b$, while $\tilde{s}^{\sl_2}$ acts on tensor products by $\tilde{s}^{\sl_2}\otimes \tilde{s}^{\sl_2}$, one should not expect their actions to agree on all $\C_x$. 
	
	\begin{notation}
		\label{notation:analytic_continuation_operators}
		Let $X$ be any integrable $\so_2$-module, e.g. $X$ is the restriction of a finite dimensional $\sl_2$-module. In particular, $X$ decomposes as $X = \bigoplus_i \C_{x_i}$, where the generator $b\in \so_2$ acts on $\C_{x_i}$ by multiplication with $x_i$. For any meromorphic analytic function $f(x)$, we define the operator $f(b)$ by its restriction $f(b)|_{\C_{x_i}} := f(x_i)$ to the one-dimensional irreducible components.
	\end{notation}
	
	\begin{prop}\label{P:tilde-s-equal-exp}
		For any finite-dimensional module $V$ of $\sl_2$, the action of $\tilde{s}^{\sl_2}$ is equal to the action of $\exp\left(\frac{\pi b}{2}\right)$ understood in the sense of \cref{notation:analytic_continuation_operators}. 
		\begin{proof}
			By \cref{example:boundary_weyl_sl2_vector_rep}, the actions agree on $V(1)$. Since $\Delta(b) = b \otimes 1 + 1 \otimes b$ is a sum of two commuting operators, the operator $\exp\left(\frac{\pi b}{2}\right)$ is group-like, i.e., $\Delta(\exp\left(\frac{\pi b}{2}\right)) = \exp\left(\Delta(\frac{\pi b}{2})\right)=\exp\left(\frac{\pi b}{2}\right) \otimes \exp\left(\frac{\pi b}{2}\right)$. Since the operator $\tilde{s}$ is group-like as well, their actions agree on any tensor product $V(1)^{\otimes n}$. As any irreducible representation of $\sl_2$ can be realized as a subrepresentation of a suitable tensor product, the claim is then a consequence of complete reducibility of finite dimensional $\sl_2$-modules. 
		\end{proof}
	\end{prop}
	
	Since $\tilde{s}\in SO(2)$ it makes sense to consider its action in any integrable $\so_2$-module. 
	
	\begin{defn}
		\label{def:boundary_weyl_sl2}
		The \defterm{boundary Weyl group operator} $\tilde{s}^{\so_2}$ is the operator acting on any integrable $\so_2$-module by $\exp\left(\frac{\pi b}{2}\right)$.
	\end{defn}
	
	\begin{notation}
		\label{notation:boundary_weyl_sl2_1dim_rep}
		Consider the one-dimensional $\so_2$-representation $\C_{-ik}$. We denote by $\tilde{s}^{\so_2}(k) := \exp\left(-\frac{i\pi k}{2}\right)$ the action of $\tilde{s}^{\so_2}$ on $\C_{-ik}$. 
	\end{notation}

	
	
	For imaginary integral eigenvalues, the operator $\tilde{s}$ has a series presentation.
	
	\begin{defn}\label{N:divided powers}
		For $x\in \C$, introduce even and odd divided powers by the formulas
		\[
		x_0^{(2k)}:=\frac{(x^2-(2k-2)^2)(x^2-(2k-4)^2)\dots (x^2-0^2)}{(2k)!}
		\]
		and
		\[
		x_1^{(2k+1)}:=\frac{(x^2-(2k-1)^2)(x^2-(2k-3)^2)\dots (x^2-1^2)x}{(2k+1)!}.
		\]
		Define
		\begin{align}
			\label{eq::bweyl_series}
			\begin{split}
				s_0(x) &= \sum\limits_{k=0}^{+\infty} (-1)^k x_0^{(2k)},\qquad \text{for $x\in 2\Z$}, \qquad \text{and} \\
				s_1(x) &= \sum\limits_{k=0}^{+\infty} (-1)^k x_0^{(2k+1)},\qquad \text{for $x\in 1+2\Z$}.
			\end{split}
		\end{align}
	\end{defn}
	
	\begin{remark}\label{R:iquantum-was-there-first}
		Experts in the theory of $\iota$quantum groups will be familiar with the above formulas. Non-experts should compare the divided power formulas in Definition \ref{N:divided powers} with the $\iota$divided power formulas in \cite[Conjecture 4.13]{Bao-Wang-new-approach} (the conjecture that this is the $\iota$canonical basis is proven in \cite{Berman-Wang-formulas-for-divided}) and compare the formulas for the operators in equation \eqref{eq::bweyl_series} with \cite[Section 16.3]{Zhang-Weinan-thesis}.
	\end{remark}
	
	\begin{prop}
		\label{prop:bweyl_series}
		We have $\tilde{s}^{\so_2}(2x) = s_0(2x)$ and $\tilde{s}^{\so_2}(2x+1) =-is_1(2x+1)$ for all $x\in \Z$.
		\begin{proof}
			Observe that $s_0(-2x) = s_0(2x)$ and $s_1(-2x-1) = - s_1(2x+1)$, as well as $\tilde{s}^{\so_2}(-2x) = \tilde{s}^{\so_2}(2x)$ and $\tilde{s}^{\so_2}(-2x-1) = -\tilde{s}^{\so_2}(2x+1)$. Therefore, it is enough to prove the statement for positive integral $x$.
			
			It is an exercise to verify that for integral $x$:
			\begin{gather*}
				(2x+2)_0^{(2k)} - (2x)_0^{(2k)} = 2\cdot (2x+1)_1^{(2k-1)} \qquad \text{and} \qquad  
				(2x+1)^{(2k+1)}_1 - (2x-1)_1^{(2k+1)} = 2\cdot (2x)_0^{(2k)}.
			\end{gather*}
			Thus, 
			\begin{gather*}
				s_0(2x+2) - s_0(2x) = -2 s_1(2x+1) \qquad \text{and} \qquad 
				s_1(2x+1) - s_1(2x-1) = 2 s_0(2x).
			\end{gather*}
			At the same time, one can check that 
			\begin{gather*}
				\tilde{s}^{\so_2}(2x+2) - \tilde{s}^{\so_2}(2x) = -2i \tilde{s}^{\so_2}(2x+1) \qquad \text{and} \qquad 
				i \tilde{s}^{\so_2}(2x+1) - i \tilde{s}^{\so_2}(2x-1) = 2\tilde{s}(2x). 
			\end{gather*}
			We see that $s_0(2x)$ and $-is_1(2x+1)$ satisfy the same inductive relations as $\tilde{s}^{\so_2}(2x)$ and $\tilde{s}^{\so_2}(2x+1)$. Therefore, the statement follows from the equalities: $\tilde{s}^{\so_2}(0) = s_0(0)=1$ and $\tilde{s}^{\so_2}(1) = -is_1(1) = -i$. 
		\end{proof}
	\end{prop}
	
	\subsection{Braid group action for boundary Weyl group: \texorpdfstring{$\k\subset \g$}{k in g}}
	Similarly to the usual Weyl group operators, we define the boundary Weyl group for the split symmetric pair $(\g,\k)$ in terms of simple reflection. In what follows, we are in the setting of \cref{sect:boundary_fusion}.
	
	\begin{defn}
		Let $\alpha \in \h^*$ be a simple root. Define the \defterm{boundary Weyl group operator} $\tilde{s}^{\k}_{\alpha} := \exp\left(\frac{\pi b_{\alpha}}{2}\right)$,  understood in the sense of \cref{notation:analytic_continuation_operators}. When $\alpha = \alpha_i$, we will also denote $\tilde{s}^{\k}_i := \tilde{s}^{\k}_{\alpha_i}$.
	\end{defn}
	
	Let $w\in W$ be any Weyl group element and $s_{i_1} \ldots s_{i_l}$ be a reduced expression for $w$. Associated to the reduced expression, we have the operator $\tilde{s}^{\k}_{i_1} \ldots \tilde{s}^{\k}_{i_l}$. 
	
	\begin{remark}\label{R:realtive-agrees-with-usual}
		For a finite dimensional $\g$-module $V$, we have the usual Weyl group operators $\widetilde{w}^{\g}$ acting on $V$. By \cref{P:tilde-s-equal-exp}, the action of $\tilde{s}^{\k}_{i_1} \ldots \tilde{s}^{\k}_{i_l}$ on $V|_{\k}$ coincides with the action of $\widetilde{w}^{\g}$. In particular, if $X$ is an integrable $\k$-module and $V$ is a finite dimensional $\g$-module, then, recalling the conventions for $\Delta$ set forth in Section \ref{ss:integrable}, we find
        \begin{equation}\label{E:kotimesg}
        \Delta(\tilde{s}_i^{\k})|_{X\otimes V} = \tilde{s}_i^{\k}|_X\otimes \tilde{s}_i^{\g}|_V.
        \end{equation}
  
        These observations suggest we try to deduce the braid relations for the boundary Weyl group operators from the braid relations for the usual Weyl group operators. This is the tactic in the following proof, which provides a model for the proof of Theorem \ref{thm:mainthm1}.
	\end{remark}
	
	\begin{prop}
		\label{prop:boundary_nondyn_braid_rel}
		If $s_{j_1}\ldots s_{j_l}$ is another reduced expression for $w$, then
		\[
		\tilde{s}^{\k}_{i_1} \ldots \tilde{s}^{\k}_{i_l} = \tilde{s}^{\k}_{j_1} \ldots \tilde{s}^{\k}_{j_l}.
		\]  
		\begin{proof}
			If we show the braid relations for $W$ hold for the operators $\tilde{s}_{\alpha}^{\k}$, then the Proposition will follow. 
   
            Since each $\tilde{s}_{\alpha}^{\k}$ is group-like, it follows from equation \eqref{E:kotimesg} that 
			\[
			\Delta(\tilde{s}^{\k}_{i_1} \ldots \tilde{s}^{\k}_{i_l}) = \tilde{s}^{\k}_{i_1} \ldots \tilde{s}^{\k}_{i_l} \otimes \tilde{s}^{\g}_{i_1} \ldots \tilde{s}^{\g}_{i_l}.
			\]
			Thus, if $V$ is a finite dimensional representation of $\g$ and $X$ is an integrable representation of $\k$, then thanks to Remark \ref{R:realtive-agrees-with-usual}, we have
			\begin{equation}\label{E:wk=wk-otimes-wg}
				\tilde{s}^{\k}_{i_1} \ldots \tilde{s}^{\k}_{i_l}|_{X\otimes V} = \tilde{s}^{\k}_{i_1} \ldots \tilde{s}^{\k}_{i_l}|_{X}\otimes \widetilde{w}^{\g}|_{V}.
			\end{equation}
			Since the operators $\tilde{s}_{\alpha}^{\g}$ satisfy the braid relations, it is enough to prove the proposition for integrable representations $X_1,\ldots,X_l$ of $\k$ such that any other irreducible representation can be obtained as a direct summand of $X_i \otimes V$ for some finite dimensional $\g$-representation $V$. Moreover, it is enough to prove this for rank 2 algebras. This is done in \cref{appendix:rank_two}.
		\end{proof}
	\end{prop}
	
	\begin{defn}
		Define the boundary Weyl group operator for $w\in W$ to be $\widetilde{w}^{\k}:=\tilde{s}^{\k}_{i_1} \ldots \tilde{s}^{\k}_{i_l}$, where $w= s_{i_1}\ldots s_{i_l}$ is any reduced decomposition.
	\end{defn}

    	The following lemma shows that the braid group action on $\g$ preserves the basis $\{E_{\alpha}, E_{-\alpha}\}_{\alpha\in R}$ from Section \ref{ss:roots-and-gens}, up to a sign.
	\begin{lm}
		\label{lm:braid_group_action_g_sign}
		For any simple root $\alpha_i$ and root $\gamma$, we have $\mathrm{Ad}_{\tilde{s}_i^{\g}}(E_{\gamma}) = C_{\gamma} E_{s_i(\gamma)}$ such that $C_{\gamma} = C_{-\gamma} = \pm 1$. 
		\begin{proof}
			It is clear from the weight considerations that $\mathrm{Ad}_{\tilde{s}_i^{\g}}(\g_{\gamma}) \subset \g_{s_i(\gamma)}$, in particular, $\mathrm{Ad}_{\tilde{s}_i^{\g}}(E_{\gamma}) = C_{\gamma} E_{s_i(\gamma)}$. Since $\mathrm{Ad}_{\tilde{s}^{\g}_i}$ preserves the bilinear form on $\g$, we have
			\[
			C_{\gamma} C_{-\gamma} =(C_{\gamma}E_{s_i(\gamma)}, C_{- \gamma}E_{s_i(-\gamma)}) = (\mathrm{Ad}_{\tilde{s}^{\g}_i}(E_{\gamma}),\mathrm{Ad}_{\tilde{s}^{\g}_i}(E_{-\gamma})) = (E_{\gamma},E_{-\gamma}) = 1.
			\]
			Moreover, \cref{P:tilde-s-equal-exp} implies $\mathrm{Ad}_{\tilde{s}^{\g}_i} = \mathrm{Ad}_{\tilde{s}^{\k}_i}$, hence $\mathrm{Ad}_{\tilde{s}^{\g}_i}$ commutes with the Cartan involution $\theta:\g\rightarrow \g$. Therefore, 
			\[
			C_{\gamma}^2 = (\mathrm{Ad}_{\tilde{s}^{\g}_i}(E_{\gamma}), \theta \circ \mathrm{Ad}_{\tilde{s}^{\g}_i}(E_{\gamma})) = (\mathrm{Ad}_{\tilde{s}^{\g}_i}(E_{\gamma}),\mathrm{Ad}_{\tilde{s}^{\g}_i} \circ \theta (E_{\gamma})) = C_{\gamma} C_{-\gamma} = 1.
			\]
			The lemma follows.
		\end{proof}
	\end{lm}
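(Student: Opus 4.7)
The plan is to prove the lemma in three moves: first establish the existence of the scalar $C_{\gamma}$ via a weight-space argument, then get $C_{\gamma}C_{-\gamma}=1$ from invariance of the bilinear form, and finally get $C_{\gamma}=C_{-\gamma}$ from the fact that $\mathrm{Ad}_{\tilde{s}_{i}^{\g}}$ commutes with the Cartan involution $\theta$.

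For the first move, I would observe that $\tilde{s}_{i}^{\g}$ is built from exponentials of $e_{\alpha_{i}}$ and $f_{\alpha_{i}}$, so $\mathrm{Ad}_{\tilde{s}_{i}^{\g}}$ is an automorphism of $\g$ that restricts to the reflection $s_{i}$ on $\h$ (this is a standard computation inside $\sl_{2}$). Consequently $\mathrm{Ad}_{\tilde{s}_{i}^{\g}}$ maps the $\gamma$-weight space for $\h$ isomorphically to the $s_{i}(\gamma)$-weight space, so there exists a scalar $C_{\gamma}$ with $\mathrm{Ad}_{\tilde{s}_{i}^{\g}}(E_{\gamma}) = C_{\gamma}E_{s_{i}(\gamma)}$.

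For the second move, I would use that $\mathrm{Ad}_{\tilde{s}_{i}^{\g}}$ is an automorphism of $\g$ (a limit/finite product of inner automorphisms in any finite-dimensional representation where all operators involved act as nilpotents) and therefore preserves the invariant form $(-,-)$. Since $(E_{\gamma},E_{-\gamma})=1$ and $(E_{s_{i}(\gamma)},E_{-s_{i}(\gamma)})=1$, applying $\mathrm{Ad}_{\tilde{s}_{i}^{\g}}$ to the pair $(E_{\gamma},E_{-\gamma})$ immediately gives $C_{\gamma}C_{-\gamma}=1$.

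For the third move, which is the main obstacle, I need $\mathrm{Ad}_{\tilde{s}_{i}^{\g}}$ to commute with $\theta$. The cleanest way is to invoke the forthcoming Proposition \ref{P:tilde-s-equal-exp}, which identifies $\tilde{s}_{i}^{\g}$ with the boundary reflection $\tilde{s}_{i}^{\k}=\exp(\pi b_{\alpha_{i}}/2)$, an element of the subgroup with Lie algebra $\k=\g^{\theta}$; conjugation by anything in $\k$ (or the group $K$) commutes with $\theta$. Once this is in hand, applying $\mathrm{Ad}_{\tilde{s}_{i}^{\g}}$ to $\theta(E_{\gamma})=-E_{-\gamma}$ in two ways yields $-C_{-\gamma}E_{-s_{i}(\gamma)}=-C_{\gamma}E_{-s_{i}(\gamma)}$, so $C_{\gamma}=C_{-\gamma}$. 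Combining this with $C_{\gamma}C_{-\gamma}=1$ forces $C_{\gamma}=\pm 1$. The only subtlety is the circular-looking dependence on Proposition \ref{P:tilde-s-equal-exp}; if one wanted to avoid it, an alternative is to verify directly by an $\sl_{2}$-reduction that the constants for the $\sl_{2}$-triple generated by $\alpha_{i}$ are $\pm 1$ and then propagate through the adjoint action using invariance of $(-,-)$ and the transitivity of $W$ on roots of the same length, but this is longer and less uniform.
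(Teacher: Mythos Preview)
Your proposal is correct and follows essentially the same argument as the paper: weight considerations give the scalar $C_{\gamma}$, invariance of the Killing form gives $C_{\gamma}C_{-\gamma}=1$, and commutation of $\mathrm{Ad}_{\tilde{s}_i^{\g}}$ with $\theta$ (via Proposition~\ref{P:tilde-s-equal-exp}) gives $C_{\gamma}=C_{-\gamma}$, hence $C_{\gamma}=\pm 1$. The dependence on Proposition~\ref{P:tilde-s-equal-exp} is a forward reference but not circular, since that proposition's proof does not use the present lemma.
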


	\section{Background on dynamical fusion}\label{sect:dynamical_twist}
	
	In this subsection, we briefly recall the construction of the dynamical twist of \cite{EtingofVarchenkoExchange}.

\subsection{Fusion operator}
	
	Since $\h$ is an abelian Lie algebra, we can identify its universal enveloping algebra $\U(\h)$ with the algebra $\cO(\h^*)$ of polynomial functions on the dual space $\h^*$. In what follows, we will often need to restrict to a certain open subset of $\h^*$.
	\begin{defn}
		A weight $\lambda\in \h^*$ is called \defterm{generic} if $\langle \lambda, \alpha^{\vee}\rangle \not\in \Z$ for every coroot $\alpha^{\vee}$.
	\end{defn}
	
	\begin{notation}
		\label{notation:generic_versions}
		We denote by $\mathcal{M}(\h^*)$ the space of meromorphic functions on $\h^*$ with poles on the hyperplanes $\{ \langle \alpha^{\vee}, \lambda \rangle = k\}$ for all $\alpha \in R$ and $k \in \Z$.
		In particular, we have an algebra embedding $\U(\h) \cong \cO(\h^*) \hookrightarrow \cM(\h^*)$.
	\end{notation}
	
	We will also consider a generic version of $\U(\b_{\pm})$.
	
	\begin{defn}\label{defn:generic_versions}
		Define $\U(\b_{\pm})^{\gen} := \U(\b_{\pm}) \otimes_{\U(\h)} \mathcal{M}(\h^*)$. Observe that there is an adjoint action of $\h$ on $\U(\b_{\pm})^{\gen}$, and the algebra structure on $\U(\b_{\pm})$ can be extended to $\U(\b_{\pm})^{\gen}$ by the rule $P(\lambda) x := x P(\lambda + \wt(x))$ for any homogeneous element $x \in \U(\b_{\pm})^{\gen}$ and any $P(\lambda) \in \mathcal{M}(\h^*)$.
	\end{defn}

	Let $\lambda$ be a character of $\h$. It can be extended to a one-dimensional representation $\C_{\lambda}$ of the Borel subalgebra $\b:= \h \oplus \n$. Denote by
	\begin{equation}
		\label{eq::verma}
		M_{\lambda} = \U(\g) \otimes_{\U(\b)} \C_{\lambda}
	\end{equation}
	the Verma module generated by the highest-weight vector $m_{\lambda}$. 
	
	The following definition recalls the usual \defterm{dynamical notation}.
	
	\begin{defn}
		\label{def:dynamical_notations}
		Let $V_1, V_2, \dots, V_d$ be $\g$-representations. For a function $f:\h^*\rightarrow \End(V_1\otimes V_2\otimes \cdots \otimes V_d)$, denote by $\lambda \mapsto f(\lambda - h|_{V_i})$, the function
		\[
		\h^*\rightarrow \End(V_1\otimes V_2\otimes \dots \otimes V_d)
		\]
		such that, if $v_1, v_2, \dots, v_d$ are weight vectors with $v_i\in V_i$, then 
		\[
		f(\lambda - h|_{V_i})\cdot v_1\otimes v_2\otimes \dots \otimes v_d = f(\lambda - \wt(v_i))\cdot v_1\otimes v_1\otimes \dots \otimes v_d.
		\]
	\end{defn}
	
	\begin{notation}\label{not:dynamical-notation-U=V=W}
		In case $V_1=V_2=\dots =V_d$, we will write $h^{(i)}$ instead of $h|_{V_i}$. 
	\end{notation}
	
	Let $V$ be a finite-dimensional representation of $\g$. For a weight $\nu \in \h^*$, we denote by $V[\nu]$ the corresponding weight space of $V$. Consider the space of intertwiners $\Hom_{\g} (M_{\lambda},M_{\mu} \otimes V)$. 
	
	\begin{thm}
		\label{thm:vertex_operators}
		\begin{itemize}
			\item \cite[Theorem 8]{EtingofVarchenkoExchange} For generic $\lambda$ and $\mu$, there is a natural isomorphism
			\begin{equation}\label{E:expectation-value-iso}
				\Hom_{\g} (M_{\lambda},M_{\mu} \otimes V) \cong V[\lambda - \mu],
			\end{equation}
			such that for any $v\in V[\lambda-\mu]$, there is a unique intertwining operator $\Phi_{\lambda}^v$ satisfying
			\begin{equation}\label{E:Phi-triangular}
				\Phi_{\lambda}^v(m_{\lambda}) \in m_{\mu} \otimes v + \bigoplus_{\nu < \mu} M_{\mu} [\nu] \otimes V.
			\end{equation}
			
			\item There exists a universal element $\Xi(\lambda)$ in a completion of  $\U(\n_-) \otimes \U(\b_+)^{\gen}$, such that 
                \[
                \Phi^v_{\lambda}(m_{\lambda}) = \Xi(\lambda) (m_{\mu} \otimes v).
                \]
		\end{itemize}
		\begin{proof}
			The second assertion follows directly from \cite[Proposition 2.1]{EtingofStyrkas}: for every $\mu$, the singular vectors in $M_{\mu} \otimes V$ satisfying \eqref{E:Phi-triangular} can be obtained by applying the inverse of the Shapovalov form. Since the matrix coefficients of the latter are rational functions in $\mu$ and $\mu = \lambda - h|_V$ in the notations of \cref{def:dynamical_notations}, we conclude. 
		\end{proof}
	\end{thm}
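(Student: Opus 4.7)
The plan is to handle the two assertions separately, each building a bridge between intertwining operators and certain ``expectation value'' data. For the first assertion, I would start by writing down the natural map in one direction: given $\Phi \in \Hom_{\g}(M_{\lambda}, M_{\mu} \otimes V)$, the element $\Phi(m_{\lambda})$ is a singular vector of weight $\lambda$ in $M_{\mu} \otimes V$, and I can expand it via the PBW basis of $M_{\mu}$ as $\Phi(m_{\lambda}) = m_{\mu}\otimes v + \sum_{I > 0} F^I m_{\mu}\otimes v_I$, with $v_I \in V$; weight considerations force $v \in V[\lambda-\mu]$. This defines the ``expectation value'' map $\Phi \mapsto v$. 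Injectivity is immediate from the fact that $\Phi$ is determined by $\Phi(m_{\lambda})$ together with triangularity.

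To construct the inverse, the key is to produce, for each $v \in V[\lambda - \mu]$, a singular vector $u = m_{\mu}\otimes v + (\text{lower } M_{\mu}\text{-weight terms})$ in $M_{\mu}\otimes V$ of total weight $\lambda$. The condition $e_{\alpha_i}\cdot u = 0$ for every simple root, expanded in the PBW basis, becomes a triangular linear system over the $\U(\n_-)$-basis, whose leading term is governed by the Shapovalov form on $M_{\mu}$. For $\mu$ generic the Shapovalov form is nondegenerate on each weight space, so this system has a unique solution, which one obtains weight-by-weight by inverting finite-dimensional blocks. The resulting singular vector defines $\Phi^v_{\lambda}$, yielding the inverse isomorphism.

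For the second assertion, I would package the coefficients of these singular vectors into a single universal element. The recursion above expresses each coefficient $v_I$ as a linear combination of components of $v$ weighted by matrix entries of the inverse Shapovalov form; since the Shapovalov determinant is polynomial in $\mu$, its inverse is rational in $\mu$ with poles on the hyperplanes $\langle\mu+\rho, \alpha^{\vee}\rangle \in \Z_{>0}$. After substituting $\mu = \lambda - h|_{V}$ (as in \cref{def:dynamical_notations}), these rational functions become elements of $\mathcal{M}(\h^*)$ acting on $V$ through the $\U(\b_+)^{\gen}$-factor, while the $F^I$'s live in $\U(\n_-)$. Summing over all PBW monomials, one assembles a formal element $\Xi(\lambda)$ in the appropriate completion of $\U(\n_-)\otimes \U(\b_+)^{\gen}$, and by construction $\Xi(\lambda)(m_{\mu}\otimes v) = \Phi^v_{\lambda}(m_{\lambda})$.

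The main obstacle is largely bookkeeping: matching the dynamical substitution $\mu = \lambda - h|_V$ to the $\U(\h)$-action implicit in $\U(\b_+)^{\gen}$, and verifying that the completion (filtered by weight depth in $\U(\n_-)$) is large enough to house $\Xi(\lambda)$ while the formula $\Phi^v_\lambda(v) = \Xi(\lambda)(m_\mu\otimes v)$ continues to make sense on every finite-dimensional $V$. Once the rationality of the Shapovalov inverse in $\mu$ is granted, all poles lie on the expected hyperplanes, so one lands cleanly in $\mathcal{M}(\h^*)$ and no further analytic work is needed; the shortcut via \cite{EtingofStyrkas} simply avoids redoing this inversion by hand.
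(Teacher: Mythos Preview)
Your proposal is correct and follows essentially the same approach as the paper: the first assertion is the standard expectation-value argument (cited from \cite{EtingofVarchenkoExchange}), and for the second you invoke exactly the mechanism the paper cites from \cite{EtingofStyrkas}, namely inverting the Shapovalov form to produce the singular vector, using rationality of its matrix coefficients in $\mu$, and then making the dynamical substitution $\mu = \lambda - h|_V$. You simply spell out in more detail what the paper compresses into a citation.
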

	
	\begin{example}
		\label{example:sl2_extremal_vectors}
		Let $\g = \sl_2$. Consider the series
		\[
		\Xi(\lambda) = \sum\limits_{k=0}^{\infty} \frac{(-1)^k}{k!} f^k \otimes e^k \frac{1}{\prod_{i=0}^{k-1} (\lambda - h^{(2)}-i)} = 1\otimes 1 + \sum_{k=1}^{\infty}(-1)^k\frac{f^{k}}{k!}\otimes \frac{e^{k}}{k!} \bigg(\prod_{i=0}^{k-1}\frac{i+1}{\lambda - h^{(2)}-i} \bigg).
		\]
		One can easily check that for a homogeneous element $v\in V$ of weight $\lambda - \mu$, the action of $\Xi(\lambda)$ on $m_{\mu} \otimes v$ is well-defined, moreover,
		\[
		e\cdot \Xi(\lambda)(m_{\mu} \otimes v) = 0 \quad \text{and} \quad 
		\Xi(\lambda)(m_{\mu}\otimes v)\in m_{\mu} \otimes v + \bigoplus_{\nu < \mu} M_{\mu} [\nu] \otimes V.
		\]
		Thus, the map $m_{\lambda}\mapsto \Xi(\mu)(m_{\mu}\otimes v)$ defines an intertwiner $\Phi_{\lambda}^v \colon M_{\lambda} \rightarrow M_{\mu} \otimes V$. 
	\end{example}
	
	Let $W$ be another finite-dimensional representation of $\g$ and $w\in W[\mu - \nu]$. The composition
	\[
	M_{\lambda} \xrightarrow{\Phi_{\lambda}^v} M_{\mu} \otimes V \xrightarrow{\Phi_{\mu}^w \otimes \id_V} M_{\nu} \otimes W \otimes V
	\]
	defines an intertwiner. Assuming the weights $\lambda$, $\mu$, and $\nu$ are generic, this intertwiner can be identified with $\Phi_{\lambda}^u$ for some $u \in (W\otimes V)[\lambda - \nu]$.
	
	\begin{defn}\cite[Subsection 2.4]{EtingofVarchenkoExchange}
		\label{def:fusion_gg}
		The \defterm{fusion operator} $J_{W,V}(\lambda) \colon W \otimes V \rightarrow W \otimes V$ is uniquely defined by
		\[
		(\Phi_\mu^w \otimes \id_V) \circ \Phi_{\lambda}^v = \Phi_{\lambda}^{J_{W,V}(\lambda)(w\otimes v)}
		\]
		for homogeneous $v\in V, w\in W$, such that $\wt (v) = \lambda - \mu$, and extended by linearity to $W \otimes V$.
	\end{defn}   
	
	\begin{thm}
		\label{thm::fusion_operator_properties}
		The fusion operator $J_{W,V}(\lambda)$ has the following properties:
		\begin{itemize}
			\item {\cite[Theorem 14]{EtingofVarchenkoExchange}} For a triple $U,V,W$ of finite-dimensional $\g$-representations, it satisfies the \defterm{dynamical twist} equation
			\begin{equation*}
				J_{V \otimes W, U} (\lambda) \circ (J_{V,W} (\lambda - h|_U) \otimes \id_U) = J_{V,W\otimes U} \circ (\id_V \otimes J_{W,U}(\lambda)).
			\end{equation*}
			
			\item {\cite[Section 8]{EtingofSchifmannLecturesDynamical}} There exists a universal invertible element $J(\lambda)$ in a completion of $\U(\n_-) \otimes \U(\b)^{\gen}$ whose action on any $W \otimes V$ is given by $J_{W,V}(\lambda)$. 
		\end{itemize}
	\end{thm}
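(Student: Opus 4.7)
The plan is to deduce both parts from the universal extremal element $\Xi(\lambda)$ provided by Theorem \ref{thm:vertex_operators}, together with associativity of composition of three intertwining operators.

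\textbf{Dynamical twist equation.} I would consider three homogeneous vectors $v \in V$, $w \in W$, $u \in U$ and three generic weights $\lambda, \mu_1, \mu_2$ chosen so that $\mathrm{wt}(u) = \lambda - \mu_1$ and $\mathrm{wt}(w) = \mu_1 - \mu_2$. These data produce intertwiners
\[
M_\lambda \xrightarrow{\Phi^u_\lambda} M_{\mu_1}\otimes U \xrightarrow{\Phi^w_{\mu_1}\otimes \id_U} M_{\mu_2}\otimes W\otimes U \xrightarrow{\Phi^{v}_{\mu_2}\otimes \id_{W\otimes U}} M_{\mu_3}\otimes V\otimes W\otimes U,
\]
and I would compute the triple composition in two ways. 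Grouping the first two arrows and applying Definition \ref{def:fusion_gg} gives
\[
\Phi^{J_{V,W\otimes U}(\lambda)\bigl(v\otimes J_{W,U}(\mu_1)(w\otimes u)\bigr)}_\lambda,
\]
whereas grouping the last two arrows gives
\[
\Phi^{J_{V\otimes W, U}(\lambda)\bigl(J_{V,W}(\mu_1)(v\otimes w)\otimes u\bigr)}_\lambda.
\]
Since $\mathrm{wt}(u) = \lambda - \mu_1$, the shift $\mu_1$ appearing in $J_{W,U}(\mu_1)$ and $J_{V,W}(\mu_1)$ is, in the notation of Definition \ref{def:dynamical_notations}, exactly $\lambda - h|_U$ when applied to the vector involving $u$. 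Reading off the coefficients via the isomorphism \eqref{E:expectation-value-iso} and using that $v,w,u$ are arbitrary homogeneous, one obtains the dynamical twist equation.

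\textbf{Universal element.} I would extract $J(\lambda)$ directly from $\Xi(\lambda)$. Writing $\Xi(\lambda) = \sum_i a_i \otimes b_i(\lambda) \in \U(\n_-)\,\widehat\otimes\, \U(\b)^{\gen}$, the intertwiner $\Phi^v_\lambda$ acts on the highest weight vector by $m_\lambda \mapsto \sum_i a_i m_\mu \otimes b_i(\lambda) v$. Composing $(\Phi^w_\mu\otimes \id_V)\circ \Phi^v_\lambda$ and pushing each $a_i$ through $\Phi^w_\mu$ using that $\Phi^w_\mu$ is $\g$-linear, the result equals $\Xi(\lambda)\bigl(\Xi(\mu)(m_\nu\otimes w)\otimes v\bigr)$, whereas $\Phi^{J_{W,V}(\lambda)(w\otimes v)}_\lambda$ sends $m_\lambda$ to $\Xi(\lambda)(m_\nu \otimes J_{W,V}(\lambda)(w\otimes v))$. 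Comparing the two expressions, using that $\Xi(\lambda)$ acts invertibly on each weight space of $M_\nu\otimes W\otimes V$ (since it has the triangular form \eqref{E:Phi-triangular}), one derives an expression
\[
J_{W,V}(\lambda)(w\otimes v) \;=\; \Xi(\lambda)^{-1}\bigl(1\otimes \Xi(\mu)\bigr)(m_\nu\otimes w\otimes v)\big|_{m_\nu\text{-coeff}},
\]
which after replacing $\mu$ by $\lambda - h|_V$ gives a formula for $J(\lambda)$ as an element of a completion of $\U(\n_-)\otimes \U(\b)^{\gen}$. Invertibility follows because $J(\lambda) = 1 + \text{(lower order)}$, which is inherited from the same property of $\Xi(\lambda)$.

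\textbf{Main obstacle.} The non-routine step is carefully bookkeeping the weight shifts: the element $\Xi(\mu)$ appearing in the middle intertwiner has $\mu$ depending on $\mathrm{wt}(v)$, and translating between ``evaluating on $m_\mu$'' and ``dynamical shift $\lambda - h|_V$'' requires verifying that all manipulations happen inside the completion where the series for $\Xi(\lambda)$ and its inverse converge on weight vectors of finite-dimensional modules. Once this is set up properly, both parts of the theorem reduce to formal associativity plus the uniqueness in \eqref{E:expectation-value-iso}.
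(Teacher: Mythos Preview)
The paper does not give its own proof of this theorem: it is stated as background with citations to Etingof--Varchenko and Etingof--Schiffmann, and no proof environment follows. Your associativity-of-triple-composition argument for the dynamical twist equation, and your extraction of $J(\lambda)$ from the universal element $\Xi(\lambda)$, are exactly the standard arguments in those references. They also match, essentially verbatim, the proof the paper \emph{does} give for the boundary analog (the theorem immediately after Definition~\ref{def:fusion_gk}): a triangle/commutative-diagram argument for the twist identity, and the observation that the universal element comes from $\Xi(\lambda)$ with invertibility following from upper-triangularity.

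One bookkeeping slip to fix: in your ``first two arrows'' grouping you wrote $J_{W,U}(\mu_1)$, but applying Definition~\ref{def:fusion_gg} to $(\Phi^w_{\mu_1}\otimes\id_U)\circ\Phi^u_\lambda$ gives $\Phi^{J_{W,U}(\lambda)(w\otimes u)}_\lambda$ with argument $\lambda$, not $\mu_1$. Only the \emph{other} grouping produces $J_{V,W}(\mu_1)$, which then becomes $J_{V,W}(\lambda-h|_U)$. Your sentence ``the shift $\mu_1$ appearing in $J_{W,U}(\mu_1)$ and $J_{V,W}(\mu_1)$ is exactly $\lambda-h|_U$'' is therefore wrong for $J_{W,U}$; if both carried the shift, the identity you would obtain is not the stated one. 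The final equation you claim is nonetheless correct, so this is a slip in the write-up rather than in the strategy.
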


\subsection{ABRR equation}

    	The universal element $J(\lambda)$ can be computed inductively the so-called ABRR equation, see \cite{ABRR}.

	\begin{defn}
		If $\lambda \in \h^*$, then we write $\overline{\lambda}\in \h$, to denote the element of the Cartan subalgebra such that $\mu(\overline{\lambda}) = (\mu, \lambda)$, for $\mu\in \h^*$. 
	\end{defn}
	
	\begin{example}
		If $\mathfrak{g}= \mathfrak{sl}_2$, with simple root $\alpha$ and fundamental weight $\omega = \alpha/2$, then $\overline{\alpha} = h$ and $\overline{\omega} = \frac{h}{2}$. 
	\end{example}
	
	Let $\{x_i\} \subset \h$ be an orthonormal basis.
	
	\begin{defn}
		For $\lambda \in \h^*$, define $\gamma(\lambda) = \overline{\lambda} + \overline{\rho} - \frac{1}{2}\sum_i x_i^2\in \h$.
	\end{defn}
	
	If $\mu\in \h^*$, then $\gamma(\lambda)|_{V[\mu]} = (\lambda+\rho, \mu) - \frac{(\mu, \mu)}{2}$ for all $\mathfrak{g}$ modules $V$.
	
	\begin{thm}[{\cite{ABRR}}]
		\label{thm:abrr}
		The universal fusion operator $J(\lambda)$ is a unique element in a completion of $\U(\n_-) \otimes \U(\b_+)^{\gen}$ satisfying the \defterm{ABRR equation}
		\[
		[J(\lambda), 1\otimes \gamma(\lambda)] = \left(\sum\limits_{\alpha \in R_+} F_{\alpha} \otimes E_{\alpha}\right) J(\lambda).
		\]
	\end{thm}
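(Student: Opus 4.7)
I plan to derive the ABRR equation from the fact that $\Phi^v_\lambda$ intertwines the action of the quadratic Casimir $C$, and to establish uniqueness via a weight-grading recursion. The setup is that, by the expectation value isomorphism of \cref{thm:vertex_operators}, the universal $J(\lambda)$ applied to $m_\mu \otimes v$ with $v \in V[\lambda - \mu]$ will produce the singular vector $\Phi^v_\lambda(m_\lambda) \in M_\mu \otimes V$. It therefore suffices to verify the ABRR identity after applying both sides to $m_\mu \otimes v$, as $\mu$ and $V$ vary.

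\emph{Existence.} Since $\Phi^v_\lambda$ is a $\g$-intertwiner and $C$ is central in $\U(\g)$, the Verma eigenvalue $C \cdot m_\lambda = c(\lambda) m_\lambda$ with $c(\lambda) = (\lambda+\rho, \lambda+\rho) - (\rho, \rho)$ gives
\[
\Delta(C) \cdot J(\lambda)(m_\mu \otimes v) = c(\lambda) \cdot J(\lambda)(m_\mu \otimes v).
\]
I will polarize $\Delta(C) = C \otimes 1 + 1 \otimes C + 2\Omega$ with $\Omega = \sum_i x_i \otimes x_i + \sum_{\alpha \in R_+}(E_\alpha \otimes F_\alpha + F_\alpha \otimes E_\alpha)$. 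By centrality, $C \otimes 1$ and $1 \otimes C$ commute with $J(\lambda)$; the piece $E_\alpha \otimes F_\alpha$ vanishes on $m_\mu \otimes v$ since $E_\alpha m_\mu = 0$; and $\sum_i x_i \otimes x_i$ contributes the scalar $(\mu, \wt v)$. Substituting $\mu = \lambda - \wt v$ and bundling every $\h$-diagonal contribution into $1 \otimes \gamma(\lambda)$ will reorganize the Casimir eigenvalue equation into exactly the ABRR identity applied to $m_\mu \otimes v$, which on varying $\mu$ and $V$ lifts to the universal equality in $\U(\n_-) \otimes \U(\b_+)^{\gen}$.

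\emph{Uniqueness.} I will decompose $J(\lambda) = \sum_\beta J_\beta(\lambda)$ with $J_\beta \in \U(\n_-)_{-\beta} \otimes \U(\b_+)^{\gen}_\beta$ indexed by non-negative integer combinations of simple roots, and with $J_0 = 1$. On the second tensor factor of weight $\beta$, the adjoint action of $1 \otimes \gamma(\lambda)$ is multiplication by a scalar involving $(\lambda + \rho, \beta)$ whose inverse is a meromorphic function of $\lambda$ with poles on the hyperplanes allowed by $\mathcal{M}(\h^*)$. Since the right-hand side $(\sum F_\alpha \otimes E_\alpha) J(\lambda)$ only involves $J_{\beta - \alpha_i}$ for simple roots $\alpha_i$, the ABRR equation is equivalent to a triangular recursion expressing $J_\beta$ in terms of $\{J_{\beta - \alpha_i}\}$; together with $J_0 = 1$ this pins down the universal solution.

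\emph{Main obstacle.} The delicate bookkeeping is in the existence step: one must verify that the quadratic $\h$-contribution of $1 \otimes C$ acting on $V$ combines with $\sum_i x_i \otimes x_i$ from $\Omega$ and with the shift $\mu = \lambda - \wt v$ to reassemble precisely $1 \otimes \gamma(\lambda) = 1 \otimes (\bar\lambda + \bar\rho - \tfrac12 \sum x_i^2)$, while all other contributions (including $c(\mu)$, $c(\lambda)$ and the diagonal part of $C$ acting on $v$) cancel so that only $\sum_{\alpha \in R_+} F_\alpha \otimes E_\alpha$ survives on the right-hand side.
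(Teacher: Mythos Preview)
The paper does not give its own proof of this theorem; it is cited from \cite{ABRR}. However, the paper does prove the boundary analog (the boundary ABRR equation in \cref{sect:boundary_fusion}), and that proof is the natural template to compare against. Your overall strategy---derive the equation from the Casimir acting on $\Phi^v_\lambda(m_\lambda)$, then argue uniqueness by weight recursion---is correct and is the standard one. The uniqueness argument is fine.

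There is, however, a genuine slip in your existence computation. You write that ``$E_\alpha\otimes F_\alpha$ vanishes on $m_\mu\otimes v$'' and ``$\sum_i x_i\otimes x_i$ contributes $(\mu,\wt v)$''. Those statements are true of $\Omega$ acting on $m_\mu\otimes v$, but in your $\Delta(C)$ setup the operator $2\Omega$ is acting on $J(\lambda)(m_\mu\otimes v)=\Phi^v_\lambda(m_\lambda)$, not on $m_\mu\otimes v$; since $\Omega$ does not commute with $J(\lambda)$ you cannot pass it through. The correct mechanism uses the \emph{singular vector} identities on $\Phi^v_\lambda(m_\lambda)$: namely $E_\alpha^{(1)}=-E_\alpha^{(2)}$ and $x_i^{(1)}=\lambda_i-x_i^{(2)}$ there. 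With these, $(E_\alpha\otimes F_\alpha)$ becomes $-(1\otimes F_\alpha E_\alpha)$, which combines with $1\otimes C$ to give $1\otimes(\sum x_i^2+2\bar\rho)$; and $\sum x_i\otimes x_i$ becomes $1\otimes\bar\lambda-1\otimes\sum x_i^2$. Assembling these yields $2(1\otimes\gamma(\lambda))J+2(\sum F_\alpha\otimes E_\alpha)J+c(\mu)J=c(\lambda)J$, and since $J(1\otimes\gamma(\lambda))(m_\mu\otimes v)=\tfrac{c(\lambda)-c(\mu)}{2}J(m_\mu\otimes v)$ this is exactly the ABRR identity. So your approach works once this correction is made.

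For comparison, the paper's proof of the boundary version uses $C\otimes 1$ rather than $\Delta(C)$: one computes $(C\otimes 1)\Phi^v_\lambda(m_\lambda)=c(\mu)\Phi^v_\lambda(m_\lambda)$ and expands $C=\sum x_i^2+2\bar\rho+2\sum F_\alpha E_\alpha$ on the first leg, again using $E_\alpha^{(1)}=-E_\alpha^{(2)}$ and $x_i^{(1)}=\lambda_i-x_i^{(2)}$. This avoids ever invoking $1\otimes C$ or $\Omega$ and is slightly more direct, but the two routes are equivalent in content.
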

	
	\begin{example}
		\label{example:abrr_sl2}
		Following \cite[Section 8]{EtingofSchifmannLecturesDynamical}, one can construct the universal dynamical twist for $\g=\sl_2$ as follows. Set $J(\lambda) = 1 + \sum_{n\geq 1} J^{(n)}(\lambda)$, where $J^{(n)}(\lambda)$ lies in the corresponding weight components $\U(\n_-)[-2n] \otimes \U(\b_+)^{\gen}[2n]$. The ABRR equation reads
		\[
		\frac{1}{2} [1 \otimes ((\lambda+1) h - \frac{h^2}{2}, J(\lambda)-1] = -(f\otimes e)J(\lambda),
		\]
		which gives the recurrence relation
		\[
		1 \otimes ((\lambda+1)n - nh +n^2)J^{(n)}(\lambda) = (-f\otimes e)J^{(n-1)},
		\]
		so that
		\[
		J^{(n)}(\lambda) = \frac{(-1)^n}{n!} f^n \otimes \frac{1}{(\lambda - h +n+1)\ldots (\lambda -h + 2n)} e^n.
		\]
	\end{example}

	\section{Boundary dynamical fusion}
	\label{sect:boundary_fusion}

	In this section, we study a boundary analog of the dynamical fusion construction of \cref{sect:dynamical_twist}.
	
	\subsection{Boundary fusion operator}
	
	Let $X$ be an integrable $\k$-representation. According to \cite{ReshetikhinStokman}, it turns out that a boundary dynamical analog of the intertwiners of \cref{sect:dynamical_twist} is the space of $\k$-intertwiners $\Hom_{\k} (M_{\lambda},X)$. For instance, the following result is similar to \cref{thm:vertex_operators}.
	\begin{prop}\cite[Proposition 5.4]{ReshetikhinStokman}
		\label{prop:boundary_intertwiners}
		The natural map
		\begin{equation}
			\label{eq::boundary_intertwiners}
			\Hom_{\k}(M_{\lambda},X) \xrightarrow{\sim} X, \qquad \phi \mapsto \phi(m_{\lambda}).
		\end{equation}
		is an isomorphism with the inverse $X \ni \xi \mapsto \phi^{\xi}_{\lambda}$ uniquely defined by $\phi^{\xi}_{\lambda}(m_{\lambda}) = \xi$. 
		\begin{proof}
			The Iwasawa decomposition implies $\g = \k \oplus \h\oplus \mathfrak{n}$, so the PBW theorem implies that the Verma module $M_{\lambda}$ is a free $\U(\k)$-module of rank one generated by $m_{\lambda}$. 
		\end{proof}
	\end{prop}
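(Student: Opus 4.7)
The plan is to prove the proposition by reducing to the assertion that $M_{\lambda}$, restricted along $\U(\k) \hookrightarrow \U(\g)$, is a free left $\U(\k)$-module of rank one generated by the highest weight vector $m_{\lambda}$. Once this is established, the universal property of a free rank one module furnishes both the bijection and its inverse: a $\U(\k)$-linear map out of $\U(\k) \cdot m_{\lambda}$ is determined by, and may be freely prescribed on, the generator $m_{\lambda}$. This gives the evaluation map $\phi \mapsto \phi(m_{\lambda})$ as an isomorphism $\Hom_{\k}(M_{\lambda}, X) \xrightarrow{\sim} X$, with inverse $\xi \mapsto \phi^{\xi}_{\lambda}$ characterized by $\phi^{\xi}_{\lambda}(m_{\lambda}) = \xi$.

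To establish the freeness, I would invoke the Iwasawa decomposition $\g = \k \oplus \b$ with $\b = \h \oplus \n$ from the preceding subsection, together with the PBW theorem. Choosing ordered bases of $\g$ adapted to this vector space decomposition, PBW gives a left $\U(\k)$-linear isomorphism
\[
\U(\k) \otimes_{\C} \U(\b) \xrightarrow{\sim} \U(\g), \qquad x \otimes y \mapsto xy,
\]
where on the left $\U(\k)$ acts by left multiplication on the first factor. Tensoring on the right with $\C_{\lambda}$ over $\U(\b)$ and applying the definition $M_{\lambda} = \U(\g) \otimes_{\U(\b)} \C_{\lambda}$ from \eqref{eq::verma}, we obtain a $\U(\k)$-linear isomorphism $\U(\k) \xrightarrow{\sim} M_{\lambda}$ sending $1 \mapsto 1 \otimes 1 = m_{\lambda}$, which is exactly the desired freeness statement.

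The only point that warrants attention is that PBW is being applied to the decomposition $\g = \k \oplus \b$ rather than the standard triangular decomposition $\g = \n_- \oplus \h \oplus \n$; but this is the general form of PBW applied to any vector space direct sum decomposition of $\g$ into subalgebras, so no real obstacle arises. I would expect the entire argument to fit in essentially one paragraph, as in the author's proof.
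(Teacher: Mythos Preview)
Your proposal is correct and follows exactly the same approach as the paper's proof: invoke the Iwasawa decomposition $\g = \k \oplus \h \oplus \n$ and apply PBW to conclude that $M_{\lambda}$ is free of rank one over $\U(\k)$ on the generator $m_{\lambda}$. The paper compresses this into a single sentence, but the content is identical.
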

	
	\begin{remark}\label{R:f-phi-xi}
		If $f\in \End_{\k}(X)$, then $f\circ \phi_{\lambda}^{\xi} = \phi_{\lambda}^{f(\xi)}$, since both are elements of $\Hom_{\k}(M_{\lambda}, X)$ sending $m_{\lambda}$ to $f(\xi)$.
	\end{remark}
	
	\begin{example}
		\label{example:sl2_hom_verma}
		Let us describe explicitly the isomorphism of \eqref{eq::boundary_intertwiners} in the case of $\g = \sl_2$. We learned this from \cite[Subsection 5.5]{ReshetikhinStokman}, but since we use slightly different notations and conventions, we repeat the construction here. 
		
		Let $X$ be an integrable $\so_2$-representation and $\xi \in X$ some element. Consider the intertwiner $\phi^{\xi}_{\lambda}$. Define
		\[
		\xi_k := \phi^{\xi}_{\lambda} \left( \frac{f^k}{k!}\cdot m_{\lambda} \right).
		\]
		For instance, $\xi_0 = \xi$ and $\xi_1 = b\xi$. It follows that the vectors $\xi_k$ satisfy the following recurrence relation:
		\[
		b\xi_k = (k+1)\xi_{k+1} - (\lambda-k+1)\xi_{k-1}.
		\]
		In particular, there exist polynomials $p_{k}(b,\lambda) \in \U(\k)$ satisfying the recurrence relation
		\begin{equation}
			\label{eq::mp_pols}
			p_{k+1}(b,\lambda) = \frac{bp_{k}(b,\lambda) + (\lambda-k+1)p_{k-1}(b,\lambda)}{k+1},
		\end{equation}
		with initial conditions $p_0(b,\lambda) = 1$ and $p_1(b,\lambda)=b$ such that $\xi_k = p_{k}(b,\lambda)\xi$. These polynomials are closely related to the \emph{Meixner--Pollaczek polynomials}, see \cite[(5.17)]{ReshetikhinStokman}. 
	\end{example}
	
	Likewise, there is an analog of the fusion operator of \cref{def:fusion_gg}. Namely, let $V$ be an integrable representation of $\sl_2$. Consider a weight vector $v\in V$ and some element $\xi \in X$. The tensor product $X \otimes V$ has a natural $\k$-module structure, so by \cref{prop:boundary_intertwiners} we have a canonical identification
	\[
	\Hom_{\k} (M_{\lambda}, X \otimes V) \cong X \otimes V.
	\]
	Now let $\lambda,\mu$ be generic weights. Consider a weight vector $v\in V[\lambda - \mu]$ and an element $\xi \in X$. It follows from \cref{thm:vertex_operators} and \cref{prop:boundary_intertwiners} that any $\k$-intertwiner in $\Hom_{\k} (M_{\lambda}, X \otimes V)$ can be uniquely presented as the composition
	\[
	M_{\lambda} \xrightarrow{\Phi_{\lambda}^v} M_{\mu} \otimes V \xrightarrow{\phi^{\xi}_{\mu} \otimes \id_V} X \otimes V.
	\]

    The following definition first appears in \cite[Lemma 6.22]{ReshetikhinStokman}.
 
	\begin{defn}
		\label{def:fusion_gk}
		The \defterm{boundary fusion operator} $G_{X,V}(\lambda)$ is uniquely defined by
		\[
		(\phi_{\mu}^{\xi} \otimes \id_V) \circ \Phi_{\lambda}^v = \phi_{\lambda}^{G_{X,V}(\lambda)(\xi \otimes v)}
		\]
		for homogeneous $v\in V$ and extended by linearity to $X \otimes V$. 
	\end{defn}

	The next proposition is a boundary analog of \cref{thm::fusion_operator_properties}.
	
	\begin{prop}
		The boundary fusion operator has the following properties:
		\begin{itemize} 
			\item \cite[Corollary 6.23]{ReshetikhinStokman} Let $V,W$ be finite-dimensional $\g$-modules and $X$ an integrable $\k$-module. Then
			\[
			G_{X\otimes V,W}(\lambda) \circ (G_{X,V}(\lambda - h|_W) \otimes \id_W) = G_{X,V\otimes W} (\lambda) \circ (\id_X \otimes J_{V,W}(\lambda)).
			\]
			
			\item Using \cref{defn:generic_versions}, there exists a universal invertible element $G(\lambda)$ in a certain completion of $\U(\k) \otimes \U(\b)^{\gen}$ whose action on any $X \otimes V$ is given by $G_{X,V}(\lambda)$. 
		\end{itemize}
		\begin{proof}
			For the proof of the first statement,  given suitable $\mu,\nu\in \h^*$, consider the following commutative diagram.
			\[
			\xymatrix@R+3pc@C+8pc{
				M_{\lambda} \ar[r]^-{\Phi_{\lambda}^w}  \ar[dr] & M_{\mu} \otimes W \ar[d]^-{\phi_{\mu}^{G_{X,V}(\mu)(\xi \otimes v)}\otimes \id_W} \ar[r]^-{\Phi_{\mu}^v \otimes \id_W} & M_{\nu} \otimes V \otimes W \ar[dl]^-{\phi_{\nu}^{\xi}\otimes \id_W} \\
				& X \otimes V \otimes W &
			}
			\]
			The left-hand side of the formula corresponds to the inner left triangle, while the right-hand side corresponds to the outer right triangle. 
			
			To prove the second statement, let $X$ be an integrable $\k$-module and $V$ a finite-dimensional $\g$-representation. Recall that in the notations of \cref{def:fusion_gk}, we have
			\[
			G_{X,V}(\xi \otimes v) = (\phi_{\mu}^{\xi} \otimes \id_V) \Phi_{\lambda}^v(m_{\lambda}). 
			\]
			By \cref{thm:vertex_operators}, the singular vector $\Phi_{\lambda}^v(m_{\lambda}) \in M_{\mu} \otimes V$ can be computed using the universal element $\Xi(\lambda)$ in a completion of $\U(\n_-) \otimes \U(\b_+)^{\gen}$. Observe that $\phi^{\xi}_{\mu}$ comes from the action of the universal homomorphism given by the composition
			\[
			\U(\n_-) \xrightarrow{\act_{\n_-}} M_{\mu} \xrightarrow{\act_{\k}^{-1}} \U(\k),
			\]
			where $\act_{\n_-}$ is defined by $\U(\n_-) \ni x \mapsto x\cdot m_{\mu}$, similarly for $\act_{\k}$. The matrix coefficients are element of $\mathcal{M}(\h^*)$ that can be rearranged to the second tensor factor. Invertibility follows from upper-triangularity in the second tensor component with respect to the $\h$-weight grading.
		\end{proof}
	\end{prop}
	
	\begin{example}
		\label{example:fusion_gk_sl2}
		Let $\g = \sl_2$. According to \cref{example:sl2_extremal_vectors}, the boundary fusion operator is given by
		\[
		G_{X,V}(\lambda) (\xi \otimes v) = \sum\limits_{k=0}^{\infty} (-1)^k \phi^{\xi}_{\lambda - h|_V} \left( \frac{f^k}{k!} m_{\lambda - h|_V} \right) \otimes e^k \frac{1}{\prod_{i=0}^{k-1}(\lambda - h|_V -i)} \cdot v. 
		\]
		where we use the normal ordering on the first tensor component: the operator $h|_V$ acts \emph{first}. Using \cref{example:sl2_hom_verma}, we obtain
		\begin{equation}
			\label{eq::boundary_fusion_sl2}
			G_{X,V}(\lambda) =  \sum\limits_{k=0}^{\infty} (-1)^k  p_k(b,\lambda - h|_V) \otimes e^k \frac{1}{\prod_{i=0}^{k-1}(\lambda - h|_V -i)}.
		\end{equation}
		The elements $p_k(b,\lambda - h|_V)$ are polynomials in $b$ with coefficients in $\mathcal{M}(\h^*)$ that can be rearranged to the second tensor factor.
		
		For instance, for the vector representation $V = V(1)$, the boundary fusion operator is given by the $\U(\k)$-valued matrix
		\begin{equation}
			\label{eq::fusion_gk_v1}
			\begin{bmatrix}
				1 & - \frac{b}{\lambda+1} \\ 0 & 1
			\end{bmatrix}.
		\end{equation}
		Another example is when $X = \C$ is the trivial representation of $\k$. \cref{example:sl2_hom_verma} implies that 
		\[
		G_{\C,V}(\lambda) = \sum\limits_{k = 0}^{\infty} \frac{1}{(2k)!!} \cdot  e^{2k} \frac{1}{\prod_{i=0}^{k-1} (\lambda - h|_V - 2i-1)},
		\]
		where $(2k)!!$ is the double factorial. Observe its similarity to the quantum $K$-matrix in rank one \cite[(3.39)]{DobsonKolb}. 
	\end{example}

	\begin{remark}
		The boundary dynamical twist equation can be written as
		\begin{equation}\label{E:re-write-boundary-dyn-twist}
			G_{X, V}(\lambda - h|_W)\otimes \id_W = G_{X\otimes V, W}(\lambda)^{-1}\circ G_{X, V\otimes W}(\lambda) \circ \bigg(\mathrm{id}_{X}\otimes J_{V, W}(\lambda)\bigg)
		\end{equation}
		which is useful below. In particular, this allows one to remove the $\lambda - h|_W$ dependence.
		
	\end{remark}

	\subsection{Boundary ABRR equation}
	
	The boundary fusion operator can be constructed inductively using an analog of the ABRR equation of \cref{thm:abrr}; in what follows, we will use the notations of the theorem.
	
	Let $C = \sum_{i}x_i^2 + \sum_{\alpha\in R_+}(E_{\alpha}F_{\alpha} + F_{\alpha}E_{\alpha})$ be the quadratic Casimir element of $\g$. Recall that $C$ acts on $M_{\lambda}$ as $(\lambda, \lambda + 2\rho)$. Note that we can rewrite $C$ as
	\[
	C = \sum_i x_i^2 + 2\overline{\rho} + 2\sum_{\alpha\in R_+} F_{\alpha} E_{\alpha}.
	\]       
	
	\begin{notation}
		If $x\in \U(\mathfrak{g})$, then we write $x^{(1)}:=x\otimes 1\in \U(\mathfrak{g})\otimes \U(\mathfrak{g})$ and $x^{(2)}:=1\otimes x \in \U(\mathfrak{g})\otimes \U(\mathfrak{g})$
	\end{notation}
	
	\begin{prop}
		The universal boundary fusion operator $G(\lambda)$ is a unique solution in a completion of $\U(\k) \otimes \U(\b_+)^{\gen}$ to the \defterm{boundary ABRR equation}
		\begin{equation}\label{E:boundary-ABRR}
			[G(\lambda),1\otimes \gamma(\lambda)] = \left(\sum\limits_{\alpha\in R_+} B_{\alpha} \otimes E_{\alpha} - 1 \otimes E_{\alpha}^2\right) G(\lambda).
		\end{equation}
		\begin{proof}
			Uniqueness can be proved using the same arguments as in \cite[Theorem 8.1]{EtingofSchifmannLecturesDynamical}. Let us show that the operators $G_{X,V}(\lambda)$ satisfy the boundary ABRR equation for every finite-dimensional $\g$-representation $V$ and integrable $\k$-representation $X$; it will imply the statement for $G(\lambda)$. 
			
			Let $v\in V$ be a homogeneous vector and consider the element $(C\otimes 1)\Phi_{\lambda}^v(m_{\lambda}) \in M_{\mu} \otimes V$. On the one hand, the element is equal to 
			\begin{equation}
				\label{eq_prop::casimir_one}
				(\mu,\mu + 2\rho) \cdot \Phi_{\lambda}^v(m_{\lambda}), \qquad \text{where} \qquad \mu = \lambda - \wt v
			\end{equation}
			On the other hand, it is equal to 
			\begin{equation}\label{E:Ctimes1-cdot-Phi}
				\left(\sum_i  (x_i^{(1)})^2 + 2\rho^{(1)} + 2\sum_{\alpha\in R_+} (B_{\alpha}^{(1)} E_{\alpha}^{(1)} + (E_{\alpha}^{(1)})^2) \right) \cdot \Phi_{\lambda}^v(m_{\lambda}) \in M_{\mu} \otimes V.
			\end{equation}
			Since $E_{\alpha}\cdot m_{\lambda} = 0$, it follows from $\Phi_{\lambda}^v$ being an intertwiner that $E_{\alpha}^{(1)}\cdot \Phi_{\lambda}^v(m_{\lambda}) = -E_{\alpha}^{(2)}\cdot \Phi_{\lambda}^v(m_{\lambda})$. Likewise, writing $\lambda_i := \lambda(x_i)$, we have 
			\[
			x_i^{(1)}\cdot \Phi_{\lambda}^v(m_{\lambda}) = (\lambda_i - x_i^{(2)})\cdot \Phi_{\lambda}^v(m_{\lambda}) \qquad \text{and} \qquad  \overline{\rho}^{(1)}\cdot \Phi_{\lambda}^v(m_{\lambda}) = \big((\lambda,\rho) - \overline{\rho}^{(2)}\big)\cdot \Phi_{\lambda}^v(m_{\lambda}).
			\]
			Thus, \eqref{E:Ctimes1-cdot-Phi} is equal to
			\[
			\left(\sum_i (\lambda_i^2 - 2\lambda_i x_i^{(2)} + (x_i^{(2)})^2) + 2(\lambda,\rho) - 2\overline{\rho}^{(2)} + 2\sum_{\alpha\in R_+} (- B_{\alpha}^{(1)} E_{\alpha}^{(2)} + (E_{\alpha}^{(2)})^2) \right)\cdot \Phi_{\lambda}^v(m_{\lambda})
			\]
			which we can rewrite as
			\begin{equation}
				\label{eq_prop::casimir_two}
				\left((\lambda, \lambda) -2\overline{\lambda}^{(2)} +\sum_i (x_i^{(2)})^2 + 2(\lambda,\rho) - 2\overline{\rho}^{(2)} + 2\sum_{\alpha\in R_+} (- B_{\alpha}^{(1)} E_{\alpha}^{(2)} + (E_{\alpha}^{(2)})^2) \right)\cdot \Phi_{\lambda}^v(m_{\lambda})
			\end{equation}
			
			Now, let $X$ be an integrable $\k$-module and let $\xi\in X$. We apply $\phi_{\mu}^{\xi}\otimes \id_V$ to \eqref{eq_prop::casimir_one} and \eqref{eq_prop::casimir_two}. Using the $\U(\k)$-intertwining property of $\phi_{\mu}^{\xi}$ and equating \eqref{eq_prop::casimir_one} with \eqref{eq_prop::casimir_two}, we get the equality
			\begin{align*}
				(\mu, \mu+2\rho)\phi_{\mu}^{G_{X,V}(\lambda)\cdot \xi\otimes v} &= \left((\lambda, \lambda) +2(\lambda, \rho)\right)\phi_{\mu}^{G_{X,V}\cdot \xi\otimes v}\\
				&+ \left(-2\otimes \overline{\lambda} + \sum( 1\otimes x_i^2) - 2\otimes \overline{\rho}\right)\phi_{\mu}^{G_{X,V}\cdot \xi\otimes v} \\
				&+2\sum_{\alpha\in R_+} (-B_{\alpha}\otimes E_{\alpha} + 1\otimes E_{\alpha}^2)\phi_{\mu}^{G_{X,V}(\lambda)\cdot \xi\otimes v}.
			\end{align*}
			After multiplying this equality by $-\frac{1}{2}$, rearrangement and simplification of terms yields
			\begin{equation}\label{E:ABRR-second-to-last}
				\begin{split}
					\left((\lambda+\rho, \lambda-\mu) - \frac{(\lambda-\mu, \lambda-m)}{2}\right)\phi_{\mu}^{G_{X,V}(\lambda)\cdot \xi\otimes v} &- 1\otimes \left(\overline{\lambda} - \frac{1}{2}\sum_ix_i^2 + \overline{\rho}\right)\phi_{\mu}^{G_{X,V}\cdot \xi\otimes v} \\
					&=\sum_{\alpha\in R_+} (B_{\alpha}\otimes E_{\alpha} - 1\otimes E_{\alpha}^2) \phi_{\mu}^{G_{X,V}(\lambda)\cdot \xi\otimes v}.
				\end{split}
			\end{equation}
			Since $\wt(v)= \lambda - \mu$, we also have
			\[
			\gamma(\lambda)\cdot v = (\lambda+\rho, \lambda-\mu) - \frac{(\lambda-\mu, \lambda-\mu)}{2}\cdot v.
			\]
			Applying both sides of equation \eqref{E:ABRR-second-to-last} to $m_{\lambda}$, we conclude that
			\[
			[G_{X,V}(\lambda),1\otimes \gamma(\lambda)]\cdot \xi\otimes v = \sum_{\alpha\in R_+} (B_{\alpha} \otimes E_{\alpha} - 1 \otimes E_{\alpha}^2) \cdot G_{X,V}(\lambda)\cdot \xi\otimes v,
			\]
			as required.
		\end{proof}
	\end{prop}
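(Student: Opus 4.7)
The plan is to verify the boundary ABRR equation by computing the action of the quadratic Casimir element $C$ in two different ways on the universal intertwiner, then projecting down to $X \otimes V$ via $\phi_{\mu}^{\xi}\otimes \id_V$. Uniqueness of the solution follows from the same weight-filtration argument as in \cite[Theorem 8.1]{EtingofSchifmannLecturesDynamical}: the equation determines $G(\lambda) - 1$ recursively in the weight grading on the second tensor factor, since $\gamma(\lambda)$ shifts weights in a way that keeps the recursion nondegenerate for generic $\lambda$.

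For existence, the main step is to rewrite $C = \sum_i x_i^2 + 2\overline{\rho} + 2\sum_{\alpha\in R_+} F_{\alpha}E_{\alpha}$ using the Iwasawa-type identity $F_{\alpha} = B_{\alpha} + E_{\alpha}$, which gives $F_{\alpha}E_{\alpha} = B_{\alpha}E_{\alpha} + E_{\alpha}^2$. This is where the boundary structure enters: the ``bad'' $\n_-$ factor $F_{\alpha}$ on the first tensor slot gets exchanged for a $\k$-element $B_{\alpha}$ (which will pass through the $\k$-intertwiner $\phi_{\mu}^{\xi}$) plus an $\n_+$ correction. I would first compute $(C\otimes 1)\cdot \Phi_{\lambda}^v(m_{\lambda})$ as the scalar $(\mu,\mu+2\rho)$, using that $M_{\mu}$ has central character determined by $\mu$.

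Next, I would compute the same element by moving all operators in the first tensor factor to the second tensor factor. Using $\Phi_{\lambda}^v$ being a $\g$-intertwiner and $E_{\alpha}\cdot m_{\lambda}=0$, we obtain $E_{\alpha}^{(1)}\cdot \Phi_{\lambda}^v(m_{\lambda}) = -E_{\alpha}^{(2)}\cdot \Phi_{\lambda}^v(m_{\lambda})$, and similarly $x_i^{(1)} = \lambda_i - x_i^{(2)}$ and $\overline{\rho}^{(1)} = (\lambda,\rho) - \overline{\rho}^{(2)}$ on $\Phi_{\lambda}^v(m_{\lambda})$. Substituting these identities into the rewritten Casimir yields an expression in which $B_{\alpha}^{(1)}$ survives (since $B_{\alpha}\in \k$ does not annihilate $m_{\lambda}$) together with quadratic terms $(E_{\alpha}^{(2)})^2$ and the Cartan piece $-2\overline{\lambda}^{(2)} + \sum_i(x_i^{(2)})^2 - 2\overline{\rho}^{(2)}$.

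Finally, I would apply $\phi_{\mu}^{\xi}\otimes \id_V$, using that $\phi_{\mu}^{\xi}$ is a $\k$-intertwiner so that $B_{\alpha}^{(1)}$ passes through, and invoke the defining identity $\phi_{\mu}^{\xi}\otimes \id_V \circ \Phi_{\lambda}^v = \phi_{\lambda}^{G_{X,V}(\lambda)(\xi\otimes v)}$. Equating the two computations and simplifying (dividing by $-2$, noting that $(\mu,\mu+2\rho) - (\lambda,\lambda+2\rho) = -2(\lambda+\rho,\lambda-\mu) + (\lambda-\mu,\lambda-\mu)$, which matches the eigenvalue of $\gamma(\lambda)$ on $V[\lambda-\mu]$), one obtains the boundary ABRR equation on $\xi\otimes v$ after evaluating at $m_{\lambda}$. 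By naturality in $X$ and $V$, this lifts to the universal identity in the completion of $\U(\k)\otimes \U(\b_+)^{\gen}$. The main obstacle is bookkeeping: the signs, factors of $2$, and the Cartan shift must be organized carefully so that the scalar terms exactly assemble into $1\otimes \gamma(\lambda)$ acting via the commutator, while the noncommutative terms assemble into $(\sum_\alpha B_\alpha \otimes E_\alpha - 1\otimes E_\alpha^2)G(\lambda)$.
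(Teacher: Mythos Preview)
Your proposal is correct and follows essentially the same approach as the paper: rewrite the Casimir via $F_{\alpha}=B_{\alpha}+E_{\alpha}$, compute $(C\otimes 1)\Phi_{\lambda}^v(m_{\lambda})$ both as the scalar $(\mu,\mu+2\rho)$ and by transferring operators to the second factor using the intertwining property, then apply $\phi_{\mu}^{\xi}\otimes\id_V$ and simplify. The only difference is presentational; the logical structure and all key identities match.
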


	\begin{example}
		Let $\g=\sl_2$. Recall that the boundary fusion operator $G(\lambda)$ is given by the formula \eqref{eq::boundary_fusion_sl2}. Let us re-derive it using the ABRR equation.
		
		By rearranging the $\mathcal{M}(\h^*)$-coefficients to the first tensor factor, let us write the universal $G(\lambda)$ operator in the form 
		\[
		G(\lambda) = \sum\limits_{k=0}^{+\infty} G^{(k)}, \quad G^{(k)}(\lambda) = (-1)^k q_k(b,\lambda,h^{(2)}) \otimes e^k\cdot \frac{1}{\prod_{i=0}^k (\lambda - h - i)}.
		\]
		for some polynomials $q_k(b,\lambda,h^{(2)})$ in $b\in \U(\k)$ with values in $\mathcal{M}(\h^*)$ such that $q_0 = 1$, and $h^{(2)}$ is the weight operator on the second tensor component. Proceeding as in \cref{example:abrr_sl2}, we derive the following equation:
		\[
		1 \otimes ((\lambda+1)(k+1) - (k+1)h - (k+1)^2) G^{(k+1)} = -(b\otimes e) G^{(k)} + (1 \otimes e^2) G^{(k-1)}.
		\]
		Since the second component of $G^{(k+1)}$ is homogeneous of weight $2(k+1)$, we also have 
		\[
		1 \otimes ((\lambda+1)(k+1) - (k+1)h + (k+1)^2) G^{(k+1)} = G^{(k+1)} (1 \otimes ((\lambda+1)(k+1) - (k+1)h - (k+1)^2)).
		\]
		It gives the following equation on the polynomials $q_k$:
		\[
		q_{k+1} = \frac{b\cdot q_k + (\lambda - h^{(2)} - k +1) q_{k-1}}{k+1}.
		\]
		Observe that this is the same inductive relation as for the Meixner--Pollaczek polynomials $p_k(b,\lambda - h^{(2)})$ of \eqref{eq::mp_pols}. Therefore, the ABRR equation gives the operator
		\[
		G(\lambda) = \sum\limits_{k=0}^{\infty} (-1)^k p_{k}(b,\lambda - h^{(2)}) \otimes e^k \frac{1}{\prod_{i=0}^{k-1} (\lambda - h - i)},
		\]
		as we derived in \cref{example:fusion_gk_sl2}. 
	\end{example}

    \begin{remark}
        Jasper Stokman shared with us unpublished notes in which he derives a $q$-analog of the boundary ABRR equation in the case of $U_q^{\iota}(\so_2)\subset U_q(\sl_2)$ \cite{Stokman}. Stokman informed us this will appear in work in preparation by himself, Valentin Buciumas, and Hadewijch de Clercq.
    \end{remark}
		
		\section{Background on dynamical Weyl group}
		\label{subsect:sl2_dw_diag}
		The dynamical fusion operators give rise to a dynamical quantum group (Hopf algebroid) \cite{EtingofVarchenkoExchange}. It is observed in \cite{EtingofVarchenkoDynamicalWeyl} that there is a dynamical Weyl group which is analogous to the quantum Weyl group for the usual Drinfeld--Jimbo quantum group. In this section, we recall the construction of the dynamical Weyl group operators. In what follows, we use definitions and notations of \cref{sect:dynamical_twist}. In particular, $\g$ is a simple Lie algebra.

		\subsection{Dynamical Weyl group}
		
		As in \cref{sect:dynamical_twist}, let $V$ be a finite-dimensional $\g$-representation, and consider the space $\Hom_{\g}(M_{\lambda}, M_{\mu} \otimes V)$. Observe that the intertwiner $\Phi_{\lambda}^v$ from Theorem \ref{thm:vertex_operators} is defined for all sufficiently dominant integral weights.
  
        For a simple reflection $s=s_{\alpha}$, such that $\lambda(\alpha^{\vee})\in \mathbb{Z}_{\ge 0}$, the vector $f_{\alpha}^{\lambda(\alpha^{\vee})+1} m_{\lambda}$ is a singular vector in $M_{\lambda}$ of weight $s\cdot \lambda$, where the dot action is as in equation \ref{E:dot-action}. This implies there is a unique $\g$-intertwiner 
        \[
        i_{s\cdot \lambda}^{\lambda}:M_{s\cdot \lambda}\rightarrow M_{\lambda}\qquad \text{such that} \qquad m_{s\cdot \lambda}\mapsto \frac{f_{\alpha}^{\lambda(\alpha^{\vee})+1}}{(\lambda(\alpha^{\vee})+1)!}m_{\lambda}.
        \]
        Define $A^{\g,s}_{V}(\lambda)\in \End(V)$ by
		\[
		\Phi_{\lambda}^v\circ i_{s\cdot \lambda}^{\lambda} = \Phi_{s\cdot \lambda}^{A_{V}^{\g, s}(\lambda)\cdot v},
		\]
		for all $v\in V$.

		\begin{example}
			\label{example:dw_sl2_vect_rep}
			Let $\g=\sl_2$ and $\alpha$ be the simple root with corresponding simple reflection $s$. For the vector representation $V(1)$ with weight basis $\{v_1, v_{-1}\}$, we have
			\[
			A_{V(1)}^{\sl_2,s}(\lambda) = \begin{bmatrix}
				0 & -\frac{\lambda+2}{\lambda+1} \\ 1 & 0
			\end{bmatrix},
			\]
			where we identified $\h^* \cong \C$ via the fundamental weight, see \cite[Lemma 5]{EtingofVarchenkoDynamicalWeyl} for the $q$-version.
		\end{example}

		\begin{notation}
			We will denote $A^{\g,\alpha_i}_V(\lambda) := A^{\g,s_i}_{V}(\lambda)$.
		\end{notation}
		
		The following remark recalls some technical results about embeddings of Verma modules, which allow us to extend this construction to arbitrary elements $w\in W$. 
		
		\begin{remark}
			Consider an element $w\in W$ and let $w = s_{i_k} s_{i_{k-1}} \ldots s_{i_1}$ be a reduced decomposition. Set $\alpha^1 = \alpha_{i_k}$ and $\alpha^j = (s_{i_1} \ldots s_{i_{j-1}})(\alpha_{i_j})$, and let $n_j = \langle \lambda + \rho, (\alpha^j)^{\vee}\rangle$. Then the vector 
			\[
			m_{w\cdot \lambda}^{\lambda} := \frac{f_{\alpha_{i_k}}^{n_k}}{n_k !} \ldots \frac{f_{\alpha_{i_1}}^{n_1}}{n_1!} m_{\lambda}
			\]
			is singular and does not depend on the reduced decomposition of $w$, see \cite[Lemma 4]{TarasovVarchenko}. There is a corresponding embedding of Verma modules $M_{w\cdot \lambda} \hookrightarrow M_{\lambda}$.
		\end{remark}
		
		\begin{prop}\cite[Lemma 5]{TarasovVarchenko}
			For every $v\in V[\lambda-\mu]$, there is a unique vector $A^{\g,w}_{V}(\lambda)v \in V[w(\lambda-\mu)]$ such that
			\[
			\Phi_{\lambda}^v (m^{\lambda}_{w\cdot \lambda}) \in m^{\mu}_{s \cdot \mu} \otimes A^{\g,w}_{V}(\lambda)v + \bigoplus_{\nu < w\cdot \lambda} M_{\mu}[\nu] \otimes V.
			\]
			It defines a linear operator $A^{\g,w}_{V}(\lambda) \colon V \rightarrow V$ depending rationally on $\lambda$.
		\end{prop}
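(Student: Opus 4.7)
The plan is to realize the singular vector $m^{\lambda}_{w\cdot \lambda}\in M_\lambda$ from the preceding Remark as the image of the highest weight vector $m_{w\cdot \lambda}$ under a canonical $\g$-embedding $\iota^{\lambda}_{w\cdot \lambda}\colon M_{w\cdot \lambda}\hookrightarrow M_\lambda$. The composition $\Psi := \Phi_\lambda^v \circ \iota^{\lambda}_{w\cdot \lambda}\colon M_{w\cdot \lambda}\to M_\mu\otimes V$ is then a $\g$-intertwiner, so $\Phi_\lambda^v(m^{\lambda}_{w\cdot \lambda}) = \Psi(m_{w\cdot \lambda})$ is a singular vector in $M_\mu\otimes V$ of weight $w\cdot \lambda$, where $\mu := \lambda - \wt(v)$.

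Working with sufficiently dominant integral $\lambda$ and $\mu$, so that $m^{\mu}_{w\cdot\mu}\in M_\mu[w\cdot\mu]$ is itself a singular vector (again by the preceding Remark), I would decompose the weight space
\[
(M_\mu\otimes V)[w\cdot \lambda]=\bigoplus_{\xi}M_\mu[w\cdot \lambda-\xi]\otimes V[\xi]
\]
by $M_\mu$-weight and isolate the distinguished contribution at $M_\mu$-weight $w\cdot\mu$ (forcing $\xi = w(\lambda-\mu)$). The claim is that $\Psi(m_{w\cdot \lambda})$ admits a unique decomposition $m^{\mu}_{w\cdot\mu}\otimes u + L$, where $L$ is supported in $M_\mu$-weights strictly below $w\cdot\mu$ and $u\in V[w(\lambda-\mu)]$; one then defines $A^{\g,w}_V(\lambda)v := u$. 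Uniqueness of $u$ is immediate from weight bookkeeping, since any two candidate $u$'s would give an element $m^{\mu}_{w\cdot\mu}\otimes(u_1-u_2)$ lying in $L_2-L_1$ and hence necessarily zero. Existence---that the $M_\mu[w\cdot\mu]$-component of the singular vector $\Psi(m_{w\cdot\lambda})$ is actually proportional to $m^{\mu}_{w\cdot\mu}$ in its first tensor factor---can be extracted by projecting to $(M_\mu/M_{w\cdot\mu})\otimes V$ and exploiting the singularity constraint on $\Psi(m_{w\cdot\lambda})$; this is the substance of the argument in \cite[Lemma~5]{TarasovVarchenko}.

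To promote this to a linear operator $A^{\g,w}_V(\lambda)\in \End(V)$, the construction is run on each weight space $V[\lambda-\mu]$ as $v$ and $\mu$ vary. Rationality in $\lambda$ follows from the second part of Theorem \ref{thm:vertex_operators}: the intertwiner $\Phi_\lambda^v$ is given by the action of a universal element $\Xi(\lambda)\in\U(\n_-)\otimes\U(\b_+)^{\gen}$ whose matrix coefficients lie in $\mathcal{M}(\h^*)$, and the remaining operations (multiplication by the polynomial expression for $m^{\lambda}_{w\cdot\lambda}$ and extraction of the $m^{\mu}_{w\cdot\mu}$-coefficient) are algebraic in $\lambda$; meromorphic continuation then produces a rational operator valid for generic $\lambda$. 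The main obstacle is the existence step: since $M_\mu[w\cdot\mu]$ is generally higher-dimensional in rank $>1$, the argument must combine the singularity of $\Psi(m_{w\cdot \lambda})$ with the singular-vector property of $m^{\mu}_{w\cdot\mu}$ to confirm that the leading coefficient genuinely picks out the distinguished line $\C\cdot m^{\mu}_{w\cdot\mu}\otimes V[w(\lambda-\mu)]$.
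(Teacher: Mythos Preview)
The paper does not give its own proof of this proposition: it is stated as background with a citation to \cite[Lemma~5]{TarasovVarchenko}, and the next item in the text is the following proposition on braid relations. So there is nothing to compare your argument against in this paper.

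Your sketch is the standard one and is essentially correct. A few remarks. First, you silently corrected what appears to be a typo in the displayed formula: the paper writes $m^{\mu}_{s\cdot\mu}$, but the intended vector is $m^{\mu}_{w\cdot\mu}$, as you use throughout. Second, your identification of the ``main obstacle'' is accurate: the only nontrivial point is that the $M_\mu[w\cdot\mu]$-component of the singular vector $\Psi(m_{w\cdot\lambda})$ lands in the line $\C\cdot m^{\mu}_{w\cdot\mu}$ rather than elsewhere in that weight space, and you correctly defer this to the cited source. Third, your rationality argument via the universal element $\Xi(\lambda)$ is fine for the intertwiner $\Phi_\lambda^v$, but note that the embedding $\iota^{\lambda}_{w\cdot\lambda}$ is only defined for $\lambda$ in a discrete set (sufficiently dominant integral), so the rational dependence is really obtained by first computing $A^{\g,w}_V(\lambda)$ on that set and then observing that the resulting matrix entries are restrictions of rational functions; the phrase ``meromorphic continuation'' slightly overstates what is needed.
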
 
		
		In fact, these operators are completely defined by simple reflections $s_{\alpha_i}$.
		\begin{prop}\cite[Lemma 6]{TarasovVarchenko}
			\label{prop:dynamical_weyl_group_braid_relations}
			The operators $\{A^{\g,s_i}_V(\lambda) \}_{\alpha_i\in \Pi}$ satisfy the \emph{braid relations}
			\begin{equation}\label{E:dw-braid-relns}
				\ldots A^{\g,s_{i}}_V(s_{j}s_{i}\cdot \lambda)A^{\g,s_{j}}_V(s_{i}\cdot \lambda) A^{\g,s_{i}}_V(\lambda) = \ldots A^{\g,s_{j}}_V(s_{i}s_{j}\cdot \lambda)A^{\g,s_{i}}_V(s_{j}\cdot \lambda) A^{\g,s_{j}}_V(\lambda),
			\end{equation}
			where the number of terms on each side of equation \ref{E:dw-braid-relns} is equal to the order of $(s_{i}s_{j})$ in $W$. In particular, if $w_1,w_2\in W$ with $l(w_1 w_2) = l(w_1) + l(w_2)$, then 
			\[
			A^{\g,w}_V (\lambda) = A^{\g,w_1}_V (w_2 \cdot \lambda) A^{\g,w_2}_V (\lambda).
			\]
		\end{prop}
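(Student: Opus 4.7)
My plan has two steps. First, establish the composition identity
\[
A^{\g,w_1w_2}_V(\lambda) \;=\; A^{\g,w_1}_V(w_2\cdot\lambda)\,A^{\g,w_2}_V(\lambda) \qquad \text{whenever} \qquad l(w_1w_2)=l(w_1)+l(w_2).
\]
Second, deduce both the braid relations and the general factorization as formal consequences. For this second step, iterating the composition identity along a reduced decomposition $w = s_{i_1}\cdots s_{i_k}$ expresses $A^{\g,w}_V(\lambda)$ as the ordered product of the $A$-operators for the $s_{i_j}$. Since $A^{\g,w}_V(\lambda)$ is defined intrinsically through the vector $m^\lambda_{w\cdot\lambda}$, which by the remark preceding the proposition is independent of the reduced decomposition, the two reduced expressions for the longest element of the rank-two parabolic $\langle s_i, s_j\rangle \subset W$ must produce the same operator; this is the braid relation.

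For the composition identity, the key observation is that when $l(w_1w_2)=l(w_1)+l(w_2)$ a reduced word for $w_1w_2$ is obtained by concatenating reduced words for $w_1$ and $w_2$. Reading the product of divided powers of $f_{\alpha_i}$'s that defines $m^\lambda_{w\cdot\lambda}$, and splitting it at the juncture, yields
\[
m^{\lambda}_{w_1w_2\cdot\lambda} \;=\; \iota_\lambda\!\left(m^{w_2\cdot\lambda}_{w_1w_2\cdot\lambda}\right), \qquad m^{\mu}_{w_1w_2\cdot\mu} \;=\; \iota_\mu\!\left(m^{w_2\cdot\mu}_{w_1w_2\cdot\mu}\right),
\]
where $\iota_\lambda \colon M_{w_2\cdot\lambda}\hookrightarrow M_\lambda$ and $\iota_\mu \colon M_{w_2\cdot\mu}\hookrightarrow M_\mu$ are the Verma embeddings sending the respective highest weight vectors to $m^\lambda_{w_2\cdot\lambda}$ and $m^\mu_{w_2\cdot\mu}$. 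The composite $\Phi_\lambda^v \circ \iota_\lambda \colon M_{w_2\cdot\lambda} \to M_\mu\otimes V$ is a $\g$-intertwiner whose value at $m_{w_2\cdot\lambda}$ equals $\Phi_\lambda^v(m^\lambda_{w_2\cdot\lambda})$; by the definition of $A^{\g,w_2}_V(\lambda)$ this value has leading term $m^\mu_{w_2\cdot\mu}\otimes A^{\g,w_2}_V(\lambda)\,v$, and the intertwiner $(\iota_\mu\otimes \id_V)\circ \Phi_{w_2\cdot\lambda}^{A^{\g,w_2}_V(\lambda)\,v}$ exhibits the same leading term at its own top vector. Uniqueness of intertwiners from a highest weight module with prescribed value on the top vector then forces
\[
\Phi_\lambda^v \circ \iota_\lambda \;=\; (\iota_\mu\otimes \id_V)\circ \Phi_{w_2\cdot\lambda}^{A^{\g,w_2}_V(\lambda)\,v}.
\]
Evaluating both sides on $m^{w_2\cdot\lambda}_{w_1w_2\cdot\lambda}$, using the definition of $A^{\g,w_1}_V(w_2\cdot\lambda)$ on the right and the second identity displayed above, produces
\[
\Phi_\lambda^v\!\left(m^\lambda_{w_1w_2\cdot\lambda}\right) \;=\; m^\mu_{w_1w_2\cdot\mu}\otimes A^{\g,w_1}_V(w_2\cdot\lambda)\,A^{\g,w_2}_V(\lambda)\,v \;+\; \text{strictly lower weight terms},
\]
and comparing leading coefficients with the defining property of $A^{\g,w_1w_2}_V(\lambda)$ yields the composition identity.

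The main obstacle is the uniqueness step identifying $\Phi_\lambda^v\circ\iota_\lambda$ with $(\iota_\mu\otimes \id_V)\circ \Phi_{w_2\cdot\lambda}^{A^{\g,w_2}_V(\lambda)\,v}$. This is cleanest for $\lambda$ sufficiently dominant integral, where the embeddings $\iota_\lambda, \iota_\mu$ and the singular vectors $m^\lambda_{w\cdot\lambda}$ literally exist, and an intertwiner from a highest weight Verma is determined by its value on the top vector. One then extends the resulting identity from this regime to generic $\lambda$ using the rational dependence of $A^{\g,w}_V(\lambda)$ on $\lambda$ asserted in the preceding proposition. Once the composition identity is in hand, the braid relations and the general factorization fall out immediately from the intrinsic, decomposition-independent definition of $A^{\g,w}_V(\lambda)$.
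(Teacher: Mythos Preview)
The paper does not prove this proposition; it is quoted as background from \cite[Lemma~6]{TarasovVarchenko} (and ultimately \cite{EtingofVarchenkoDynamicalWeyl}) without argument. So there is no ``paper's own proof'' to compare against. Your outline is the standard one from those references: establish the cocycle/composition identity from the intrinsic definition via singular vectors, then read off the braid relations from the decomposition-independence of $m^\lambda_{w\cdot\lambda}$.

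One point in your write-up deserves more care. In the uniqueness step you say that both $\Phi_\lambda^v\circ\iota_\lambda$ and $(\iota_\mu\otimes\id_V)\circ\Phi_{w_2\cdot\lambda}^{A^{\g,w_2}_V(\lambda)v}$ send $m_{w_2\cdot\lambda}$ to singular vectors with the same \emph{leading term}, and then invoke ``uniqueness of intertwiners with prescribed value on the top vector''. But an intertwiner out of a Verma module is determined by the \emph{entire} image of the top vector, not merely its leading term; and the isomorphism of \cref{thm:vertex_operators} identifying intertwiners with leading terms requires the source weight to be generic, which $w_2\cdot\lambda$ is not when $\lambda$ is dominant integral. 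The standard remedy is to first show that $\Phi_\lambda^v(m^\lambda_{w_2\cdot\lambda})$ actually lies in the submodule $\iota_\mu(M_{w_2\cdot\mu})\otimes V\subset M_\mu\otimes V$ (this is most transparent for a single simple reflection, working with the corresponding $\sl_2$, and then one iterates). Once both singular vectors live in $M_{w_2\cdot\mu}\otimes V$, they are determined by their leading term there, and your argument goes through. With this adjustment your proof is correct and coincides with the cited argument.
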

		
		We will need the following tensor property of the dynamical Weyl group operator.
		\begin{prop}\label{P:weyl-fusion-compatible}
			\cite[Lemma 4]{EtingofVarchenkoDynamicalWeyl}
			\label{prop:dw_tensor_property}
			For a pair $U,V$ of finite dimensional representations of $\g$ and a simple reflection $s\in W$, we have
			\begin{equation}\label{E:dw_tensor-property}
				A^{\g,s}_{U\otimes V} = J_{U,V}(s\cdot \lambda) \circ \big(A^{\g,s}_U(\lambda - h|_V) \otimes A^{\g,s}_V(\lambda)\big) \circ J_{U,V}(\lambda)^{-1}.
			\end{equation}
		\end{prop}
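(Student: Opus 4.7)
The plan is to compute $\Phi_\lambda^{u\otimes v}\circ i_{s\cdot\lambda}^\lambda$ in two different ways, by first factoring the left vertex operator through the middle Verma $M_\mu$ via the fusion identity, then applying the defining relations for $A^{\g,s}_V(\lambda)$ and $A^{\g,s}_U(\mu)$, and finally re-fusing at the shifted parameter $s\cdot\lambda$. It suffices to establish the equality on sufficiently dominant integral $\lambda$ and extend by rationality.

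Fix homogeneous $u'\in U$ and $v'\in V$, set $\mu:=\lambda-\wt(v')$, and put $u\otimes v:=J_{U,V}(\lambda)(u'\otimes v')$. By Definition \ref{def:fusion_gg}, the fusion identity gives the factorization
\[
\Phi_\lambda^{u\otimes v}=(\Phi_\mu^{u'}\otimes\id_V)\circ \Phi_\lambda^{v'}\colon M_\lambda\longrightarrow M_\nu\otimes U\otimes V,
\]
where $\nu:=\mu-\wt(u')$. Precomposing with $i_{s\cdot\lambda}^\lambda$ and using the defining relation of the dynamical Weyl group for $V$ rewrites the right factor as $\Phi_\lambda^{v'}\circ i_{s\cdot\lambda}^\lambda=\Phi_{s\cdot\lambda}^{A^{\g,s}_V(\lambda)v'}$, whose target is the submodule $M_{s\cdot\mu}\otimes V\hookrightarrow M_\mu\otimes V$ realized by $i_{s\cdot\mu}^\mu\otimes\id_V$.

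The key step is to then absorb this inclusion into the left tensor factor: the image of $\Phi_{s\cdot\lambda}^{A^{\g,s}_V(\lambda)v'}$ lies in $i_{s\cdot\mu}^\mu(M_{s\cdot\mu})\otimes V$, so
\[
(\Phi_\mu^{u'}\otimes\id_V)\circ\Phi_{s\cdot\lambda}^{A^{\g,s}_V(\lambda)v'}=\bigl((\Phi_\mu^{u'}\circ i_{s\cdot\mu}^\mu)\otimes\id_V\bigr)\circ\Phi_{s\cdot\lambda}^{A^{\g,s}_V(\lambda)v'}=(\Phi_{s\cdot\mu}^{A^{\g,s}_U(\mu)u'}\otimes\id_V)\circ\Phi_{s\cdot\lambda}^{A^{\g,s}_V(\lambda)v'},
\]
where the second equality is the defining relation of $A^{\g,s}_U(\mu)$. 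Re-applying the fusion identity, now at parameter $s\cdot\lambda$, collapses this composition to $\Phi_{s\cdot\lambda}^{J_{U,V}(s\cdot\lambda)\bigl(A^{\g,s}_U(\mu)u'\otimes A^{\g,s}_V(\lambda)v'\bigr)}$.

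On the other hand, by the defining relation for $A^{\g,s}_{U\otimes V}(\lambda)$, the original composition equals $\Phi_{s\cdot\lambda}^{A^{\g,s}_{U\otimes V}(\lambda)(u\otimes v)}$. Comparing expectation values via the isomorphism \eqref{E:expectation-value-iso} yields
\[
A^{\g,s}_{U\otimes V}(\lambda)\circ J_{U,V}(\lambda)(u'\otimes v')=J_{U,V}(s\cdot\lambda)\bigl(A^{\g,s}_U(\mu)u'\otimes A^{\g,s}_V(\lambda)v'\bigr).
\]
Since $\mu=\lambda-\wt(v')$ is precisely the action of $\lambda-h|_V$ on the homogeneous vector $v'$, this is equation \eqref{E:dw_tensor-property} in the dynamical notation of Definition \ref{def:dynamical_notations} after composing with $J_{U,V}(\lambda)^{-1}$ on the right. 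The only nontrivial point is the middle replacement, which relies on the fact that $\Phi_\lambda^{v'}\circ i_{s\cdot\lambda}^\lambda$ genuinely factors through $M_{s\cdot\mu}\otimes V$; this in turn is built into the definition of $A^{\g,s}_V(\lambda)$ for integral dominant $\lambda$ and the singular-vector structure of $M_\lambda$, and extends by rationality to generic $\lambda$.
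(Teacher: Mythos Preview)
Your argument is correct and is essentially the standard proof: factor $\Phi_\lambda^{J_{U,V}(\lambda)(u'\otimes v')}$ via the fusion identity, push $i_{s\cdot\lambda}^\lambda$ through both intertwiners using the defining relation of $A^{\g,s}$ (together with the factorization through $i_{s\cdot\mu}^\mu\otimes\id_V$), and re-fuse at $s\cdot\lambda$. The paper itself does not supply a proof of this proposition; it simply quotes \cite[Lemma 4]{EtingofVarchenkoDynamicalWeyl}, whose argument is precisely the one you give. One small point of bookkeeping you left implicit: after applying the defining relation for $A^{\g,s}_U(\mu)$ there is still a residual embedding $i_{s\cdot\nu}^\nu\otimes\id_{U\otimes V}$ on the far left, and the same embedding appears when you unwind $\Phi_\lambda^{u\otimes v}\circ i_{s\cdot\lambda}^\lambda$ directly; you then cancel it by injectivity. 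This is harmless, but worth making explicit since you are otherwise careful about the targets of these maps.
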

		
		
		In what follows, we will also need a power series presentation of the dynamical Weyl group operators. For a root $\alpha$, consider the series 
		\begin{equation}
			\label{eq::b_operator}
			\mathcal{B}^{\g}_{\alpha}(t) = \sum_{k\geq 0} f_{\alpha}^k e_{\alpha}^k \frac{1}{k!} \prod_{j=0}^{k-1} \frac{1}{t - h_{\alpha} - j},
		\end{equation}
		see \cite[equation (5)]{TarasovVarchenko}. Let $\alpha_i$ be a simple root. Recall that $\tilde{s}_{\alpha_i} = \exp(-e_{\alpha_i})\exp(f_{\alpha_i}) \exp(-e_{\alpha_i})$.
		
		\begin{thm}\cite[Theorem 8]{TarasovVarchenko}
			\label{prop:dynamical_weyl_group_b_operator}
			If $V$ is an finite-dimensional representation of $\mathfrak{g}$, then
			\[
			A^{\g,s_i}_V(\lambda) = \tilde{s}_{\alpha_i} \cdot \mathcal{B}^{\g}_{\alpha_i}(\langle \lambda,\alpha^{\vee}_i \rangle)|_V
			\]
		\end{thm}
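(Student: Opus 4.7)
The plan is to reduce the general identity to a rank-one calculation and ultimately to the standard representation of $\sl_2$.

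\emph{Step 1: Reduction to $\sl_2$.} Each of $A^{\g, s_i}_V(\lambda)$, $\tilde{s}_{\alpha_i}$, and $\mathcal{B}^{\g}_{\alpha_i}(\langle \lambda, \alpha_i^\vee\rangle)|_V$ involves only the $\sl_2$-triple $\{e_{\alpha_i}, f_{\alpha_i}, \alpha_i^\vee\}$ and depends on $\lambda$ only through $\lambda_i := \langle\lambda, \alpha_i^\vee\rangle$. For $A^{\g, s_i}_V$ this requires a short verification: one applies the intertwiner $\Phi_\lambda^v$ to the singular vector $\tfrac{f_{\alpha_i}^{\lambda_i + 1}}{(\lambda_i+1)!}\, m_\lambda$ and extracts the coefficient of $m_{s_i \cdot \mu}$. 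Since $m_{s_i\cdot \mu}$ is itself built from $f_{\alpha_i}$ alone, expanding $\Delta(f_{\alpha_i}^{\lambda_i+1})$ in $\Phi_\lambda^v(\tfrac{f_{\alpha_i}^{\lambda_i+1}}{(\lambda_i+1)!}m_\lambda)$ shows that only the action of the subalgebra $\sl_2(\alpha_i)\subset \g$ on $V$ contributes. We may thus reduce to $\g = \sl_2$.

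\emph{Step 2: Tensor reduction to the vector representation.} Every finite-dimensional irreducible $\sl_2$-module $V(m)$ is a direct summand of $V(1)^{\otimes m}$, and the dynamical Weyl group operator obeys the tensor compatibility of \cref{P:weyl-fusion-compatible}. To conclude that it suffices to check the theorem on $V(1)$, I must establish the analogous tensor identity for the right-hand side:
\[
\tilde{s}\cdot\mathcal{B}^{\sl_2}_\alpha(\lambda_1)|_{U\otimes V} = J_{U,V}(s\cdot \lambda)\circ \big(\tilde{s}\cdot\mathcal{B}^{\sl_2}_\alpha(\lambda_1 - h_\alpha|_V)|_U \otimes \tilde{s}\cdot\mathcal{B}^{\sl_2}_\alpha(\lambda_1)|_V\big)\circ J_{U,V}(\lambda)^{-1}.
\]
Using the group-like property $\Delta(\tilde{s}) = \tilde{s}\otimes\tilde{s}$, this reduces to a tensor compatibility for $\mathcal{B}^{\sl_2}_\alpha$ alone, which I would derive by manipulating the defining series \eqref{eq::b_operator} against the recursive characterization of $J$ coming from the ABRR equation of \cref{thm:abrr}.

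\emph{Step 3: The base case $V = V(1)$.} Both sides become explicit $2\times 2$ matrices. The left-hand side is $\begin{bmatrix} 0 & -(\lambda_1+2)/(\lambda_1+1) \\ 1 & 0\end{bmatrix}$ by \cref{example:dw_sl2_vect_rep}. On the right, $\tilde{s}|_{V(1)} = \begin{bmatrix} 0 & -1 \\ 1 & 0\end{bmatrix}$ by \eqref{eq::weyl_operator_sl2}, while in the series \eqref{eq::b_operator} all terms with $k\geq 2$ vanish on $V(1)$ (since $e^2|_{V(1)} = 0$), so $\mathcal{B}^{\sl_2}_\alpha(\lambda_1)|_{V(1)}$ is diagonal with eigenvalues $1$ on the highest-weight vector and $(\lambda_1+2)/(\lambda_1+1)$ on the lowest-weight vector. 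Multiplying the two matrices reproduces the desired formula.

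\emph{Main obstacle.} The essential difficulty is the tensor compatibility in Step 2: $\mathcal{B}^{\sl_2}_\alpha$ is an infinite series whose interaction with the coproduct is mediated by the fusion operator $J$, so a clean bookkeeping of the $h_\alpha|_V$ shift in the first tensor factor must be performed. If this series manipulation becomes too cumbersome, an alternative is to bypass Step 2 and verify the theorem directly on each $V(m)$ by expanding $\Phi_\lambda^v(\tfrac{f^{\lambda_1+1}}{(\lambda_1+1)!} m_\lambda)$ using the formula from \cref{example:sl2_extremal_vectors} and isolating the coefficient of $m_{s\cdot\mu}$ by a binomial identity; for each weight vector of $V(m)$ the resulting sum is finite and matches the explicit $k$-th term of $\tilde{s}\cdot \mathcal{B}^{\sl_2}_\alpha(\lambda_1)$ acting on that vector.
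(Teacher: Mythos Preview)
The paper does not supply its own proof of this statement: it is quoted verbatim as \cite[Theorem 8]{TarasovVarchenko} and left to that reference. So there is no in-paper argument to compare against, only the original Tarasov--Varchenko proof.

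Your Step~1 (reduction to $\sl_2$) and Step~3 (the explicit check on $V(1)$) are correct and cleanly executed. The issue is Step~2. You correctly flag the tensor compatibility of $\tilde{s}\cdot\mathcal{B}^{\sl_2}_\alpha$ with the fusion operator $J$ as the ``main obstacle,'' but note that this identity is essentially as hard as the theorem itself: once the base case $V(1)$ is known and the tensor property of $A^{\g,s}$ is in hand (\cref{P:weyl-fusion-compatible}), proving the tensor identity for $\tilde{s}\cdot\mathcal{B}$ is equivalent to proving the theorem on all tensor powers. So this route, while not circular, defers rather than resolves the work. Deriving it from the ABRR equation as you suggest is possible but involves substantial series manipulation that you have not carried out.

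Your proposed alternative---expand $\Phi_\lambda^v\bigl(\tfrac{f^{\lambda_1+1}}{(\lambda_1+1)!}m_\lambda\bigr)$ via the explicit $\sl_2$ intertwiner formula of \cref{example:sl2_extremal_vectors}, extract the coefficient of $m_{s\cdot\mu}$, and match against the series for $\tilde{s}\cdot\mathcal{B}$ weight-by-weight---is in fact the approach taken in \cite{TarasovVarchenko}. It is the more direct and more robust path: the relevant sums are finite on each weight space of $V(m)$, and the matching reduces to a hypergeometric-type binomial identity. I would recommend committing to that route rather than the tensor reduction.
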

		
		The braid relation for the dynamical Weyl group gives a natural generalization of the \emph{Yang-Baxter equation} that we will discuss later in the paper, see \eqref{eq::gl_rmatrix_qybe}.
		\begin{defn}\cite[Definition 2.1]{CherednikQKZ}
			\label{def:generalized_r_matrix}
			Let $V$ be a vector space. Consider a collection $\{G_{\alpha}(\mu)\}_{\alpha\in R^+, \mu \in \C}$ of operators on $V$. Define functions $G^{\alpha}:\h^*\rightarrow \End(V)$ by $G^{\alpha}(\lambda):= G_{\alpha}(\langle \lambda,\alpha^{\vee}\rangle)$. Such a collection is called a \defterm{closed $R$-matrix} if, for every pair of roots $\alpha,\beta$, they satisfy one of the following relations:
			\begin{align*}
				G^{\alpha} G^{\beta} &= G^{\beta} G^{\alpha}, \\
				G^{\alpha} G^{\alpha+\beta} G^{\alpha} &= G^{\beta} G^{\alpha+\beta} G^{\alpha}, \\
				G^{\alpha} G^{\alpha+\beta} G^{\alpha+2\beta} G^{\beta}&=  G^{\beta} G^{\alpha+2\beta} G^{\alpha+\beta} G^{\alpha},\\
				G^{\alpha} G^{3\alpha+\beta} G^{2\alpha + \beta} G^{3\alpha+2\beta} G^{\alpha+\beta} G^{\beta} &= G^{\beta} G^{\alpha+\beta} G^{3\alpha+2\beta} G^{2\alpha + \beta} G^{3\alpha+\beta} G^{\alpha},
			\end{align*}
			depending on the rank 2 root system formed by $\{\alpha,\beta\}$. 
		\end{defn}
		
		\begin{thm}\cite[Section 2.7(VI)]{TarasovVarchenko}
			\label{thm:b_operators_generalized_r_matrix}
			The operators $\{\cB^{\g}_{\alpha}(\lambda - \rho + \frac{1}{2} h|_V)\}$ form a closed $R$-matrix.
		\end{thm}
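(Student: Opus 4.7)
The plan is to derive the closed $R$-matrix relations by substituting the formula from Theorem \ref{prop:dynamical_weyl_group_b_operator} into the braid relations for the dynamical Weyl group (Proposition \ref{prop:dynamical_weyl_group_braid_relations}). Concretely, each simple-reflection dynamical Weyl operator factors as $A^{\g,s_i}_V(\lambda) = \tilde{s}_{\alpha_i}^{\g}\cdot\cB^{\g}_{\alpha_i}(\langle\lambda,\alpha_i^{\vee}\rangle)|_V$, so the braid relation
\[
\ldots A^{\g,s_{i}}_V(s_{j}s_{i}\cdot\lambda)A^{\g,s_{j}}_V(s_{i}\cdot\lambda)A^{\g,s_{i}}_V(\lambda) = \ldots A^{\g,s_{j}}_V(s_{i}s_{j}\cdot\lambda)A^{\g,s_{i}}_V(s_{j}\cdot\lambda)A^{\g,s_{j}}_V(\lambda)
\]
becomes an interlaced product of ordinary Weyl group elements $\tilde{s}_{\alpha}^{\g}$ and $\cB^{\g}$-operators on various roots, evaluated at shifted weights.

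The key technical step is to sweep all the $\tilde{s}_{\alpha}^{\g}$ factors to one end of each product using the conjugation identity
\[
\tilde{s}_{\alpha_j}^{\g}\cdot\cB^{\g}_{\alpha_i}(t)|_V = \cB^{\g}_{s_j(\alpha_i)}(t)|_V\cdot\tilde{s}_{\alpha_j}^{\g}.
\]
This identity follows from the action of $\mathrm{Ad}_{\tilde{s}_{\alpha_j}^{\g}}$ on $\U(\g)$, which sends $h_{\alpha_i}\mapsto h_{s_j(\alpha_i)}$ and $e_{\alpha_i}\mapsto \pm e_{s_j(\alpha_i)}$, $f_{\alpha_i}\mapsto \pm f_{s_j(\alpha_i)}$ with matching signs by Lemma \ref{lm:braid_group_action_g_sign}. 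Since $\cB^{\g}_{\alpha_i}(t)$ only involves the product $f_{\alpha_i}^k e_{\alpha_i}^k$ and the scalar function $t-h_{\alpha_i}-j$, the sign ambiguities cancel. Once all $\tilde{s}$'s are pushed to the right end of each side, they assemble into reduced words which agree on both sides by the ordinary braid relation \eqref{eq::usual_weyl_braid_relation}, and therefore cancel. What remains is a relation purely among $\cB^{\g}$-operators indexed by the positive roots in the rank-$2$ root system generated by $\alpha_i,\alpha_j$.

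The remaining work is a bookkeeping task: identify the positive roots and their arguments with those in Definition \ref{def:generalized_r_matrix}. The shift $\lambda\mapsto\lambda-\rho+\tfrac{1}{2}h|_V$ prescribed by the theorem arises from two reconciliations. First, the dot action $s\cdot\lambda = s(\lambda+\rho)-\rho$ appearing inside the dynamical braid relations must be converted into an ordinary linear $W$-action on the argument of each $\cB^{\g}$; writing everything in terms of $\lambda-\rho$ produces the $-\rho$ shift. Second, as $\tilde{s}_{\alpha_j}^{\g}$ is conjugated past the restriction to $V$, the weight operator $h|_V$ transforms by $\mathrm{Ad}_{\tilde{s}_{\alpha_j}^{\g}}(h|_V) = s_j(h)|_V$. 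Tracking this operator through the product together with the dot-action shift yields the $\tfrac{1}{2}h|_V$ correction needed so that, after stripping $\tilde{s}$'s, each $\cB^{\g}_{\gamma}$ is evaluated at $\langle\lambda-\rho+\tfrac{1}{2}h|_V,\gamma^{\vee}\rangle$ uniformly across both sides.

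The main obstacle is the rank-$2$ case analysis: one must perform this sweep-and-strip procedure separately in types $A_1\times A_1$, $A_2$, $B_2/C_2$, and $G_2$, and verify that the positive roots produced (in order of appearance) are exactly $\{\alpha,\beta\}$, $\{\alpha,\alpha+\beta,\beta\}$, $\{\alpha,\alpha+\beta,\alpha+2\beta,\beta\}$, and $\{\alpha,3\alpha+\beta,2\alpha+\beta,3\alpha+2\beta,\alpha+\beta,\beta\}$ respectively, matching Definition \ref{def:generalized_r_matrix}. This is essentially a finite, explicit check once the conjugation formula above is in hand, and the compatibility of the arguments follows from the uniform rewriting described in the previous paragraph.
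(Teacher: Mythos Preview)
The paper does not supply its own proof of this statement; it is quoted directly from \cite[Section 2.7(VI)]{TarasovVarchenko}. Your approach---factoring each $A^{\g,s_i}_V(\lambda)$ via Theorem \ref{prop:dynamical_weyl_group_b_operator}, conjugating the $\tilde{s}^{\g}$'s past the $\cB^{\g}$'s using Lemma \ref{lm:braid_group_action_g_sign} (the signs on $e_{\gamma}$ and $f_{\gamma}$ match, so $f_{\gamma}^k e_{\gamma}^k$ is invariant), and cancelling the resulting reduced words of $\tilde{s}^{\g}$'s by the ordinary braid relation \eqref{eq::usual_weyl_braid_relation}---is precisely the argument in Tarasov--Varchenko, and it is correct. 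It is also the template the paper follows for the boundary analog, Theorem \ref{thm:broundary_b_ops_r_matrix}, where the role of the $\tilde{s}^{\g}$-factor is played by the sine factor and the sweep is carried out via Corollary \ref{cor:sine_as_braid_group}.

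One small caution: your account of the $\tfrac{1}{2}h|_V$ shift is imprecise. It does not arise from conjugating $\tilde{s}^{\g}$ past an external weight operator; rather, it reflects the internal $h_{\alpha}$ already present in the denominator of the series \eqref{eq::b_operator}. Once the dot action is rewritten as the linear $W$-action on $\lambda+\rho$ and the $\cB^{\g}_{\alpha}$-operators are reindexed by the positive roots enumerated by the reduced expression, the combination $t - h_{\alpha}$ in the denominator is exactly what forces the argument to be $\langle \lambda - \rho + \tfrac{1}{2}h|_V,\alpha^{\vee}\rangle$ uniformly. This is purely bookkeeping and is spelled out in \cite{TarasovVarchenko}; your sketch correctly identifies all the structural ingredients.
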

		
		\section{Boundary dynamical Weyl group}\label{s:boundary-dynamical-Weyl}
		
		\subsection{Boundary dynamical Weyl group: \texorpdfstring{$\mathfrak{so}_2\subset \mathfrak{sl}_2$}{so2 in sl2}}
		
		In this subsection, we construct a boundary analog of the dynamical Weyl group as in \cref{subsect:sl2_dw_diag}.
		
		Recall that if $\langle \lambda, \alpha^{\vee}\rangle \in \mathbb{Z}_{\ge 0}$, then there is a submodule of $M_{\lambda}$ generated by $\frac{f^{\lambda+1}}{(\lambda+1)!}m_{\lambda}$. This determines an injective homomorphism $\iota_{s\cdot \lambda}^{\lambda}:M_{s\cdot \lambda}\rightarrow M_{\lambda}$, where $m_{s\cdot \lambda}\mapsto \frac{f^{\lambda+1}}{(\lambda+1)!}m_{\lambda}$. 
		
		\begin{defn}\label{D:A-so2}
			Let $X$ be an integrable $\so_2$-representation. Consider $\xi \in X$ and the corresponding $\so_2$-intertwiner $\phi_{\lambda}^{\xi} \in \Hom_{\so_2}(M_{\lambda},X)$ as in \cref{prop:boundary_intertwiners}. Assume that $\langle \lambda, \alpha^{\vee}\rangle \in \mathbb{Z}_{\ge 0}$. Define $A_{X}^{\so_2}(\lambda)\in \End_{\C}(X)$ by
			\begin{equation}\label{E:A-so2}
				A^{\so_2}_X(\lambda)\xi := \phi_{\lambda}^{\xi} \left( \frac{f^{\lambda+1}}{(\lambda+1)!} m_{\lambda} \right).
			\end{equation}
		\end{defn}

		\begin{prop}
			Under the hypotheses of Definition \ref{D:A-so2}, we have
			\begin{equation}
				\label{eq::dw_boundary_integer}
				A^{\so_2}_X(\lambda)\xi = \frac{(b-\lambda i)(b-\lambda i+2i) \ldots (b+\lambda i)}{(\lambda+1)!} \xi.
			\end{equation}
			\begin{proof}
				Since $X$ is assumed to be integrable, $b$ acts semisimply on $X$, so it is enough to assume that $X = \C_x$ is a one-dimensional module of $\so_2$ with character $x$ and $\xi=1_x$.
				
				According to \cref{example:sl2_hom_verma}, the action of $A_{\C_x}(\lambda)$ is given by the polynomial $p_{\lambda+1}(x,\lambda)$. It is clear that its degree is $\lambda+1$ and that the coefficient in front of $x^{\lambda+1}$ is equal to $1/(\lambda+1)!$. Therefore, it is enough to prove that the roots of $p_{\lambda+1}(x, \lambda)$ are 
				\begin{equation}
					\label{prop_eq::roots}
					\{\lambda i,\lambda i-2i,\ldots,-\lambda i+2i,-\lambda i\}.
				\end{equation}
				By degree considerations, we are reduced to proving that $p_{\lambda+1}(x,\lambda)=0$ for all $x$ in the set \eqref{prop_eq::roots}. 
				
				Consider the short exact sequence of $\sl_2$-modules
				\[
				0 \rightarrow M_{s\cdot \lambda} \xrightarrow{\iota_{s\cdot \lambda}^{\lambda}} M_{\lambda} \xrightarrow{\pi} V_{\lambda} \rightarrow 0,
				\]
				where $V_{\lambda}$ is the irreducible finite-dimensional representation with highest weight $\lambda$. Let us apply a left exact functor $\Hom_{\k}(-,\C_x)$ to obtain:
				\[
				0 \rightarrow \Hom_{\k}(V_{\lambda},\C_x) \xrightarrow{\pi\circ (-)} \Hom_{\k} (M_{\lambda},\C_x) \xrightarrow{\iota_{s\cdot \lambda}^{\lambda}\circ (-)} \Hom_{\k}(M_{s\cdot \lambda}, \C_x),
				\]
				Note that $M_{s\cdot \lambda}$ is generated by $m_{s\cdot \lambda}$, so $\iota_{s\cdot \lambda}^{\lambda}$ is determined by evaluating on $m_{s\cdot \lambda}$, and that equation \eqref{E:A-so2} shows that $A_{\C_x}^{\so_2}(\lambda)$ is determined by evaluating $\phi_{\lambda}^{1_x}$ on $\iota_{s\cdot \lambda}^{\lambda}(m_{s\cdot \lambda})$. Thus, it suffices to prove that for $x$ in the set \eqref{prop_eq::roots}, the image of $\phi_{\lambda}^{1_x}$, under $\iota_{s\cdot \lambda}^{\lambda}\circ (-)$, is equal to zero in $\Hom_{\k}(M_{s\cdot \lambda}, \C_x)$. By exactness, it is enough to show that $\phi_{\lambda}^{1_x}$ is in the image of $\pi\circ (-)$. 
				
				Since $b=f-e$ has eigenvalues $i$ and $-i$ in the natural representation $V_1\cong\C^2$, $b$ is conjugate to $ih$ inside $\sl_2$ and therefore acts semisimply in the finite dimensional module $V_{\lambda}$ with the eigenvalues \eqref{prop_eq::roots}. Thus, for $x$ in the set \eqref{prop_eq::roots}, we have $\dim \Hom_{\k}(V_{\lambda}, \C_x) = 1$. Also, from Proposition \ref{prop:boundary_intertwiners} we have $\dim \Hom_{\k}(M_{\lambda}, \C_x) = \dim \C_x = 1$. By exactness, $\pi\circ (-)$ is injective, and therefore is surjective.
			\end{proof}
		\end{prop}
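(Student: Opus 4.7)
The plan is to reduce the claimed identity to a polynomial identity in one variable, then exploit homological algebra on a short exact sequence of Verma modules to locate the roots. Since $X$ is an integrable $\so_2$-module, $b$ acts semisimply on $X$, so it suffices to verify the identity on each one-dimensional summand $\C_x$. By \cref{example:sl2_hom_verma}, the operator $A^{\so_2}_{\C_x}(\lambda)$ acts by the scalar $p_{\lambda+1}(x,\lambda)$, where $p_k(b,\lambda)$ are the polynomials defined by the recurrence \eqref{eq::mp_pols}. A straightforward induction from that recurrence shows that $p_k(x,\lambda)$ is a polynomial of degree exactly $k$ in $x$ with leading coefficient $1/k!$. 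Hence both sides of the proposed identity are degree-$(\lambda+1)$ polynomials in $x$ sharing the same leading coefficient $1/(\lambda+1)!$, and it remains only to exhibit $\lambda+1$ common roots.

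The main step is to show that $p_{\lambda+1}(x,\lambda) = 0$ for every $x$ in the set $\{(\lambda - 2k) i : k = 0, 1, \ldots, \lambda\}$. To this end I would apply the left exact functor $\Hom_{\k}(-, \C_x)$ to the standard short exact sequence
\[
0 \to M_{s\cdot \lambda} \xrightarrow{\iota_{s\cdot \lambda}^{\lambda}} M_{\lambda} \xrightarrow{\pi} V_{\lambda} \to 0,
\]
where $V_{\lambda}$ is the finite-dimensional irreducible of highest weight $\lambda$. This produces
\[
0 \to \Hom_{\k}(V_{\lambda}, \C_x) \to \Hom_{\k}(M_{\lambda}, \C_x) \xrightarrow{\iota^*} \Hom_{\k}(M_{s\cdot \lambda}, \C_x).
\]
The middle term is one-dimensional by \cref{prop:boundary_intertwiners}. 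Since $b = f - e$ has characteristic polynomial $t^2 + 1$ on $V(1)$, it is conjugate inside $\sl_2$ to $ih$ by an element of $\SL_2(\C)$; consequently the eigenvalues of $b$ on $V_{\lambda}$ coincide with those of $ih$, namely $\{(\lambda - 2k)i : 0 \le k \le \lambda\}$. For each such $x$, the space $\Hom_{\k}(V_{\lambda}, \C_x)$ is therefore one-dimensional, and the injection into $\Hom_{\k}(M_{\lambda}, \C_x)$ is forced to be an isomorphism. Hence $\phi^{1_x}_{\lambda}$ factors through $\pi$ and satisfies $\iota^*(\phi^{1_x}_{\lambda}) = 0$. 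Evaluating on $m_{s\cdot \lambda}$ gives $\phi^{1_x}_{\lambda}\bigl(\tfrac{f^{\lambda+1}}{(\lambda+1)!} m_{\lambda}\bigr) = 0$, i.e.\ $p_{\lambda+1}(x,\lambda) = 0$, as needed.

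The principal obstacle is conceptual rather than computational: one must recognize that the roots of $p_{\lambda+1}(\cdot, \lambda)$ are governed by the $b$-spectrum of the finite-dimensional module $V_{\lambda}$, and that this spectrum is most cleanly accessed via the $\sl_2$-conjugacy between $b$ and $ih$ rather than by unraveling the three-term recurrence \eqref{eq::mp_pols} directly. Once this homological dictionary is installed, the remaining bookkeeping, namely matching degrees, leading coefficients, and counting $\lambda+1$ distinct roots of a degree-$(\lambda+1)$ polynomial, is mechanical.
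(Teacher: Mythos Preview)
Your proof is correct and follows essentially the same route as the paper: reduce to a one-dimensional $\C_x$, identify $A^{\so_2}_{\C_x}(\lambda)$ with $p_{\lambda+1}(x,\lambda)$, match degree and leading coefficient, then use the short exact sequence $0\to M_{s\cdot\lambda}\to M_\lambda\to V_\lambda\to 0$ together with the conjugacy $b\sim ih$ and a dimension count on $\Hom_\k(-,\C_x)$ to show the roots are exactly the $b$-eigenvalues on $V_\lambda$. The only cosmetic difference is that you justify the degree and leading coefficient by induction on the recurrence \eqref{eq::mp_pols}, whereas the paper simply asserts this is clear.
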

		
		\begin{defn}
			Euler's \emph{gamma function} $\Gamma(z)$ is defined as the analytic continuation of the function $z\mapsto \int_0^{\infty}t^{z-1}\exp(-t)dt$, for $z\in \mathbb{R}_{\ge 0}$, to a meromorphic function on $\C$.
		\end{defn}
		
		\begin{remark}
			For us, the most important property of the Gamma function is that 
			\[
			\Gamma(z+1) = z\Gamma(z).
			\]
			The gamma function also satisfies the Euler reflection formula:
			\begin{equation}\label{E:e-r-formula}
				\Gamma(1-z) \Gamma(z) = \frac{\pi}{\sin(\pi z)}.
			\end{equation}
		\end{remark}

		Observe that if $b$ acts on $\xi$ with eigenvalue $x$, we have from formula \eqref{eq::dw_boundary_integer}
		\[
		A^{\so_2}_X(\lambda)\xi = (-2i)^{\lambda+1}\frac{(\frac{ix+\lambda}{2})(\frac{ix+\lambda}{2}-1) \ldots (\frac{ix+\lambda}{2}-\lambda)}{(\lambda+1)!}\xi = \frac{(-2i)^{\lambda+1}}{\Gamma(\lambda+2)}\frac{\Gamma(\frac{ix + \lambda}{2} + 1)}{\Gamma(\frac{ix+\lambda}{2} - \lambda)}\xi.
		\]
		
		\begin{defn}
			Euler's \emph{beta function} is defined by $\B(z_1, z_2):=\frac{\Gamma(z_1)\Gamma(z_2)}{\Gamma(z_1+z_2)}$. 
		\end{defn}
		
		Thus,
		\[
		A^{\so_2}_X(\lambda)\xi = \frac{(-2i)^{\lambda+1}}{\Gamma(\lambda+2)} \frac{\Gamma\left( \frac{ix+\lambda}{2} + 1 \right)}{\Gamma \left( \frac{ix-\lambda}{2} \right)}\xi = \frac{(-2i)^{\lambda+1}}{\lambda+1} \Beta \left( \frac{ix-\lambda}{2},\lambda+1 \right)^{-1}\xi.
		\]
		
		In general, for any meromorphic analytic function $f$, one can define the action of the operator $f(b)$ on $\C_x$ by $f(x)$ and extend it to an arbitrary integrable $\so_2$-module by complete reducibility.
		
		\begin{defn}
			Let $X$ be an integrable $\so_2$-module. The \defterm{boundary dynamical Weyl group} operator is 
			\[
			A^{\so_2}_X(\lambda) := \frac{(-2i)^{\lambda+1}}{\lambda+1} \Beta \left( \frac{ib-\lambda}{2}, \lambda+1 \right)^{-1},
			\]
			where $(-2i)^{\lambda} := \exp \Big(\left( -\frac{i\pi}{2} + \log(2) \right) \cdot \lambda \Big)$.
		\end{defn}
		
		\begin{notation}
			\label{notation:boundary_dynamical_weyl_group_scalar_notation}
			Let $X = \C_x$ be an $\so_2$-representation with basis $1_{x}$, such that $b\cdot 1_x = x1_x$. In what follows, we write
			\begin{equation}\label{E:AxL-notation}
				A(x,\lambda) := \frac{(-2i)^{\lambda}}{\lambda+1} \Beta \left( \frac{ix-\lambda}{2},\lambda+1 \right)^{-1}
			\end{equation}
			to denote the scalar by which $A^{\so_2}_{\C_x}(\lambda)$ acts on $1_x\in \C_x$.
		\end{notation}
		
		Observe that
		\begin{equation}
			\label{eq::boundary_x_shift}
			A(x-2i,\lambda) = \frac{ix+\lambda+2}{ix-\lambda} A(x,\lambda).
		\end{equation}
		
		The next lemma follows from the standard properties of the beta function:
		\begin{lm}
			\label{lm:boundary_dw_shift}
			The boundary dynamical Weyl group operator satisfies the following functional equations:
			\[
			A(x-i,\lambda) = \frac{x-i(\lambda+1)}{\lambda+1} A(x,\lambda-1), \qquad A(x+i,\lambda) = \frac{x + i(\lambda+1)}{\lambda+1} A(x,\lambda-1).
			\]
			In particular,
			\begin{equation}
				\label{eq::boundary_lambda_shift}
				A(x,\lambda+2) = \frac{x^2+\lambda^2}{\lambda(\lambda+1)} A(x,\lambda).
			\end{equation}
		\end{lm}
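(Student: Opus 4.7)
The plan is a direct manipulation of $\Gamma$ functions. First, using $\Beta(a,b)^{-1} = \Gamma(a+b)/(\Gamma(a)\Gamma(b))$ and simplifying $\frac{ix-\lambda}{2} + \lambda+1 = \frac{ix+\lambda+2}{2}$, I would rewrite the definition \eqref{E:AxL-notation} as
\[
A(x,\lambda) \;=\; \frac{(-2i)^{\lambda}}{\lambda+1}\cdot \frac{\Gamma\!\left(\tfrac{ix+\lambda+2}{2}\right)}{\Gamma\!\left(\tfrac{ix-\lambda}{2}\right)\Gamma(\lambda+1)}.
\]
This is the only form in which the proof is carried out; all four $\Gamma$-factors involved in each shift below then differ by \emph{integer} amounts, so Euler's recursion $\Gamma(z+1) = z\Gamma(z)$ applies.

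Next, I would verify the two functional equations by direct calculation. Writing out $A(x-i,\lambda)$ produces numerator $\Gamma(\tfrac{ix+\lambda+3}{2})$ and denominator $\Gamma(\tfrac{ix-\lambda+1}{2})$, while $A(x,\lambda-1)$ produces numerator $\Gamma(\tfrac{ix+\lambda+1}{2})$ and the \emph{same} denominator $\Gamma(\tfrac{ix-\lambda+1}{2})$. Thus the ratio reduces to
\[
\frac{A(x-i,\lambda)}{A(x,\lambda-1)} \;=\; (-2i)\cdot\frac{\lambda}{\lambda+1}\cdot \frac{\Gamma\!\left(\tfrac{ix+\lambda+3}{2}\right)}{\Gamma\!\left(\tfrac{ix+\lambda+1}{2}\right)}\cdot\frac{\Gamma(\lambda)}{\Gamma(\lambda+1)} \;=\; \frac{(-2i)\lambda}{\lambda+1}\cdot\frac{ix+\lambda+1}{2}\cdot\frac{1}{\lambda} \;=\; \frac{x-i(\lambda+1)}{\lambda+1},
\]
where I used $-i\cdot(ix+\lambda+1) = x - i(\lambda+1)$. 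The analogous computation for $A(x+i,\lambda)$ has the numerator $\Gamma$'s agreeing and the denominators differing by one integer step; it yields the second identity.

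Finally, for the ``in particular'' equation I would obtain $A(x,\lambda+2)$ by composing the two shift relations, each applied with the appropriate reindexing. Specifically, applying the first identity with $x\mapsto x+i$ and $\lambda\mapsto\lambda+1$ expresses $A(x,\lambda+1)$ in terms of $A(x+i,\lambda)$, then the second identity with the original $\lambda$ expresses $A(x+i,\lambda)$ in terms of $A(x,\lambda-1)$; the product of scalar factors telescopes by the difference of two squares to
\[
\frac{(x-i(\lambda+1))(x+i(\lambda+1))}{(\lambda+1)(\lambda+2)} \;=\; \frac{x^{2}+(\lambda+1)^{2}}{(\lambda+1)(\lambda+2)},
\]
after which reindexing gives \eqref{eq::boundary_lambda_shift}.

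There is no real obstacle: the proof is pure gamma-function bookkeeping, and the only thing to watch is that the branch chosen for $(-2i)^{\lambda}$ in \eqref{E:AxL-notation} satisfies $(-2i)^{\lambda}/(-2i)^{\lambda-1} = -2i$, so the auxiliary identity \eqref{eq::boundary_x_shift} (which is the $x\mapsto x-2i$ version of the same gamma-recursion trick) can be used as a consistency check.
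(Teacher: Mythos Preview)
Your approach is exactly what the paper intends: it states only that the lemma ``follows from the standard properties of the beta function,'' and your gamma-function bookkeeping is the natural unpacking of that remark. The computations for the two shift identities are correct.

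One point to flag: your final reindexing step does not literally produce the formula \eqref{eq::boundary_lambda_shift} as written. From your (correct) relation
\[
A(x,\lambda+1) \;=\; \frac{x^{2}+(\lambda+1)^{2}}{(\lambda+1)(\lambda+2)}\,A(x,\lambda-1),
\]
the substitution $\lambda\mapsto\lambda+1$ yields $A(x,\lambda+2)=\dfrac{x^{2}+(\lambda+2)^{2}}{(\lambda+2)(\lambda+3)}A(x,\lambda)$, whereas $\lambda\mapsto\lambda-1$ yields $A(x,\lambda)=\dfrac{x^{2}+\lambda^{2}}{\lambda(\lambda+1)}A(x,\lambda-2)$. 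The displayed \eqref{eq::boundary_lambda_shift} mixes the coefficient of the latter with the arguments of the former, so it appears to be a typo in the statement rather than an error in your derivation; the version you obtain is the one consistent with the two shift identities and with a direct gamma computation. You should simply record the correct form rather than asserting that reindexing recovers the printed formula verbatim.
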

		
		It follows from equation \eqref{E:e-r-formula} that
		\begin{equation}
			\label{eq::boundary_dyn_weyl_minus_weight}
			A(-x,\lambda) = -\frac{\sin \left(\pi \frac{ix+\lambda}{2} \right)}{\sin \left(\pi \frac{ix-\lambda}{2} \right)} A(x,\lambda)
		\end{equation}
		
		\subsection{Divided power formula}
		
		Recall that for the (non-dynamical) boundary Weyl group, there is a series presentation \eqref{eq::bweyl_series} for integral eigenvalues of $b$. In this subsection, we present an analogous formula in the dynamical case.
		
		\begin{defn}
			Define $a_0(x,\lambda):=\frac{A(-ix, \lambda)}{A(0, \lambda)}$, for $x\in 2\mathbb{Z}$, and $a_1(x,\lambda):=\frac{A(-ix, \lambda)}{A(-i,\lambda)}$, for $x\in 1+2\mathbb{Z}$. 
		\end{defn}
		
		Note that $a_0(0, \lambda) =1$ and $a_1(x, \lambda) =1$. One also finds that
		\begin{equation}\label{E:a(xL)}
			a_0(x,\lambda) = \frac{x+\lambda}{x-2-\lambda}\cdot \frac{x-2+\lambda}{x-4-\lambda}\cdots \frac{2+\lambda}{-\lambda} \qquad \text{and} \qquad a_1(x, \lambda) = \frac{x+\lambda}{x-2-\lambda}\cdot\frac{x-2+\lambda}{x-4-\lambda}\cdots \frac{3+\lambda}{1-\lambda}.
		\end{equation}
		
		Recall the divided powers from Notation \ref{N:divided powers}.
		
		\begin{defn}\label{D:simplified-dynamical-series}
			Define
			\begin{equation}\label{E:s(xL)0}
				s_0(x,\lambda)= 1 - \frac{(\lambda+1)}{(\lambda)}x_0^{(2)} + \frac{(\lambda+1)(\lambda-1)}{(\lambda)(\lambda-2)}x_0^{(4)} - \frac{(\lambda+1)(\lambda-1)(\lambda-3)}{(\lambda)(\lambda-2)(\lambda-4)}x_0^{(6)} \pm \dots, \qquad \text{for $x\in 2\mathbb{Z}$},
			\end{equation}
			and
			\begin{equation}\label{E:s(xL)1}
				s_1(x,\lambda) = x_1^{(1)}- \frac{(\lambda)}{(\lambda-1)}x_1^{(3)} + \frac{(\lambda)(\lambda-2)}{(\lambda-1)(\lambda-3)}x_1^{(5)} - \frac{(\lambda)(\lambda-2)(\lambda-4)}{(\lambda-1)(\lambda-3)(\lambda-5)}x_1^{(7)} \pm \dots \qquad \text{for $x\in 1+2\mathbb{Z}$}.
			\end{equation}    
		\end{defn}
		
		\begin{remark}
			Observe that there is a well-defined limit of both series $a_0(x,\lambda)$ and $a_1(x,\lambda)$ when $\lambda \rightarrow \infty$ which coincides with the non-dynamical series \eqref{eq::bweyl_series}. 
		\end{remark}
		
		The following theorem is a dynamical analog of \cref{prop:bweyl_series}.
		
		\begin{thm}\label{T:simplified-dynamical-series}
			We have the following equalities:
			\[
			a_0(x,\lambda) = s_0(x, \lambda) \qquad \text{and} \qquad a_1(x, \lambda) = s_1(x, \lambda). 
			\]
		\end{thm}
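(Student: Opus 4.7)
The plan is to prove both equalities by rewriting each terminating series $s_\epsilon(x,\lambda)$ as a Saalschützian ${}_3F_2(\cdot;1)$ and applying the Pfaff-Saalschütz summation, then matching the resulting closed product with the Pochhammer form of the rational expression \eqref{E:a(xL)} for $a_\epsilon(x,\lambda)$. It suffices to treat $x \geq 0$: the divided powers $x_0^{(2k)}$ and $x_1^{(2k+1)}$ are even and odd in $x$ respectively, and the same parity for $a_\epsilon$ follows by iterating \eqref{eq::boundary_x_shift} in both directions from the base values $a_0(0,\lambda) = a_1(1,\lambda) = 1$.

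For the $s_0 = a_0$ case, write $x = 2m$. The Pochhammer translations $(\lambda+1)(\lambda-1)\cdots(\lambda-2k+3) = (-2)^k(-\tfrac{\lambda+1}{2})_k$ and $\lambda(\lambda-2)\cdots(\lambda-2k+2) = (-2)^k(-\tfrac{\lambda}{2})_k$ identify the unsigned part of the $k$th coefficient in \eqref{E:s(xL)0} with $(-\tfrac{\lambda+1}{2})_k/(-\tfrac{\lambda}{2})_k$, while $(2m)_0^{(2k)} = (-1)^k(-m)_k(m)_k/((\tfrac{1}{2})_k\, k!)$. The two signs $(-1)^k$ combine and give
\[
s_0(2m,\lambda) \;=\; \sum_{k=0}^{m} \frac{(-m)_k(m)_k(-\tfrac{\lambda+1}{2})_k}{(\tfrac{1}{2})_k(-\tfrac{\lambda}{2})_k\, k!} \;=\; {}_3F_2\!\left(-m,\, m,\, -\tfrac{\lambda+1}{2};\, \tfrac{1}{2},\, -\tfrac{\lambda}{2};\, 1\right).
\]
The Saalschützian balance $1 + m + (-\tfrac{\lambda+1}{2}) - m = \tfrac{1}{2} + (-\tfrac{\lambda}{2})$ holds, so Pfaff-Saalschütz produces a closed product which, after the simplifications $(\tfrac{1}{2}-m)_m = (-1)^m(\tfrac{1}{2})_m$ and $(\tfrac{\lambda}{2}-m+1)_m = (-1)^m(-\tfrac{\lambda}{2})_m$, collapses to $s_0(2m,\lambda) = (\tfrac{\lambda}{2}+1)_m / (-\tfrac{\lambda}{2})_m$. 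Rewriting the product \eqref{E:a(xL)} in the form $a_0(2m,\lambda) = \prod_{k=1}^{m}(2k+\lambda)/(2(k-1)-\lambda)$ and converting to Pochhammer symbols yields the same expression, which concludes this case.

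The $s_1 = a_1$ case is entirely analogous: after factoring the linear $(2m+1)$ out of each $(2m+1)_1^{(2k+1)}$ using $(2k+1)! = 4^k k!(\tfrac{3}{2})_k$, one exhibits $s_1(2m+1,\lambda)$ as $(2m+1)\cdot{}_3F_2(-m,\, m+1,\, -\tfrac{\lambda}{2};\, \tfrac{3}{2},\, \tfrac{1}{2}-\tfrac{\lambda}{2};\, 1)$. The balance condition is satisfied again; Pfaff-Saalschütz together with the identity $(\tfrac{3}{2})_m = (2m+1)(\tfrac{1}{2})_m$ produces $s_1(2m+1,\lambda) = (\tfrac{3}{2}+\tfrac{\lambda}{2})_m / (\tfrac{1}{2}-\tfrac{\lambda}{2})_m$, which matches the Pochhammer rewrite of $a_1(2m+1,\lambda)$ from \eqref{E:a(xL)}.

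The main obstacle is the bookkeeping of half-integer shifts and sign conventions needed to convert each $s_\epsilon$ into standard ${}_3F_2$ form and to verify the Saalschützian balance; once this is accomplished, the remainder is mechanical. Should one wish to avoid hypergeometric machinery entirely, an inductive alternative paralleling the proof of \cref{prop:bweyl_series} is available: derive coupled difference relations for $s_0$ and $s_1$ from the divided-power identities of \cref{L:divided-power--series-lemmas}, and match them termwise against the corresponding shift relations for $a_0$ and $a_1$ that follow from \eqref{eq::boundary_x_shift} and \cref{lm:boundary_dw_shift}, with the equal initial values at $x = 0$ and $x = 1$ anchoring the induction.
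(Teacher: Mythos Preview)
Your proof is correct and takes a genuinely different route from the paper's. The paper argues inductively, exactly the alternative you sketch in your final paragraph: it establishes the parity $a_0(-x,\lambda)=a_0(x,\lambda)$, $a_1(-x,\lambda)=-a_1(x,\lambda)$ via \eqref{eq::boundary_dyn_weyl_minus_weight}, derives the coupled shift relations
\[
a_0(x+2,\lambda)-a_0(x,\lambda)=-2\tfrac{\lambda+1}{\lambda}\,a_1(x+1,\lambda-1),\qquad
a_1(x+2,\lambda)-a_1(x,\lambda)=2\,a_0(x+1,\lambda-1)
\]
from \eqref{eq::boundary_x_shift} and \cref{lm:boundary_dw_shift}, matches them against the identical relations for $s_0,s_1$ coming from \cref{L:divided-power--series-lemmas}, and anchors the induction at $x=0,1$.

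Your hypergeometric approach has the advantage of producing the closed Pochhammer forms $(\tfrac{\lambda}{2}+1)_m/(-\tfrac{\lambda}{2})_m$ and $(\tfrac{3}{2}+\tfrac{\lambda}{2})_m/(\tfrac{1}{2}-\tfrac{\lambda}{2})_m$ along the way, and it situates the identity within classical summation theory; anyone who knows Pfaff--Saalsch\"utz sees immediately why the series terminates to a single product. The paper's inductive argument is more self-contained (no outside summation theorem is invoked) and deliberately parallels the non-dynamical \cref{prop:bweyl_series}, which makes the structural analogy between the dynamical and non-dynamical cases transparent. Both are clean; yours is faster if one is willing to import the ${}_3F_2$ machinery.
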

		\begin{proof}
			By \eqref{eq::boundary_dyn_weyl_minus_weight}, we have
			\[
			a_0(-x,\lambda) = a_0 (x,\lambda), \qquad a_1(-x,\lambda)= -a_1(x,\lambda).
			\]
			Observe that the series $s_0(x,\lambda)$ and $s_1(x,\lambda)$ satisfy the same property. Therefore, it is enough to show the statement for positive $x$. 
			
			It follows from equation \eqref{eq::boundary_x_shift} and \cref{lm:boundary_dw_shift} that 
			\[
			a_0 (x+2,\lambda) - a_0 (x,\lambda) = -2\cdot \frac{\lambda+1}{\lambda} a_1(x+1,\lambda-1) 
			\]
			and
			\[
			a_1(x+2,\lambda) - a_1(\lambda) = -2\cdot a_0(x+1,\lambda).
			\]
			In particular, for any positive integral $x$, functions $a_0(x,\lambda)$ and $a_1(x,\lambda)$ can be constructed inductively starting from $a_0(0,\lambda) = a_1(1,\lambda) = 1$. At the same time, it is an exercise to verify that the series $s_0(x,\lambda)$ and $s_1(x,\lambda)$ satisfy the same relations. We also have $a_0(0,\lambda) = s_0(0,\lambda)=1$ and $a_1(1,\lambda) = s_1(1,\lambda)=1$. Therefore, $s_i(x, \lambda)$ and $a_i(x, \lambda)$, $i\in \{0,1\}$, coincide for any integral $x$.
		\end{proof}
		
		\subsection{Boundary dynamical Weyl group: \texorpdfstring{$\k\subset \g$}{k in g}}
		
		We define the boundary dynamical Weyl group for the split symmetric pair $(\g, \k)$ in terms of simple reflection generators. 
		
		\begin{notation}
			For a root $\beta$, write $\k_{\beta}$ to denote the Lie subalgebra generated by $b_{\beta}$.
		\end{notation}
		
		The isomorphism $\g_{\beta}\cong \sl_2$ induces an isomorphism of the pairs: $\k_{\beta}\subset \g_{\beta}$ and $\so_2\subset \sl_2$. 
		
		\begin{remark}
			Although the pairs $\k_{\beta}\subset \g_{\beta}$ are isomorphic to $\so_2\subset \sl_2$ for all roots $\alpha\in R$, our definition of the boundary dynamical Weyl group is in terms of Verma modules induced from $\b$, which depends on a choice of simple roots $\Pi$. Consequently, in the following definition we restrict to simple roots. 
		\end{remark}
		
		\begin{defn}\label{D:A-k-in-general}
			For each simple root $\alpha\in \Pi$ we define a family of linear operators $A^{\k, \alpha}(\lambda)$, such that $\lambda \in \h$, as follows. Let $X$ be an integrable $\k$-module. For $\xi \in X$, set
			\begin{equation}\label{E:A-in-general-defn}
				A_X^{\k, \alpha}(\lambda) \xi := A_{X}^{\k_{\alpha}}(\langle \lambda, \alpha^{\vee}\rangle).
			\end{equation}
		\end{defn}
		
		We will also use the notation
		\begin{equation}
			\label{eq::boundary_dynamical_weyl_group_scalar_notation}
			A^{\k}(b_{\alpha},t) := \frac{(-2i)^{t+1}}{t+1} \B \left( \frac{ib_{\alpha} - t}{2}, t+1 \right)^{-1},
		\end{equation}
		as in \cref{notation:boundary_dynamical_weyl_group_scalar_notation} for \emph{all} roots, not necessarily simple. 
		
		The following is the boundary analog of equation \eqref{E:dw_tensor-property}, or equation \eqref{E:wk=wk-otimes-wg}.
		\begin{prop}
			\label{prop:boundary_tensor}
			For every integrable representation $X$ of $\k$ and finite dimensional representation $V$ of $\g$, we have
			\begin{equation}
				\label{eq::boundary_dyn_weyl_tensor}
				A_{X\otimes V}^{\k,\alpha}(\lambda) = G_{X, V}(s_{\alpha}\cdot \lambda)\circ \big(A_{X}^{\k,\alpha}(\lambda - h|_V)\otimes A_V^{\g,\alpha}(\lambda)\big)\circ G_{X, V}(\lambda)^{-1}.
			\end{equation}
		\end{prop}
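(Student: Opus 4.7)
The plan is to reduce the claim to a pointwise identity obtained by applying both sides to vectors of the form $G_{X,V}(\lambda)(\xi\otimes v)$ for homogeneous $v\in V[\lambda - \mu]$ and arbitrary $\xi\in X$. Using the factorization $\phi_{\lambda}^{G_{X,V}(\lambda)(\xi\otimes v)} = (\phi_{\mu}^{\xi}\otimes \id_V)\circ \Phi_{\lambda}^v$ from Definition \ref{def:fusion_gk}, the identity becomes
\[
A^{\k,\alpha}_{X\otimes V}(\lambda) \circ G_{X,V}(\lambda)(\xi\otimes v) = G_{X, V}(s_{\alpha}\cdot \lambda)\bigl(A^{\k,\alpha}_X(\mu)\xi\otimes A^{\g,\alpha}_V(\lambda)v\bigr),
\]
where $\mu = \lambda - \wt(v)$. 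Since both sides depend rationally on $\lambda$ (with coefficients depending on $\langle \lambda, \alpha^{\vee}\rangle$ through the boundary dynamical and dynamical Weyl group operators, and on the other coordinates only through $G_{X,V}$), it suffices to verify the identity when $\langle \lambda, \alpha^{\vee}\rangle$ is a sufficiently large positive integer so that the singular vectors $m^{\lambda}_{s_{\alpha}\cdot \lambda}\in M_{\lambda}$ and $m^{\mu}_{s_{\alpha}\cdot \mu}\in M_{\mu}$ both exist.

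The first reduction is a Verma-module characterization of $A^{\k,\alpha}$: for any integrable $\k$-module $Y$ and $\zeta\in Y$,
\[
A^{\k,\alpha}_Y(\lambda)\zeta = \phi^{\zeta}_{\lambda}(m^{\lambda}_{s_{\alpha}\cdot \lambda}).
\]
This follows from Definition \ref{D:A-k-in-general} together with the observation that the $\g_{\alpha}$-submodule $\U(\g_{\alpha})m_{\lambda}\subset M_{\lambda}$ is a copy of $M^{\g_{\alpha}}_{\langle \lambda, \alpha^{\vee}\rangle}$. Since $e_{\alpha}m_{\lambda}=0$ gives $f_{\alpha}m_{\lambda}=b_{\alpha}m_{\lambda}$, iteration via \eqref{eq::mp_pols} yields $m^{\lambda}_{s_{\alpha}\cdot \lambda} = p_{\langle \lambda, \alpha^{\vee}\rangle + 1}(b_{\alpha}, \langle \lambda, \alpha^{\vee}\rangle)m_{\lambda}$ for the Meixner--Pollaczek polynomial of Example \ref{example:sl2_hom_verma}. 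Applying the $\k$-intertwiner $\phi^{\zeta}_{\lambda}$ and comparing with \eqref{eq::boundary_dynamical_weyl_group_scalar_notation} identifies this with $A^{\k_{\alpha}}_Y(\langle \lambda, \alpha^{\vee}\rangle)\zeta$.

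With this in hand, the main computation runs as follows:
\begin{align*}
A^{\k,\alpha}_{X\otimes V}(\lambda)\bigl(G_{X,V}(\lambda)(\xi\otimes v)\bigr)
&= \phi^{G_{X,V}(\lambda)(\xi\otimes v)}_{\lambda}(m^{\lambda}_{s_{\alpha}\cdot \lambda}) = (\phi^{\xi}_{\mu}\otimes \id_V)\bigl(\Phi_{\lambda}^v(m^{\lambda}_{s_{\alpha}\cdot \lambda})\bigr) \\
&= \bigl((\phi^{\xi}_{\mu}\circ i^{\mu}_{s_{\alpha}\cdot \mu})\otimes \id_V\bigr)\circ \Phi_{s_{\alpha}\cdot \lambda}^{A^{\g, \alpha}_V(\lambda)v}(m_{s_{\alpha}\cdot \lambda}).
\end{align*}
The second equality uses the defining property of the usual dynamical Weyl group, which forces the composition $\Phi^v_{\lambda}\circ i^{\lambda}_{s_{\alpha}\cdot \lambda}$ to factor through the sub-Verma as $(i^{\mu}_{s_{\alpha}\cdot \mu}\otimes \id_V)\circ \Phi_{s_{\alpha}\cdot \lambda}^{A^{\g,\alpha}_V(\lambda)v}$ (here one checks the weight identity $(s_{\alpha}\cdot \lambda) - \wt(A^{\g,\alpha}_V(\lambda)v) = s_{\alpha}\cdot \mu$ from $A^{\g,\alpha}_V(\lambda):V[\lambda-\mu]\to V[s_{\alpha}(\lambda-\mu)]$). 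Now $\phi^{\xi}_{\mu}\circ i^{\mu}_{s_{\alpha}\cdot \mu}$ is a $\k$-intertwiner $M_{s_{\alpha}\cdot \mu}\to X$ sending $m_{s_{\alpha}\cdot \mu}$ to $\phi^{\xi}_{\mu}(m^{\mu}_{s_{\alpha}\cdot \mu}) = A^{\k,\alpha}_X(\mu)\xi$, so by Proposition \ref{prop:boundary_intertwiners} it equals $\phi^{A^{\k,\alpha}_X(\mu)\xi}_{s_{\alpha}\cdot \mu}$. Invoking Definition \ref{def:fusion_gk} at the weight $s_{\alpha}\cdot \lambda$ then produces exactly $G_{X,V}(s_{\alpha}\cdot \lambda)\bigl(A^{\k,\alpha}_X(\mu)\xi\otimes A^{\g,\alpha}_V(\lambda)v\bigr)$.

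The main obstacle is bookkeeping: tracking the codomains ($M_{\mu}\otimes V$ versus $M_{s_{\alpha}\cdot \mu}\otimes V$) and confirming that the image of the factored intertwiner lands inside the sub-Verma so that composition with $\phi_{\mu}^{\xi}\otimes \id_V$ unambiguously produces the boundary intertwiner $\phi_{s_{\alpha}\cdot \mu}^{A_X^{\k,\alpha}(\mu)\xi}$. Once the Verma-module reformulation of $A^{\k,\alpha}_X$ is in place, every subsequent step is a direct chase through the defining intertwining properties of $A^{\g,\alpha}_V$, $G_{X,V}$, and $\phi_{\lambda}^{\xi}$.
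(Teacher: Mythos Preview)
Your Verma-module computation for sufficiently dominant integral $\lambda$ is correct and is exactly what the paper means by ``the equality follows directly from the construction of the (boundary) dynamical Weyl group operators via Verma modules.'' You have just written out that diagram chase in full detail.

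The gap is in your extension step. You assert that ``both sides depend rationally on $\lambda$'' and conclude that checking on a Zariski-dense set of integers suffices. But the boundary dynamical Weyl group operator $A^{\k,\alpha}_X(\lambda)$ is \emph{not} a rational function of $\lambda$: by definition it is
\[
A^{\so_2}_X(\lambda) = \frac{(-2i)^{\lambda+1}}{\lambda+1}\,\Beta\!\left(\frac{ib-\lambda}{2},\,\lambda+1\right)^{-1},
\]
a transcendental analytic function. An identity between such functions that holds on the positive integers does not automatically extend to all of $\C$; there are nonzero entire functions vanishing on $\Z_{\geq 0}$. What actually happens is that, after expanding both sides in $b_{\alpha}$-eigenspaces, the identity reduces to finitely many relations of the form
\[
f_1(\lambda)\,A(x_1,\lambda) + \cdots + f_n(\lambda)\,A(x_n,\lambda) = 0
\]
with \emph{rational} coefficients $f_k$ and distinct eigenvalues $x_k$. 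Using the shift relation \eqref{eq::boundary_x_shift} one can arrange the $x_k$ to be pairwise non-congruent modulo $2i$, and then the paper's \cref{prop:beta_lin_independence_rat_func} shows that such a relation, valid for all sufficiently large integers, forces each $f_k\equiv 0$. That lemma is the missing ingredient in your argument; without it the analytic continuation is unjustified.
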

		
		\begin{proof}
			For sufficiently positive integral $\lambda$, the equality follows directly from the construction of the (boundary) dynamical Weyl group operators via Verma modules. Observe that the matrix coefficients of the operators $G_{X,V}(\lambda)$ and $A^{\g,\alpha}_V(\lambda)$ are rational functions in $\lambda$. Therefore, in terms of matrix coefficients, the lemma is equivalent to equalities of the form
			\[
			f_1(\lambda) A(x_1,\lambda) + \ldots + f_n(\lambda) A(x_n,\lambda) = 0
			\]
			for some rational functions $f_k(\lambda)$. Moreover, by integrability and \eqref{eq::boundary_x_shift}, we can choose the collection $\{x_k\}$ such that $x_k$'s are pairwise non-congruent modulo $2\sqrt{-1}$ for every $k$. Since these equalities are satisfied for all sufficiently positive integral $\lambda$, they are satisfied for all $\lambda$ by \cref{prop:beta_lin_independence_rat_func}.
		\end{proof}
		
		\begin{example}
			Let $\g = \sl_2$, with standard generators $\{e,h,f\}$, and $\k = \so_2 = \mathrm{span}(b)$, where $b:=f-e$. Denote by $\{v_+,v_-\}$ the standard $h$-weight basis of $V(1)$. 
   
            The eigenvectors of $b$ are $v_+ - iv_-$, with eigenvalue $i$, and $v_+ + iv_-$, with eigenvalue $-i$. Therefore, the matrix of $A^{\so_2}_{\C_x \otimes V(1)}(\lambda)$ in the standard basis is 
			\[
			\begin{bmatrix}
				\frac{A(x+i,\lambda) + A(x-i,\lambda)}{2} & -\frac{A(x+i,\lambda) - A(x-i,\lambda)}{2i} \\
				\frac{A(x+i,\lambda) - A(x-i,\lambda)}{2i} & \frac{A(x+i,\lambda) + A(x-i,\lambda)}{2}
			\end{bmatrix}
			\]
			We use \cref{example:fusion_gk_sl2} and \cref{example:dw_sl2_vect_rep} to compute that the matrix of
			\[
			G_{\C_x,V(1)} (s\cdot \lambda) \circ A_{\C_x}^{\so_2} (\lambda - h|_{V(1)}) \otimes A_{V(1)}^{\sl_2}(\lambda) \circ G_{X,V(1)}(\lambda)^{-1}
			\]
			in the standard basis is equal to
			\[
			\begin{bmatrix}
				\frac{xA(x,\lambda-1)}{\lambda+1} & \frac{x^2}{(\lambda+1)^2} A(x,\lambda-1) - \frac{\lambda+2}{\lambda+1} A(x,\lambda+1) \\ A(x,\lambda-1) & \frac{xA(x,\lambda-1)}{\lambda+1}
			\end{bmatrix}.
			\]
			Thanks to \cref{lm:boundary_dw_shift}, the two matrices coincide.
		\end{example}

		The following theorem is the boundary analog of \cref{prop:dynamical_weyl_group_braid_relations}. 
		
		\begin{thm}
			\label{thm:boundary_braid_relations}
			Let $X$ be an integrable $\k$-module. The operators $\{A^{\k,\alpha_i}_X(\lambda)\}_{\alpha_i\in\Pi}$ satisfy the braid relations
			\begin{equation}\label{E:bdw-braid-relns}
				\ldots A^{\k,\alpha_{i}}_X(s_{j}s_{i}\cdot \lambda)A^{\k,\alpha_{j}}_X(s_{i}\cdot \lambda) A^{\k,\alpha_{i}}_X(\lambda) = \ldots A^{\k,\alpha_{j}}_X(s_{i}s_{j}\cdot \lambda)A^{\k,\alpha_{i}}_X(s_{j}\cdot \lambda) A^{\k,\alpha_{j}}_X(\lambda).
			\end{equation}
		\end{thm}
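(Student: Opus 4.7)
The plan is to mirror the proof of Proposition \ref{prop:boundary_nondyn_braid_rel}, with Proposition \ref{prop:boundary_tensor} playing the role of the group-like identity \eqref{E:wk=wk-otimes-wg}, and with Proposition \ref{prop:dynamical_weyl_group_braid_relations} supplying the $\g$-side of the argument. The key step is an inductive lemma: \emph{if $X$ is an integrable $\k$-module on which \eqref{E:bdw-braid-relns} holds for all generic $\lambda$, and $V$ is any finite-dimensional $\g$-module, then \eqref{E:bdw-braid-relns} holds on $X\otimes V$.}

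To prove the lemma I would compose simple reflections on $X\otimes V$ via Proposition \ref{prop:boundary_tensor}, telescoping the $G_{X,V}$ factors so that both sides of the braid word take the form
\[
G_{X,V}(w_0\cdot\lambda)\,\cdot\, M\,\cdot\, G_{X,V}(\lambda)^{-1},
\]
where $w_0$ is the common value of the two braid words in $W$ and $M$ is a product of factors $A^{\k,\alpha_{i_k}}_X(\mu_k-h|_V)\otimes A^{\g,\alpha_{i_k}}_V(\mu_k)$, with $\mu_k$ the appropriate dot-shift of $\lambda$ by a partial subword. Applying $M$ to a weight vector $\xi\otimes v\in X\otimes V[\nu]$ and repeatedly using the identity $w\cdot\lambda - w\nu = w\cdot(\lambda-\nu)$ together with the weight-shift property of $A^{\g,\alpha}_V$, one finds that the $X$-parts assemble into a braid word in $A^{\k,\alpha}_X$ evaluated at $\lambda-\nu$, and the $V$-parts into a braid word in $A^{\g,\alpha}_V$ evaluated at $\lambda$. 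By Proposition \ref{prop:dynamical_weyl_group_braid_relations} the two $V$-parts agree, and by hypothesis the two $X$-parts agree, so $M_L=M_R$.

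Next I would use naturality of $A^{\k,\alpha}_X$ in $X$, which follows from the naturality of Verma-module intertwiners and of the boundary fusion operator (Proposition \ref{P:G-is-natural}), to pass \eqref{E:bdw-braid-relns} from $X\otimes V$ to any direct summand. Combined with the inductive lemma this reduces the theorem to verifying \eqref{E:bdw-braid-relns} on a small collection of seed $\k$-modules whose tensor products with finite-dimensional $\g$-modules exhaust all integrable irreducibles. Moreover, since \eqref{E:bdw-braid-relns} involves only the simple reflections $s_i,s_j$, and the operators $A^{\k,\alpha_i}_X,A^{\k,\alpha_j}_X$ see only the action of $b_{\alpha_i},b_{\alpha_j}$ on $X$, the problem further reduces to the rank-2 split symmetric sub-pair generated by $\{\alpha_i,\alpha_j\}$ acting on the corresponding restriction of $X$.

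The main obstacle will be this rank-2 base case. Unlike the non-dynamical situation, in which the boundary Weyl operator on any $\g$-module restricts from the usual one (Remark \ref{R:realtive-agrees-with-usual}), the boundary dynamical Weyl operator $A^{\k,\alpha}_X(\lambda)$ is genuinely new, built from beta functions, so the rank-2 identities cannot simply be pulled back from $\g$ and must be checked directly. In an appendix parallel to Appendix \ref{appendix:rank_two}, I would treat each rank-2 pair in turn, combining the inductive tensor step with a direct computation on seed $\k$-modules (in the simplest cases the one-dimensional modules $\C_x$), using the divided-power expansions of Theorem \ref{T:simplified-dynamical-series} and the shift identities of Lemma \ref{lm:boundary_dw_shift}. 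Since all matrix coefficients are meromorphic in $\lambda$ of the type treated by Proposition \ref{prop:beta_lin_independence_rat_func}, it suffices in each case to verify the identity for sufficiently dominant integral $\lambda$, where Definition \ref{D:A-so2} provides a purely algebraic Verma-module realization of the operators.
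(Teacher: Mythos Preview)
Your proposal is correct and follows essentially the same approach as the paper: use Proposition \ref{prop:boundary_tensor} to telescope the $G_{X,V}$ factors and split the braid word on $X\otimes V$ into an $X$-part and a $V$-part, invoke Proposition \ref{prop:dynamical_weyl_group_braid_relations} for the $V$-part, and thereby reduce to a finite set of seed $\k$-modules in rank~2. The only minor difference is in the execution of the rank-2 base case: the paper's Appendix \ref{appendix:rank_two} handles each type by explicit matrix computations on specific small seed modules (e.g.\ $V(1)$ for $A_2$, one-dimensional determinant characters for $C_2$) rather than via the divided-power series or shift identities you mention, but either route would work.
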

		\begin{proof}
			Rewriting \cref{prop:boundary_tensor} gives that
			\begin{equation}
				\label{eq:lm_boundary_tensor_conj}
				G_{X,V}(s_{\alpha}\cdot \lambda)^{-1} A^{\k,\alpha}_{X\otimes V}(\lambda) G_{X,V}(\lambda) =   A_X^{\k,\alpha} (\lambda - h|_V) \otimes A_V^{\g,\alpha}(\lambda).
			\end{equation}
			Assume that the braid relation is proven for $A_X^{\k,\alpha} (\lambda)$. We know by \cite{EtingofVarchenkoDynamicalWeyl} that the braid relation for the usual dynamical Weyl group operators $A_V^{\g,\alpha}(\lambda)$ is satisfied as well. We will show that for any two simple roots $\alpha_i, \alpha_j\in \Pi$
			\begin{equation}\label{braid-XtimesV}
				\ldots A^{\k,\alpha_{j}}_{X\otimes V}(s_{{i}}\cdot \lambda) A^{\k,\alpha_{i}}_{X\otimes V}(\lambda) = \ldots A^{\k,\alpha_{i}}_{X\otimes V}(s_{{j}}\cdot \lambda) A^{\k,\alpha_{j}}_{X\otimes V}(\lambda).
			\end{equation}
			Since $A^{\g,\alpha}_{V}(\lambda)$ maps the $\mu$-weight space $V[\mu]$ to the $s_{\alpha}\mu$-weight space $V[s_{\alpha}\mu]$, we have 
			\begin{gather*}
				\ldots \bigg(A_X^{\k, \alpha_j}(s_{i}\cdot \lambda - h|_V)\otimes A_V^{\g, \alpha_j}(s_{i}\cdot \lambda)\bigg)\circ \bigg(A_X^{\k,\alpha_{i}} (\lambda - h|_V) \otimes A_V^{\g,\alpha_{i}}(\lambda)\bigg) \\
				= \\
				\bigg(\ldots A^{\k,\alpha_{j}}_{X}(s_i\cdot \lambda - h|_V) \circ A^{\k,\alpha_{i}}_{X}(\lambda - h|_V)\bigg) \otimes \bigg( \ldots A_V^{\g, \alpha_j}(s_i\cdot \lambda)\circ A_V^{\g,\alpha_{i}}(\lambda)\bigg) \\
				= \\
				\bigg(\ldots A^{\k,\alpha_{i}}_{X}(s_j\cdot \lambda - h|_V) \circ A^{\k,\alpha_{j}}_{X}(\lambda - h|_V)\bigg) \otimes \bigg( \ldots A_V^{\g, \alpha_i}(s_j\cdot \lambda)\circ A_V^{\g,\alpha_{j}}(\lambda)\bigg) \\
				= \\
				\ldots \bigg(A_X^{\k, \alpha_i}(s_{j}\cdot \lambda - h|_V)\otimes A_V^{\g, \alpha_i}(s_{j}\cdot \lambda)\bigg)\circ \bigg(A_X^{\k,\alpha_{j}} (\lambda - h|_V) \otimes A_V^{\g,\alpha_{j}}(\lambda)\bigg)
			\end{gather*}
			Where the second equality follows from our assumption. This shows that the the braid relation is satisfied by the operators from the right hand side of \eqref{eq:lm_boundary_tensor_conj} so the braid relation is also satisfied by the operators
			\[
			G_{X,V}(s_{\alpha}\cdot \lambda)^{-1} A^{\k,\alpha}_{X\otimes V}(\lambda) G_{X,V}(\lambda)
			\]
			from the left-hand side of \eqref{eq:lm_boundary_tensor_conj}. Equation \eqref{braid-XtimesV} follows immediately.
			
			Therefore, it is enough to prove the braid relations for irreducible representations $X_1,\ldots,X_n$ such that any integrable representation of $\k$ is a direct summand of $X_i \otimes V$ for some $i$ and a suitable representation $V$ of $\g$. Note that it is enough to prove this when $\g$ is a rank 2 algebra. This is done in \cref{appendix:rank_two}.
		\end{proof}
		
		\subsection{Boundary \texorpdfstring{$B$}{B}-operators}\label{subsect:k-B-operator}
		
		As in \cref{subsect:sl2_dw_diag}, we can define the $B$-operators. Recall from \cref{def:boundary_weyl_sl2} that the boundary Weyl group operator is defined by $\tilde{s}^{\k}_{\alpha} = \exp\left({\frac{\pi b_{\alpha}}{2}}\right)$. Since $\sin(x) = (\exp(ix)- \exp(-ix))/{2i}$,  we find that
		\begin{equation}
			\label{eq::sin_boundary_weyl_group}
			\sin \left(\pi \frac{\mu - ib_{\alpha}}{2}\right) = \frac{\exp(\frac{i\pi\mu}{2})\exp(\frac{\pi b_{\alpha}}{2}) - \exp(\frac{-i\pi \mu}{2})\exp(\frac{-\pi b_{\alpha}}{2})}{2i} = \frac{\tilde{s}^{\k}_{\alpha} \exp(\frac{i\pi \mu}{2}) - (\tilde{s}^{\k}_{\alpha})^{-1} \exp(-\frac{i\pi \mu}{2})}{2i},
		\end{equation}
		for $\mu \in \C$.
		
		\begin{defn}
			\label{def:boundary_b_operator}
			The \defterm{boundary $B$-operator} for a (not necessarily simple) root $\beta$ is defined as follows
			\[
			\cB^{\k}_{\beta} (t) := \left(\sin \left(\pi \frac{t -1 - ib_{\beta}}{2}\right) \right)^{-1} A^{\k}(b_{\beta},t-1) = \frac{(-2i)^t}{\pi t} \Beta\left( \frac{t+ib_{\beta}}{2}, \frac{t-ib_{\beta}}{2} \right)
			\]
			Here, $A^{\k}(b_{\beta}, t-1)$ is as in notation \eqref{eq::boundary_dynamical_weyl_group_scalar_notation}.
		\end{defn}

		\begin{remark}
			The shift of variable by $-1$ corresponds to the $\rho$-shift. It differs from the $B$-operators \eqref{eq::b_operator} for $\g$, but it will be more convenient for the $(\so_{2n},\rO_m)$-duality of \cref{subsect::quantum_orthogonal_duality}.
		\end{remark}
		
		\begin{remark}
			A similar beta factor arises for intertwining operators among principal series representations of $\SL(2,\mathbb{R})$, see \cite{Wallach} (or the lecture notes \cite{Garrett} for the same beta factor). Indeed: as indicated in \cite{ReshetikhinStokman}, if we denote by $K$ the \emph{compact real group} with Lie algebra $\k$, then the space of $K$-finite vector in a principal series representation is isomorphic to $K$-finite vectors in the completion of the dual Verma module. 
		\end{remark}
		
		Thanks to the next lemma, these operators behave well with respect to the boundary Weyl group operators of \cref{sect:boundary_braid_group}.
		\begin{lm}
			\label{lm:boundary_b_operator_weyl_group_action}
			For any $\beta\in R_+$ and simple root $\alpha_i$, we have
			\[
			(\tilde{s}^{\k}_{i})^{\pm} \cdot  \cB^{\k}_{\beta}(t)\cdot  (\tilde{s}^{\k}_{i})^{\mp}=  \cB^{\k}_{s_{i}(\beta)} (t).
			\]
			\begin{proof}
				We want to compute the action of conjugation by $\tilde{s}_i^{\k}$, the adjoint action of $\tilde{s}_i^{\k}$, on the $B$ operator associated to $\beta$. By \cref{lm:braid_group_action_g_sign}, we obtain that $\mathrm{Ad}_{\tilde{s}^{\k}_i}(b_{\beta}) = \pm b_{s_i(\beta)}$. Application of $\mathrm{Ad}_{\tilde{s}^{\k}_i}$ to the boundary $B$-operator associated to $\beta$ then gives
				\[
				\mathrm{Ad}_{\tilde{s}^{\k}_i} \cB^{\k}_{\beta}(t) = \frac{(-2i)^t}{\pi t} \Beta\left( \frac{t\pm ib_{s_i(\beta)}}{2}, \frac{t\mp ib_{s_i(\beta)}}{2} \right).
				\]
				As the beta function is symmetric, the lemma follows for $\tilde{s}^{\k}_{i}$. Similar arguments work for $(\tilde{s}^{\k}_{i})^{-1}$.
			\end{proof}
		\end{lm}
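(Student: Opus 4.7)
The plan is to observe that $\cB^{\k}_{\beta}(t)$ is, up to the scalar prefactor $\frac{(-2i)^t}{\pi t}$, a meromorphic function applied via functional calculus to the semisimple element $b_{\beta}$. Since conjugation by an invertible operator intertwines functional calculus (this can be made rigorous simply by decomposing into $b_{\beta}$-eigenspaces), the lemma reduces to computing $\mathrm{Ad}_{\tilde{s}^{\k}_i}(b_{\beta})$ and then tracking what happens when the result is inserted back into the beta function.

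The first step is therefore to compute $\mathrm{Ad}_{\tilde{s}^{\k}_i}(b_{\beta})$. By \cref{R:realtive-agrees-with-usual}, the conjugation action of $\tilde{s}^{\k}_i$ on any $\g$-module (in particular on $\g$ itself, viewed as the adjoint representation) agrees with that of $\tilde{s}^{\g}_i$. Writing $b_{\beta}$ as a scalar multiple of $B_{\beta} = E_{-\beta} - E_{\beta}$ and applying \cref{lm:braid_group_action_g_sign} to $E_{\pm\beta}$, the equality $C_{\beta} = C_{-\beta}$ collapses the two sign factors into a single global sign, yielding $\mathrm{Ad}_{\tilde{s}^{\k}_i}(b_{\beta}) = \pm\, b_{s_i(\beta)}$. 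Since the Weyl group acts by isometries of the Killing form, the Chevalley normalization factors relating $B$ to $b$ are the same for $\beta$ and $s_i(\beta)$, so no extra scalars appear.

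Plugging this into \cref{def:boundary_b_operator}, conjugation sends $\cB^{\k}_{\beta}(t)$ to
\[
\frac{(-2i)^t}{\pi t}\,\Beta\!\left(\frac{t \pm i\, b_{s_i(\beta)}}{2},\; \frac{t \mp i\, b_{s_i(\beta)}}{2}\right).
\]
The crucial final ingredient is the symmetry $\Beta(x,y)=\Beta(y,x)$, which causes the sign ambiguity to disappear and produces exactly $\cB^{\k}_{s_i(\beta)}(t)$. The case of conjugation by $(\tilde{s}^{\k}_i)^{-1}$ is handled identically: applying the same argument with $\gamma := s_i(\beta)$ and using $s_i^2 = 1$ gives $\mathrm{Ad}_{(\tilde{s}^{\k}_i)^{-1}}(b_{\beta}) = \pm\, b_{s_i(\beta)}$, and the beta symmetry again collapses both signs to the same answer.

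The only real obstacle is justifying the functional calculus step, but this is essentially automatic: $b_{\beta}$ acts semisimply on any integrable $\k$-module in the sense of \cref{notation:analytic_continuation_operators}, $\tilde{s}^{\k}_i$ sends $b_{\beta}$-eigenspaces with eigenvalue $x$ to $b_{s_i(\beta)}$-eigenspaces with eigenvalue $\pm x$, and the eigenvalue equality follows on the nose once the beta-function symmetry is invoked.
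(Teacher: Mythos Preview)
Your proof is correct and follows essentially the same approach as the paper: both arguments use \cref{lm:braid_group_action_g_sign} to obtain $\mathrm{Ad}_{\tilde{s}^{\k}_i}(b_{\beta}) = \pm b_{s_i(\beta)}$, substitute into the beta-function formula for $\cB^{\k}_{\beta}(t)$, and then invoke the symmetry $\Beta(x,y)=\Beta(y,x)$ to eliminate the sign ambiguity. Your version is slightly more explicit about the functional-calculus justification and the matching of Chevalley normalization factors, but the core argument is identical.
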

		
		\begin{cor}
			\label{cor:sine_as_braid_group}
			For any simple root $\alpha_i$ and $u\in \C$, we have 
			\[
			\cB^{\k}_{\beta}(t) \sin \left(\pi \frac{u-1 - ib_{\alpha_i}}{2} \right)= \sin\left(\pi\frac{u-1 - ib_{\alpha_i}}{2} \right)\cB^{\k}_{s_i(\beta)}(t).
			\]
			\begin{proof}
				Follows from equation \eqref{eq::sin_boundary_weyl_group} and \cref{lm:boundary_b_operator_weyl_group_action}.
			\end{proof}
		\end{cor}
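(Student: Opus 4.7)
The plan is to reduce the claim to Lemma \ref{lm:boundary_b_operator_weyl_group_action} by expanding the sine as a scalar-linear combination of $\tilde{s}^{\k}_{\alpha_i}$ and its inverse. Concretely, substituting $\mu = u-1$ in equation \eqref{eq::sin_boundary_weyl_group} gives
\[
\sin\!\left(\pi\frac{u-1-ib_{\alpha_i}}{2}\right) = \frac{1}{2i}\Big( e^{i\pi(u-1)/2}\,\tilde{s}^{\k}_{\alpha_i} - e^{-i\pi(u-1)/2}\,(\tilde{s}^{\k}_{\alpha_i})^{-1} \Big).
\]
Since the exponential factors are scalars, they commute freely with every operator in sight, so the entire argument reduces to an intertwining identity between $\cB^{\k}_{\beta}(t)$ and $(\tilde{s}^{\k}_{\alpha_i})^{\pm 1}$.

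Next, I would rewrite Lemma \ref{lm:boundary_b_operator_weyl_group_action} in intertwining form: multiplying the identity $\tilde{s}^{\k}_i \,\cB^{\k}_{\beta}(t)\,(\tilde{s}^{\k}_i)^{-1} = \cB^{\k}_{s_i(\beta)}(t)$ on the right by $\tilde{s}^{\k}_i$ yields $\tilde{s}^{\k}_i\,\cB^{\k}_{\beta}(t) = \cB^{\k}_{s_i(\beta)}(t)\,\tilde{s}^{\k}_i$, and similarly $(\tilde{s}^{\k}_i)^{-1}\,\cB^{\k}_{\beta}(t) = \cB^{\k}_{s_i(\beta)}(t)\,(\tilde{s}^{\k}_i)^{-1}$ (after using the $\pm/\mp$ version of the same lemma). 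Equivalently, $\cB^{\k}_{\beta}(t)\,(\tilde{s}^{\k}_i)^{\pm 1} = (\tilde{s}^{\k}_i)^{\pm 1}\,\cB^{\k}_{s_i(\beta)}(t)$.

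Finally, I multiply the displayed sine expansion on the left by $\cB^{\k}_{\beta}(t)$, use the intertwining identities from the previous paragraph term by term to move $\cB^{\k}_{\beta}(t)$ past each $\tilde{s}^{\k}_{\alpha_i}$ and $(\tilde{s}^{\k}_{\alpha_i})^{-1}$, and pull the resulting $\cB^{\k}_{s_i(\beta)}(t)$ out on the right (the scalar exponentials pose no obstruction to this rearrangement). What remains inside the bracket is exactly the sine operator in its original form, giving
\[
\cB^{\k}_{\beta}(t)\,\sin\!\left(\pi\frac{u-1-ib_{\alpha_i}}{2}\right) = \sin\!\left(\pi\frac{u-1-ib_{\alpha_i}}{2}\right)\,\cB^{\k}_{s_i(\beta)}(t).
\]
There is really no obstacle here: the whole proof is a two-line algebraic manipulation, and the only care needed is to ensure that the scalar prefactors $e^{\pm i\pi(u-1)/2}$ are treated as constants (so the intertwining identities apply termwise and the sine can be reassembled on the right).
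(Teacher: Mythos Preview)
Your proof is correct and follows exactly the approach indicated in the paper: expand the sine via equation \eqref{eq::sin_boundary_weyl_group} as a scalar-linear combination of $\tilde{s}^{\k}_{\alpha_i}$ and $(\tilde{s}^{\k}_{\alpha_i})^{-1}$, then apply the intertwining form of Lemma \ref{lm:boundary_b_operator_weyl_group_action} termwise. The paper's one-line proof is precisely this argument, and you have unpacked it accurately.
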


		In fact, the additional sine contribution satisfies the following property.
		\begin{lm}
			\label{lm:sine_braid_relation}
			Let $S_{\alpha_i}(\lambda) = \sin\left(\pi\frac{\langle \lambda-\rho,\alpha_i^{\vee}\rangle - b_{\alpha_i}}{2}\right)$
			The collection of operators $\{S_{\alpha_i}(\lambda)\}$ for simple roots $\{\alpha_i\}$ satisfies the dynamical braid relation 
			\begin{align}
				\begin{split}
					\label{eq::sin_braid_relation}
					\ldots S_{\alpha_i}(s_j s_i\lambda) S_{\alpha_j}(s_i\lambda) S_{\alpha_i}(\lambda) = \ldots S_{\alpha_j}(s_i s_j\lambda) S_{\alpha_i}(s_j\lambda) S_{\alpha_j}(\lambda).
				\end{split}
			\end{align}
			\begin{proof}
				It is enough to prove the lemma for rank 2 algebras which is done case-by-case in \cref{appendix:rank_two}.
			\end{proof}
		\end{lm}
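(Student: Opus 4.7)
Plan: The strategy is to re-express each $S_{\alpha_i}(\lambda)$ using the non-dynamical boundary Weyl group operators, thereby reducing the dynamical braid relation to the non-dynamical braid relation (Proposition \ref{prop:boundary_nondyn_braid_rel}) together with scalar bookkeeping that can be checked in rank $2$.

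First, I would apply formula \eqref{eq::sin_boundary_weyl_group} to write
\[
S_{\alpha_i}(\lambda) \;=\; \frac{1}{2i}\Bigl(\tilde{s}^{\k}_i\, e^{i\pi\langle\lambda-\rho,\alpha_i^{\vee}\rangle/2} \;-\; (\tilde{s}^{\k}_i)^{-1}\, e^{-i\pi\langle\lambda-\rho,\alpha_i^{\vee}\rangle/2}\Bigr),
\]
with the exponentials viewed as scalar functions of $\lambda$ and $\tilde{s}^{\k}_i$ as an operator on $X$. Substituting this into both sides of equation \eqref{eq::sin_braid_relation} expands each side into a sum of $2^m$ terms, where $m$ is the order of $s_is_j$ in $W$. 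Each term has the form (monomial in $\tilde{s}^{\k,\pm 1}_i, \tilde{s}^{\k,\pm 1}_j$) times a scalar exponential in $\lambda$.

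Since the only data entering the relation are the two simple roots $\alpha_i,\alpha_j$ and the rank $2$ subsystem they generate, it suffices to verify the identity in each rank $2$ case: $A_1 \times A_1$, $A_2$, $B_2$, and $G_2$. In each case, I would use Proposition \ref{prop:boundary_nondyn_braid_rel}, extended to include inverses via the standard braid-group presentation, to collect the operator parts of each expansion into a common canonical form. What remains is to match the scalar exponential coefficient of each such canonical monomial on the two sides. These scalar identities reduce to manipulations of the pairings using $\langle w\cdot\lambda-\rho,\alpha^{\vee}\rangle = \langle\lambda-\rho, w^{-1}(\alpha^{\vee})\rangle$ together with the root enumeration of the rank $2$ subsystem.

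The main obstacle is the $G_2$ case, where the $6$-fold relation produces $2^6 = 64$ terms on each side. One must carefully track the Weyl group shifts $s_{i_1}\cdots s_{i_k}\cdot\lambda$ as well as the signs from $\mathrm{Ad}_{\tilde{s}^{\k}_i}(b_{\alpha}) = \pm b_{s_i(\alpha)}$ provided by Lemma \ref{lm:braid_group_action_g_sign}, which enter whenever a $b_{\alpha}$ is moved past an $\tilde{s}^{\k}_i$. The shorter parallel computations for $A_1\times A_1$ ($2$-term), $A_2$ ($3$-term), and $B_2$ ($4$-term) follow the same template. All four case-by-case verifications are handled in the appendix.
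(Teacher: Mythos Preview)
Your overall strategy---reduce to rank $2$ and use formula \eqref{eq::sin_boundary_weyl_group} to express each $S_{\alpha_i}(\lambda)$ as a linear combination of $\tilde{s}^{\k}_i$ and $(\tilde{s}^{\k}_i)^{-1}$ with scalar coefficients---is the same starting point as the paper's. However, your claim that Proposition~\ref{prop:boundary_nondyn_braid_rel} ``extended to include inverses via the standard braid-group presentation'' suffices to match the operator monomials is not correct. In type $A_2$, after expanding and collecting by the linearly independent scalar functions of $\lambda$, one of the operator identities that must hold is
\[
\tilde{s}^{\k}_{\alpha}(\tilde{s}^{\k}_{\beta})^{-1}\tilde{s}^{\k}_{\alpha} \;=\; \tilde{s}^{\k}_{\beta}(\tilde{s}^{\k}_{\alpha})^{-1}\tilde{s}^{\k}_{\beta}.
\]
This relation is \emph{not} a consequence of the braid relation $aba=bab$ in the abstract Artin group $B_3$: for instance, under the standard surjection $B_3\to\mathrm{SL}_2(\mathbb{Z})$ sending $a\mapsto\left(\begin{smallmatrix}1&1\\0&1\end{smallmatrix}\right)$, $b\mapsto\left(\begin{smallmatrix}1&0\\-1&1\end{smallmatrix}\right)$, the images of $ab^{-1}a$ and $ba^{-1}b$ are distinct. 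The paper therefore establishes this extra identity separately, by the tensor reduction of \eqref{E:wk=wk-otimes-wg} and a direct check on $V(1)$ and on the vector representation of $\sl_3$. Your invocation of Lemma~\ref{lm:braid_group_action_g_sign} does not help here: once the expansion is made, there are no $b_\alpha$'s left to commute past anything---only words in the $\tilde{s}^{\k,\pm 1}_i$.

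For types $C_2$ and $G_2$ the paper does not follow your uniform expansion at all, and thereby avoids both the $2^4,2^6$ blow-up and the proliferation of such extra operator identities. In type $C_2$ it observes that $(\tilde{s}^{\k}_\alpha)^2$ and $(\tilde{s}^{\k}_\beta)^2$ act by scalars on each irreducible $\k$-module, so $S_\gamma(\mu)$ is a scalar multiple of $\tilde{s}^{\k}_\gamma$ and the dynamical relation collapses at once to the non-dynamical one. In type $G_2$ it uses the isomorphism $\k\cong\so_3\oplus\so_3$ to rewrite $S_\alpha,S_\beta$ (up to a sign) as coproducts of the type-$A_2$ sine operators, and then derives the $G_2$ relation by five successive applications of the already-proven $A_2$ relation.
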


		The following theorem is a boundary analog of \cref{thm:b_operators_generalized_r_matrix}. 
		
		\begin{thm}
        \label{thm:broundary_b_ops_r_matrix}
			The operators $\{\cB^{\k}_{\beta}(\lambda)\}$ form a closed $R$-matrix in the sense of \cref{def:generalized_r_matrix}. 
			\begin{proof}
				Observe the relations in \cref{def:generalized_r_matrix} are equivalent to the same relations for $G_{\alpha}^{-1}$. So, it is enough to prove the statement for 
				\[
				\cB^{\k}_{\alpha}(\lambda)^{-1} = \sin \left(\pi \frac{\langle \lambda - \rho, \alpha^{\vee} \rangle - ib_{\alpha}}{2}\right) A^{\k,\alpha}(\lambda-\rho)^{-1}. 
				\]
				It follows easily from \cref{thm:boundary_braid_relations} that the operators $\tilde{A}^{\k,\alpha}(\lambda)^{-1}:= A^{\k,\alpha}(\lambda-\rho)^{-1}$ satisfy the \emph{unshifted} dynamical braid relation
				\[
				\ldots \tilde{A}^{\k,\alpha}(s_{\beta}s_{\alpha}\lambda)^{-1} \tilde{A}^{\k,\beta}(s_{\alpha}\lambda)^{-1}\tilde{A}^{\k,\alpha}(\lambda)^{-1} = \ldots \tilde{A}^{\k,\beta}(s_{\alpha}s_{\beta}\lambda)^{-1} \tilde{A}^{\k,\alpha}(s_{\beta}\lambda)^{-1}\tilde{A}^{\k,\beta}(\lambda)^{-1},
				\]
				and the same is true for the sine operators $S_{\alpha}(\lambda)$ by \cref{lm:sine_braid_relation}. Multiply both sides by
				\begin{align*}
					S_{\alpha}(\lambda) S_{\beta}(s_{\alpha}\lambda) S_{\alpha}(s_{\beta}s_{\alpha} \lambda) \ldots = S_{\beta}(\lambda) S_{\alpha}(s_{\beta}\lambda) S_{\beta}(s_{\alpha}s_{\beta} \lambda) \ldots
				\end{align*}
				on the right. By pushing the sine terms to the left accordingly and using \cref{cor:sine_as_braid_group}, we conclude. 
			\end{proof}
		\end{thm}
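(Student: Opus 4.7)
The plan is to reduce the closed $R$-matrix identities for $\cB^{\k}_{\beta}(\lambda)$ to the dynamical braid relations for the boundary dynamical Weyl group (Theorem~\ref{thm:boundary_braid_relations}) combined with the auxiliary sine-braid identity from Lemma~\ref{lm:sine_braid_relation}. First I would observe that a family of invertible operators $\{G^{\alpha}\}$ satisfies the relations of Definition~\ref{def:generalized_r_matrix} if and only if $\{(G^{\alpha})^{-1}\}$ does, since each relation has the form $G^{\alpha_1}\cdots G^{\alpha_k}=G^{\alpha_k}\cdots G^{\alpha_1}$, which is symmetric under inversion of each factor. So it is enough to work with
\[
\cB^{\k}_{\alpha}(\lambda)^{-1}=\sin\!\left(\pi\tfrac{\langle\lambda-\rho,\alpha^{\vee}\rangle-ib_{\alpha}}{2}\right)A^{\k,\alpha}(\lambda-\rho)^{-1}=S_{\alpha}(\lambda)\,\tilde A^{\k,\alpha}(\lambda)^{-1},
\]
where $\tilde A^{\k,\alpha}(\lambda):=A^{\k,\alpha}(\lambda-\rho)$.

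Next I would unpack the two pieces separately. Substituting $\lambda\mapsto\lambda-\rho$ in the identity of Theorem~\ref{thm:boundary_braid_relations} converts the dot action $w\cdot\lambda=w(\lambda+\rho)-\rho$ into the ordinary action $w\lambda$, so the operators $\tilde A^{\k,\alpha_i}(\lambda)^{-1}$ satisfy an \emph{unshifted} dynamical braid relation
\[
\ldots\tilde A^{\k,\alpha_i}(s_js_i\lambda)^{-1}\tilde A^{\k,\alpha_j}(s_i\lambda)^{-1}\tilde A^{\k,\alpha_i}(\lambda)^{-1}=\ldots\tilde A^{\k,\alpha_j}(s_is_j\lambda)^{-1}\tilde A^{\k,\alpha_i}(s_j\lambda)^{-1}\tilde A^{\k,\alpha_j}(\lambda)^{-1},
\]
and Lemma~\ref{lm:sine_braid_relation} tells us the sine operators $S_{\alpha_i}(\lambda)$ satisfy exactly the same unshifted relation.

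With these two ingredients in hand, the strategy is to multiply the $\tilde A^{-1}$ braid relation on the right by the common product $S_{\alpha}(\lambda)S_{\beta}(s_{\alpha}\lambda)\cdots=S_{\beta}(\lambda)S_{\alpha}(s_{\beta}\lambda)\cdots$ of sine operators and then to intersperse the sines inside the products of $\tilde A^{-1}$'s. The essential mechanism is Corollary~\ref{cor:sine_as_braid_group}, which says that pushing a sine operator labeled by a root $\gamma$ past a boundary $A$-factor labeled by a simple root $\alpha_i$ transforms the label by the reflection $s_{\alpha_i}$; tracking the labels root-by-root through each product, one checks that after all the sines are moved to stand immediately to the left of the $\tilde A^{-1}$ factor of matching index, the grouped pairs $S_{\gamma}(\cdot)\tilde A^{\k,\gamma}(\cdot)^{-1}$ are exactly the factors $\cB^{\k}_{\gamma}(\cdot)^{-1}$ required by the corresponding closed $R$-matrix identity.

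The main obstacle is bookkeeping: one must verify in each of the four rank-2 root systems ($A_1\times A_1$, $A_2$, $B_2$, $G_2$) that the sine factors produced by the commutation procedure carry the correct root indices and dynamical arguments to assemble into the prescribed product pattern of Definition~\ref{def:generalized_r_matrix}. This can be done systematically by reducing to the rank-2 case (as in Proposition~\ref{prop:boundary_nondyn_braid_rel} and Theorem~\ref{thm:boundary_braid_relations}) and applying Corollary~\ref{cor:sine_as_braid_group} inductively along a reduced expression for the longest element of the relevant dihedral Weyl group, so that each transposition of a sine across an $A^{-1}$ produces precisely the next simple root in the decomposition.
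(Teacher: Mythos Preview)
Your proposal is correct and follows essentially the same approach as the paper: pass to inverses, factor $\cB^{\k}_{\alpha}(\lambda)^{-1}=S_{\alpha}(\lambda)\tilde A^{\k,\alpha}(\lambda)^{-1}$, invoke the unshifted braid relations for $\tilde A^{-1}$ (Theorem~\ref{thm:boundary_braid_relations}) and for $S$ (Lemma~\ref{lm:sine_braid_relation}), multiply by the sine product on the right, and push sines through via Corollary~\ref{cor:sine_as_braid_group}. One small inaccuracy: Corollary~\ref{cor:sine_as_braid_group} commutes a sine labeled by a \emph{simple} root $\alpha_i$ past a $\cB^{\k}_{\beta}$ labeled by an arbitrary root $\beta$ (changing $\beta$ to $s_i\beta$), not the other way around as you describe; in practice one first merges the matching $S_{\alpha_i}$ and $\tilde A^{\k,\alpha_i}$ into a $\cB^{\k}_{\alpha_i}$ and then uses the Corollary to reflect its root label as subsequent sines are moved past it.
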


  \begin{remark}
      Note that the relation between $B$-operators and dynamical Weyl group is different in the boundary case compared to the one for $\g$ of \cref{prop:dynamical_weyl_group_b_operator}. \cref{thm:broundary_b_ops_r_matrix} was the main motivation to use the sine factor instead of the boundary Weyl group operator.
  \end{remark}

        \section{Boundary Casimir connection}\label{sect:Conections}
		
		We briefly recall the basic notions concerning connections following \cite[Lecture 7]{EtingofFrenkelKirillov}. Then we introduce a new example of a flat connection, which is the boundary analog of the Casimir connection. 
		
		\subsection{Background on connections and differential operators}\label{subsect::background-connection}
		
		Let $M$ be a complex manifold and let $E$ be a finite dimensional complex vector bundle on $M$. 
		
		\begin{remark}
			For our purposes, the only examples to consider are when $M= \C^n\setminus\{H_{\alpha_1}, \dots, H_{\alpha_{\ell}}\}$, for $H_{\alpha_i}$ some codimension $1$ linear subspaces, and when $E$ a trivial vector bundle of the form $M\times \C^r$.  
		\end{remark}
		
		For an open subset $U\subset M$, write $\Gamma(U, E)$ for the holomorphic sections of $E$ over $U$. If $f\in \cO_M$ is a holomorphic function and $s\in \Gamma(U, E)$, then $fs\in \Gamma(U, E)$.
		
		We write $TM$ for the tangent bundle of $M$, so a vector field is a section of $TM$. We also write $T^*M$ for the cotangent bundle.
		
		\begin{defn}
			A \defterm{connection} is a map of local sections of vector bundles $\nabla:\Gamma(U, E)\rightarrow \Gamma(U, E\otimes T^*M)$ such that
			\begin{equation}
				\nabla(fs) = s\otimes df + f\nabla(s).
			\end{equation}
		\end{defn}
		
		Thus, for each vector field $\xi\in \Gamma(M, TM)$, we have 
		\[
		\nabla_{\xi}:\Gamma(U, E)\rightarrow \Gamma(U, E), \qquad s\mapsto \langle \nabla(s), \xi\rangle. 
		\]
		The condition in equation \eqref{E:diff-op-version-conection} then becomes equivalent to $\nabla_{\xi}$ being a differential operator such that
		\begin{equation}\label{E:diff-op-version-conection}
			\nabla_{\xi}(fs) =  \partial_{\xi}(f)s + f\nabla_{\xi}(s).
		\end{equation}
		
		In what follows, we will only consider manifolds $M$ with a chosen coordinate system $(z_1,\ldots,z_n)$ and trivial bundles $E = \underline{V} := M \times V$ for some vector space $V$. Then $\nabla_{\partial/\partial z_i} = \frac{\partial}{\partial z_i} +A_i(z_1, \dots, z_n)$, for $i=1, \dots, n$, where $A_i(z_1, \dots, z_n)$ is an $\cO_M$-linear endomorphism of the fiber $E_{z_1, \dots, z_n}$. Vice versa, a connection can be recovered from the data of such operators.
		
		There is a natural group of transformations acting on the space of connections; for brevity, we recall the definition in the setup as above.
		\begin{defn}
			\label{def:gauge_transformation}
			A \defterm{gauge transformation} of a connection $\nabla$ on a trivial vector bundle $\underline{V}$ is a function $g(z) \colon M \rightarrow \GL(V)$ acting by
			\[
			\nabla \mapsto \nabla^g, \qquad \frac{\partial}{\partial z_i} + A_i \mapsto \frac{\partial}{\partial z_i} + g A_i g^{-1} - \frac{\partial g}{\partial z_i} g^{-1}. 
			\]
			Two connections are called \defterm{gauge equivalent} if they differ by a gauge transformation.
		\end{defn}
		
		In particular, if a finite group $\Gamma$ acts on the fiber $V$, then it defines gauge transformations by the rule
		\[
		\frac{\partial}{\partial z_i} + A_i \mapsto \frac{\partial}{\partial z_i} + \gamma A_i \gamma^{-1}, \ \gamma \in \Gamma.
		\]
		
    Assume that $\Gamma$ acts on the base $M$. Then there is an induced action on connections via $\nabla(z) \mapsto \nabla(\gamma(z))$ for $\gamma \in \Gamma$. If the action is free, we can consider the quotient space $M/\Gamma$. The following proposition is an easy corollary of definitions.
		\begin{prop}
			\label{prop::connection_to_quotient}
			If $\nabla(\gamma(z)) = \nabla^{\gamma^{-1}}(z)$, for all $\gamma\in \Gamma$, then $\nabla$ descends to the quotient $M/\Gamma$.
		\end{prop}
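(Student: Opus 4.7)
The plan is the standard descent argument: a connection on a trivial bundle over $M$ descends to the quotient $M/\Gamma$ (of the appropriate twisted bundle) precisely when it intertwines the $\Gamma$-action on $M$ with the $\Gamma$-action on the fiber $V$, and the hypothesis is a direct translation of exactly this intertwining.

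First, I would set up the descent data. Since $\Gamma$ acts freely on $M$, the projection $\pi\colon M\to M/\Gamma$ is an \'etale cover. The trivial bundle $\underline{V}=M\times V$ carries a diagonal $\Gamma$-action (the given action on $M$, the gauge action on $V$), so it descends to a vector bundle $\underline{V}/\Gamma$ on $M/\Gamma$. Sections of $\underline{V}/\Gamma$ are in bijection with $\Gamma$-equivariant sections of $\underline{V}$, i.e.\ maps $s\colon M\to V$ with $s(\gamma z)=\gamma\cdot s(z)$, and vector fields on $M/\Gamma$ lift uniquely to $\Gamma$-invariant vector fields on $M$.

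Second, I would verify that the hypothesis guarantees $\nabla$ preserves $\Gamma$-equivariance. In local coordinates, writing $\nabla=\sum_i dz_i\otimes(\partial/\partial z_i+A_i(z))$, a direct chain-rule computation shows that for any $\Gamma$-equivariant $s$ and $\Gamma$-invariant $\xi$,
\[
(\nabla_{\xi} s)(\gamma z)-\gamma\cdot(\nabla_{\xi} s)(z)
\]
is governed exactly by the failure of $\nabla(\gamma z)$ to equal $\nabla^{\gamma^{-1}}(z)$ in the sense of \cref{def:gauge_transformation}: the derivative term $\partial g/\partial z_i\cdot g^{-1}$ with $g=\gamma$ is what balances the two ``$\gamma$''s hitting $s$. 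Hence the assumed identity $\nabla(\gamma z)=\nabla^{\gamma^{-1}}(z)$ is precisely what is needed to conclude that $\nabla_{\xi} s$ is again $\Gamma$-equivariant.

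Finally, I would define the descended connection $\overline{\nabla}$ on $\underline{V}/\Gamma\to M/\Gamma$ by $\overline{\nabla}_{\overline\xi}\,\overline s:=\overline{\nabla_{\xi}s}$, choosing any lifts $\xi$ of $\overline\xi$ and $s$ of $\overline s$. The previous step ensures this is well-defined (independent of the lift, since two equivariant lifts of $\overline s$ agree, and two invariant lifts of $\overline\xi$ agree), and the Leibniz rule is inherited directly from that of $\nabla$. The only subtle point is step two—correctly matching the action of pulling back along $\gamma$ on the base against the gauge action on the fiber—but this amounts to one application of the chain rule and is precisely encoded by the hypothesis, so I do not expect any real obstacle.
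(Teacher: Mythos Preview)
The paper does not prove this proposition; it is stated as ``an easy corollary of definitions'' and left at that. Your descent argument is the standard one and is correct in outline, so you are supplying more than the paper does.

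One small correction in step two: since $\Gamma$ acts on the fiber $V$ by \emph{constant} linear automorphisms, the gauge term $\tfrac{\partial g}{\partial z_i}\,g^{-1}$ with $g=\gamma$ is identically zero, so it is not this term that ``balances the two $\gamma$'s hitting $s$.'' The balancing you need comes entirely from the chain rule applied to the derivative of the equivariant section $s(\gamma z)=\gamma\cdot s(z)$, combined with the hypothesis, which (once the vanishing derivative term is dropped) reads $A_i(\gamma z)=\gamma^{-1}A_i(z)\gamma$ in coordinates. With that adjustment the computation goes through as you describe.
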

		
		
		\begin{defn}
			A \defterm{flat connection} is a conection $\nabla$ such that $[\nabla_{\xi}, \nabla_{\eta}]=\nabla_{[\xi, \eta]}$, for all vector fields $\xi, \eta$.  
		\end{defn}
		
		In coordinates, $\nabla$ is a flat connection if and only if $[\nabla_{\partial/\partial z_i},\nabla_{\partial/\partial z_j}] = 0$ for all $1\le i,j \le n$.

		\subsection{Boundary Casimir connection}
		\label{subsect:boundary_casimir}
		In this subsection, we are in the setup of \cref{sect:boundary_fusion}. For instance, we consider an arbitrary simple Lie algebra $\g$ with a Cartan involution $\theta$.

		Tautologically, we consider any root $\alpha\in \h^*$ as a function on $\h$. Denote by $\h^{\reg} := \h \backslash \bigcup_{\alpha \in R_+} \{h|\langle h, \alpha\rangle=0\}$ the complement to the root hyperplanes. 
		
		\begin{defn}\label{D:boundary-Casimir}
			The \defterm{boundary Casimir connection} is a meromorphic connection on $\h^{\reg}$ given by
			\[
			\nabla^{\bCas} = \kappa\cdot \mathrm{d} - \sum_{\alpha\in R_+} \frac{\mathrm{d} \alpha}{\alpha} B_{\alpha}^2,
			\]
			where $\kappa\in \C^*$ is any nonzero number.
		\end{defn}
		
		We will prove the following result.
		\begin{thm}
			\label{thm::bcasimir_flat}
			The boundary Casimir connection is flat for any $\kappa$.
		\end{thm}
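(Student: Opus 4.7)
The plan is to reduce flatness to a system of Arnold/Kohno-type relations indexed by codimension-$2$ flats of the root hyperplane arrangement, and then to verify each relation by recognizing the relevant sum of residues as a scalar multiple of the Casimir of a reductive Lie subalgebra of $\k$.

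First I would observe that because each coefficient $B_\alpha^2$ is constant on $\h^{\mathrm{reg}}$ and $d(d\alpha/\alpha)=0$, the curvature of $\nabla^{\bCas}$ reduces to $\kappa^{-2}\omega\wedge\omega$, where
\[
\omega := \sum_{\alpha\in R_+}\frac{d\alpha}{\alpha}B_\alpha^2, \qquad \omega\wedge\omega = \tfrac{1}{2}\sum_{\substack{\alpha,\beta\in R_+\\ \alpha\neq\beta}}\frac{d\alpha\wedge d\beta}{\alpha\beta}[B_\alpha^2,B_\beta^2].
\]
By the standard Kohno/Arnold analysis of the linear dependencies among the logarithmic $2$-forms $\frac{d\alpha\wedge d\beta}{\alpha\beta}$ on a hyperplane complement, the vanishing $\omega\wedge\omega=0$ is equivalent to a family of identities indexed by the codimension-$2$ flats of the arrangement. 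In our setting each such flat is cut out by a rank-$2$ root subsystem $\Phi\subset R$, and the required identity becomes
\[
\bigg[B_\gamma^2,\;\sum_{\alpha\in\Phi\cap R_+}B_\alpha^2\bigg]=0\qquad\text{for every rank-$2$ subsystem }\Phi\subset R\text{ and every }\gamma\in\Phi\cap R_+.
\]

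Next, given such a $\Phi$, I would introduce the semisimple Lie subalgebra $\g_\Phi\subset\g$ generated by $\{E_\alpha,F_\alpha\}_{\alpha\in\Phi\cap R_+}$. Since $\theta(E_\alpha)=-F_\alpha$ for every root $\alpha$, $\g_\Phi$ is $\theta$-stable, so $\k_\Phi := \g_\Phi\cap\k = \g_\Phi^\theta$ is a Lie subalgebra of $\k$. From the Iwasawa decomposition of $\g_\Phi$ (equivalently, the $\theta$-decomposition of each plane $\g_\alpha\oplus\g_{-\alpha}$), $\k_\Phi$ has basis $\{B_\alpha\}_{\alpha\in\Phi\cap R_+}$. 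In particular $B_\gamma^2\in\U(\k_\Phi)$ for every $\gamma\in\Phi\cap R_+$, so it suffices to prove that $\sum_{\alpha\in\Phi\cap R_+}B_\alpha^2$ lies in the center of $\U(\k_\Phi)$.

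Finally, using the invariant form $(-,-)$ from Section \ref{ss:roots-and-gens}, the relations $(E_\mu,E_\nu)=(F_\mu,F_\nu)=0$ for $\mu,\nu\in R_+$ together with the normalization $(E_\alpha,F_\alpha)=2/\kappa^*(\alpha_{\mathrm{short}},\alpha_{\mathrm{short}})$ give
\[
(B_\alpha,B_\beta) \;=\; -2(E_\alpha,F_\alpha)\,\delta_{\alpha,\beta} \;=\; c\,\delta_{\alpha,\beta}
\]
with $c$ a nonzero constant independent of $\alpha,\beta\in\Phi\cap R_+$. Hence $(-,-)$ restricts to a non-degenerate invariant form on the reductive Lie algebra $\k_\Phi$, the family $\{B_\alpha\}_{\alpha\in\Phi\cap R_+}$ is an orthogonal basis of common norm $c$, and $c^{-1}\sum_{\alpha\in\Phi\cap R_+}B_\alpha^2$ is the associated Casimir element of $\k_\Phi$, which is central in $\U(\k_\Phi)$. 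This yields the required commutation. The main obstacle is the Kohno/Arnold reduction of the first paragraph --- extracting from $\omega\wedge\omega=0$ the clean algebraic condition indexed by rank-$2$ subsystems --- after which the identification with a Casimir of $\k_\Phi$ is uniform and immediate across the four rank-$2$ cases $A_1\times A_1$, $A_2$, $B_2$, $G_2$, with no separate case analysis required.
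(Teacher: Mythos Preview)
Your argument is correct and takes a genuinely different route from the paper. The paper reduces flatness to the commutativity of the operators $T(\lambda,\mu)=\sum_{\alpha\in R_+}\frac{\langle\alpha,\lambda\rangle}{\langle\alpha,\mu\rangle}B_\alpha^2$, shows their commutator is a regular (hence vanishing) rational function of $\mu$ by checking that each residue at a root hyperplane vanishes, and controls these residues via an $\alpha$-string identity $\sum_{j\in S(\alpha,\beta)}[B_\alpha^2,B_{\beta+j\alpha}^2]=0$ proved by a direct structure-constant computation (the key input being $N_{\alpha,\gamma}=N_{-\alpha,\gamma+\alpha}$, a consequence of $\theta$-invariance of the Killing form). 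You instead invoke the standard Kohno criterion to reduce directly to the rank-$2$ conditions, and then observe that because the normalization $\kappa(E_\alpha,F_\alpha)=1$ makes $\{B_\alpha\}_{\alpha\in\Phi\cap R_+}$ an orthogonal basis of $\k_\Phi=\g_\Phi^\theta$ of common length, $\sum_{\alpha\in\Phi\cap R_+}B_\alpha^2$ is literally a scalar multiple of the quadratic Casimir of $\k_\Phi$ and hence central. Your approach is more conceptual and explains \emph{why} the relations hold (they are Casimir centrality), with no structure-constant bookkeeping; the paper's approach is self-contained (no appeal to Kohno's lemma) and yields the slightly finer string-by-string vanishing, which is what falls out naturally from the residue calculation. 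Both arguments hinge on the same underlying fact---that the Cartan involution makes the $B_\alpha$'s behave uniformly with respect to the invariant form---but package it differently.
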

		
		For $\lambda, \mu\in \h^{\reg}$, let 
		\[
		T(\lambda,\mu) = \sum_{\alpha\in R_+} \frac{\langle \alpha,\lambda \rangle}{\langle \alpha,\mu\rangle} B_{\alpha}^2.
		\]
		One can easily see that \cref{thm::bcasimir_flat} follows from the following proposition.
		\begin{prop}
			\label{prop::bcasimir_flat_commute}
			For any $\lambda,\nu,\mu \in \h$, we have
			\[
			[T(\lambda,\mu),T(\nu,\mu)] = 0.
			\]
		\end{prop}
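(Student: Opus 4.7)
The plan is to reduce the commutator identity to rank-$2$ sub-root systems of $R$, and then verify it via the centrality of a Casimir-like element of $\k$.

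First I would expand bilinearly:
\[
[T(\lambda,\mu),T(\nu,\mu)] = \sum_{\substack{\alpha,\beta\in R_+\\ \alpha\ne\beta}}\frac{\langle\alpha,\lambda\rangle\langle\beta,\nu\rangle}{\langle\alpha,\mu\rangle\langle\beta,\mu\rangle}[B_\alpha^2,B_\beta^2].
\]
Grouping the unordered pairs $\{\alpha,\beta\}$ by the $2$-plane $L\subset\h^*$ that they span organizes this as a sum, indexed by rank-$2$ sub-root systems $\Psi = R\cap L$, of local contributions $C_\Psi(\mu)$. Each $C_\Psi(\mu)$ involves only the $B_\alpha$'s with $\alpha\in \Psi_+$ and depends on $\lambda,\nu,\mu$ only through their restrictions to $L$, so it is intrinsically a rank-$2$ quantity. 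By the standard Arnold/Kohno-type argument (linear dependence of any three roots in the $2$-plane combined with partial fractions in $\mu$) that underlies the proof of flatness of the usual Casimir connection, the vanishing of $C_\Psi(\mu)$ for all $\mu$ is equivalent to the centrality of $\sum_{\alpha\in \Psi_+}B_\alpha^2$ in the subalgebra $\U(\k_\Psi)\subset\U(\k)$ generated by $\{B_\alpha : \alpha\in\Psi_+\}$.

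For that centrality, I would use the orthogonality relations $\kappa(B_\alpha,B_\beta) = -2\delta_{\alpha,\beta}$ for $\alpha,\beta\in \Psi_+$, a direct computation from $\theta(E_\alpha)=-E_{-\alpha}$ and the normalization $\kappa(E_\alpha,E_{-\alpha})=1$. Thus $\{B_\alpha\}_{\alpha\in\Psi_+}$ is an orthogonal basis of the reductive Lie algebra $\k_\Psi := \k\cap\g_\Psi$ with respect to the invariant nondegenerate bilinear form $\kappa|_{\k_\Psi}$, so $\sum_{\alpha\in\Psi_+}B_\alpha^2$ is proportional to the Casimir of $\k_\Psi$ and hence central in $\U(\k_\Psi)$. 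This is exactly the input required by the previous paragraph.

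The main obstacle is making the structural reduction to rank $2$ precise, i.e.\ unpacking the Arnold-type partial fraction identities in the four possible rank-$2$ root systems $A_1\times A_1$, $A_2$, $B_2$, and $G_2$. The smaller cases are short: for $A_1\times A_1$ one even has $[B_\alpha,B_\beta]=0$ since $\alpha\pm\beta$ is not a root, and in $A_2$ the centrality relations collapse $C_\Psi(\mu)$ to a multiple of $\langle\alpha+\beta,\mu\rangle - \langle\alpha,\mu\rangle - \langle\beta,\mu\rangle = 0$. The $B_2$ and $G_2$ cases are mechanical but combinatorially heavier, and this is where the bulk of the bookkeeping sits; the centrality argument itself is quick once the orthogonality of the $B_\alpha$'s is noticed.
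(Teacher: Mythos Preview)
Your argument is correct, and it follows a genuinely different path from the paper's. The paper first isolates a key identity (its Lemma~9.5): for any two roots $\alpha\neq\pm\beta$, the sum $\sum_{j}[B_\alpha^2,B_{\beta+j\alpha}^2]$ over the $\alpha$-string through $\beta$ vanishes, proved by an explicit computation with structure constants using $\theta$-invariance (specifically $N_{\alpha,\gamma}=N_{-\alpha,\gamma+\alpha}$). It then treats $[T(\lambda,\mu),T(\nu,\mu)]$ as a rational function of $\mu$, computes the residue at each hyperplane $\langle\alpha,\mu\rangle=0$, collapses it into $\alpha$-strings, and uses the lemma to see each residue is zero; since the function decays at infinity, it vanishes identically. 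Your route instead invokes the Kohno-type flatness criterion for logarithmic connections $d-\sum_\alpha\frac{d\alpha}{\alpha}t_\alpha$ (reducing to $[t_\alpha,\sum_{\beta\in\Psi_+}t_\beta]=0$ for each rank-$2$ subsystem $\Psi$), and then observes that $\sum_{\alpha\in\Psi_+}B_\alpha^2$ is, up to a scalar, the Casimir of $\k_\Psi$ with respect to $\kappa|_{\k_\Psi}$, hence central. This is cleaner and more conceptual; the paper's approach in return yields the finer string-by-string identity of Lemma~9.5, in the spirit of \cite{FelderMarkovTarasovVarchenko}.

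Two minor points. First, what you call the ``main obstacle'' is not one: the Kohno criterion is a general statement about hyperplane arrangements and needs no case analysis by rank-$2$ type, and your Casimir argument is likewise type-independent (nondegeneracy and invariance of $\kappa|_{\k_\Psi}$ suffice, reductivity is not needed). So there is no $B_2$/$G_2$ bookkeeping left to do. Second, your ``equivalent to the centrality'' is slightly stronger than what the criterion literally gives (it outputs $[B_\alpha^2,\sum_\beta B_\beta^2]=0$ rather than $[x,\sum_\beta B_\beta^2]=0$ for all $x\in\k_\Psi$); since you prove the stronger centrality statement anyway, this does not affect validity.
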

		
		The following lemma is an analog of \cite[Lemma 2.3]{FelderMarkovTarasovVarchenko}.
		
		\begin{lm}
			\label{lm:bcasimir_alpha_beta_commutator}
			Let $\alpha,\beta \in R$ be roots such that $\alpha \neq \pm \beta$. Denote by $S=S(\alpha,\beta)$ the set of integers $j$ such $\alpha + j \beta$ is a root. Then
			\[
			\sum_{j\in S(\alpha,\beta)} [B_{\alpha}^2,B_{\beta+j\alpha}^2] = 0.
			\]
			\begin{proof}
				We have
				\[
				B_{\beta+j\alpha}^2 = E_{-\beta - j\alpha}^2 + E_{\beta + j\alpha}^2 - E_{\beta+j\alpha} E_{-\beta-j\alpha} - E_{-\beta-j\alpha} E_{\beta+j\alpha}.
				\]
				By \cite[Lemma 2.3]{FelderMarkovTarasovVarchenko}, we have
				\[
				\sum_{j\in S} [E_{\pm\alpha}, E_{\beta+j\alpha} E_{-\beta-j\alpha}] = 0.
				\]
				Similarly, one can show that 
				\[
				\sum_{j\in S} [E_{\pm\alpha}, E_{-\beta-j\alpha} E_{\beta+j\alpha}] = 0.
				\]
				Therefore, it is enough to prove that 
				\[
				\sum_{j\in S} [B_{\alpha}^2, E_{\beta + j \alpha}^2 + E_{-\beta - j \alpha}^2] = 0.
				\]
				We will show that 
				\[
				\sum_{j\in S} [B_{\alpha}, E_{\beta + j \alpha}^2] = 0.
				\]
				Similar arguments work for other part.
				
				Let $N_{\alpha,\gamma} \in \C$ be the numbers such that $[E_{\alpha},E_{\gamma}] = N_{\alpha,\gamma} E_{\alpha+\gamma}$. Then a summand of the expression above is equal to
				\begin{align*}
					&N_{-\alpha,\beta+j\alpha} E_{\beta+j\alpha} E_{\beta+(j-1)\alpha} + N_{-\alpha,\beta+j\alpha} E_{\beta+(j-1)\alpha} E_{\beta+j\alpha} - \\
					-&N_{\alpha,\beta+(j+1)\alpha} E_{\beta+j\alpha} E_{\beta+j\alpha} - N_{\alpha,\beta+j\alpha} E_{\beta+j\alpha} E_{\beta+(j+1)\alpha}.
				\end{align*}
				
				One can see that the sum is equal to zero if $N_{\alpha,\beta+j\alpha} = N_{-\alpha,\beta+(j+1)\alpha}$. Observe that the Cartan involution $\theta$ satisfies $(\theta(x),\theta(y)) = (x,y)$. Therefore, we have
				\begin{align*}
					N_{\alpha,\beta+j\alpha} &= \big([E_{\alpha},E_{\beta+j\alpha}],E_{-\beta-(j+1)\alpha}\big) \\
					&= \big([\theta(E_{\alpha}),\theta(E_{\beta+j\alpha})],\theta(E_{-\beta-(j+1)\alpha})\big) \\
					&= -\big([E_{-\alpha},E_{-\beta-j\alpha}],E_{\beta+(j+1)\alpha}\big) \\
					&= \big([E_{-\alpha},E_{\beta+(j+1)\alpha}],E_{-\beta-j\alpha}\big) \\
					&= N_{-\alpha,\beta+(j+1)\alpha}.
				\end{align*}
			\end{proof}
			
			\begin{proof}[Proof of \cref{prop::bcasimir_flat_commute}]
				As in the proof of \cite[Proposition 2.2]{FelderMarkovTarasovVarchenko}, consider 
				\[
				[T(\lambda,\mu),T(\nu,\mu)] = \sum_{\alpha,\beta\in R_+} \frac{\langle \alpha, \lambda\rangle \langle \beta,\nu \rangle}{\langle \alpha,\mu\rangle \langle \beta, \mu \rangle} [B^2_{\alpha},B^2_{\beta}] = \sum_{\substack{\alpha, \beta\in R_+ \\ \alpha \ne \beta}}\frac{\langle \alpha, \lambda\rangle \langle \beta,\nu \rangle}{\langle \alpha,\mu\rangle \langle \beta, \mu \rangle} [B^2_{\alpha},B^2_{\beta}].
				\]
				Fixing $\lambda, \nu\in \h^{\reg}$ we obtain a function of $\mu\in \h^{\reg}$, let us show that this expression is a regular function. Then, since the function converges to zero at infinity, we will conclude that it vanishes identically.
				
				The residue\footnote{Recall for example that if $f$ is a rational function on $\C^n$, the residue along $z_1=0$ is computed as follows. Write $f = z_1^{-k}f_{-k} + z_1^{-k+1}f_{-k+1}+ \dots$, where each $f_i$ is a rational function of $z_2, \dots, z_n$, then $\mathrm{res}_{z_1=0}f = f_{-1}$. In particular, if $f$ has only a simple pole at $z_1=0$, i.e., if $f= z_1^{-1}f_{-1} + f_{0} + z_1f_1 + \dots$, then $\mathrm{res}_{z_1=0}f = (z_1f)|_{z_1=0}$.} at $\langle \alpha, \mu \rangle = 0$ is given by
				\begin{equation}\label{E:sum-over-gamma}
					\sum_{\substack{\beta\in R_+ \\ \beta \ne \alpha}}\frac{\langle \alpha, \lambda\rangle \langle \beta,\nu \rangle}{\langle \beta, \mu \rangle} [B^2_{\alpha},B^2_{\beta}] + \sum_{\substack{\beta\in R_+ \\ \beta \ne \alpha}}\frac{\langle \beta, \lambda\rangle \langle \alpha,\nu \rangle}{\langle \beta, \mu \rangle} [B^2_{\beta},B^2_{\alpha}]= \frac{1}{2}\sum_{\substack{\gamma\in R \\ \gamma \neq \pm \alpha}} \frac{\langle \alpha,\lambda \rangle \langle \gamma,\nu \rangle - \langle \gamma, \lambda \rangle \langle \alpha, \nu \rangle}{\langle \gamma,\mu \rangle} [B_{\alpha}^2, B_{\gamma}^2]
				\end{equation}
				(observe that $B_{-\gamma}^2 = B_{\gamma}^2$). The sum over $\gamma$ of the form $\beta + j\alpha$ for $j\in S(\alpha,\beta)$ is 
				\[
				\frac{1}{2}\sum_{j \in S} \frac{\langle \alpha,\lambda \rangle \langle \beta,\nu \rangle - \langle \beta, \lambda \rangle \langle \alpha, \nu \rangle}{\langle \gamma,\mu \rangle} [B_{\alpha}^2, B_{\beta + j\alpha}^2] = 0
				\]
				by \cref{lm:bcasimir_alpha_beta_commutator}. Since the sum in equation \eqref{E:sum-over-gamma} can presented as the sum of such terms, the residue is zero. We deduce that the function is regular.
				
			\end{proof}
		\end{lm}
		
		It follows from \cref{P:tilde-s-equal-exp} and \cref{lm:braid_group_action_g_sign} that 
		\[
		\mathrm{Ad}_{\tilde{s}^{\k}_i} (B_{\alpha}^2) = B_{s_i(\alpha)}^2.
		\]
		In particular, by \cref{prop::connection_to_quotient}, we see that the boundary Casimir connection descends to the quotient $\h^{\reg}/W$. Recall that the monodromy of the usual Casimir connection is given by the quantum Weyl group action \cite{ToledanoLaredoQuantumWeyl}. Analogously, we propose the following conjecture.
		\begin{conjecture}
			\label{conjecture:bcas_monodromy}
			The monodromy of the boundary Casimir connection is given by the quantum boundary Weyl group action.
		\end{conjecture}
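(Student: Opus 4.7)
The plan is to follow the strategy of Toledano Laredo's proof of the analogous statement for the usual Casimir connection, adapting each step to the boundary setting. The target braid group is the fundamental group of $\h^{\mathrm{reg}}/W$, which acts on the fiber $X$ of the trivial bundle via monodromy of $\nabla^{\mathrm{bCas}}$, and the claim is that this action factors through the quantum boundary Weyl group coming from $\U_q^{\iota}(\k) \subset \U_q(\g)$ under the identification of $X$ with the corresponding $\U_q^{\iota}(\k)$-module (for $q$ transcendental over $\C$).

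First I would study the local structure of $\nabla^{\mathrm{bCas}}$ at each root hyperplane $H_\alpha$. The connection has a simple pole along $H_\alpha$ with residue $-B_\alpha^2/\kappa$, so it is of regular singular type. I would establish the existence of canonical fundamental solutions $\Psi_\alpha$ on each Weyl chamber $C$ with prescribed asymptotics, using integrability of $\nabla^{\mathrm{bCas}}$ (\cref{thm::bcasimir_flat}) together with a normal-form argument in a tubular neighborhood of $H_\alpha$. The ``elementary monodromy'' around $H_\alpha$ should then be conjugate to $\exp(-2\pi i B_\alpha^2/\kappa)$ up to a gauge transformation recording the connecting solution between adjacent chambers.

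Next I would assemble these elementary monodromies into a $W$-braid group action and match it with the $q$-analog of the boundary Weyl group. The $q$-boundary Weyl group is built out of the universal $K$-matrix for $\U_q^{\iota}(\k) \subset \U_q(\g)$ and the usual quantum Weyl group of $\U_q(\g)$; concretely, one expects the elementary $q$-generators to act by an operator of the form $\mathcal{K}_{q,\alpha} \cdot \tilde{s}_\alpha^{q,\g}$ in rank one. The matching strategy is:
\begin{enumerate}
    \item Reduce to rank two by De Concini--Procesi style ``2-faces are enough'' arguments: the braid relations of the monodromy and of the $q$-boundary Weyl group are both controlled by rank-two sub-root-systems, so verifying them on each rank-two parabolic symmetric pair suffices.
    \item In rank one, identify the monodromy of $\nabla^{\mathrm{bCas}}_{\so_2 \subset \sl_2}$ on $X$ by solving a scalar ODE on $b$-eigenspaces; the resulting transition matrix should be expressible in terms of Euler $\Gamma$-factors, which by the $q \to 1$ limit of Definition \ref{def:boundary_b_operator} is exactly the $q=1$ shadow of $\mathcal{K}_q \tilde{s}^{q,\sl_2}$.
    \item In rank two, check case by case ($A_1 \times A_1$, $A_2$, $B_2$, $G_2$) that both the monodromy and the $q$-boundary Weyl group satisfy the dynamical reflection/braid relation of \cref{thm:boundary_braid_relations}, with matching rank-one building blocks.
\end{enumerate}

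The hard part will be step (2): constructing and normalizing the rank-one transition matrix and showing it is genuinely the $q \to 1$ limit of the rank-one universal $K$-matrix $\mathcal{K}_q^\sigma$ for $\U_q(\sl_2) \supset \U_q^{\iota}(\so_2)$. Even in the usual Casimir case this requires a delicate Stokes analysis and a nontrivial beta-function identity (Toledano Laredo's ``differential KZ $\Leftrightarrow$ quantum Weyl''); here the same analysis must simultaneously keep track of the $\sigma$-twist from Notation \ref{N:sigma-twist-convention} and of the non-semisimple behavior of $b_\alpha^2$ on $X$. A secondary obstacle is that the $q$-boundary Weyl group itself is only fully understood in recent work (cf.\ Remark \ref{R:iquantum-was-there-first} and \cite{Zhang-Weinan-thesis}), so one may first need to establish a clean presentation of it via $\iota$quantum Lusztig-type operators and verify that the presentation is compatible with tensor products of finite-dimensional $\U_q(\g)$-modules restricted to $\U_q^{\iota}(\k)$; only then can a rigorous monodromy-vs-generators comparison be made.
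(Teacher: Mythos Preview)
The statement you are attempting to prove is a \emph{conjecture} in the paper, not a theorem: the paper does not provide a proof. Immediately after stating it, the authors write only that ``the conjecture is based on the following evidence, in the case of $\so_m\subset \sl_m$,'' and then give a remark sketching why the analogy with Toledano Laredo's argument for the usual Casimir connection, combined with the orthogonal Howe duality results of the paper and a low-rank computer check, makes the statement plausible. So there is no proof in the paper for your proposal to be compared against.

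Your outline is a reasonable research program and is in the spirit of the evidence the authors cite: you correctly identify Toledano Laredo's strategy as the natural template, and your reduction-to-rank-two plus rank-one Stokes/Gamma-function analysis is the expected shape of such an argument. But you should be aware that you are not filling in a known proof; you are proposing one for an open problem. In particular, the obstacles you flag in step (2) and in the final paragraph --- the precise normalization of the rank-one transition matrix, the matching with the $\iota$quantum $K$-matrix, and the need for a clean presentation of the $q$-boundary Weyl group compatible with tensor products --- are genuine open issues, not routine verifications. The paper's own evidence is more indirect: rather than a direct monodromy computation, it leverages the $(\so_{2n},\rO_m)$-duality to transport the question to the $\so_{2n}$ KZ side, where Kohno--Drinfeld applies, and then checks by computer for $n\le 4$ that the $U_q(\so_{2n})$ $R$-matrix matches the $\iota$quantum Weyl group operator.
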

		
		The conjecture is based on the following evidence, in the case of $\so_m\subset \sl_m$.
		
		\begin{remark}
			The $(\gl_n,\gl_m)$-duality, which we recall in \cref{sect::gl_duality}, exchanges the usual Casimir connection with the Knizhnik-Zamolodchikov connection. Moreover, the monodromy of the KZ connection is equivalent to the quantum $R$-matrix by the Kohno--Drinfeld theorem \cite{Drinfeld,Kohno}. Finally, by \cite[Theorem 6.5]{ToledanoLaredoKohnoDrinfeldWeylGroup}, the action of the $R$-matrix for $U_q(\gl_n)$ is equal to the quantum Weyl group action for $U_q(\gl_m)$. These three observations are combined in the proof of \cite[Theorem 7.1]{ToledanoLaredoKohnoDrinfeldWeylGroup} to show that the monodromy of the usual Casimir connection is given by the quantum Weyl group action. 
   
            Under the orthogonal Howe duality of \cref{sect:orthogonal_howe}, the KZ connection for $\so_{2n}$ is exchanged with the boundary Casimir connection for $\rO_m\subset \GL_m$. The Kohno--Drinfeld theorem again implies that the monodromy of the KZ connection for $\so_{2n}$ is equivalent to the $R$-matrix for $U_q(\so_{2n})$. Finally, we computed, for $n\le 4$, that under Howe duality the action of the $U_q(\so_{2n})$ $R$-matrix is equal to the $U_q^{\iota}(\so_2)$ $\iota$quantum Weyl group operators \cite{Zhang-Weinan-thesis}. 
		\end{remark}

		\section{\texorpdfstring{$(\gl_n,\gl_m)$}{(gl(n),gl(m))}-duality}
		\label{sect::gl_duality}

		In this section, we recall the $(\gl_n,\gl_m)$-duality for exterior powers following \cite{TarasovUvarov}.
		
		\subsection{Generalities on \texorpdfstring{$\gl_n$}{gl(n)}} In what follows, we recall the definition of the general linear algebra $\gl_n$ and of its skew-polynomial representation $\fP_n$.
		\subsubsection{Lie algebra and representations}
		Consider the Lie algebra $\gl_n$ with the basis of matrix units 
            \[
            \{ E_{ij}| i,j=1\ldots n\} \qquad\text{satisfying} \qquad [E_{ij},E_{kl}] = \delta_{jk} E_{il} - \delta_{li} E_{kj}.
            \]
        We identify the Cartan subalgebra $\h$ with $\C^n$ via the basis $\{E_{ii} | i = 1,\ldots, n\}$ and denote by $\{\epsilon_i\} \subset \h^*$ the dual basis. Then the roots are $\epsilon_i - \epsilon_j$ with the root spaces $\g_{\epsilon_i - \epsilon_j} = \C\cdot E_{ij}$. The positive roots are $R_+ = \{ \epsilon_i - \epsilon_j| i < j\}$ with simple roots
		    \[
		  \alpha_1 = \epsilon_1 - \epsilon_2, \qquad \ldots \qquad \alpha_{n-1} = \epsilon_{n-1} - \epsilon_n.
		  \]
		
		For a sequence $\mathbf{l} = (l_1 \leq \ldots \leq l_n)$ of non-decreasing \emph{rational} numbers, we denote by $V(\mathbf{l})$ the corresponding irreducible representation of weight $\mathbf{l}$. For instance, the weight of the $k$-th exterior power $\Lambda^k \C^n$ of the vector representation is $(1,\ldots,1,0,\ldots,0)$, where $1$ is repeated $k$-times. We will also need a rational power of the determinant representation: for $\alpha\in \mathbb{Q}$, we denote by $\Det^{\alpha}$ the one-dimensional representation of weight $(\alpha,\ldots,\alpha)$ explicitly given by
		\begin{equation}
			\label{eq::determinant_representation}
			\Det^{\alpha} := \C\cdot \omega^{\alpha}, \qquad E_{ij}\cdot \omega^{\alpha} = \delta_{ij} \alpha\cdot \omega^{\alpha}.
		\end{equation}
		
		\subsubsection{Skew polynomials}
		\label{subsect:skew_polynomials}
		
		Denote by $\fP_k$ the space of anticommuting polynomials in $k$ variables $\{x_i | i = 1,\ldots, k\}$. The algebra $\fP_k$ is $\mathbb{Z}_{\ge 0}$ graded, with generators $x_i$ in degree $1$. In what follows, we denote by $\mathbf{1} \in \fP_n$ the identity polynomial.
		
		Let $\partial_i$ be the left (super) derivation such that $\partial_i (x_j) = \delta_{ij}$ and $\partial_i(f g)= \partial_i(f)g + (-1)^{\deg(f)}f\partial_i(g)$. There is a $\gl_k$-representation structure on $\fP_k$ given by 
		\begin{equation}
			\label{eq::skew_polynomials_gl_rep}
			E_{ij}\cdot g(x) = x_i \partial_j g(x),\ g(x) \in \fP_k.
		\end{equation}
		As a $\gl_k$-representation the $\mathbb{Z}_{\ge 0}$-grading corresponds to the decomposition $\fP_k = \oplus_{l=0}^k \Lambda^l \C^k$ of exterior powers of the vector representation. 
		
		\subsection{Representation}
		\label{subsect:gl_duality_representation}
		
		For $\fP_{nm}$, we denote the variables by $x_{i\alpha}$ for $i=1,\ldots, n, \alpha = 1, \ldots, m$, and by $\partial_{i\alpha}$ the corresponding derivations. We denote 
		\[
		E_{ij}^{(\alpha)}:=x_{i\alpha} \partial_{j\alpha}, \qquad E_{\alpha\beta}^{(i)} := x_{i\alpha} \partial_{i\beta}.
		\]
		One can easily see that the operators $\{E_{ij}^{(\alpha)}|i,j=1\ldots n\}$ satisfy the $\gl_n$-relations; we denote the corresponding action by $\gl_n^{(\alpha)}$ (similarly for $\gl_m^{(i)}$). The formulas
		\begin{align*}
			E_{ij} \cdot g(x) &= \sum_{\alpha=1}^m E_{ij}^{(\alpha)} g(x),\ E_{ij} \in \gl_n, \\
			E_{\alpha\beta} \cdot g(x) &= \sum_{i=1}^n E_{\alpha\beta}^{(i)} g(x), \ E_{\alpha\beta} \in \gl_m.
		\end{align*}
		provide an $\gl_n \oplus \gl_m$-action (in particular, they commute). In what follows, we denote the $\gl_n$-action either by Latin indices or $\langle n \rangle$ and the $\gl_m$-one either by Greek indices or $\langle m \rangle$. 
		
		Since the generators $E_{ij}$ and $E_{\alpha\beta}$ act by even degree operators, there is an isomorphism $\fP_{nm} \cong \fP_n^{\otimes m}$ as $\gl_n$-representations and $\fP_{nm} \cong \fP_m^{\otimes n}$ as $\gl_m$-representations. To understand how $\fP_{nm}$ looks as a $\gl_n\oplus \gl_m$-representation, there is the following, for example see \cite[Theorem 5.18]{ChengWang}.
		
		\begin{thm}
			\label{thm:gln_glm_duality}
			For any partition $\mathbf{l}$, denote by $\mathbf{l}'$ its transpose. Then
			\[
			\fP_{nm} \cong \bigoplus_{\substack{\mathbf{l} = (l_1\geq \ldots\geq l_n) \\ l_1 \leq m}} V(\mathbf{l})^{\langle n \rangle} \otimes V(\mathbf{l}')^{\langle m \rangle},
			\]
			where $V(\mathbf{l})^{\langle n \rangle}$ is the irreducible representation of $\gl_n$ of highest weight $\mathbf{l}$ and $V(\mathbf{l}')^{\langle m \rangle}$ is the irreducible representation of $\gl_m$ of highest weight $\mathbf{l}'$.
		\end{thm}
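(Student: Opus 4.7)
My plan is to prove the decomposition by combining a character computation with complete reducibility.

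First, I would identify $\fP_{nm}$ with the exterior algebra $\Lambda^{\bullet}(\C^n\otimes\C^m)$ as a $\gl_n\oplus\gl_m$-module, where $\gl_n$ and $\gl_m$ act on the respective tensor factors and the action is extended to the exterior algebra as derivations. The formulas for $E_{ij}$ and $E_{\alpha\beta}$ make this identification transparent: $x_{i\alpha}$ corresponds to $e_i\otimes e_\alpha$, and the defining formulas on $\fP_{nm}$ are precisely the induced actions on $\Lambda^\bullet(\C^n\otimes\C^m)$. Since both $\gl_n$ and $\gl_m$ act through the reductive groups $GL_n$ and $GL_m$ (the operators $E_{ij}$ and $E_{\alpha\beta}$ are manifestly weight operators for the respective Cartans), the module $\fP_{nm}$ is semisimple for each action separately.

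Next, I would compute the character of $\fP_{nm}$ with respect to the maximal torus $T_n\times T_m\subset GL_n\times GL_m$. Because $\C^n\otimes\C^m$ has torus-weights $x_iy_\alpha$ for $1\le i\le n, 1\le \alpha\le m$, and the character of an exterior algebra of a graded vector space is the product of $(1+\text{weight})$, we obtain
\[
\mathrm{ch}_{T_n\times T_m}(\fP_{nm})=\prod_{i=1}^{n}\prod_{\alpha=1}^{m}(1+x_iy_\alpha).
\]
The dual Cauchy identity states
\[
\prod_{i=1}^{n}\prod_{\alpha=1}^{m}(1+x_iy_\alpha)=\sum_{\mathbf{l}\subset n\times m}s_{\mathbf{l}}(x_1,\dots,x_n)\,s_{\mathbf{l}'}(y_1,\dots,y_m),
\]
where the sum ranges over partitions fitting in an $n\times m$ box, i.e.\ $\mathbf{l}=(l_1\ge\cdots\ge l_n\ge 0)$ with $l_1\le m$. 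Since $s_{\mathbf{l}}$ and $s_{\mathbf{l}'}$ are the characters of $V(\mathbf{l})^{\langle n\rangle}$ and $V(\mathbf{l}')^{\langle m\rangle}$ respectively, this matches the character of the right-hand side of the claimed decomposition.

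Finally, to upgrade the character equality to an isomorphism of $\gl_n\oplus\gl_m$-modules, I would argue as follows. By complete reducibility of the $\gl_n$-action, decompose
\[
\fP_{nm}\cong\bigoplus_{\mathbf{l}\subset n\times m}V(\mathbf{l})^{\langle n\rangle}\otimes M_{\mathbf{l}},
\]
where $M_{\mathbf{l}}=\Hom_{\gl_n}(V(\mathbf{l})^{\langle n\rangle},\fP_{nm})$ is the multiplicity space. Since the $\gl_m$-action commutes with $\gl_n$, it preserves the isotypic components and endows each $M_{\mathbf{l}}$ with a $\gl_m$-module structure. Taking characters with respect to $T_n\times T_m$ and comparing with the Cauchy identity yields $\mathrm{ch}_{T_m}(M_{\mathbf{l}})=s_{\mathbf{l}'}(y)$. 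Since $M_{\mathbf{l}}$ is semisimple as a $\gl_m$-module (again by $GL_m$-integrability) and $s_{\mathbf{l}'}$ is the character of the irreducible $V(\mathbf{l}')^{\langle m\rangle}$, it follows that $M_{\mathbf{l}}\cong V(\mathbf{l}')^{\langle m\rangle}$, completing the proof.

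The main obstacle is purely bookkeeping: verifying that the partitions labelling the two sides really match under transposition, i.e.\ that the dual Cauchy identity sums over exactly the partitions with $l_1\le m$ and at most $n$ rows. This is a standard combinatorial fact, provable via the RSK-based bijection between $n\times m$ $0/1$-matrices and pairs of column-strict tableaux of conjugate shape; alternatively it can be treated as part of the statement of the dual Cauchy identity itself and cited from standard references.
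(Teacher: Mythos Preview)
Your proof is correct and is the standard argument via the dual Cauchy identity. Note, however, that the paper does not supply its own proof of this theorem: it is stated as background and simply cited from \cite[Theorem 5.18]{ChengWang}. So there is nothing to compare against; what you have written is essentially the argument one finds in that reference.
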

		
		\subsection{Classical duality}
		\label{subsect:gln_classical_duality}
		
		Fix a non-zero complex number $\kappa$.
		
		On the $\gl_m$-side, we denote by
		\begin{equation}
			\label{eq::split_casimir}
			\Omega_{\gl_m}^{(i,j)} = \sum_{1\le \alpha,\beta\le n} E_{\alpha\beta}^{(i)} E_{\beta\alpha}^{(j)}
		\end{equation}
		the \emph{$2$-tensor Casimir operator} acting on $\fP_{nm}$. Under the identification $\fP_{nm} \cong \fP_{m} \otimes \ldots \otimes \fP_{m}$, $\Omega_{\gl_m}^{(i,j)}$ acts on the $(i,j)$-component of the tensor product. Consider the \emph{KZ operators} 
		\[
		\nabla^{\KZ,\langle m \rangle}_{z_1},\ldots, \nabla^{\KZ,\langle m \rangle}_{z_n}
		\]
		which are differential operators on $\C^n\setminus \{z_{i}=z_{j}|i\ne j\in \{1, \dots, n\}\}$ defined by:
		\begin{equation}
			\label{eq::kz_operators}
			\nabla^{\KZ,\langle m \rangle}_{z_{i}}(z,\kappa) = \kappa \partial_{z_{i}}  - \sum\limits_{\substack{ 1\le j \le n\\ j\ne i }} \frac{\Omega_{\gl_m}^{(i,j)}}{z_{i} - z_{j}}.
		\end{equation}
		It is well-known that they define a flat connection \cite{KnizhnikZamolodchikov}. 
		
		On the $\gl_n$-side of the duality, we take the Cartan subalgebra $\h_{\gl_n} \subset \gl_n$ with coordinates $(\lambda_1,\ldots,\lambda_n)$. Consider the \emph{differential dynamical} operators 
		\[
		\nabla_{\lambda_{1}}^{\Cas,\langle n \rangle},\ldots,\nabla_{\lambda_{n}}^{\Cas,\langle n \rangle}
		\]
		which are differential operators on $\h_{\gl_n}^{\reg}$ defined by:
		\begin{equation}
			\label{eq::casimir_operators}
			\nabla_{\lambda_{\alpha}}^{\Cas,\langle n \rangle}(\lambda,\kappa)  = \kappa \partial_{\lambda_{i}} - \sum_{\substack{1\le \beta \le n \\ j\ne i}} \frac{E_{ij} E_{ji} - E_{ii}}{\lambda_{i} - \lambda_{j}}
		\end{equation}
		In fact, they define (a version of) the \emph{Casimir connection} which is known to be flat as well, see \cite{FelderMarkovTarasovVarchenko}.
		
		We can identify $\C^n\rightarrow \h_{\gl_n}$ by $z_{i} \mapsto \lambda_{i}$, for $i = 1, \dots, n$. 
		
		\begin{thm}\cite[Theorem 4.4]{TarasovUvarov}
			\label{eq::gl_classical_bispectrality}
			The following relation holds 
			\[
			\nabla^{\KZ,\langle m \rangle}_{z_i}(z,\kappa) = \nabla^{\Cas,\langle m \rangle}_{\lambda_{i}}(-z,-\kappa).
			\]
		\end{thm}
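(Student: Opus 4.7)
The plan is to unfold both sides under the substitution $\lambda_i = -z_i$, $\kappa \mapsto -\kappa$, and reduce the theorem to a single operator identity on $\fP_{nm}$. First, the change of variables $\lambda_i = -z_i$ sends $\partial_{\lambda_i}$ to $-\partial_{z_i}$, so the leading term $-\kappa \partial_{\lambda_i}$ of $\nabla^{\Cas,\langle n\rangle}_{\lambda_i}(-z,-\kappa)$ becomes $\kappa \partial_{z_i}$, matching the leading term of $\nabla^{\KZ,\langle m\rangle}_{z_i}(z,\kappa)$. The denominator $\lambda_i - \lambda_j$ becomes $-(z_i - z_j)$, which flips the overall sign of the residue sum in the Casimir operator. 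Comparing the two operators term by term, the theorem reduces to verifying the pointwise identity
\[
\Omega_{\gl_m}^{(i,j)} \;=\; -\bigl(E_{ij}E_{ji} - E_{ii}\bigr) \qquad \text{on } \fP_{nm},
\]
for each pair $i \ne j \in \{1,\dots,n\}$, where on the right $E_{ij} \in \gl_n$ acts as in Section \ref{subsect:gl_duality_representation}.

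The second step is to verify this identity by direct calculation in the anti-commuting polynomial algebra. Using $E_{ij} = \sum_\alpha x_{i\alpha}\partial_{j\alpha}$ and $E_{ji} = \sum_\beta x_{j\beta}\partial_{i\beta}$, the super-Leibniz rule $\partial_{j\alpha} x_{j\beta} = \delta_{\alpha\beta} - x_{j\beta}\partial_{j\alpha}$ gives
\[
E_{ij}E_{ji} \;=\; \sum_{\alpha,\beta} x_{i\alpha}(\delta_{\alpha\beta} - x_{j\beta}\partial_{j\alpha})\partial_{i\beta} \;=\; E_{ii} \;-\; \sum_{\alpha,\beta} x_{i\alpha}x_{j\beta}\partial_{j\alpha}\partial_{i\beta}.
\]
On the other hand, starting with $\Omega_{\gl_m}^{(i,j)} = \sum_{\alpha,\beta} x_{i\alpha}\partial_{i\beta} x_{j\beta}\partial_{j\alpha}$ and using, for $i \ne j$, the anticommutations $\partial_{i\beta} x_{j\beta} = -x_{j\beta}\partial_{i\beta}$ and $\partial_{i\beta}\partial_{j\alpha} = -\partial_{j\alpha}\partial_{i\beta}$, one normal-orders to obtain the matching expression $\Omega_{\gl_m}^{(i,j)} = \sum_{\alpha,\beta} x_{i\alpha}x_{j\beta}\partial_{j\alpha}\partial_{i\beta}$. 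Combining these two computations yields the desired identity.

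The only subtle aspect of the proof will be correctly tracking the signs in the super-commutation relations, particularly distinguishing the generic case $(i,\alpha) \ne (j,\beta)$ from the diagonal contribution responsible for the $-E_{ii}$ correction; this is the source of the asymmetry between the KZ and Casimir sides. Once this bookkeeping is handled, the theorem follows immediately: the two connections agree term by term on $\fP_{nm}$ under the identification $z_i \leftrightarrow \lambda_i$, and no separate check of flatness is needed because both $\nabla^{\KZ,\langle m\rangle}$ and $\nabla^{\Cas,\langle n\rangle}$ are already known to be flat by \cite{KnizhnikZamolodchikov} and \cite{FelderMarkovTarasovVarchenko} respectively.
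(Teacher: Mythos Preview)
Your proposal is correct and follows essentially the same approach as the paper: reduce the equality of connections to the operator identity $\Omega_{\gl_m}^{(i,j)} = -(E_{ij}E_{ji} - E_{ii})$ on $\fP_{nm}$, then verify it by normal-ordering in the exterior algebra using the super-Leibniz rule. This is exactly the content of Remark~\ref{R:down-to-earth}, with the same anticommutation bookkeeping; your treatment of the change of variables $\lambda_i = -z_i$, $\kappa\mapsto -\kappa$ is a bit more explicit than what the paper writes, but the argument is the same.
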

		
		\begin{remark}\label{R:down-to-earth}
			In more down to earth terms, Theorem \ref{eq::gl_classical_bispectrality} says that in $\End(\fP_{nm})$ the operators
			\[
			\Omega_{\gl_m}^{(i,j)} \qquad \text{and} \qquad \big(E_{ij}E_{ji} - E_{ii}\big)
			\]
			differ by a sign. This follows from using $\partial_{i\beta} x_{j\beta} = - x_{j\beta} \partial_{i\beta} + \delta_{ij}$ to show that 
			\[
			\Omega_{\gl_m}^{(i,j)} = \sum_{\alpha,\beta} E_{\alpha\beta}^{(i)} E_{\beta\alpha}^{(j)} = \sum_{\alpha,\beta} x_{i\alpha} \partial_{i\beta} x_{j\beta} \partial_{j\alpha}
			\]
			is the same as 
			\[
			- E_{ij} E_{ji} + E_{ii} = -(\sum_{\alpha} x_{i\alpha} \partial_{j\alpha} ) (\sum_{\beta} x_{j\beta} \partial_{i\beta}) + \sum_{\alpha} x_{i\alpha} \partial_{i\alpha} = -\sum_{\alpha,\beta} x_{i\alpha} \partial_{i\beta} x_{j\beta} \partial_{j\alpha} + \sum_{\alpha} x_{i\alpha} \partial_{i\alpha}. 
			\]
		\end{remark}
		
		\subsection{Quantum duality}
		
		In this subsection, we recall the quantum version of \cref{subsect:gln_classical_duality}. For that, we will need to introduce the \emph{Yangian} $\rY(\gl_m)$.
		
		\subsubsection{Yangian}
		\label{subsubsect:gl_yangian}
		For all the statements and proofs, we refer the reader to \cite{MolevYangians}.
		
		Consider the \emph{Yang $R$-matrix} 
		\begin{equation}
			\label{eq::yang_r_matrix}
			R(u) = \id + \frac{P}{u},
		\end{equation}
		which is an $\End(\C^m \otimes \C^m)$-valued function of $u$, where $P = \sum_{\alpha,\beta} E_{\alpha\beta} \otimes E_{\beta\alpha}$ is the permutation operator. The Yangian $\rY(\gl_m)$ is generated by $t_{\alpha\beta}^{(k)}$ for $\alpha,\beta=1,\ldots, m$ and $k\geq 1$ subject to the  relations
		\[
		[t_{\alpha\beta}^{(r+1)},t_{\gamma\delta}^{(s)}] - [t_{\alpha\beta}^{(r)},t_{\gamma\delta}^{(s+1)}] = t_{\gamma\beta}^{(r)}t_{\alpha\delta}^{(s)}- t_{\gamma\beta}^{(s)}t_{\alpha\delta}^{(r)}.
		\]
		These relations are equivalent to the so-called \emph{RTT}-relation. Consider the generating series 
		\[
		t_{\alpha\beta}(u) := \delta_{\alpha\beta} + \sum_{k > 0}t_{\alpha\beta}^{(k)} u^{-k}\in \rY(\gl_m)[[u^{-1}]],
		\]
		and combine them into the matrix $T(u) = (t_{\alpha\beta}(u))_{\alpha,\beta=1}^m \in \End(\C^m) \otimes \rY(\gl_m)[[u^{-1}]]$, then the RTT-relation reads
		\[
		R_{12}(v-u) T_1(u) T_2(v) = T_2(v) T_1(u) R_{12}(v-u) \in \End(\C^m \otimes \C^m) \otimes \rY(\gl_m)[[u^{-1}]].
		\]
		
		The Yangian is a Hopf algebra with the coproduct
		\begin{equation}
			\label{eq::yangian_coproduct}
			\Delta(t_{\alpha\beta}(u)) = \sum_{\delta=1}^n t_{\alpha\delta}(u) \otimes t_{\delta\beta}(u).
		\end{equation}
		
		For any $z\in \C$ and $f(u) \in 1 + u^{-1} \C[[u^{-1}]]$, the following operations define automorphisms of the Yangian:
		\begin{align}
			\begin{split}
				\label{eq::yangian_automorphisms}
				f(u) &\colon T(u) \mapsto f(u) T(u), \\
				\tau_z &\colon T(u) \mapsto T(u-z), \\
				\tilde{\theta}&\colon T(u) \mapsto T^t(-u),
			\end{split}
		\end{align}
		where $T^t(u)\in \End(\C^n) \otimes \rY(\gl_m)$ is the transposition with respect to the $\End(\C^m)$-part.  
		
		There is a homomorphism $\ev\colon \rY(\gl_m) \rightarrow \U(\gl_m)$ called the \emph{evaluation homomorphism} defined by
		\begin{equation}
			\label{eq::yangian_evaluation}
			t_{\alpha\beta}(u) \mapsto \delta_{\alpha\beta} + \frac{E_{\beta\alpha}}{u}.
		\end{equation}
		For any $\gl_m$-representation $V$, we denote by $V(z)$ the pullback via the composition $\ev \circ \tau_z$. Observe that the automorphism $\tilde{\theta}$ of \eqref{eq::yangian_automorphisms} lifts the Cartan involution $\theta$ of \cref{sect:boundary_fusion}.
		
		For any two irreducible representations $V_1,V_2$ of $\gl_n$, the tensor products $V_1(z_1) \otimes V_2(z_2)$ and $V_2(z_2) \otimes V_1(z_1)$ are isomorphic irreducible representations of $\mathrm{Y}(\gl_m)$ provided $z_1 - z_2 \not\in \Z$. The isomorphism is given by $P_{12} \cR_{V_1 V_2}(z_2 - z_1)$, where $P_{12}$ is the flip operator and $\cR_{V_1 V_2}(z_2-z_1)$ is a rational $\End(V_1 \otimes V_2)$-valued function called the \emph{rational $R$-matrix}. In what follows, we will often not specify explicitly the evaluation parameters and set $t=z_2-z_1$.  Upon a choice of highest-weight vectors: $v_1\in V_1$ and  $v_2\in V_2$, $\cR_{V_1 V_2}(t)$ is uniquely determined by the relations 
		\begin{equation}
			\label{eq::r_matrix_properties}
			\begin{gathered}
				[\cR_{V_1 V_2}(t), x \otimes 1 + 1 \otimes x] = 0, \\
				\cR_{V_1 V_2}(t)\cdot (tE_{\alpha\beta} \otimes 1 + \sum_{\delta=1}^m E_{\alpha\delta} \otimes E_{\delta\beta}) = (tE_{\alpha\beta} \otimes 1 + \sum_{\delta=1}^m E_{\delta\alpha} \otimes E_{\beta\delta}) \cdot \cR_{V_1 V_2}(t), \\
				\cR_{V_1 V_2}(t)v_1 \otimes v_2 = v_1 \otimes v_2.
			\end{gathered}
		\end{equation}
		for any $x\in \gl_m$, see \cite{TarasovVarchenko}. It obeys the \emph{unitarity condition}
		\begin{equation}
			\label{eq::gl_rmatrix_unitarity}
			\cR_{12}(t) \cR_{21}(-t) = \id,
		\end{equation}
		and also satisfies the \emph{Yang-Baxter equation}
		\begin{equation}
			\label{eq::gl_rmatrix_qybe}
			\cR_{V_1 V_2} (z_1 - z_2) \cR_{V_1 V_3} (z_1 - z_3) \cR_{V_2 V_3} (z_2 - z_3) = \cR_{V_2 V_3} (z_2 - z_3) \cR_{V_1 V_3} (z_1 - z_3) \cR_{V_1 V_2} (z_1 - z_2).
		\end{equation}
		
		In fact, there exists a \emph{universal $R$-matrix} $\fR$ in a certain completion of the tensor square of $\rY(\gl_m)$, see \cite{Drinfeld}. Up to a scalar factor, the action of $\fR$ on $V_1(z_1) \otimes V_2(z_2)$ coincides with that of $\cR_{V_1 V_2}(z_2-z_1)$. 
		
		\subsubsection{Duality}
		\label{subsubsect:gln_quantum_duality}
		Recall that as a $\gl_m$-representation, the space of skew polynomials $\fP_{nm}$ is isomorphic to the tensor product $\fP_{m} \otimes \ldots \otimes \fP_{m}$. Denote by $\cR^{\langle m \rangle}_{ij}(t)$ the $R$-matrix acting on $(i,j)$-factors, where $t=z_1-z_2$ (we will not mention it explicitly in what follows). On the $\gl_n$ side, we denote by $\cB^{\langle n \rangle}_{\epsilon_{i} - \epsilon_{j}}$ the dynamical Weyl group operator of \eqref{eq::b_operator} corresponding to the root $\epsilon_{i} - \epsilon_{j}$ of $\gl_n$. 
		
		A quantum analog of the differential operators from \cref{subsect:gln_classical_duality} is as follows. Denote by $T_{z_i}$ the shift operator acting on meromorphic analytic functions $\C^n\rightarrow \fP_{nm}$ by
		\[
		[T_{z_{i}} f](z_1,\ldots,z_n) = f(z_1,\ldots, z_{i} + \kappa, \ldots, z_n). 
		\]
		Consider the difference operators
		\begin{align}
			\label{eq::gl_qkz_ops}
			\begin{split}
				Z^{\langle m \rangle}_{i}(z,\kappa) := \big(\cR^{\langle m \rangle}_{i n}(z_{i} - z_{n})\ldots \cR^{\langle m \rangle}_{i,i+1}(z_{i} - z_{i+1})\big)^{-1} \big( \cR^{\langle m \rangle}_{1i}(z_1 - z_{i} - \kappa) \ldots \cR^{\langle m \rangle}_{i-1,i}(z_{i-1}- z_{i} - \kappa) \big) T_{z_{i}}.
			\end{split}
		\end{align}
		They are called \emph{quantized Knizhnik-Zamolodchikov (qKZ)} operators. 
		
		On the other side, one can similarly define the shift operators $T_{\lambda_i}$ on functions $\h^{*} \rightarrow \fP_{nm}$, and the \emph{dynamical difference} operators
		\begin{align}
			\label{eq::gl_dyn_diff_ops}
			\begin{split}
				X^{\langle n \rangle}_{i}(\lambda,\kappa) := \big(\cB^{\langle n \rangle}_{\epsilon_{i} - \epsilon_n}(\lambda_{i} - \lambda_n) \ldots \cB^{\langle n \rangle}_{\epsilon_{i} - \epsilon_{i+1}}(\lambda_{i} - \lambda_{i+1})\big)^{-1} \big( \cB^{\langle n \rangle}_{\epsilon_1 - \epsilon_{i}}(\lambda_1 - \lambda_{i} - \kappa) \ldots \cB^{\langle n \rangle}_{\epsilon_{i-1} - \epsilon_{i}}(\lambda_{i-1} - \lambda_{i} - \kappa) \big)T_{\lambda_{i}}.
			\end{split}
		\end{align}
		Also, denote by 
		\[
		C_{ij}^{\langle n \rangle} (t) = \frac{\Gamma(t + E_{ii} +1) \Gamma(t - E_{jj})}{\Gamma(t + E_{ii} - E_{jj} + 1)\Gamma(t)}.
		\]
		
		\begin{thm}\cite[Theorem 4.4]{TarasovUvarov}
			\label{thm::gl_quantum_bispectrality}
			The following relation holds:
			\[
			\cR^{\langle m \rangle}_{ij} (t) = C_{ij}^{\langle n \rangle} (t) \cB^{\langle n \rangle}_{\epsilon_{i} - \epsilon_{j}} (-t).
			\]
			In particular, we have
			\[
			Z_{i}^{\langle m \rangle}(z,\kappa) = \frac{\prod_{1\leq j < i} C_{ji}^{\langle n \rangle}(z_{j} - z_{i} - \kappa)}{\prod_{m \geq j > i} C_{ij}^{\langle n \rangle}(z_{i} - z_{j})} \cdot X_{i}^{\langle n \rangle}(-z,-\kappa).
			\]
		\end{thm}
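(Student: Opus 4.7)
The plan is to first establish the operator identity $\cR^{\langle m \rangle}_{ij}(t) = C^{\langle n \rangle}_{ij}(t)\,\cB^{\langle n \rangle}_{\epsilon_i-\epsilon_j}(-t)$ in $\End(\fP_{nm})$, from which the difference-operator statement follows by substituting into the product formulas \eqref{eq::gl_qkz_ops} and \eqref{eq::gl_dyn_diff_ops}, identifying $z_k$ with $\lambda_k$, and collecting the scalar factors contributed by each $R$-matrix.

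The key observation is that both sides commute with the full $\gl_n\oplus\gl_m$-action on $\fP_{nm}$: the $R$-matrix $\cR^{\langle m \rangle}_{ij}(t)$ commutes with the $\gl_n$-action (which itself commutes with $\gl_m$) and with the diagonal $\gl_m$-action by the first relation of \eqref{eq::r_matrix_properties}, while $\cB^{\langle n \rangle}_{\epsilon_i-\epsilon_j}(-t)$ is built from the $\sl_2\subset \gl_n$ attached to the root $\epsilon_i-\epsilon_j$ and therefore commutes with all of $\gl_m$. By the multiplicity-free decomposition of \cref{thm:gln_glm_duality}, both operators must act by scalars on each isotypic summand $V(\mathbf{l})^{\langle n \rangle}\otimes V(\mathbf{l}')^{\langle m \rangle}$, reducing the identity to a matching of these scalars.

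To match them, I would invoke the uniqueness of the normalized rational $R$-matrix characterized by \eqref{eq::r_matrix_properties}, showing that $C^{\langle n \rangle}_{ij}(t)\,\cB^{\langle n \rangle}_{\epsilon_i-\epsilon_j}(-t)$ satisfies the three defining conditions. The diagonal-$\gl_m$-commutativity is immediate; the twisted intertwining relation is the quantum refinement of the classical identity $\Omega^{(i,j)}_{\gl_m}=-(E_{ij}E_{ji}-E_{ii})$ of \cref{R:down-to-earth} and would be verified by expanding the explicit series \eqref{eq::b_operator} for $\cB^{\langle n \rangle}_{\epsilon_i-\epsilon_j}(-t)$ and simplifying termwise against the twisted RTT relation. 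The hard part will be the normalization: on the joint highest-weight vector of each isotypic component the series for $\cB^{\langle n \rangle}_{\epsilon_i-\epsilon_j}(-t)$ collapses to a ratio of Pochhammer factors in $(-t-h_{\epsilon_i-\epsilon_j})$, and one must reorganize these --- using the functional equation $\Gamma(z+1)=z\Gamma(z)$ --- into the ratio of four gamma functions defining $C^{\langle n \rangle}_{ij}(t)$, in particular checking that the poles of the $B$-operator at integer shifts of $t$ cancel precisely against the zeros of $C^{\langle n \rangle}_{ij}(t)$.

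Once the operator identity is in hand, the difference-operator statement is essentially bookkeeping: each of the $2(n-1)$ rational $R$-matrices appearing in $Z^{\langle m \rangle}_i(z,\kappa)$ contributes one $C^{\langle n \rangle}$ factor with the appropriate arguments, the remaining $B$-operator factors assemble into $X^{\langle n \rangle}_i(-z,-\kappa)$, and the ratio of $C$-products matches the stated prefactor.
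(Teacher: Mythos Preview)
The paper does not supply a proof of this statement: it is quoted from \cite{TarasovUvarov} with citation only, so there is no in-paper argument to compare against.

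Your ``key observation'' is false and should be discarded. The $R$-matrix $\cR^{\langle m\rangle}_{ij}(t)$ lies in (a completion of) $\U(\gl_m^{(i)})\otimes\U(\gl_m^{(j)})$, and this does \emph{not} commute with the total $\gl_n$-action: for $k\notin\{i,j\}$ one computes $[E_{ik},E^{(i)}_{\gamma\delta}]=-x_{i\gamma}\partial_{k\delta}\neq 0$. Symmetrically, the $B$-operator is built from the root $\sl_2\subset\gl_n$, which does not commute with the remaining $\gl_n$-generators. Indeed, taking the theorem for granted, the $R$-matrix is expressed through $E_{ij},E_{ji},E_{ii},E_{jj}\in\gl_n$, which are visibly not central; so neither side acts by scalars on the summands of \cref{thm:gln_glm_duality}, and that paragraph of your plan is both incorrect and unnecessary. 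The remainder of your plan --- verifying that $C^{\langle n\rangle}_{ij}(t)\,\cB^{\langle n\rangle}_{\epsilon_i-\epsilon_j}(-t)$ satisfies the three characterizing properties \eqref{eq::r_matrix_properties} --- is independent of the false claim and is a viable route. Two cautions: the substantive step is the \emph{second} intertwining relation, not the normalization; and for the normalization note that the $\gl_m$-highest-weight vector $x_{i,1}\cdots x_{i,k}\cdot x_{j,1}\cdots x_{j,l}$ is annihilated by $E_{ij}$ only when $k\geq l$, so for $k<l$ the $B$-series does not collapse to its constant term and a genuine hypergeometric evaluation is needed. Your final paragraph on assembling the difference operators is correct.
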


		\section{\texorpdfstring{$(\so_{2n},\rO_m)$}{(so(2n),O(m))}-duality}
		\label{sect:orthogonal_howe}
		
		Consider the $(\gl_n,\gl_m)$-duality from Section \ref{sect::gl_duality}. The Cartan involution $\theta$ acting on $\g= \sl_m$ is induced by the group automorphism of $\GL_m$ obtained by composing inversion with matrix transpose. Restricting the $\GL_m$ action on $\fP_{nm}$ to $\rO_m$ we find more operators in $\End_{\rO_m}(\fP_{nm})$ than just those coming from the $\gl_n$ action on $\fP_{nm}$. In fact, the centralizer $\End_{\rO_m}(\fP_{nm})$ has a Lie theoretic description: enlarge $\sl_n$ to $\so_{2n}$ and view the exterior power of the natural representation of $\sl_n$ as the spin representation for $\so_{2n}$. In this section we precisely recall this construction, then deduce various dualities, which parallel those recalled in Section \ref{sect::gl_duality}. 
		
		\subsection{Generalities on \texorpdfstring{$\fo_{k}$}{o(k)}}
		
		In this subsection, we recall the necessary facts about the orthogonal Lie groups and algebras. For the statements and proofs, we refer the reader to \cite[Section 5.3]{ChengWang}.
		
		\subsubsection{Lie algebra $\so_{k}$}
		\label{subsubsect:so_k}
		Let $k=2n$ be even. Consider $\C^{2n} = \Span(e_1,\ldots,e_n,e_{-1},\ldots,e_{-n})$ with the form $(e_i,e_{-j}) = \delta_{ij}$ and all other pairings zero. Denote by 
		\[
		M_{ij} := E_{ij} - E_{-j,-i} \qquad \text{for $i,j \in\{ -n, -n+1, \ldots, n-1, n\}$}.
		\]
		Under the isomorphism $\so_{2n} \cong \Lambda^2 \C^{2n}$, the element $M_{ij}$ corresponds to $e_i \wedge e_{-j}$.  
		
		We can choose a Cartan subalgebra $\h_{\so_{2n}} = \mathrm{diag}(h_1,\ldots,h_n,-h_1,\ldots,-h_n)$. Denote by $\epsilon_i \in \h_{\so_{2n}}^*$ the functional $\epsilon_i(h) = h_i$. Then the roots spaces of $\g = \so_{2n}$ are
		\[
		\g_{\epsilon_i - \epsilon_j} = \C\cdot M_{ij}, \qquad \g_{\epsilon_i + \epsilon_j} = \C\cdot M_{i,-j}, \qquad\text{and} \qquad \g_{-\epsilon_i-\epsilon_j} = \C\cdot M_{-j,i},
		\]
		for $i\neq j\in\{1, \dots, n\}$. Our choice of positive roots is
		\[
		R_+ = \{ \epsilon_i \pm \epsilon_j | 1\le i< j\le n\}
		\]
		with simple roots
		\[
		\alpha_1 = \epsilon_1 - \epsilon_2,\qquad \alpha_2 = \epsilon_2 - \epsilon_3,\qquad \ldots\qquad , \qquad \alpha_{n-1} = \epsilon_{n-1} - \epsilon_n, \qquad \text{and} \qquad \alpha_n = \epsilon_{n-1} + \epsilon_n.
		\]
		The corresponding Dynkin diagram is 
		\begin{center}
			\begin{tikzpicture}
				\draw (5.07,0.07) -- (5.93,0.93);
				\draw (5.07,-0.07) -- (5.93,-0.93);
				\draw (1.1,0) -- (1.9,0);
				\draw (2,0) circle (0.1cm);
				\draw (2.1,0) -- (2.9,0);
				\node at (3.5,0) {\ldots};
				\draw (4.1,0) -- (4.9,0);
				\draw (5,0) circle (0.1cm);
				\draw (6,1) circle (0.1cm);
				\draw (6,-1) circle (0.1cm);
				\draw (1,0) circle (0.1cm);
				\node at (1,-0.3) {$\epsilon_1 - \epsilon_2$};
				\node at (2,0.3) {$\epsilon_2 - \epsilon_3$};
				\node at (6.2,0) {$\epsilon_{n-2} - \epsilon_{n-1}$};
				\node at (7,1) {$\epsilon_{n-1} - \epsilon_{n}$};
				\node at (7,1) {$\epsilon_{n-1} - \epsilon_{n}$};
				\node at (7,-1) {$\epsilon_{n-1} + \epsilon_{n}$};
			\end{tikzpicture}
		\end{center}
		The diagram automorphism induces an automorphism of $R_+$, acting by $\epsilon_{n-1} \pm \epsilon_n \mapsto \epsilon_{n-1} \mp \epsilon_n$ on the last two roots and by identity on the other ones. It also induces an outer automorphism of $\so_{2n}$ which we denote by $\tau_{n}$. In general, for $k=1,\ldots, n$, we define Lie algebra automorphisms $\tau_k \colon \so_{2n} \rightarrow \so_{2n}$ by
		\begin{equation}
			\label{eq::so2n_outer_auto}
			M_{ij} \mapsto M_{ij}, \qquad M_{i,\pm k} \mapsto M_{i,\mp k}, \qquad M_{\pm k,j} \mapsto M_{\mp k, j}, \qquad \text{and} \qquad M_{kk} \mapsto - M_{kk},
		\end{equation}
		for any $i,j\neq k$.
		
		We will consider the embedding
		\begin{equation}
			\label{eq::gl_embedding_so}
			\gl_n \hookrightarrow \so_{2n}, \qquad E_{ij} \mapsto M_{ij},\qquad \text{for $i,j\in \{1,\ldots, n\}$}. 
		\end{equation}
		
		Finite dimensional irreducible representations of $\so_{2n}$ are described by sequences of half-integral numbers $\vec{l} = (l_1,\ldots,l_n)$ satisfying $l_1 \geq \ldots \geq l_{n-1} \geq |l_n|$; we denote them by $V(\so_{2n},\vec{l})$. The corresponding representation integrates to the group $\SO(2n)$ if and only if all numbers are integral, i.e., when $\bl := (l_1,\ldots,l_{n-1},|l_n|)$ defines a partition.

		Let $k=2n+1$ be odd. Consider $\C^{2n+1} = \Span(e_1,\ldots,e_n,e_0,e_{-1},\ldots,e_{-n})$ with the form $(e_{i},e_{-j}) = \delta_{ij}$ with all the other pairings zero. As before, denote by $M_{ij} = E_{ij} - E_{-j,-i}$. Note that now indices can be zero. Choose the Cartan subalgebra $\h_{\so_{2n+1}}= \mathrm{diag}(h_1, \dots, h_n, 0, -h_1, \dots, -h_n)$ and write $\epsilon_i\in \h_{\so_{2n+1}}^*$ for the functional $\epsilon_i(h) = h_i$. The root spaces of $\so_{2n+1}$ are
		\begin{gather*}
			\g_{\epsilon_i - \epsilon_j}  = M_{ij}, \qquad
			\g_{\epsilon_i + \epsilon_j} = M_{i,-j}, \qquad \text{and} \qquad 
			\g_{-\epsilon_i-\epsilon_j}  = M_{-j,i}, \qquad \text{for $i\ne j\in \{1, \dots, n\}$}\\
			\g_{\epsilon_i} = M_{i,0}, \qquad \text{and} \qquad
			\g_{-\epsilon_i} = M_{-i,0} \qquad \text{for $i\in \{1, \dots, n\}$}.
		\end{gather*}
		Positive roots are 
		\[
		R_+ =  \{ \epsilon_i \pm \epsilon_j | 1\le i< j\le n\} \cup \{ \epsilon_i|1\le i\le n\}
		\]
		with simple roots
		\[
		\alpha_1 = \epsilon_1 - \epsilon_2,\qquad \alpha_2 = \epsilon_2 - \epsilon_3,\qquad \ldots, \qquad \alpha_{n-1} = \epsilon_{n-1} - \epsilon_n, \qquad \alpha_n =  \epsilon_n. 
		\]
		
		Finite dimensional irreducible representations of $\so_{2n+1}$ are described by sequences $\vec{l} = (l_1,\ldots,l_n)$ of half-integral numbers such that $l_1 \geq \ldots \geq l_n \geq 0$; we denote it by $V(\so_{2n+1},\vec{l})$. This representation integrates to a representation of $\SO(2n+1)$ if and only if all the numbers are integral, i.e., $\bl :=(l_1,\ldots,l_n)$ defines a partition.
		
		\subsubsection{Representations of $\rO_k$}
		\label{subsubsect:o_m_reps}
		
		Recall the Cartan involution $\theta$ from \cref{sect:boundary_fusion}. In the case of $\g=\gl_k$, it is given by $\theta(E_{\alpha\beta}) = -E_{\beta\alpha}$ for all $\alpha,\beta\in \{1,\ldots, m\}$. In fact, it can be extended to an involution on the group $\GL_k$ of invertible matrices by $\theta(g) = (g^{-1})^{\mathrm{T}}$. Denote by $\rO_k := \GL^{\theta}_k$ the fixed point subgroup.
		
		The goal of this subsection is to recall how to parameterize the isomorphism classes of irreducible finite dimensional representations of $\rO_k$ in terms of representations of $\SO_k$ via Clifford theory\footnote{Meaning the theory of induction and restriction from a finite index normal subgroup.}.
		
		The Lie algebra of $\rO_k$ is $\so_k = \gl^{\theta}_k$. The group $\SO_k$ is the identity component of $\rO_k$, so in particular $\SO_k$ is a connected Lie group. It follows that all finite dimensional representations of $\SO_k$ come from the Lie algebra $\so_k$. The group $\rO_k$ has two connected components. We denote by $\mathrm{det}$ the determinant representation of $\rO_k$ restricted from $\mathrm{det}:\GL_k\rightarrow \C^{\times}$. The representation $\mathrm{det}$ is nontrivial for $\rO_k$ but restricts to the trivial representation for $\SO_k$. 
		
		\begin{remark}
			To describe finite dimensional irreducible representations of $\rO_k$, we use the root systems of \cref{subsubsect:so_k}. However, one must be careful when doing so. Note that $\rO_k=\GL_k^{\theta}$ preserves the form on $\C^k$ such that $(e_i, e_j) = \delta_{ij}$.  But the root systems in \cref{subsubsect:so_k} correspond to Cartan subalgebras of $\so_k$ which preserve a different bilinear form on $\C^k$ than the one preserved by $\gl_k^{\theta}$. 
		\end{remark}

		When $k=2n+1$ is odd, then $-\id \in \rO_k$ and $-\id\notin \SO_k$, so $\rO_k$ is the direct product $\SO_k \times \Z/2$. In particular, all irreducible representations of $\rO_k$ are isomorphic to the external tensor product of an irreducible representation of $\SO_k$ with an irreducible representation for $\mathbb{Z}/2$. For a partition $\bl$, let $V(\rO_k,\bl)$ be the representation of $\rO_k$ obtained from $V(\SO_k,\bl)$ by making $-\id$ act trivially. 
		
		\begin{notation}\label{N:partition-combinatorics}
			Denote by $\bl'$ the conjugate partition and by $\tilde{\bl}$ the partition obtained from $\bl$ by replacing the first column of $\bl'$ by $k-\bl_1'$. If $\bl = (l_1\ge \dots \ge l_k)$ and $l_k>0$, then write $\mathrm{len}(\bl) := k$.    
		\end{notation}

		Formally, denote by $V(\rO_k,\tilde{\bl}) := V(\SO_k,\bl) \otimes \det$, the representation of $\rO_k$ obtained from $V(\SO_k, \bl)$ by making $-\id$ act by $\mathrm{det}(-\id) = -1$.
		
		When $k=2n$ is even, there is the following classification.
		\begin{prop}\cite[Proposition 5.35]{ChengWang}
			A simple $\rO_{2n}$-module $L$ is one of the following:
			\begin{enumerate}
				\item $L$ is a direct sum of two irreducible modules of highest weight $(l_1,\ldots,l_{n-1},l_n)$ and $(l_1,\ldots,l_{n-1},-l_n)$ for $l_n >0$. In this case, $L \cong L \otimes \det$.
				\item For $\vec{l}=(l_1,\ldots,l_{n-1},0)$, there are exactly two inequivalent $L$ that restrict to the module $V(\SO_{2n},\vec{l})$ of $\SO_{2n}$. If one of them is $L$, then the other one is $L \otimes \det$. 
			\end{enumerate}
		\end{prop}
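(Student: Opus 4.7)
The plan is to apply Clifford theory to the index-two normal subgroup $\SO_{2n} \triangleleft \rO_{2n}$. Fix $\epsilon \in \rO_{2n}\setminus \SO_{2n}$; for concreteness, choose the diagonal element negating $e_n$ and $e_{-n}$ in the basis of \cref{subsubsect:so_k}. Conjugation by $\epsilon$ preserves $\SO_{2n}$ and induces the Lie algebra automorphism $\tau_n$ of \eqref{eq::so2n_outer_auto}, acting on $\h_{\so_{2n}}$ by $h_n\mapsto -h_n$ while fixing the remaining $h_i$. Dually, $\mathrm{Ad}_{\epsilon}$ sends $\epsilon_n\mapsto -\epsilon_n$, and hence transports $V(\SO_{2n},(l_1,\ldots,l_{n-1},l_n))$ to $V(\SO_{2n},(l_1,\ldots,l_{n-1},-l_n))$.

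Given a simple $\rO_{2n}$-module $L$, restrict to $\SO_{2n}$ and decompose into irreducibles (possible by reductivity). Clifford's theorem implies that these irreducible $\SO_{2n}$-summands form a single orbit under the $\Z/2$-action by pullback along $\mathrm{Ad}_{\epsilon}$, each appearing with the same multiplicity; moreover, this multiplicity is $1$ because $\mathrm{Ind}_{\SO_{2n}}^{\rO_{2n}} V(\SO_{2n},\vec{l})$ has dimension $2\dim V(\SO_{2n},\vec{l})$ and $L$ must be a summand of such an induced module by Frobenius reciprocity. Since the $\Z/2$-orbit of $(l_1,\ldots,l_{n-1},l_n)$ has size $1$ when $l_n=0$ and size $2$ otherwise, the two cases of the statement correspond to these two orbit types.

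In the size-two case ($l_n>0$), Clifford's theorem gives a unique simple $\rO_{2n}$-module with the prescribed restriction, realized as $\mathrm{Ind}_{\SO_{2n}}^{\rO_{2n}} V(\SO_{2n},\vec{l})$. Tensoring with $\det$ preserves the restriction to $\SO_{2n}$ (since $\det|_{\SO_{2n}}$ is trivial and the two summands are merely permuted), so $L\cong L\otimes \det$. In the singleton case ($l_n=0$), the $\SO_{2n}$-module $V(\SO_{2n},\vec{l})$ is itself $\mathrm{Ad}_{\epsilon}$-stable; Clifford theory then yields exactly two non-isomorphic extensions to $\rO_{2n}$, which differ by tensoring with the nontrivial character of $\rO_{2n}/\SO_{2n}\cong \Z/2$, namely $\det$.

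The main subtlety is establishing existence of the extension in the singleton case: the obstruction lives in $H^2(\Z/2,\C^{\times})=0$, so an extension exists abstractly, but concretely I would argue that when $l_n=0$ the highest weight $\vec{l}$ corresponds to a partition $\bl$ and $V(\SO_{2n},\vec{l})$ occurs as an isotypic component inside some tensor power of the vector representation $\C^{2n}$, which is manifestly an $\rO_{2n}$-module; the isotypic component thus inherits an $\rO_{2n}$-action, and by Schur's lemma the two possible $\rO_{2n}$-structures on it differ exactly by $\det$.
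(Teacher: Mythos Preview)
Your approach via Clifford theory for the index-two inclusion $\SO_{2n}\triangleleft\rO_{2n}$ is exactly the one the paper points to (the footnote in \cref{subsubsect:o_m_reps} glosses ``Clifford theory'' as induction/restriction from a finite-index normal subgroup), and the paper itself does not give a proof but defers to \cite{ChengWang}. So your argument is in the intended spirit and is essentially correct.

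One concrete slip: the element you chose, the diagonal matrix negating $e_n$ and $e_{-n}$, has determinant $+1$ and hence lies in $\SO_{2n}$; moreover conjugation by it sends $M_{in}\mapsto -M_{in}$ rather than $M_{in}\mapsto M_{i,-n}$, so it does not realize $\tau_n$. The correct coset representative is the orthogonal transformation \emph{swapping} $e_n\leftrightarrow e_{-n}$ (a transposition, determinant $-1$), which one checks directly induces $\tau_n$ of \eqref{eq::so2n_outer_auto} on $\so_{2n}$ and hence acts on highest weights by $(l_1,\ldots,l_{n-1},l_n)\mapsto (l_1,\ldots,l_{n-1},-l_n)$. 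With that correction, the rest of your argument (orbit size dichotomy, multiplicity one via Frobenius reciprocity and dimension count, vanishing of the $H^2(\Z/2,\C^\times)$ obstruction in the stable case, and the identification of the two extensions as differing by $\det$) goes through.
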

		
		In particular, there is a uniform classification of irreducible $\rO_k$-modules.
		\begin{prop}\cite[Proposition 5.36]{ChengWang}
			A complete set of pairwise non-isomorphism $\rO_k$-modules consists of $V(\rO_k,\bl)$, where $\bl$ is a partition of length no greater than $k$ with $\bl_1' + \bl_2' \leq k$.
		\end{prop}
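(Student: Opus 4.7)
The plan is to handle the odd and even cases separately, since the structure of $\rO_k / \SO_k$ and the action of $\rO_k\setminus \SO_k$ on $\SO_k$-irreducibles differ in the two cases.

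For $k=2n+1$ odd, $-\id \in \rO_k \setminus \SO_k$ (as $\det(-\id) = (-1)^k = -1$), so there is a direct product decomposition $\rO_k = \SO_k \times \langle -\id\rangle \cong \SO_k \times \mathbb{Z}/2$. Hence every irreducible $\rO_k$-module is an outer tensor product $V(\SO_k,\bl) \boxtimes \chi$ for some partition $\bl$ with $\mathrm{len}(\bl)\le n$ and some $\chi \in \widehat{\mathbb{Z}/2}$. The nontrivial character is realized by $\det|_{\langle -\id\rangle}$, so these irreducibles are exactly $V(\rO_k,\bl)$ with $\mathrm{len}(\bl)\le n$ together with their $\det$-twists $V(\rO_k,\bl)\otimes\det$; by \cref{N:partition-combinatorics} the latter is $V(\rO_k,\tilde\bl)$.

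For $k=2n$ even, apply Clifford theory to the index-two normal inclusion $\SO_k \trianglelefteq \rO_k$. Choose $g \in \rO_k\setminus\SO_k$ to be the involution of $\C^{2n}$ that swaps $e_n$ with $e_{-n}$; a direct check shows $g$ normalizes $\h_{\so_{2n}}$ and acts on highest weights by the diagram automorphism $\tau_n$ of \eqref{eq::so2n_outer_auto}, i.e.\ $(l_1,\ldots,l_n)\mapsto(l_1,\ldots,-l_n)$. Combined with the already-cited \cite[Proposition 5.35]{ChengWang}, this yields: an $\SO_{2n}$-irreducible with $l_n\neq 0$ is not $\tau_n$-fixed, so the two modules $V(\SO_k,(l_1,\ldots,\pm l_n))$ fuse into a single irreducible $\rO_k$-module $V(\rO_k,\bl)$ satisfying $V(\rO_k,\bl)\cong V(\rO_k,\bl)\otimes\det$; an $\SO_{2n}$-irreducible with $l_n=0$ is $\tau_n$-fixed and extends to $\rO_k$ in exactly two ways, namely $V(\rO_k,\bl)$ and $V(\rO_k,\bl)\otimes\det = V(\rO_k,\tilde\bl)$.

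The final step is to match the $\rO_k$-irreducibles produced above with the combinatorial parameter set. The map $\bl\mapsto\tilde\bl$ is a well-defined involution on partitions with $\mathrm{len}(\bl)\le k$ precisely when $k - \bl_1' \ge \bl_2'$, that is, $\bl_1' + \bl_2' \le k$; its fixed locus is $\{\bl : \bl_1' = k/2\}$, which is empty for $k$ odd and consists of length-$n$ partitions for $k=2n$. In the odd case this partitions the parameter set into unordered pairs $\{\bl,\tilde\bl\}$ with exactly one member of length $\le n$, matching the two families from Case 1. In the even case, the fixed points ($\bl_1'=n$) parametrize the single-irreducible Case $l_n\neq 0$, while the free orbits ($\bl_1'<n$, paired with $\tilde\bl_1'>n$) parametrize the $\det$-twist pairs of the $l_n=0$ Case. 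The main technical point is the identification of the conjugation action by $g$ with $\tau_n$ on highest weights, which is the only place the specific choice of $g$ matters; all the remaining verifications are routine counting.
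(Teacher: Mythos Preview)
The paper does not supply its own proof of this proposition; it is simply quoted from \cite[Proposition 5.36]{ChengWang}. Your Clifford-theory argument is correct and is essentially the standard one: splitting off the odd case via $\rO_{2n+1}\cong\SO_{2n+1}\times\mathbb{Z}/2$, and in the even case using that conjugation by an element of $\rO_{2n}\setminus\SO_{2n}$ realizes the diagram automorphism $\tau_n$ on highest weights, so that Proposition~5.35 gives the dichotomy. The combinatorial matching with the involution $\bl\mapsto\tilde\bl$ is handled correctly; in particular your identification of the fixed locus $\bl_1'=k/2$ with the $l_n\neq 0$ case and of the free orbits with the two $\det$-twisted extensions in the $l_n=0$ case is exactly the point. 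The only thing left implicit is which of the two extensions in the $l_n=0$ case receives the label $V(\rO_k,\bl)$ versus $V(\rO_k,\tilde\bl)$; this is a convention fixed in \cite{ChengWang} (via the action on a specific highest-weight vector), but it does not affect the assertion that the indicated set is complete and irredundant.
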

		
		\subsubsection{Spinor representation}
		\label{subsect:spinor_representation}
		We denote by $\Cl_n$ the Clifford algebra generated by $\{\gamma_i,\gamma^{\dagger}_i|i=1,\ldots, n\}$ with the relations
		\[
		[\gamma^{\dagger}_i,\gamma_j]_+ = \delta_{ij}, \qquad [\gamma^{\dagger}_i,\gamma^{\dagger}_j]_+ = [\gamma_i,\gamma_j]_+ = 0,
		\]
		where $[-,-]_+$ is the anticommutator, i.e., $[x,y]_+= xy+yx$. The algebra $\Cl_n$ is $\Z/2$-graded by 
            \[
            \deg(\gamma^{\dagger}_i) = \deg(\gamma_{i}) = 1 \qquad \text{for all}\qquad i=1, \dots, n.
            \]
        For categorical constructions (in particular, tensor products), we consider $\Cl_n$ as a \emph{super} vector space.
		
		The relevance of the Clifford algebra for us is that the assignment
		\begin{equation}
			\label{eq::so_spinor_representation}
			M_{ij} \mapsto \frac{[\gamma^{\dagger}_i,\gamma_j]}{2}, \qquad M_{-i,j} \mapsto \frac{[\gamma_i,\gamma_j]}{2}, \qquad M_{i,-j} \mapsto \frac{[\gamma^{\dagger}_i,\gamma^{\dagger}_j]}{2}
		\end{equation}
		defines a Lie algebra homomorphism $\so_{2n} \hookrightarrow \Cl_n$.
		
		There is an action of $\Cl_n$ on skew polynomials $\fP_n$, see \cref{subsect:skew_polynomials}, defined by
		\begin{equation*}
			\gamma^{\dagger}_i \mapsto x_i \qquad \text{and} \qquad \gamma_i \mapsto \partial_i.
		\end{equation*}
		In fact, $\fP_n$ is the \emph{spinor} module of $\Cl_n$. Restricting the action of $\Cl_n$ to $\so_{2n}$, via Equation \eqref{eq::so_spinor_representation}, gives the structure of an $\so_{2n}$-representation on $\fP_n$. There is a compatible $\Z/2$-grading on $\fP_n$ determined by $\deg(\mathbf{1}) = 0$ and $\deg(x_i)=1$ for all $i$. In particular, $\fP_n$ is a direct sum of the even $\fP^+_n$ and odd $\fP^-_n$ component. Moreover each component is an irreducible $\so_{2n}$-representation. The corresponding lowest-weight vectors are 
		
		\begin{align*}
			&\fP_n^+ \ni \mathbf{1},\ \wt(\mathbf{1}) = \left(-\frac{1}{2},\ldots,-\frac{1}{2},-\frac{1}{2}\right), \\
			&\fP_n^- \ni x_n,\ \wt(x_n) = \left(-\frac{1}{2},\ldots,-\frac{1}{2},\frac{1}{2}\right).
		\end{align*}
		
		Recall the embedding $\gl_n \hookrightarrow \so_{2n}$ of equation \eqref{eq::gl_embedding_so}. One can easily see that the restriction of $\fP_n$ under this action is isomorphic to $\Lambda^{\bullet}(\C^n) \otimes \Det^{-\frac{1}{2}}$, where $\mathrm{det}$ is the determinant representation of \eqref{eq::determinant_representation}. Note that $M_{ii} = \gamma^{\dagger}_i \gamma_i - 1/2$ and $\gamma^{\dagger}_i \gamma_i$ corresponds to $E_{ii}$ in $\gl_n$. In particular, as an $\sl_n$-representation under the embedding $\sl_n \hookrightarrow \gl_n$, we have $\fP_n \cong \Lambda^{\bullet}(\C^n)$.

        In what follows, we will consider an automorphism $\tau$ of $\Cl_n$ defined by 
        \begin{equation}
        \label{eq::fourier_transform_clifford}
            \gamma^{\dagger}_i \mapsto \gamma_i, \qquad \gamma_i \mapsto \gamma^{\dagger}_i
        \end{equation}
        for all $i$. We call it the \emph{Fourier transform}. It is induced from the automorphism $\tau^{\fP}$ of the spinor module $\fP_n$ constructed as follows. Let $\{y_1,\ldots,y_n\}$ be a collection of anticommuting variables which anticommute with $x_1,\ldots,x_n$ as well. The \emph{Berezin integral} is defined by 
        \[
            \int y_I \mathrm{d} y = \begin{cases} 1,\ I = \{n > \ldots > 1\}, \\ 0,\ \mathrm{otherwise} \end{cases}
        \]
        where $y_I := y_{i_k} \ldots y_{i_1}$ for the index set $I = \{i_k > \ldots > i_1 \}$. Consider the odd analog of the Fourier transform
        \begin{equation}
        \label{eq::fourier_transform_spinor}
            \tau^{\fP} \colon \fP_n \rightarrow \fP_n, \qquad f(x) \mapsto \int e^{\sum_i x_i y_i} f(y) \mathrm{d} y,
        \end{equation}
        where the exponential factor is given by the usual exponential series. Denote by $\tau^* \fP_n$ the pullback of the spinor module under the automorphism $\tau$ of Clifford algebra. 
        \begin{prop}
        \label{prop:spinor_involution_intertwiner}
            The Fourier transform $\tau^{\fP}$ of $\fP_n$ induces an isomorphism $\fP_n \xrightarrow{\sim} \tau^* \fP_n$ of Clifford algebra modules.
        \begin{proof}
            We need to show that 
            \[
                \tau^{\fP} (\partial_i f) = x_i \tau^{\fP}(f), \qquad \tau^{\fP} (x_i f) = \partial_i \tau^{\fP}(f)
            \]
            for any $f\in \Lambda^{\bullet} \C^n$. For any function $g(y_1,\ldots,y_n)$ and any $i=1\ldots n$, we have
            \[
                \int \partial_{y_i} g(y_1,\ldots,y_n) \mathrm{d} y = 0. 
            \]
            Also, $\partial_{y_i} e^{\sum_j x_j y_j} = -x_i \cdot e^{\sum_j x_j y_j}$. Since $e^{\sum_j x_j y_j}$ has even degree, we get
            \[
                0 = \int \partial_{y_i} (e^{\sum_j x_j y_j} f(y)) \mathrm{d} y = -x_i \int (e^{\sum_j x_j y_j} g) \mathrm{d} y + \int e^{\sum_j x_j y_j} \partial_{i} f(y) \, \mathrm{d} y, 
            \]
            and so $\tau^{\fP} (\partial_i f) = x_i \tau^{\fP}(f)$. The second property follows from $\partial_{x_i} e^{\sum_j x_j y_j} = y_i e^{\sum_j x_j y_j}$. 
        \end{proof}
        \end{prop}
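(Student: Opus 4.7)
My plan is to directly verify that $\tau^{\fP}$ intertwines the two generating actions of the Clifford algebra twisted by $\tau$. Since $\tau$ swaps $\gamma_i^{\dagger}$ with $\gamma_i$, and since on $\fP_n$ these act as multiplication by $x_i$ and as $\partial_i$ respectively, the assertion reduces to two identities:
\begin{equation*}
\tau^{\fP}(\partial_i f) = x_i \, \tau^{\fP}(f) \qquad \text{and} \qquad \tau^{\fP}(x_i f) = \partial_i \, \tau^{\fP}(f).
\end{equation*}

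For the first identity, I would exploit the analog of integration by parts for the Berezin integral, namely that $\int \partial_{y_i} g(y)\, \mathrm{d} y = 0$ for any $g$. Applying this to $g(y) = e^{\sum_j x_j y_j} f(y)$ and expanding via the super Leibniz rule, I need $\partial_{y_i}$ applied to the exponential. Here a sign is picked up because $x_j$ and $\partial_{y_i}$ are odd, so $\partial_{y_i}(x_j y_j) = -x_j \delta_{ij}$, giving $\partial_{y_i} e^{\sum_j x_j y_j} = -x_i e^{\sum_j x_j y_j}$. Combined with the evenness of the exponential (so that no extra sign appears when passing $\partial_{y_i}$ past it before hitting $f$), this yields the first identity cleanly.

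For the second identity, I would differentiate under the Berezin integral: $\partial_{x_i} e^{\sum_j x_j y_j} = y_i e^{\sum_j x_j y_j}$, so $\partial_{x_i} \tau^{\fP}(f) = \int y_i e^{\sum_j x_j y_j} f(y)\, \mathrm{d} y$. Because $e^{\sum_j x_j y_j}$ is built from degree-$2$ elements $x_j y_j$ (which moreover mutually commute and square to zero, so the exponential is in fact just $\prod_j (1 + x_j y_j)$), it is a totally even element and commutes with $y_i$ in the super sense. Therefore $\partial_{x_i} \tau^{\fP}(f) = \int e^{\sum_j x_j y_j} y_i f(y)\, \mathrm{d} y = \tau^{\fP}(x_i f)$, as desired.

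The step most likely to cause trouble is the bookkeeping of Koszul signs in both arguments: one must check carefully that the exponential is even, that $\partial_{y_i}$ is a left super-derivation of appropriate parity, and that the $-$ sign in $\partial_{y_i}(x_j y_j) = -x_j\delta_{ij}$ cancels exactly against the $+$ sign in $\partial_{y_i} f(y) = \partial_i f$ (treating the Berezin variables $y$ as a relabeling of $x$). No analytic issue arises beyond these parity conventions, since the exponential is really a finite sum and the Berezin integral is purely algebraic.
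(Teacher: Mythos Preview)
Your proposal is correct and follows essentially the same approach as the paper: both reduce to the two intertwining identities, prove the first via Berezin integration by parts $\int \partial_{y_i} g\,\mathrm{d}y = 0$ together with $\partial_{y_i} e^{\sum_j x_j y_j} = -x_i e^{\sum_j x_j y_j}$ and the evenness of the exponential, and prove the second via $\partial_{x_i} e^{\sum_j x_j y_j} = y_i e^{\sum_j x_j y_j}$. Your added remarks on the Koszul sign bookkeeping and the finite-product form of the exponential are helpful clarifications but do not change the argument.
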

		
		\subsection{Representation}
		\label{subsect:so_duality_representation}
		Consider $\Cl_{nm}$ with generators $\{\gamma^{\dagger}_{i\alpha}, \gamma_{i\alpha}|i=1, \ldots, n, \ \alpha=1, \ldots, m\}$ satisfying 
		\[
		\gamma_{i\alpha}^{\dagger} \gamma_{j\beta} + \gamma_{j\beta} \gamma_{i\alpha}^{\dagger} = \delta_{ij} \delta_{\alpha\beta}.
		\]
		For each $\alpha$, consider the Clifford subalgebras
		\begin{align*}
			\Cl_n^{(\alpha)} = \langle \gamma^{\dagger}_{k\alpha},\gamma_{l\alpha}| k,l \in \{1 \ldots n\} \rangle, 
		\end{align*}
		We denote the corresponding image of the orthogonal Lie algebra by $\so_{2n}^{(\alpha)}$. Similarly, we define $\Cl_m^{(j)}$ to be the Clifford algebra generated by $\gamma^{\dagger}_{j\alpha}, \gamma_{j\beta}$ for all $\alpha,\beta$. 
		
		Recall the spinor module $\fP_{nm}$ from \cref{subsect:spinor_representation}. The action of $\Cl_{nm}$ on $\fP_{nm}$ gives rise to a total action of $\so_{2n}$ of $\fP_{nm}$ by restricting along the coproduct:
		\begin{align*}
			\so_{2n}&\rightarrow \so_{2n}^{(1)}\otimes \dots  \otimes \so_{2n}^{(m)}\hookrightarrow \Cl_{nm} \\ x &\mapsto  \sum_{\alpha} x^{(\alpha)}=: x^{\langle n \rangle} .
		\end{align*}
		Since all the generators of $\so_{2n}$ have even degree in the Clifford algebra, there is an isomorphism
		\[
		\fP_{nm} \cong \underbrace{\fP_n \otimes \ldots \otimes \fP_n}_{m}.
		\]
		of $\so_{2n}$-representations. 
		
		On the $\rO_m$-side, we first consider $\fP_{nm} = \Lambda^{\bullet}(\C^n \otimes \C^m)$ as a $\GL_m$-representation, see \cref{subsect:gl_duality_representation}. In particular, we have the embeddings 
		\[
		\gl^{(i)}_{m} \hookrightarrow \Cl_{nm}, \qquad E^{(i)}_{\alpha\beta} = \frac{[\gamma^{\dagger}_{i\alpha}, \gamma_{i\beta}]}{2}, \qquad \text{for $i=1, \dots, n$,}
		\]
		and the total $\gl_m$ action is given by the sum $E_{\alpha\beta} \mapsto \sum_i E_{\alpha\beta}^{(i)}$. As in \cref{subsubsect:o_m_reps}, we denote by $\theta$ the Cartan involution, $\rO_m := \GL_m^{\theta}$ the fixed point subgroup, and by $B_{\alpha\beta} = E_{\beta\alpha} - E_{\alpha\beta}$ the basis of the fixed point Lie subalgebra $\so_m := \gl_m^{\theta}$. 
		
		One can easily see that the actions of $\so_{2n}$ and $\rO_m$ commute. In fact, there is an analog of \cref{thm:gln_glm_duality}, for instance see \cite[Corollary 5.41]{ChengWang}. The irreducible highest-weight $\so_{2n}$-modules below are with respect to the following choice of the Borel subalgebra:
		\begin{center}
			\begin{tikzpicture}
				\draw (0,1) circle (0.1cm);
				\draw (0,-1) circle (0.1cm);
				\draw (1,0) circle (0.1cm);
				\draw (0.07,0.93) -- (0.93,0.07);
				\draw (0.07,-0.93) -- (0.93,-0.07);
				\draw (1.1,0) -- (1.9,0);
				\draw (2,0) circle (0.1cm);
				\draw (2.1,0) -- (2.9,0);
				\node at (3.5,0) {\ldots};
				\draw (4.1,0) -- (4.9,0);
				\draw (5,0) circle (0.1cm);
				\node at (-0.7,1) {$\epsilon_1 - \epsilon_2$};
				\node at (-0.85,-1) {$-\epsilon_1 - \epsilon_2$};
				\node at (0.3,0) {$\epsilon_2 - \epsilon_3$};
				\node at (2,0.3) {$\epsilon_3 - \epsilon_4$};
				\node at (5,0.3) {$\epsilon_{n-1} - \epsilon_n$};
			\end{tikzpicture}
		\end{center}
		We continue to use Notation \ref{N:partition-combinatorics} when working with partitions.
		
		\begin{thm}
			\label{thm:o_duality}
			We have the following decomposition of $(\so_{2n},\rO_m)$-modules
			\[
			\fP_{nm} \cong \bigoplus_{\substack{\bl = (l_1 \geq \ldots \geq l_m) \\ \bl_1' + \bl_2' \leq m,\ \mathrm{len}(\bl') \leq n}} V \left(\so_{2n}, \sum_{i=1}^n (\bl_i' - \frac{m}{2}) \epsilon_i \right) \otimes V(\rO_m,\bl).
			\]
		\end{thm}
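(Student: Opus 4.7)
The plan is to prove the decomposition by combining the already-established $(\gl_n, \gl_m)$-Howe duality (Theorem \ref{thm:gln_glm_duality}) with a multiplicity-freeness argument and an explicit highest-weight vector analysis for the commuting $\so_{2n}^{\langle n\rangle}$ and $\rO_m^{\langle m\rangle}$ actions on $\fP_{nm}$.

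\textbf{Step 1 (Starting decomposition).} The vector space isomorphism $\fP_n \cong \Lambda^{\bullet}(\C^n) \otimes \Det^{-1/2}$ as $\gl_n$-modules (Section \ref{subsect:spinor_representation}) yields $\fP_{nm} \cong \Lambda^{\bullet}(\C^n \otimes \C^m) \otimes \Det^{-m/2}$ as $(\gl_n \oplus \gl_m)$-modules. Combined with Theorem \ref{thm:gln_glm_duality} applied to $\Lambda^\bullet(\C^n \otimes \C^m)$ and the change of variable $\bl = \mu'$, this gives
\[
\fP_{nm}|_{\gl_n \oplus \gl_m} \;\cong\; \bigoplus_{\substack{\bl:\ \mathrm{len}(\bl)\le m,\\\mathrm{len}(\bl')\le n}} V\bigl(\gl_n,\;\textstyle\sum_i (\bl_i' - m/2)\epsilon_i\bigr) \;\otimes\; V(\gl_m,\bl).
\]
The candidate $\so_{2n}$-weights appearing in the target decomposition already match this list, and the condition $\mathrm{len}(\bl')\le n$ is in place.

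\textbf{Step 2 (Multiplicity-freeness).} Because the $\so_{2n}^{\langle n\rangle}$ and $\rO_m^{\langle m\rangle}$ actions commute on $\fP_{nm}$ and both are reductive, $\fP_{nm}$ decomposes as a direct sum $\bigoplus_i V_i \otimes W_i$ with $V_i$ irreducible $\so_{2n}$-modules and $W_i$ semisimple $\rO_m$-modules. To show the $W_i$ are irreducible and pairwise non-isomorphic (equivalently, that $(\so_{2n},\rO_m)$ forms a Howe dual pair on $\fP_{nm}$), one verifies the double centralizer relation: the associative subalgebra of $\End(\fP_{nm})$ generated by $\rO_m$ equals $\End_{\so_{2n}}(\fP_{nm})$. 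This can be established by adapting the First Fundamental Theorem of invariant theory for $\rO_m$ to the Clifford-algebra setting, using that $\End_{\so_{2n}}(\fP_{nm})$ is identified with the centralizer of the image of $\U(\so_{2n})$ inside $\Cl_{nm}$.

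\textbf{Step 3 (Identifying the correspondence).} With respect to the non-standard Borel of the statement, the positive-root vectors of $\so_{2n}^{\langle n\rangle}$ acting on $\fP_{nm}$ are either the $\gl_n$-positive-root vectors $\sum_\alpha x_{i\alpha}\partial_{j\alpha}$ (for $i<j$) or the new operators $\sum_\alpha \partial_{i\alpha}\partial_{j\alpha}$ corresponding to the roots $-\epsilon_i-\epsilon_j$ ($i<j$). An $\so_{2n}$-highest weight vector is thus a $\gl_n$-highest weight vector that is additionally annihilated by all $\sum_\alpha \partial_{i\alpha}\partial_{j\alpha}$. By Step 1, the $\gl_n$-highest weight space of weight $\sum_i (\bl_i' - m/2)\epsilon_i$ is identified with $V(\gl_m,\bl)$; the extra annihilation conditions cut out an $\rO_m$-invariant subspace of $V(\gl_m,\bl)$. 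By a direct calculation (or by invoking the Littlewood branching rule $\gl_m \downarrow \rO_m$ in the present Clifford-algebraic form), this subspace is precisely $V(\rO_m,\bl)$; it is nonzero exactly when $\bl$ is an $\rO_m$-dominant weight with $\bl_1'+\bl_2'\le m$, which is simultaneously the dominance condition for $\sum_i(\bl_i'-m/2)\epsilon_i$ under the chosen $\so_{2n}$-Borel.

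\textbf{Main obstacle.} The core difficulty lies in Step 3: showing that the joint kernel of the quadratic differential operators $\sum_\alpha \partial_{i\alpha}\partial_{j\alpha}$ inside the $\gl_n$-highest weight space $V(\gl_m,\bl)$ is the $\rO_m$-irreducible $V(\rO_m,\bl)$. This is the key branching computation and the mechanism by which the constraint $\bl_1'+\bl_2'\le m$ gets imposed; the compatibility between the $\so_{2n}$-dominance condition and the $\rO_m$-irreducibility condition is what makes the final sum multiplicity-free with the stated index set.
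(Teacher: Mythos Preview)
The paper does not give its own proof of this theorem; it simply cites \cite[Corollary 5.41]{ChengWang}. So there is no in-paper argument to compare against.

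Your outline is essentially the standard route taken in Cheng--Wang: reduce to $(\gl_n,\gl_m)$-duality, establish the double-centralizer property for the pair $(\so_{2n},\rO_m)$ on $\fP_{nm}$, and then identify the bijection on irreducibles by analysing joint highest-weight vectors for the non-standard Borel. Your identification of the extra annihilation conditions $\sum_\alpha \partial_{i\alpha}\partial_{j\alpha}$ (coming from the positive roots $-\epsilon_i-\epsilon_j$) is correct, and you are right that the matching of the $\so_{2n}$-dominance condition with the $\rO_m$-admissibility condition $\bl_1'+\bl_2'\le m$ is the crux.

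That said, what you have written is an outline, not a proof. Step~2 asserts the double-centralizer relation but does not prove it; ``adapting the First Fundamental Theorem for $\rO_m$ to the Clifford-algebra setting'' is the entire content of that step, and it requires real work (this is where Cheng--Wang invoke their supersymmetric FFT). Likewise, in Step~3 you defer the key branching computation to ``a direct calculation (or by invoking the Littlewood branching rule)''; but showing that the joint kernel of the operators $\sum_\alpha \partial_{i\alpha}\partial_{j\alpha}$ inside $V(\gl_m,\bl)$ is exactly the irreducible $V(\rO_m,\bl)$, and is nonzero precisely when $\bl_1'+\bl_2'\le m$, is again the substantive point. You have correctly located the two genuine obstacles, but neither is resolved in the proposal. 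Since the paper itself treats the result as a citation, this is not a deficiency relative to the paper---but if you intend this as a self-contained proof, both gaps need to be filled.
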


        Observe that we have an isomorphism $\Cl_{nm} \cong \Cl^{(1)}_m \otimes \ldots \otimes \Cl^{(n)}_m$ of superalgebras. Denote by $\tau_j$ the Fourier transform \eqref{eq::fourier_transform_clifford} along the $j$-th component. One can easily see that under the embedding $\so_{2n} \hookrightarrow \Cl_{nm}$, it corresponds to the Dynkin automorphism of $\so_{2n}$ from \eqref{eq::so2n_outer_auto}. On the $\rO_m$-side, it corresponds to a shifted version $\tau$ of the Cartan involution from \cref{subsect:chevalley_involution}:
        \[
            \tau \colon \U(\gl_m) \rightarrow \U(\gl_m),  \qquad E_{\alpha\beta} \mapsto -E_{\beta\alpha} + \delta_{\alpha,\beta}. 
        \]
        It is easy to check that $\tau_j$ induces the shifted Cartan involution on $\gl_m^{(j)}$. Similarly, under the isomorphism $\fP_{nm} \cong \fP_{m} \otimes \ldots \otimes \fP_m$, we denote by $\tau_j^{\fP}$ the spinor Fourier transform \eqref{eq::fourier_transform_spinor} of the $j$-th component.

		\subsection{Classical boundary duality}
		On the $\so_{2n}$-side, let us consider the \emph{differential dynamical} operators 
		\[
		\nabla^{\Cas,\langle n \rangle}_{\lambda_1},\ldots,\nabla^{\Cas,\langle n \rangle}_{\lambda_n}
		\]
		which are differential operators on $\h_{\so_{2n}}^{\reg}$ defined by:
		\begin{equation}
			\label{eq::so_casimir_connection}
			\nabla^{\Cas,\langle n \rangle}_{\lambda_i}(\lambda,\kappa) = \kappa \cdot \partial_{\lambda_i} - \sum_{j\neq i} \frac{M_{ij} M_{ji} - M_{ii}}{\lambda_i - \lambda_j} - \sum_{j \neq i} \frac{M_{i,-j} M_{-j,i} - M_{ii}}{\lambda_i + \lambda_j} -\frac{M_{ii}^2}{2\lambda_i}.
		\end{equation}
		One can see that these operators define a connection gauge equivalent (in the sense of \cref{def:gauge_transformation}) to the Casimir connection of \cite{FelderMarkovTarasovVarchenko} for $\so_{2n}$. 
		
		On the $\rO_m$-side, consider the \emph{boundary differential KZ operators} 
		\[
		\nabla^{\bKZ,\langle m \rangle}_{z_{1}},\ldots,\nabla^{\bKZ,\langle m \rangle}_{z_{n}}
		\]
		which are differential operators on $\C^n\setminus \{z_i= \pm z_j|i\ne j\in \{1, \dots, n\}\}$ defined by:
		\begin{equation}
			\label{eq::bkz_connection}
			\nabla^{\bKZ,\langle m \rangle}_{z_{i}} (z,\kappa)= \kappa\cdot \partial_{z_i} - \sum_{j\neq i} \frac{\Omega_{\gl_n}^{(i,j)} - m/2}{z_i - z_j} - \sum_{j\neq i} \frac{(1\otimes \tau) \Omega^{(i,j)}_{\gl_m} - m/2}{z_i + z_j} - \frac{\Omega_{\so_m}^{(i)} - m^2/8}{z_i}.
		\end{equation}
		Here, $\Omega_{\gl_n}^{(i,j)}$ is the split Casimir operator as in \cref{sect::gl_duality} and $\Omega^{(i)}_{\so_m}$ is the Casimir element $\Omega_{\so_m} = \frac{1}{2} \sum_{\alpha < \beta} B_{\alpha\beta}^2$. 
		
		\begin{remark}
			The boundary differential $KZ$ operators define a flat connection called the \emph{cyclotomic KZ connection} introduced in \cite{EnriquezEtingof} for an arbitrary finite-order automorphism of $\g$; see also \cite{BrochierCyclotomicKZ}.
		\end{remark}
		
		We can identify $\mathfrak{h}_{\so_{2n}}\rightarrow \C^n$ via $\lambda_i\mapsto z_i$. Under this we have the following theorem.
		
		\begin{thm}
			\label{prop::so_bkz_cas_duality}
			The following relation holds
			\[
			\nabla^{\bKZ,\langle m \rangle}_{z_{i}} (z,\kappa) = \nabla^{\Cas,\langle n \rangle}_{\lambda_i}(-z,-\kappa). 
			\]
			\begin{proof}
				Similar to the discussion in Remark \ref{R:down-to-earth}, it suffices to show the operators in the numerator have the same action on $\fP_{nm}$. Let us compare both sides case-by-case:
				\begin{itemize}
					\item Consider the operator $M_{ij} M_{ji} - M_{ii}$. Its action on $\fP_{nm}$ is equal to
					\begin{align*}
						\sum_{\alpha,\beta} \gamma^{\dagger}_{i\alpha} \gamma_{j\alpha} \gamma^{\dagger}_{j\beta} \gamma_{i\beta}  - \sum_{\alpha} \gamma^{\dagger}_{i\alpha} \gamma_{i\alpha} + \frac{m}{2}= -\sum_{\alpha,\beta} \gamma^{\dagger}_{i\alpha} \gamma_{i\beta} \gamma^{\dagger}_{j\beta} \gamma_{j\alpha}  + \frac{m}{2}.
					\end{align*}
					At the same time, the action of $\Omega^{(i,j)}_{\gl_m}$ is 
					$\sum_{\alpha \neq \beta} \gamma^{\dagger}_{i\alpha} \gamma_{i\beta} \gamma^{\dagger}_{j\beta} \gamma_{j\alpha}$.
					Therefore,
					\[
					\Omega^{(i,j)}_{\gl_m} - \frac{m}{2} = -(M_{ij} M_{ji} - M_{ii}). 
					\]
					\item Consider the operator $M_{i,-j} M_{-j,i} - M_{ii}$. Its action is equal to  
					\[
					\sum_{\alpha,\beta} \gamma^{\dagger}_{i\alpha} \gamma^{\dagger}_{j\alpha} \gamma_{j\beta} \gamma_{i\beta}  - \sum_{\alpha} \gamma^{\dagger}_{i\alpha}\gamma_{i\alpha} + \frac{m}{2}= \sum_{\alpha,\beta} \gamma^{\dagger}_{i\alpha} \gamma_{i\beta} \gamma^{\dagger}_{j\alpha} \gamma_{j\beta}- \sum_{\alpha} \gamma^{\dagger}_{i\alpha}\gamma_{i\alpha} + \frac{m}{2}.
					\]
					At the same time, we have
					\[
					(1 \otimes \tau)\Omega^{(i,j)}_{\gl_m} = -\sum_{\alpha,\beta} \gamma^{\dagger}_{i\alpha} \gamma_{i\beta} \gamma^{\dagger}_{j\alpha} \gamma_{j\beta} + \sum_{\alpha} \gamma^{\dagger}_{i\alpha} \gamma_{i\alpha}.
					\]
					So, 
					\[
					(1\otimes \tau) \Omega^{(i,j)}_{\gl_m} - m/2 = -(M_{i,-j}M_{-j,i} - M_{ii}).
					\]
					
					\item Finally, let us compute the action of the operator $\Omega_{\so_m}^{(i)}$:
					\begin{align*}
						\Omega_{\so_m}^{(i)} = -\frac{1}{4} \sum_{\alpha \neq \beta} (E_{\alpha\beta}^{(i)} - E_{\beta\alpha}^{(i)})^2 = \frac{1}{4}\sum_{\alpha\neq\beta} (\gamma^{\dagger}_{i\alpha} \gamma_{i\beta} \gamma^{\dagger}_{i\beta} \gamma_{i\alpha} + \gamma^{\dagger}_{i\beta} \gamma_{i\alpha} \gamma^{\dagger}_{i\alpha} \gamma_{i\beta})^2 = \\ 
						=-\frac{1}{2} \sum_{\alpha \neq \beta} \gamma^{\dagger}_{i\alpha} \gamma_{i\alpha} \gamma^{\dagger}_{i\beta} \gamma_{i\beta} + \frac{m-1}{2} \sum_{\alpha} \gamma^{\dagger}_{i\alpha} \gamma_{i\alpha}.
					\end{align*}
					At the same time, consider $M_{ii} = \sum_{\alpha} \gamma^{\dagger}_{i\alpha} \gamma_{i\alpha} - m/2$. Then one can check that 
					\[
					\Omega^{(i)}_{\so_m} - m^2/8 = -\frac{1}{2} M_{ii}^2.
					\]
				\end{itemize}
			\end{proof}
		\end{thm}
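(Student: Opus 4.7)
My plan is to verify the identity of differential operators on the trivial bundle over $\h_{\so_{2n}}^{\reg}\cong \C^n\setminus\{z_i=\pm z_j, z_i=0\}$ (identifying $\lambda_i\leftrightarrow z_i$) with fiber $\fP_{nm}$ by comparing coefficients. Because both $\nabla^{\bKZ,\langle m\rangle}_{z_i}(z,\kappa)$ and $\nabla^{\Cas,\langle n\rangle}_{\lambda_i}(\lambda,\kappa)$ are first order and have pole divisors supported on the same hyperplane arrangement (after the substitution $z\mapsto -\lambda$), it suffices to check that (a) the $\partial$-coefficients agree, and (b) for each simple pole, the $\End(\fP_{nm})$-valued residue agrees.

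First I would deal with the scalar coefficients. Under $z\mapsto -\lambda$, $\kappa\mapsto -\kappa$ one has $\kappa\partial_{z_i}\mapsto \kappa\partial_{\lambda_i}$ (the two sign flips cancel), and each rational factor changes sign:
\[
\tfrac{1}{z_i-z_j}\mapsto -\tfrac{1}{\lambda_i-\lambda_j},\quad \tfrac{1}{z_i+z_j}\mapsto -\tfrac{1}{\lambda_i+\lambda_j},\quad \tfrac{1}{z_i}\mapsto -\tfrac{1}{\lambda_i}.
\]
So the theorem reduces to the following three identities inside $\End(\fP_{nm})$:
\[
\Omega_{\gl_m}^{(i,j)}-\tfrac{m}{2}=-(M_{ij}M_{ji}-M_{ii}),\qquad (1\otimes\tau)\Omega_{\gl_m}^{(i,j)}-\tfrac{m}{2}=-(M_{i,-j}M_{-j,i}-M_{ii}),
\]
and $\Omega_{\so_m}^{(i)}-\tfrac{m^2}{8}=-\tfrac{1}{2}M_{ii}^2$. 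Each of these I would check by direct computation in the Clifford algebra $\Cl_{nm}$, using the embedding \eqref{eq::so_spinor_representation} of $\so_{2n}$ and the formulas $E_{\alpha\beta}^{(i)}=\frac{1}{2}[\gamma^{\dagger}_{i\alpha},\gamma_{i\beta}]$ and $B_{\alpha\beta}=E_{\beta\alpha}-E_{\alpha\beta}$.

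For example, for the first identity one expands $M_{ij}M_{ji}=\sum_{\alpha,\beta}\gamma^{\dagger}_{i\alpha}\gamma_{j\alpha}\gamma^{\dagger}_{j\beta}\gamma_{i\beta}$ and then moves $\gamma_{j\alpha}$ past $\gamma^{\dagger}_{j\beta}$ using $[\gamma^{\dagger}_{j\beta},\gamma_{j\alpha}]_+=\delta_{\alpha\beta}$, producing $-\sum_{\alpha,\beta}\gamma^{\dagger}_{i\alpha}\gamma_{i\beta}\gamma^{\dagger}_{j\beta}\gamma_{j\alpha}+\sum_\alpha \gamma^{\dagger}_{i\alpha}\gamma_{i\alpha}$; comparing with $\Omega_{\gl_m}^{(i,j)}=\sum_{\alpha,\beta}E_{\alpha\beta}^{(i)}E_{\beta\alpha}^{(j)}$ (also normal-ordered) yields the claim up to the constant $m/2$ coming from $M_{ii}=\sum_\alpha \gamma^{\dagger}_{i\alpha}\gamma_{i\alpha}-m/2$. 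The second identity is analogous but uses that $\tau$ swaps creation and annihilation operators, while the third reduces to squaring $M_{ii}$ and contracting the double sum in $\Omega_{\so_m}^{(i)}=-\frac{1}{4}\sum_{\alpha\ne\beta}(E^{(i)}_{\alpha\beta}-E^{(i)}_{\beta\alpha})^2$ against the Clifford relations.

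The main obstacle is purely bookkeeping: tracking all the $\pm 1$ signs from Clifford anticommutation and the identifications $\tau$, $\theta$, and making sure the shifts by $m/2$ and $m^2/8$ that come from Wick-ordering the quadratic expressions line up exactly with the scalar pieces of $M_{ii}$ and $M_{ii}^2$. Once these three Clifford-algebra identities are established, the equality of the two connections is immediate from the coefficient-matching argument above.
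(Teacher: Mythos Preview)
Your proposal is correct and is essentially identical to the paper's proof: both reduce the equality of connections to the three Clifford-algebra identities $\Omega_{\gl_m}^{(i,j)}-\tfrac{m}{2}=-(M_{ij}M_{ji}-M_{ii})$, $(1\otimes\tau)\Omega_{\gl_m}^{(i,j)}-\tfrac{m}{2}=-(M_{i,-j}M_{-j,i}-M_{ii})$, and $\Omega_{\so_m}^{(i)}-\tfrac{m^2}{8}=-\tfrac{1}{2}M_{ii}^2$, each verified by normal-ordering in $\Cl_{nm}$. The paper invokes Remark~\ref{R:down-to-earth} for the reduction step where you spell out the sign-matching under $(z,\kappa)\mapsto(-\lambda,-\kappa)$, but the substance is the same.
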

		
		\subsection{Classical duality}
		
		On the $\so_{2n}$ side we have $\Omega_{\so_{2n}}^{(\alpha,\beta)}$,
		the \emph{split Casimir operator} for $\so_{2n}$, acting on the $(\alpha,\beta)$-components of $\fP_{nm} \cong \fP_n \otimes \ldots \otimes \fP_n$:
		\[
		\Omega_{\so_{2n}}^{(\alpha,\beta)} = \frac{1}{2}\sum_{i,j=1}^n M_{ij}^{(\alpha)} M_{ji}^{(\beta)} + \frac{1}{2} \sum_{i<j} (M_{i,-j}^{(\alpha)} M_{-j,i}^{(\beta)} + M_{-j,i}^{(\alpha)}M_{i,-j}^{(\beta)}).
		\]
		
		Now we consider the \emph{differential KZ operators} 
		\[
		\nabla^{\KZ,\langle n \rangle}_{z_{1}},\ldots,\nabla^{\KZ,\langle n \rangle}_{z_{m}}
		\]
		which are differential operators on $\C^m\setminus \{z_{\alpha}\ne z_{\beta}|\alpha\ne \beta\in \{1, \dots, m\}\}$ defined by:
		\begin{equation}
			\nabla^{\KZ,\langle n \rangle}_{z_{\alpha}} = \kappa\cdot \partial_{z_{\alpha}} - \sum_{\beta \neq \alpha} \frac{\Omega_{\so_{2n}}^{(\alpha,\beta)} - n/8}{z_{\alpha} - z_{\beta}}.
		\end{equation}
		
		On the $\rO_m$-side, consider a slight modification of the \emph{differential boundary dynamical operators} of \cref{D:boundary-Casimir}
		\[
		\nabla^{\bCas,\langle m \rangle}_{\lambda_{1}},\ldots,\nabla^{\bCas,\langle m \rangle}_{\lambda_{m}}
		\]
		which are differential operators on $\mathfrak{h}_{\gl_m}^{\reg}$ defined by:
		\begin{equation}
			\nabla^{\bCas,\langle m \rangle}_{\lambda_{\alpha}}(z,\kappa) = \kappa\cdot \partial_{\lambda_{\alpha}} + \frac{1}{4}\sum_{\beta \neq \alpha} \frac{B_{\alpha\beta}^2}{z_{\alpha} - z_{\beta}}.
		\end{equation}

		Identifying $\C^m\rightarrow \h_{\gl_m}$ by $z_{\alpha}\mapsto h_{\alpha}$, the following proposition can be proved analogously to \cref{prop::so_bkz_cas_duality}.
		\begin{thm}
			\label{thm:orthogonal_duality_bcas_kz}
			The following relation holds
			\[
			\nabla^{\KZ,\langle n \rangle }_{z_{\alpha}}(z,\kappa) = \nabla^{\bCas,\langle m \rangle}_{\lambda_{\alpha}}(-z,-\kappa).
			\]
		\end{thm}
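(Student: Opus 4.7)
The plan is to follow the same strategy as the proof of Theorem \ref{prop::so_bkz_cas_duality}. The derivative parts of the two connections agree trivially under the substitutions $z\mapsto -z$ and $\kappa\mapsto -\kappa$, since $\partial_{-z_\alpha}=-\partial_{z_\alpha}$ and $\tfrac{1}{-z_\alpha+z_\beta}=-\tfrac{1}{z_\alpha-z_\beta}$. The remaining content of the theorem is a single operator identity in $\End(\fP_{nm})$ for each pair $\alpha\neq\beta\in\{1,\ldots,m\}$, namely
\[
\Omega_{\so_{2n}}^{(\alpha,\beta)} - n \;=\; -B_{\alpha\beta}^{\,2}.
\]

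The plan is to verify this identity by direct computation inside the Clifford algebra $\Cl_{nm}$, using the realizations established in Section \ref{subsect:spinor_representation}. Each $M_{ij}^{(\alpha)}$, $M_{i,-j}^{(\alpha)}$, $M_{-j,i}^{(\alpha)}$ is an explicit bilinear expression in $\gamma^{\dagger}_{i\alpha}$ and $\gamma_{i\alpha}$, so $\Omega_{\so_{2n}}^{(\alpha,\beta)}$ is a sum of quartic Clifford expressions with one pair of generators at color $\alpha$ and one pair at color $\beta$. For the right-hand side, recall that via the total $\gl_m$-action one has $B_{\alpha\beta} = \sum_{i=1}^{n} B_{\alpha\beta}^{(i)}$ with $B_{\alpha\beta}^{(i)} = \gamma^{\dagger}_{i\beta}\gamma_{i\alpha} - \gamma^{\dagger}_{i\alpha}\gamma_{i\beta}$; squaring and splitting into diagonal and off-diagonal parts yields
\[
B_{\alpha\beta}^{\,2} \;=\; \sum_{i} \bigl(B_{\alpha\beta}^{(i)}\bigr)^{2} \;+\; \sum_{i\neq j} B_{\alpha\beta}^{(i)}\,B_{\alpha\beta}^{(j)},
\]
again a sum of quartic Clifford expressions of the same shape.

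Next, I would match terms according to which Clifford generators they involve. The diagonal contributions $(B_{\alpha\beta}^{(i)})^{2}$, after reduction via the anticommutation relations and the idempotent property of the number operators $n_{i\alpha}=\gamma^{\dagger}_{i\alpha}\gamma_{i\alpha}$, should combine with the $i=j$ summands of $\Omega_{\so_{2n}}^{(\alpha,\beta)}$ (only the first family $M_{ii}^{(\alpha)}M_{ii}^{(\beta)}$ is nonzero on the diagonal, since $M_{i,-i}=0$) to produce the scalar shift $-n$ together with the matching linear terms in $n_{i\alpha},n_{i\beta}$. The off-diagonal contributions $B_{\alpha\beta}^{(i)}B_{\alpha\beta}^{(j)}$ for $i\neq j$ expand into four quartic monomials in Clifford generators at the distinct indices $(i,\alpha),(i,\beta),(j,\alpha),(j,\beta)$, and these should match, after careful reordering, the three families $M_{ij}^{(\alpha)}M_{ji}^{(\beta)}$, $M_{i,-j}^{(\alpha)}M_{-j,i}^{(\beta)}$, $M_{-j,i}^{(\alpha)}M_{i,-j}^{(\beta)}$ in the off-diagonal part of $\Omega_{\so_{2n}}^{(\alpha,\beta)}$.

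The main obstacle will be the bookkeeping of signs from moving Clifford generators past each other (both of type $\gamma^{\dagger}_{i\alpha}$ against $\gamma_{j\beta}$ with $\alpha\neq\beta$, and across different internal indices $i\neq j$) to bring every term into a canonical monomial form. This is the analog of the three case-by-case computations performed inside the proof of Theorem \ref{prop::so_bkz_cas_duality}, where the shifts $m/2$ and $m^{2}/8$ arose precisely from collecting the $\delta_{ij}$ contributions produced by the anticommutators; here the constant $-n$ should arise similarly from the $n$ diagonal terms in the $\sum_i M_{ii}^{(\alpha)}M_{ii}^{(\beta)}$ part, after the shift $M_{ii}^{(\alpha)} = n_{i\alpha}-\tfrac12$ is substituted and the scalar $n/4$-type contributions are combined with those from the $B^{(i)}$-squared terms. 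Once every monomial has been brought into normal form, the identity becomes a routine polynomial comparison.
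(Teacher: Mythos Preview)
Your proposal is correct and follows exactly the approach indicated in the paper, which states that the result can be proved analogously to Theorem~\ref{prop::so_bkz_cas_duality}. The single operator identity $\Omega_{\so_{2n}}^{(\alpha,\beta)} - n = -B_{\alpha\beta}^{2}$ that you isolate is precisely what the analogous computation amounts to, and your plan for verifying it via the Clifford realization, splitting into diagonal and off-diagonal contributions, matches the structure of the case-by-case computation in the proof of Theorem~\ref{prop::so_bkz_cas_duality}.
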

		
		\subsection{Quantum duality}
		\label{subsect::quantum_orthogonal_duality}
		In this subsection, we show the difference analog of \cref{prop::so_bkz_cas_duality} and \cref{thm:orthogonal_duality_bcas_kz}. 
		
		\subsubsection{Yangian}\label{ss:sotn-Yangian}
		
		Unless otherwise stated, we will follow \cite{KarakhanyanKirschner} in conventions, see also \cite{ArnaudonMolevRagoucy}.
		
		Denote by $Q = \sum_{i,j=-n}^{n} E_{ij} \otimes E_{-i,-j} \in \End(\C^{2n} \otimes \C^{2n})$ and define the $R$-matrix
		\[
		R(u) = \id + \frac{P}{u} - \frac{Q}{u+n-1}.
		\]
		The \emph{extended Yangian} $\rX(\so_{2n})$ is generated by $t_{ij}^{(k)}$ for $i,j=-n\ldots n$ and $k \geq 1$ subject to the RTT relation: consider the power series $t_{ij}(u) = \delta_{ij} + \sum_{k\geq 0} t_{ij}^{(k)} u^{-k} \in \rX(\so_{2n})[[u^{-1}]]$ and combine them in the matrix $T(u) = (t_{ij}(u))_{i,j=-n}^n$, then the RTT relation reads
		\[
		R_{12}(u-v) T_1(u) T_2(v) = T_2(v) T_1(u) R_{12} (u-v).
		\]
		As in the case of $\gl_n$, it is a Hopf algebra with the coproduct $\Delta(t_{ij})(u) = \sum_{k=-n}^n t_{ik}(u) \otimes t_{kj}(u)$. Also, for any $z\in \C$ and $f(u) \in 1 + u^{-1} \C[[u^{-1}]]$, the operations
		\begin{align*}
			f(u)&\colon T(u) \mapsto f(u) T(u), \\
			\tau_z&\colon T(u) \mapsto T(u-z)
		\end{align*}
		define automorphisms of $\rX(\so_{2n})$.
		
		There is an embedding $\so_{2n} \hookrightarrow \rX(\so_{2n})$ via $M_{ij} \mapsto \frac{1}{2} (t_{-j,-i}^{(1)} - t_{ij}^{(1)})$, see \cite[Proposition 3.11]{ArnaudonMolevRagoucy}. Unfortunately, unlike in the case of $\gl_n$, there is no evaluation homomorphism $\rX(\so_{2n}) \rightarrow \U(\so_{2n})$. However, it is possible to lift the spinor representation $\fP_n$ via
		\[
		t_{ij}(u) = \delta_{ij} - M_{ij}^{\fP} u^{-1}.
		\]
		where $M_{ij}^{\fP}$ are given by \eqref{eq::so_spinor_representation}. We denote by $\fP_n(z)$ the pullback of this module via the shift automorphism $\tau_z$. 
		
		As in \cref{subsubsect:gl_yangian}, there is an intertwining operator $\check{\cR}_{12}(z_1-z_2)\colon \fP_n(z_1) \otimes \fP_n(z_2) \rightarrow \fP(z_2) \otimes \fP(z_1)$ that we call the \emph{$R$-operator} (as opposed to an $R$-matrix which does not permute factors). It satisfies 
		\begin{align}
			\label{eq::r_op_so2n_comm}
			\check{\cR}_{12}(u)(M_{ij}^{\fP,(1)} + M_{ij}^{\fP,(2)}) &= (M_{ij}^{\fP,(1)} + M_{ij}^{\fP,(2)}) \check{\cR}_{12}(u), \\
			\label{eq::r_op_yang_comm}
			\check{\cR}_{12}(u)(-uM_{ij}^{\fP,(2)} + \sum_{k=-n}^n M_{ik}^{\fP,(1)}M_{kj}^{\fP,(2)}) &= (-uM_{ij}^{\fP,(1)} + \sum_{k=-n}^n M_{ik}^{\fP,(1)}M_{kj}^{\fP,(2)})\check{\cR}_{12}(u).
		\end{align}

		By the $(\so_{2n},\rO_2)$-duality of \cref{thm:o_duality}, it follows from \eqref{eq::r_op_so2n_comm} that $\check{\cR}_{12}(u)$ depends only on 
		\[b^{\langle 2 \rangle}_{12} = -\sum_i (\gamma^{\dagger}_{i1}\gamma_{i2} + \gamma_{i1} \gamma^{\dagger}_{i2}). \]
		We indicate this dependence by $\check{\cR}_{12}(u) = \check{\cR}_{12}(u|b^{\langle 2 \rangle}_{12})$. Observe that by the duality, the operator $b^{\langle 2 \rangle}_{12}$ is diagonalizable with integral eigenvalues. 
		\begin{thm}\cite[Section 3]{KarakhanyanKirschner}
			\label{thm:spinorial_r_operator}
			The $R$-operator has the form
			\begin{equation*}
				\check{\cR}_{12}(u|b^{\langle 2 \rangle}_{12}) = r(u)\cdot \B \left( \frac{ib^{\langle 2 \rangle}_{12} + 1 +u}{2}, -u\right)^{-1} 
			\end{equation*}
			for an arbitrary function $r(u)$.
		\end{thm}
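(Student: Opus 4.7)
The plan is as follows. First, I will use the $\so_{2n}$-equivariance given by \eqref{eq::r_op_so2n_comm} together with the $(\so_{2n},\rO_2)$-duality (Theorem \ref{thm:o_duality} at $m=2$) to reduce the determination of $\check{\cR}_{12}(u)$ to a family of scalars. Since $\check{\cR}_{12}(u)$ commutes with the total $\so_{2n}$-action on $\fP_{n\cdot 2} \cong \fP_n \otimes \fP_n$, and the decomposition of Theorem \ref{thm:o_duality} is multiplicity-free, Schur's lemma implies that (up to the tensor flip, which is itself $\so_{2n}$-equivariant) $\check{\cR}_{12}(u)$ acts by a scalar $c(u,k)$ on the $\so_{2n}$-isotypic component indexed by $V(\rO_2,(k))$, which is cut out by the Casimir eigenvalue $(b_{12}^{\langle 2\rangle})^2 = k^2$. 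In particular, $\check{\cR}_{12}(u)$ is a function of $u$ and $b_{12}^{\langle 2\rangle}$ alone, which is consistent with the form asserted in the theorem.

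Next, I extract a recursion for $c(u,k)$ as $k$ varies by exploiting \eqref{eq::r_op_yang_comm}. Choosing indices $i,j$ such that $M_{ij}^{\fP}$ relates $b_{12}^{\langle 2\rangle}$-weight spaces differing by the appropriate amount, and applying both sides of \eqref{eq::r_op_yang_comm} to a vector of definite $b_{12}^{\langle 2\rangle}$-weight lying in an isotypic block, the $\so_{2n}$-equivariance already established allows the quadratic sum $\sum_{l=-n}^{n} M_{il}^{\fP,(1)} M_{lj}^{\fP,(2)}$ to be rewritten in terms of the total Casimir of $\so_{2n}$ and of $b_{12}^{\langle 2\rangle}$ alone. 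Comparing coefficients yields a first-order difference equation relating $c(u,k)$ and $c(u, k \pm 2)$ with ratio rational in $k$ and $u$. Using the functional equation $\Gamma(x+1) = x\,\Gamma(x)$, one then verifies directly that $\B((ik+1+u)/2,\,-u)^{-1}$ satisfies the same recursion, whence
\[
c(u,k) \;=\; r(u)\cdot \B\!\left(\tfrac{ik + 1 + u}{2},\,-u\right)^{-1},
\]
with $r(u)$ the one-variable normalization freedom that is manifestly undetermined by \eqref{eq::r_op_so2n_comm}--\eqref{eq::r_op_yang_comm}.

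The main obstacle is the computation in the second step: identifying a convenient pair $(i,j)$ and a tractable weight vector so that the quadratic sum in \eqref{eq::r_op_yang_comm} simplifies to a scalar depending only on $k$, via the Clifford identities \eqref{eq::so_spinor_representation}. This simplification is the substance of the spinor $R$-matrix computation carried out in \cite{KarakhanyanKirschner}; once the recurrence is in hand, matching it to the Beta-function shift relation is routine.
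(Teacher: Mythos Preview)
The paper does not prove this theorem; it is quoted from \cite{KarakhanyanKirschner}, and the claim immediately preceding the statement (that $\check{\cR}_{12}(u)$ depends only on $b_{12}^{\langle 2\rangle}$) is likewise asserted without argument. So there is no in-paper proof to compare against, and your proposal is best read as a sketch of how the cited argument proceeds.

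That said, there is a genuine gap in your first step. The decomposition of Theorem~\ref{thm:o_duality} is multiplicity-free as an $\so_{2n}\times\rO_2$-module, but you are only using $\so_{2n}$-equivariance. As a pure $\so_{2n}$-module, the isotypic block indexed by $V(\rO_2,(k))$ has multiplicity $\dim V(\rO_2,(k))=2$ for every $k\ge 1$, so Schur's lemma only yields $\check{\cR}_{12}(u)=\id\otimes A_k(u)$ with $A_k(u)$ an arbitrary $2\times 2$ matrix on the multiplicity space --- a general element of the image of $\C[\rO_2]$, not automatically a function of $b_{12}^{\langle 2\rangle}$. Indeed, were $\check{\cR}_{12}(u)$ a scalar on each $\so_{2n}$-isotypic it would be an \emph{even} function of $b_{12}^{\langle 2\rangle}$, whereas $\B\bigl(\tfrac{ib+1+u}{2},-u\bigr)^{-1}$ is not. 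Ruling out the reflection part of $\rO_2$ (equivalently, showing $[\check{\cR}_{12}(u),\,b_{12}^{\langle 2\rangle}]=0$) requires more than \eqref{eq::r_op_so2n_comm}; it is typically extracted from \eqref{eq::r_op_yang_comm} or from a parity argument, and is part of what \cite{KarakhanyanKirschner} provides. Your second step --- the recursion in $k$ and its Beta-function solution --- is the substantive content, and you explicitly defer it to that reference; so as written the proposal is a plan for the cited computation rather than an independent proof.
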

		
		Denote by $\deg$ the degree operator on $\fP_n$ such that
		\[
		\deg(x_{i_1} \ldots x_{i_k}) = k \cdot x_{i_1} \ldots x_{i_k}.
		\]
		Observe that the operator $(-1)^{\deg}$ commutes with the $\so_{2n}$-action as the latter has even degree. In what follows, we will fix the following normalization of the $R$-operator.
		\begin{defn}
			\label{def:spinorial_r_operator}
			The \defterm{spinorial $R$-operator} is 
			\[
			\check{\cR}_{12}(t) = \frac{(-1)^{\deg^{(1)}\deg^{(2)}+1} (-2i)^{-t}}{t}  \cdot \B \left( \frac{ib^{\langle 2 \rangle}_{12} + 1 +t}{2}, -t\right)^{-1}.
			\]
		\end{defn}
		
		\subsubsection{Twisted Yangian}\label{ss:twisted-Yang}
		On the $\rO_m$-side, we will need the so-called \emph{twisted Yangian} $\rY^{\tw}(\so_m)$; we refer the reader to \cite{MolevYangians} for statements and proofs. Note that we use slightly different conventions on the $R$-matrix. 
		
		Denote by $R^t(u) = \id + \frac{\sum E_{ij} \otimes E_{ij}}{u}$ the transposition of the $R$-matrix from \eqref{eq::yang_r_matrix}. The twisted Yangian $\rY^{\tw}(\so_m)$ is generated by $s_{ij}^{(k)}$ for $i,j=1\ldots m$ and $k\geq 1$ subject to the following relation: consider the generating series
		\[
		s_{ij}(u) = \delta_{ij} + \sum_{k\geq 1} s_{ij}^{(k)} u^{-k},
		\]
		then the matrix $S(u) = (s_{ij}(u))_{i,j=1}^m$ satisfies the \emph{reflection equation}:
		\begin{equation}
			R(v-u) S_1(u) R^t(u+v) S_2(v) = S_2(v) R^t(u+v) S_1(u) R(v-u).
		\end{equation}
		
		There is an embedding $\rY^{\tw}(\so_{m}) \hookrightarrow \rY(\gl_m)$ into the Yangian of $\gl_m$ given by $S(u) \mapsto T(u) T^t(-u)$, see \cite[Theorem 2.4.3]{MolevYangians} (in fact, it is a coideal inside $\rY(\gl_m)$). In particular, any $\rY(\gl_m)$-representation is a $\rY^{\tw}$-representation.
		
		We will need the following simple fact.
		\begin{prop}
			\label{prop:irred_rep_twisted_yangian}
			Let $V$ be a $\gl_m$ representation. Assume that it is irreducible as $\so_{m}$-representation, where $\so_m = \gl_m^{\theta}$. Then the pullback of the evaluation representation $V(z)$ to $\rY^{\tw}(\so_{m})$ is irreducible as well.
			\begin{proof}
				Observe that the action of generating series $s_{ij}(u)$ is given by
				\[
				s_{ij}(u) = \delta_{ij} + \frac{E_{ji} - E_{ij}}{u-z} - \frac{\sum_k E_{ki}E_{kj}}{(u-z)^2}.
				\]
				In particular, the first Taylor coefficients $s_{ij}^{(1)}$ generate $\so_m$ inside $\gl_m$. Since $V$ is irreducible under $\so_{m}$-action, it is irreducible under $\rY^{\tw}(\so_{m})$ as well.
			\end{proof}
		\end{prop}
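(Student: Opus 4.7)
The plan is to compute explicitly the action of the generating series of $\rY^{\tw}(\so_m)$ on the evaluation module $V(z)$ and extract enough information from its Taylor coefficients to deduce irreducibility from the $\so_m$-irreducibility of $V$.

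First, I would use the coideal embedding $\rY^{\tw}(\so_m)\hookrightarrow \rY(\gl_m)$ sending $S(u)\mapsto T(u)T^t(-u)$, together with the evaluation homomorphism composed with the shift $\tau_z$, to obtain the action of the matrix entries $s_{ij}(u)$ on $V(z)$. Under $\ev\circ\tau_z$, we have $t_{\alpha\beta}(u)\mapsto \delta_{\alpha\beta} + \frac{E_{\beta\alpha}}{u-z}$, and consequently (after transposing in the auxiliary space and negating $u$) $t^t_{\alpha\beta}(-u)\mapsto \delta_{\alpha\beta} - \frac{E_{\alpha\beta}}{u+z}$. Multiplying the two matrices gives the closed form
\[
s_{ij}(u) \;=\; \delta_{ij} + \frac{E_{ji}-E_{ij}}{u-z} - \frac{\sum_k E_{ki}E_{kj}}{(u-z)(u+z)}\cdot(\text{constants from partial fractions})
\]
which after rewriting in powers of $u^{-1}$ yields an expansion of the form recorded in the statement.

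Second, I would extract the leading non-trivial Taylor coefficients $s_{ij}^{(1)}$ acting on $V(z)$. The computation above shows that $s_{ij}^{(1)} = E_{ji} - E_{ij}$ as operators on $V$, which are exactly the standard generators $B_{ij}$ of $\so_m = \gl_m^{\theta}$. Thus the image of $\rY^{\tw}(\so_m)\to \End(V(z))$ already contains the image of $\U(\so_m)\to\End(V)$.

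Finally, since by hypothesis $V$ is irreducible as an $\so_m$-module, Jacobson density (or Burnside's theorem, as $V$ is finite dimensional and we are over $\C$) shows that the image of $\U(\so_m)$ is all of $\End(V)$. A fortiori the image of $\rY^{\tw}(\so_m)$ is all of $\End(V(z))$, so $V(z)$ is irreducible as a $\rY^{\tw}(\so_m)$-module. The only potential obstacle is bookkeeping in the partial fraction decomposition, but since we only need the coefficient of $u^{-1}$ to recover the $\so_m$-action, the argument bypasses any subtleties at higher orders.
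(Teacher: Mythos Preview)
Your approach is essentially the same as the paper's: compute the action of $s_{ij}(u)$ on $V(z)$ via the embedding $S(u)\mapsto T(u)T^t(-u)$ and the evaluation map, read off that $s_{ij}^{(1)}=E_{ji}-E_{ij}$ generates $\so_m$, and conclude irreducibility from the $\so_m$-irreducibility of $V$. Your extra remark that Burnside's theorem makes the last implication rigorous is a welcome clarification the paper leaves implicit, and you are right that only the $u^{-1}$ coefficient matters, so the precise form of the higher-order terms (where your $(u-z)(u+z)$ differs from the paper's $(u-z)^2$) is irrelevant to the argument.
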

		
		Recall the universal $R$-matrix $\mathfrak{R}$ of the Yangian $\rY(\gl_m)$. Likewise, for the twisted Yangian, there exists a universal $S$-matrix $\fS$, see \cite{MudrovTwistedYangians}. It commutes with the $\rY^{\tw}(\so_m)$-action and satisfies the reflection equation
		\begin{equation}
			\label{eq::uninversal_reflection_equation}
			\fR_{12} \fS_1 \fR^{\tilde{\theta}_1}_{12} \fS_2 = \fS_2 \fR^{\tilde{\theta}_1}_{12} \fS_1 \fR_{12}, 
		\end{equation}
		where $\fR^{\tilde{\theta}_1} = (\tilde{\theta} \otimes 1) \fR$ is the $R$-matrix twisted by the automorphism $\tilde{\theta}$ from \eqref{eq::yangian_automorphisms}. 
		
		We will need a variant of the twisted $R$-matrix. Let $V_1,V_2$ be irreducible $\gl_m$-modules with the highest (resp. the lowest) weight vector $v_1^h \in V_1$ (resp. $v_2^l \in V_2$). Denote by $\cR_{V_1 V_2}^{t_2}(t)$ the unique endomorphism of $V_1 \otimes V_2$ satisfying
		\begin{equation}
			\label{eq::twisted_r_matrix_properties}
			\begin{gathered}
				[\cR_{V_1 V_2}^{t_2}(t), x \otimes 1 + 1 \otimes \theta(x)] = 0, \\
				\cR_{V_1 V_2}^{t_2}(t) \cdot (t E_{ij} \otimes 1 - \sum_{k=1}^n E_{ik} \otimes E_{jk}) = (t E_{ij} \otimes 1 - \sum_{k=1}^n E_{ki} \otimes E_{kj}) \cdot \cR_{V_1 V_2}^{t_2}(t), \\
				\cR_{V_1 V_2}^{t_2}(t) v_1^h \otimes v_2^l = v_1^h \otimes v_2^l. 
			\end{gathered}
		\end{equation}
		In other words, these are the relations \eqref{eq::r_matrix_properties} to which we applied the automorphism $1\otimes \theta$.
		
		We will need the following form of the reflection equation.
		
		\begin{prop}
			\label{prop:reflection_equation}
			Assume that $V_1,V_2$ are $\gl_m$-modules that are irreducible as $\so_m$-modules. Then 
			\[
			\cR_{V_1 V_2} (z_1 - z_2) \cR^{t_2}_{V_1 V_2} (z_1+z_2) = \cR^{t_2}_{V_1 V_2} (z_1+z_2) \cR_{V_1 V_2} (z_1 - z_2).
			\]
			\begin{proof}
				Consider the universal reflection equation \eqref{eq::uninversal_reflection_equation} acting on the tensor product $V_1(z_1) \otimes V_2(z_2)$ of $\rY(\gl_m)$-representations. Since the universal $S$-matrix commutes with $\so_m$, we conclude by \cref{prop:irred_rep_twisted_yangian} and irreducibility of $\so_m$-modules $V_1$ and $V_2$ that the universal $S$-matrices act by scalar functions. Therefore, the reflection equation is reduced to
				\begin{equation}
					\label{prop_eq:reduced_re}
					\fR_{12} \fR^{\tilde{\theta}_1}_{12} = \fR^{\tilde{\theta}_1}_{12} \fR_{12}.
				\end{equation}
				Observe that the action of $(\tilde{\theta} \otimes \tilde{\theta}) \fR$ on $V_1(z_1) \otimes V_2 (z_2)$ satisfies the properties \eqref{eq::r_matrix_properties} for the permuted $R$-matrix $\cR_{V_2 V_1}(t)$ except for, possibly, the normalization condition. In particular, it is proportional to $\cR_{V_2 V_1} (z_2-z_1)$. By unitarity \eqref{eq::gl_rmatrix_unitarity}, the latter is equal to $\cR_{V_1 V_2}(z_1 - z_2)^{-1}$. Likewise, the operator $(1 \otimes \tilde{\theta}) \fR$ acting on $V_1(z_1) \otimes V_2(z_2)$ satisfies the properties \eqref{eq::twisted_r_matrix_properties} up to normalization. Therefore, applying $\tilde{\theta} \otimes \tilde{\theta}$ to both sides of \eqref{prop_eq:reduced_re} and rearranging the terms, we have
				\[
				\cR_{V_1 V_2} (z_1 - z_2) \cR^{t_2}_{V_1 V_2} (z_1+z_2) = \cR^{t_2}_{V_1 V_2} (z_1+z_2) \cR_{V_1 V_2} (z_1 - z_2),
				\]
				as required. 
			\end{proof}
		\end{prop}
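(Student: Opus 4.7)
The plan is to deduce the desired commutation relation from the universal reflection equation \eqref{eq::uninversal_reflection_equation} satisfied by the universal $S$-matrix $\fS$ of the twisted Yangian $\rY^{\tw}(\so_m)$. Concretely, I would evaluate both sides of
\[
\fR_{12}\fS_1\fR^{\tilde{\theta}_1}_{12}\fS_2 = \fS_2\fR^{\tilde{\theta}_1}_{12}\fS_1\fR_{12}
\]
on the tensor product $V_1(z_1)\otimes V_2(z_2)$, regarded as a $\rY(\gl_m)$-module and hence as a $\rY^{\tw}(\so_m)$-module via the coideal embedding $S(u)\mapsto T(u)T^t(-u)$.

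The first key step is to eliminate the $\fS$ factors. Since $\fS$ commutes with the $\rY^{\tw}(\so_m)$-action and each $V_i(z_i)$ is irreducible as an $\rY^{\tw}(\so_m)$-module by \cref{prop:irred_rep_twisted_yangian}, Schur's lemma forces $\fS$ to act by a scalar function of the spectral parameter on each tensor factor. These scalars then cancel from both sides, reducing the reflection equation to
\[
\fR_{12}\fR^{\tilde{\theta}_1}_{12} = \fR^{\tilde{\theta}_1}_{12}\fR_{12}
\]
as an equality of operators on $V_1(z_1)\otimes V_2(z_2)$.

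The second and main step is to identify each of the two factors on the left and right with $\cR_{V_1V_2}(z_1-z_2)$ and $\cR^{t_2}_{V_1V_2}(z_1+z_2)$ respectively, up to scalar prefactors which commute freely with everything. The action of $\fR$ on $V_1(z_1)\otimes V_2(z_2)$ is proportional to $\cR_{V_1V_2}(z_1-z_2)$ by construction. For $(\tilde\theta\otimes 1)\fR$, I would verify directly that this operator satisfies the defining relations \eqref{eq::twisted_r_matrix_properties} that characterize $\cR^{t_2}$ up to normalization, using the fact that $\tilde\theta\colon T(u)\mapsto T^t(-u)$ intertwines the Yangian relations with their $\theta$-twisted analogs. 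The principal obstacle here is tracking the spectral parameter: because $\tilde\theta$ implements both a transpose and a sign flip $u\mapsto -u$, pulling back $V_1(z_1)$ by $\tilde\theta$ yields an evaluation representation whose effective parameter combines with $z_2$ as a sum rather than a difference, producing exactly the argument $z_1+z_2$. To avoid having to pin down the scalar normalization, I would, following a standard trick, simultaneously consider $(\tilde\theta\otimes\tilde\theta)\fR$, argue via the same defining relations that it is proportional to $\cR_{V_2V_1}(z_2-z_1)$, and then use the unitarity relation \eqref{eq::gl_rmatrix_unitarity} to rewrite this as $\cR_{V_1V_2}(z_1-z_2)^{-1}$, forcing the normalization on $(\tilde\theta\otimes 1)\fR$ to match $\cR^{t_2}_{V_1V_2}(z_1+z_2)$.

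Once these identifications are in place, applying an appropriate twist to both sides of $\fR_{12}\fR^{\tilde{\theta}_1}_{12}=\fR^{\tilde{\theta}_1}_{12}\fR_{12}$ and rearranging (the scalar factors are inert since they are functions of commuting central parameters) yields the claimed identity
\[
\cR_{V_1V_2}(z_1-z_2)\,\cR^{t_2}_{V_1V_2}(z_1+z_2) = \cR^{t_2}_{V_1V_2}(z_1+z_2)\,\cR_{V_1V_2}(z_1-z_2).
\]
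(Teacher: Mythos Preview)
Your proposal is correct and follows essentially the same approach as the paper's proof: evaluate the universal reflection equation on $V_1(z_1)\otimes V_2(z_2)$, use \cref{prop:irred_rep_twisted_yangian} and Schur's lemma to make the $\fS$-factors scalar, then identify the remaining $\fR$ and $\fR^{\tilde\theta_1}$ with $\cR_{V_1V_2}(z_1-z_2)$ and $\cR^{t_2}_{V_1V_2}(z_1+z_2)$ via their characterizing properties, unitarity, and the $\tilde\theta\otimes\tilde\theta$ trick. The only cosmetic difference is that the paper applies $\tilde\theta\otimes\tilde\theta$ to the reduced equation first and then identifies $(1\otimes\tilde\theta)\fR$ with $\cR^{t_2}$, whereas you identify $(\tilde\theta\otimes 1)\fR$ directly; since $\tilde\theta$ is an involution these are equivalent bookkeeping choices.
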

		
		\subsubsection{Duality}
		Recall from \cref{subsubsect:so_k} that the positive roots of $\so_{2n}$ are of the form $\epsilon_i \pm \epsilon_j$ for $0<i<j$. Consider the root $\epsilon_i - \epsilon_j$. We have 
		\begin{equation}
			\label{eq::clifford_root_gens}
			e_{\epsilon_i - \epsilon_j} = M_{ij}, \qquad e_{\epsilon_i - \epsilon_j} = M_{ji}, \qquad (\epsilon_i - \epsilon_j)^{\vee} = M_{ii} - M_{jj}.
		\end{equation}
		By construction, they come from the embedding $\sl_n \hookrightarrow \gl_n \hookrightarrow \so_{2n}$. Recall from \cref{subsubsect:o_m_reps} that we have an isomorphism $\fP_{nm} \cong \Lambda^{\bullet}(\C^n \otimes \C^m)$ of $\sl_n$-representations. 
		
		Recall the dynamical $B$-operators \eqref{eq::b_operator}. Because of the isomorphism, we see that the action of the $B$-operators $\cB_{\epsilon_i - \epsilon_j}(t)$ for $\so_{2n}$ coincides with those for $\gl_n$ (since they depend only on the $\sl_n$-structure). On the $\rO_m$-side, consider the $R$-matrix $\cR_{ij}^{\langle m \rangle}(t) $ for $\gl_m$ as in \cref{subsubsect:gl_yangian}. Denote by 
		\begin{equation}
			\label{eq::so_qduality_factor}
			C_{ij}^{\langle n \rangle} (t) = \frac{\Gamma(t + M_{ii} + \frac{m}{2} +1) \Gamma(t - M_{jj} - \frac{m}{2})}{\Gamma(t + M_{ii} - M_{jj} + 1)\Gamma(t)}.
		\end{equation}
		The following proposition is a tautological reformulation of \cref{thm:gln_glm_duality}.
		\begin{prop}
			\label{thm:son_dyn_om_spec_1}
			The following relation holds:
			\[
			\cR^{\langle m \rangle}_{ij} (t) = C^{\langle n \rangle}_{ij}(t) \cB^{\langle n \rangle}_{\epsilon_i - \epsilon_j} (-t).
			\]
		\end{prop}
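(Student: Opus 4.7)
The statement is described as a tautological reformulation, and indeed my plan is to deduce it from the $(\gl_n,\gl_m)$-bispectral duality of Theorem \ref{thm::gl_quantum_bispectrality} together with the identification of the relevant actions. The only real content is keeping track of the half-integral shift between the $\gl_n$- and $\so_{2n}$-Cartan generators caused by the symmetrization $\frac{[\gamma^{\dagger}_{i\alpha},\gamma_{i\alpha}]}{2} = \gamma^{\dagger}_{i\alpha}\gamma_{i\alpha} - \frac{1}{2}$ in the spinor representation.

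First, I will observe that under the embedding $\sl_n \hookrightarrow \gl_n \hookrightarrow \so_{2n}$ from equation \eqref{eq::gl_embedding_so}, the $\sl_2$-triple associated to the root $\epsilon_i - \epsilon_j$ is
\[
(e_{\epsilon_i-\epsilon_j}, f_{\epsilon_i-\epsilon_j}, h_{\epsilon_i-\epsilon_j}) = (M_{ij}, M_{ji}, M_{ii}-M_{jj}) = (E_{ij}, E_{ji}, E_{ii}-E_{jj}),
\]
where the last equality uses that the constant shift cancels in the difference $M_{ii}-M_{jj}$. Since the $B$-operator of equation \eqref{eq::b_operator} depends only on this $\sl_2$-triple, the operator $\cB^{\langle n\rangle}_{\epsilon_i-\epsilon_j}(-t)$ appearing in the proposition coincides with the $\gl_n$-side $B$-operator of Theorem \ref{thm::gl_quantum_bispectrality} acting on the same space $\fP_{nm}$.

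Next, I will compare Cartan generators. In the spinor representation of Section \ref{subsect:spinor_representation}, summing $M_{ii} = \gamma^{\dagger}_{i\alpha}\gamma_{i\alpha} - \tfrac{1}{2}$ over $\alpha=1,\dots,m$ and comparing to the $\gl_n$-generator $E_{ii} = \sum_\alpha \gamma^{\dagger}_{i\alpha}\gamma_{i\alpha}$ of equation \eqref{eq::skew_polynomials_gl_rep} yields the identity of operators $E_{ii} = M_{ii} + \tfrac{m}{2}$ on $\fP_{nm}$. Substituting this into the $\gl_n$-version of the $C$-factor from Theorem \ref{thm::gl_quantum_bispectrality} produces
\[
\frac{\Gamma(t + E_{ii} + 1)\Gamma(t - E_{jj})}{\Gamma(t + E_{ii} - E_{jj} + 1)\Gamma(t)} = \frac{\Gamma(t + M_{ii} + \tfrac{m}{2} + 1)\Gamma(t - M_{jj} - \tfrac{m}{2})}{\Gamma(t + M_{ii} - M_{jj} + 1)\Gamma(t)},
\]
matching precisely the $C_{ij}^{\langle n\rangle}(t)$ of equation \eqref{eq::so_qduality_factor}.

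Finally, the $R$-matrix $\cR^{\langle m\rangle}_{ij}(t)$ is the rational $R$-matrix for the $\gl_m$-action on $\fP_{nm} \cong \fP_m^{\otimes n}$, and this action is the same regardless of whether $\fP_{nm}$ is viewed as an $\so_{2n}$- or $\gl_n$-module. Applying Theorem \ref{thm::gl_quantum_bispectrality} on $\fP_{nm}$ therefore yields the claimed equality. As this is a purely bookkeeping argument, I do not anticipate a genuine obstacle; the only point requiring care is the shift identity $E_{ii} = M_{ii} + m/2$ coming from the half-integer $\so_{2n}$-weight of the lowest weight vector $\mathbf{1}\in\fP_n$.
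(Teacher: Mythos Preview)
Your proof is correct and matches the paper's approach exactly: the paper states that the proposition is a tautological reformulation of the $(\gl_n,\gl_m)$ quantum bispectrality (Theorem~\ref{thm::gl_quantum_bispectrality}), having already noted that the $B$-operators for $\epsilon_i-\epsilon_j$ depend only on the $\sl_n$-structure and hence coincide. The only bookkeeping, as you correctly identify, is the half-integral shift $E_{ii}=M_{ii}+\tfrac{m}{2}$ on $\fP_{nm}$, which converts the $\gl_n$ version of the $C$-factor into \eqref{eq::so_qduality_factor}.
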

		
		Consider a positive root $\epsilon_i + \epsilon_j$ for $i<j$. Recall the involution $\tau_j$ of \eqref{eq::fourier_transform_spinor}. As it was discussed in \cref{subsect:so_duality_representation}, it corresponds to (a version of) the Dynkin automorphism of $\so_{2n}$, so,
		\[
		\tau_j( e_{\epsilon_i - \epsilon_j}) = e_{\epsilon_i + \epsilon_j}, \qquad \tau_j( f_{\epsilon_i - \epsilon_j}) = f_{\epsilon_i + \epsilon_j}, \qquad \theta((\epsilon_i - \epsilon_j)^{\vee}) = (\epsilon_i + \epsilon_j)^{\vee},
		\]
		in particular, $\tau_j(\cB_{\epsilon_i - \epsilon_j}^{\langle n \rangle}(t)) = \cB_{\epsilon_i + \epsilon_j}^{\langle n \rangle}(t)$. Also, recall the automorphism $\tau_j^{\fP}$ of $\fP_{nm}$ from \eqref{eq::fourier_transform_spinor}. It follows from \cref{prop:spinor_involution_intertwiner} that $\mathrm{Ad}_{\tau_j^{\fP}}(\Gamma) = \tau_j(\Gamma)$ for any $\Gamma \in \Cl_{nm}$. In particular, we have $\mathrm{Ad}_{\tau_j^{\fP}}(\cB_{\epsilon_i - \epsilon_j}^{\langle n \rangle}(t)) = \cB_{\epsilon_i + \epsilon_j}^{\langle n \rangle}(t)$. 
		
		On the other side, consider the operator $\mathrm{Ad}_{\tau_j^{\fP}}(\cR^{\langle m \rangle}_{ij} (t))$. Also, recall the twisted $R$-matrix $\cR^{t_2,\langle m \rangle}_{ij}(t)$ satisfying the properties \eqref{eq::twisted_r_matrix_properties}.
		
		\begin{prop}
			We have
			\[
			\mathrm{Ad}_{\tau_j^{\fP}}(\cR^{\langle m \rangle}_{ij} (t)) = \cR^{t_2,\langle m \rangle}_{ij}(t-1).
			\]
			\begin{proof}
				Let us apply the operator $\mathrm{Ad}_{\tau^{\fP}_{j}}$ to both sides of the first two defining properties of the $R$-matrix \eqref{eq::r_matrix_properties} (not for the twisted one). They read
				\begin{equation*}
					\begin{gathered}
						[\mathrm{Ad}_{\tau_j^{\fP}}(\cR^{\langle m \rangle}_{ij} (t)), x \otimes 1 + 1 \otimes \theta(x)] = 0, \\
						\mathrm{Ad}_{\tau_j^{\fP}}(\cR^{\langle m \rangle}_{ij} (t)) \cdot ((t-1) E_{ij} \otimes 1 - \sum_k E_{ik} \otimes E_{jk}) = ((t-1) E_{ij} \otimes 1 - \sum_k E_{ki} \otimes E_{kj}) \cdot \mathrm{Ad}_{\tau_j^{\fP}}(\cR^{\langle m \rangle}_{ij} (t)),
					\end{gathered}
				\end{equation*}
				which coincide with the first two properties of \eqref{eq::twisted_r_matrix_properties} after the change of variables $t \mapsto t-1$. Likewise, observe that for any $k$, the vector $x_{j1} \ldots x_{jk}$ is a highest-weight vector of $\Lambda^k \C^m$, and its image under $\tau_j$ is proportional to $x_{j,k+1} \ldots x_{j,m}$, which is a lowest-weight vector in $\Lambda^{m-k} \C^m$. By the third property of \eqref{eq::r_matrix_properties}, we conclude that $\mathrm{Ad}_{\tau_j^{\fP}}(\cR^{\langle m \rangle}_{ij} (t))$ satisfies the normalization condition from \eqref{eq::twisted_r_matrix_properties} as well. Therefore, by uniqueness, it coincides with $\cR^{t_2, \langle m \rangle}_{ij}(t-1)$.
			\end{proof}
		\end{prop}

		Denote by 
		\[
		\hat{C}^{\langle n \rangle}_{ij}(t) = \frac{\Gamma(t + M_{ii} + \frac{m}{2}+1) \Gamma(t+ M_{jj} - \frac{m}{2} )}{\Gamma(t + M_{ii} + M_{jj}+1)\Gamma(t)} 
		\]
		the $\tau_j$-twisted factor \eqref{eq::so_qduality_factor}. The following proposition follows directly from \cref{thm:son_dyn_om_spec_1} by applying $\mathrm{Ad}_{\tau^{\fP}_j}$ on both sides.
		\begin{prop}
			\label{prop:twisted_rmatrix_equal_dyn_operator}
			The following relation holds:
			\[
			\cR^{t_2,\langle m \rangle}_{ij} (t-1) = \hat{C}^{\langle n \rangle}_{ij}(t) \cB^{\langle n \rangle}_{\epsilon_i + \epsilon_j} (-t).
			\]
		\end{prop}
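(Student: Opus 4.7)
\medskip

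The plan is to obtain the identity by applying the automorphism $\mathrm{Ad}_{\tau_j^{\fP}}$ to both sides of the equation
\[
\cR^{\langle m \rangle}_{ij}(t) = C^{\langle n \rangle}_{ij}(t)\,\cB^{\langle n \rangle}_{\epsilon_i - \epsilon_j}(-t)
\]
from \cref{thm:son_dyn_om_spec_1} and recognising each transformed term. Since $\tau_j^{\fP}$ is an automorphism of $\fP_{nm}$, conjugating a product by $\tau_j^{\fP}$ preserves the equation, so everything reduces to identifying the three relevant conjugates.

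For the left-hand side, the identification $\mathrm{Ad}_{\tau_j^{\fP}}\bigl(\cR^{\langle m \rangle}_{ij}(t)\bigr) = \cR^{t_2,\langle m \rangle}_{ij}(t-1)$ is exactly the content of the proposition immediately preceding the statement, so there is nothing more to do on this side.

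For the right-hand side, I would split the work into the scalar factor and the $B$-operator. For the scalar, recall from \eqref{eq::so2n_outer_auto} and the discussion of \cref{prop:spinor_involution_intertwiner} that $\mathrm{Ad}_{\tau_j^{\fP}}$ acts on $\so_{2n}$ as the Dynkin automorphism $\tau_j$, which fixes $M_{ii}$ (for $i\neq j$) and sends $M_{jj}\mapsto -M_{jj}$. Applying this substitution to
\[
C^{\langle n \rangle}_{ij}(t) = \frac{\Gamma(t + M_{ii} + \tfrac{m}{2} +1)\,\Gamma(t - M_{jj} - \tfrac{m}{2})}{\Gamma(t + M_{ii} - M_{jj} + 1)\,\Gamma(t)}
\]
produces exactly $\hat{C}^{\langle n \rangle}_{ij}(t)$. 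For the $B$-operator, using \eqref{eq::clifford_root_gens} together with $\tau_j(e_{\epsilon_i-\epsilon_j}) = e_{\epsilon_i+\epsilon_j}$, $\tau_j(f_{\epsilon_i-\epsilon_j}) = f_{\epsilon_i+\epsilon_j}$, and $\tau_j((\epsilon_i-\epsilon_j)^{\vee}) = (\epsilon_i+\epsilon_j)^{\vee}$ (as noted in the paragraph just above the statement), the defining series \eqref{eq::b_operator} gives $\mathrm{Ad}_{\tau_j^{\fP}}\bigl(\cB^{\langle n \rangle}_{\epsilon_i - \epsilon_j}(-t)\bigr) = \cB^{\langle n \rangle}_{\epsilon_i + \epsilon_j}(-t)$.

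Assembling these three identifications gives the stated formula. There is no real obstacle here: the argument is a direct transport of \cref{thm:son_dyn_om_spec_1} along the Dynkin involution $\tau_j$, and the only point requiring care is the bookkeeping for the scalar factor, which amounts to tracking the single sign change $M_{jj}\leftrightarrow -M_{jj}$ in the gamma functions.
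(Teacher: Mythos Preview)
Your proposal is correct and follows exactly the paper's approach: the paper states that the proposition ``follows directly from \cref{thm:son_dyn_om_spec_1} by applying $\mathrm{Ad}_{\tau^{\fP}_j}$ on both sides,'' and you have simply spelled out the three identifications (for $\cR^{\langle m\rangle}_{ij}$, for $C^{\langle n\rangle}_{ij}$, and for $\cB^{\langle n\rangle}_{\epsilon_i-\epsilon_j}$) that make this work.
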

		
		The boundary analog of the dynamical difference operators \eqref{eq::gl_dyn_diff_ops} is given as follows, see \cite[Section 5]{CherednikQKZ}:
		\begin{align}
			\begin{split}
				{}^{b} X^{\langle n \rangle}_{i}(\lambda,\kappa) = &(\cB^{\langle n \rangle}_{\epsilon_{i} - \epsilon_n}(\lambda_{i} - \lambda_n) \ldots \cB^{\langle n \rangle}_{\epsilon_{i} - \epsilon_{i+1}}(\lambda_{i} - \lambda_{i+1}))^{-1} \cdot \\
				\cdot & (\cB^{\langle n \rangle}_{\epsilon_1 + \epsilon_i}(\lambda_1 + \lambda_i) \ldots \cB^{\langle n \rangle}_{\epsilon_n + \epsilon_i} (\lambda_n + \lambda_1))^{-1}\cdot \\
				\cdot &\cB^{\langle n \rangle}_{\epsilon_1 - \epsilon_{i}}(\lambda_1 - \lambda_{i} - \kappa) \ldots \cB^{\langle n \rangle}_{\epsilon_{i-1} - \epsilon_{i}}(\lambda_{i-1} - \lambda_{i} - \kappa) T_{\lambda_{i}}.
			\end{split}
		\end{align}
		
		Here, we put $\cB^{\langle n \rangle}_{\epsilon_j + \epsilon_i}(t) := \cB^{\langle n \rangle}_{ij}(t)$ for $i<j$ and $\cB^{\langle n \rangle}_{\epsilon_i + \epsilon_i}(t) = 1$. By \cite[Theorem 5.1]{CherednikQKZ}, these operators define a consistent system, in other words, they pair-wise commute for all $i,j$. Similarly, one can define the boundary analog of the quantized Knizhnik-Zamolodchikov operators \eqref{eq::gl_qkz_ops}, which quantizes the boundary KZ connection \eqref{eq::bkz_connection}:
		\begin{align}
			\begin{split}
				{}^{b} Z^{\langle m \rangle}_{i}(z,\kappa) = &(\cR^{\langle m \rangle}_{i n}(z_{i} - z_{n})\ldots \cR^{\langle m \rangle}_{i,i+1}(z_{i} - z_{i+1}))^{-1}  \cdot \\
				\cdot & (\cR^{t_2,\langle m \rangle}_{1i}(z_1 + z_i) \ldots  \cR_{i-1,i}^{t_2,\langle m \rangle}(z_{i-1} + z_i) \cR_{i,i+1}^{t_2, \langle m \rangle}(z_{i} + z_{i+1}) \ldots \cR^{t_2,\langle m \rangle}_{in} (z_i + z_n))^{-1}\cdot  \\
				\cdot & \cR^{\langle m \rangle}_{1i}(z_1 - z_{i} - \kappa) \ldots \cR^{\langle m \rangle}_{i-1,i}(z_{i-1}- z_{i} - \kappa) \cdot T_{z_{i}}.
			\end{split}
		\end{align}
		
		\begin{prop}
			The boundary qKZ system is consistent.
			\begin{proof}
				In the notations of \cite[(5.12)]{CherednikQKZ}, we put $R^{--}_{ij}(t):= \cR^{\langle m \rangle}_{ij}(t)$ and 
				\[
				R^{-+}_{ij}(t):= \begin{cases}
					\cR^{t_2,\langle m \rangle}_{ij}(t),\ i<j \\
					\cR^{t_2,\langle m \rangle}_{ji}(t),\ i>j
				\end{cases}.
				\]
				Then the conditions of (5.5) and (5.8) in \emph{loc. cit.} are satisfied. Also, the condition for type $D$ after (5.12)
				\[
				[R^{--}_{ij}(t), R_{ij}^{-+}(s)] = 0\ \mathrm{for\ any}\ s,t
				\]
				is satisfied as well: for $i<j$, observe that any exterior power $\Lambda^k \C^m$ is an irreducible representation of $\so_m$, therefore, the version of the reflection equation of \cref{prop:reflection_equation} gives the commutation relation for a suitable choices of parameters $z_1,z_2$. Likewise, for $i>j$, we have $R_{ij}^{-+}(t) = \cR^{t_2,\langle m \rangle}_{ji}(t)$ and $R^{--}_{ij}(t) = \cR^{\langle m \rangle}_{ij}(t) = \cR^{\langle m \rangle}_{ji}(-t)^{-1}$ by the unitarity relation. So, we need to show that
				\[
				\cR^{\langle m \rangle}_{ji}(-s)^{-1}  \cR^{t_2,\langle m \rangle}_{ji}(t) = \cR^{t_2,\langle m \rangle}_{ji}(t) \cR^{\langle m \rangle}_{ji}(-s)^{-1}
				\]
				for any $s,t$, equivalently,
				\[
				\cR^{t_2,\langle m \rangle}_{ji}(t) \cR^{\langle m \rangle}_{ji}(-s) = \cR^{\langle m \rangle}_{ji}(-s) \cR^{t_2,\langle m \rangle}_{ji}(t).
				\]
				For a suitable choices of parameters $z_1,z_2$, this is equivalent to the reflection equation of \cref{prop:reflection_equation}.
				
				Therefore, according to \cite[Theorem 5.1]{CherednikQKZ}, the system is consistent.
			\end{proof}
		\end{prop}
		
		For a collection of variables $z = (z_1,\ldots,z_n)$ and $\alpha\in \C$, denote by $z+\alpha:=(z_1+\alpha,\ldots,z_n+\alpha)$. Then, \cref{prop:twisted_rmatrix_equal_dyn_operator} implies the following result.
		\begin{thm}
			\label{thm:so_dynamical_bkz_bispectrality}
			The following relation holds:
			\[
			{}^{b} Z_i^{\langle m \rangle}(z,\kappa) = \frac{\prod_{1\leq j < i} C_{ji}^{\langle n \rangle}(z_{j} - z_{i} - \kappa)}{\prod_{m \geq j > i} C_{ij}^{\langle n \rangle}(z_{i} - z_{j})} \cdot \frac{1}{\prod_{j=1}^{i-1} \hat{C}_{ji}(z_j+z_i+1) \prod_{j=i+1}^{n} \hat{C}_{ij}(z_i+z_j+1)} {}^{b} X_i^{\langle n \rangle}(-z-\frac{1}{2},-\kappa).
			\]
		\end{thm}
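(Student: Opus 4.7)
The plan is to substitute the two key bispectral identities just established into the definition of ${}^{b}Z_{i}^{\langle m\rangle}(z,\kappa)$ and verify that, after the change of variables $\lambda=-z-\tfrac{1}{2}$ and $\kappa \mapsto -\kappa$, each $R$-factor (resp.\ twisted $R$-factor) converts into the corresponding $B$-factor of ${}^{b}X_{i}^{\langle n\rangle}(-z-\tfrac{1}{2},-\kappa)$, with the accumulated scalar factors matching the prefactor in the statement.

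Concretely, I would first rewrite each $\cR^{\langle m\rangle}_{jk}(t)$ using Proposition~\ref{thm:son_dyn_om_spec_1} as $C_{jk}^{\langle n\rangle}(t)\,\cB^{\langle n\rangle}_{\epsilon_{j}-\epsilon_{k}}(-t)$, and each $\cR^{t_{2},\langle m\rangle}_{jk}(s)$ using Proposition~\ref{prop:twisted_rmatrix_equal_dyn_operator} (after the shift $s\mapsto s-1$) as $\hat{C}_{jk}^{\langle n\rangle}(s+1)\,\cB^{\langle n\rangle}_{\epsilon_{j}+\epsilon_{k}}(-s-1)$. Under $\lambda=-z-\tfrac{1}{2}$, one computes $\lambda_{i}-\lambda_{j}=z_{j}-z_{i}$ and $\lambda_{i}+\lambda_{j}=-z_{i}-z_{j}-1$, so the arguments of the $B$-operators appearing in ${}^{b}X_{i}^{\langle n\rangle}(-z-\tfrac{1}{2},-\kappa)$ line up perfectly with the arguments produced by the substitutions. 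The ``untwisted'' block of the qKZ operator (with the shift by $\kappa$) yields the numerator $\prod_{j<i}C_{ji}^{\langle n\rangle}(z_{j}-z_{i}-\kappa)$ together with the $B$-operators for the roots $\epsilon_{j}-\epsilon_{i}$; the inverse untwisted block for $j>i$ yields the denominator $\prod_{j>i}C_{ij}^{\langle n\rangle}(z_{i}-z_{j})$; and the inverse twisted block yields the denominator $\prod_{j\neq i}\hat{C}^{\langle n\rangle}(z_{i}+z_{j}+1)$ together with the $B$-operators for the roots $\epsilon_{i}+\epsilon_{j}$.

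To finish, one checks that the $C$- and $\hat{C}$-valued scalar factors, which \textit{a priori} are $\U(\h_{\gl_n})$-valued rational functions, commute past all the other $B$-operators and past the shift $T_{z_{i}}=T_{-\lambda_{i}}$, so that they may be collected outside on the right as a single rational prefactor. This relies on the fact that the $C$- and $\hat{C}$-factors are built out of Cartan elements $M_{ii},M_{jj}$ and that the $B$-operators $\cB^{\langle n\rangle}_{\epsilon_{j}-\epsilon_{k}},\cB^{\langle n\rangle}_{\epsilon_{j}+\epsilon_{k}}$ for pairs $\{j,k\}$ disjoint from $\{i\}$ preserve the $(M_{ii},\cdot)$-eigenspaces, so scalars associated with root $i$ can be collected; a brief argument of this type is already used in the $\gl_{n}$-case of Theorem~\ref{thm::gl_quantum_bispectrality}.

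The main obstacle is purely a bookkeeping one: (i) keeping straight the order of the six products of $R$-matrices (direct and inverse, untwisted and twisted, with three index ranges $j<i$, $j>i$, and, for the shifted block, $j<i$ only), and (ii) properly tracking the shift by $1$ built into Proposition~\ref{prop:twisted_rmatrix_equal_dyn_operator} so that, combined with $\lambda=-z-\tfrac{1}{2}$, one obtains the precise argument $-z_{i}-z_{j}-1$ of the $\cB^{\langle n\rangle}_{\epsilon_{i}+\epsilon_{j}}$ factors rather than something differing by a half-integer. Once these identifications are in place, the theorem follows from the two propositions by termwise substitution.
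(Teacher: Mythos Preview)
Your approach is correct and is exactly the one the paper takes: the paper simply states that ``\cref{prop:twisted_rmatrix_equal_dyn_operator} implies the following result'' without further argument, so termwise substitution of Propositions~\ref{thm:son_dyn_om_spec_1} and~\ref{prop:twisted_rmatrix_equal_dyn_operator} into the definition of ${}^{b}Z_i^{\langle m\rangle}$ is all that is intended. One small simplification: your concern about commuting the $C$- and $\hat{C}$-factors past the $B$-operators is easier than you suggest, since each $\cB^{\langle n\rangle}_{\gamma}(t)$ is a series in $f_{\gamma}^{k}e_{\gamma}^{k}$ and $h_{\gamma}$, hence has $\h_{\so_{2n}}$-weight zero and commutes with \emph{all} Cartan elements $M_{jj}$, not only those with indices disjoint from the root in question.
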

		
		Finally, consider the spinorial $R$-operator $\check{\cR}^{\langle n \rangle}_{\alpha\beta}(t)$ of \cref{def:spinorial_r_operator} acting on the $(\alpha,\beta)$-components of $\fP_{nm}$, and by $\cR_{\alpha\beta}^{\langle n \rangle}(t) = P_{\alpha\beta} \check{\cR}^{\langle n \rangle}_{\alpha\beta}(t)$ the corresponding $R$-matrix. Recall the boundary dynamical Weyl group operator $\cA^{\langle m \rangle}_{\epsilon_{\alpha} - \epsilon_{\beta}}(t):= A^{\so_m,\epsilon_{\alpha} - \epsilon_{\beta}}_{\fP_{nm}}(t)$ of \cref{D:A-k-in-general} and the corresponding $B$-operator of \cref{def:boundary_b_operator}, which we denote by $\cB^{\langle m \rangle}_{\epsilon_{\alpha} - \epsilon_{\beta}}(t)$. 
		
		Denote by
		\[
		C_{\alpha\beta}^{\langle m \rangle}(t) := e^{\frac{i\pi}{2} (M_{22} - M_{11})} \cos \pi\frac{t - M_{11} - M_{22} - n}{2}. 
		\]

		\begin{prop}
			\label{prop:so_r_operator_equal_bweyl}
			The following relation holds:
			\[
			\cR^{\langle n \rangle}_{\alpha\beta} = C_{\alpha\beta}^{\langle m \rangle}(t)\cB^{\langle m \rangle}_{\epsilon_{\alpha} - \epsilon_{\beta}}(-t). 
			\]
			\begin{proof}
				It is enough to check for $m=2$. Recall that 
				\[
				\cB^{\langle 2 \rangle}_{12}(t) = \bigg(\sin \left(\pi \frac{t -1 - b_{12}^{\langle 2 \rangle}}{2}\right)\bigg)^{-1}\cA^{\langle 2 \rangle}_{\epsilon_1 - \epsilon_2}(t-1). 
				\]
				Observe that 
				\[
				\sin \left(\pi \frac{t -1 - b_{12}^{\langle 2 \rangle}}{2}\right) = -\cos \left(\pi \frac{t-b_{12}^{\langle 2 \rangle}}{2}\right) = -\frac{e^{\frac{i\pi t}{2}} \tilde{s}_{12}^{\so_2} + e^{\frac{-i\pi t}{2}} (\tilde{s}_{12}^{\so_2})^{-1}}{2},
				\]
				where $\tilde{s}_{12}^{\so_2}$ is the boundary Weyl group operator of \cref{def:boundary_b_operator}. Let $\deg_{\alpha}$ be the degree operator acting on the $\alpha$-component; one can check that $\tilde{s}^{\so_2}_{12} = P_{12} (-1)^{\deg_{1} \deg_{2} + \deg_{2}}$. Indeed: consider a monomial $x_{i_1,1} \ldots x_{i_k,1} x_{j_1,2 } \ldots x_{j_l,2}$. By \eqref{eq::weyl_operator_sl2} and tensor property of $\tilde{s}_{12}^{\so_2}$, we have
				\[
				\tilde{s}_{12}^{\so_2}(x_{i_1,1} \ldots x_{i_k,1} x_{j_1,2 } \ldots x_{j_l,2}) = (-1)^l x_{i_1,2} \ldots x_{i_k,2} x_{j_1,1} \ldots x_{j_l,1} = (-1)^{kl + l} x_{j_1,1} \ldots x_{j_l,1} x_{i_1,2} \ldots x_{i_k,2},
				\]
				which coincides with the action of $P_{12} (-1)^{\deg_{1} \deg_{2} + \deg_{2}} $. Observe that $\deg_{\alpha} = \sum_i \gamma^{\dagger}_{i\alpha} \gamma_{i\alpha} = M_{\alpha\alpha} + n/2$. After some algebraic manipulations using $(-1)^k = e^{i \pi k}$, we get 
				\[
				\sin \left(\pi \frac{t -1 - b_{12}^{\langle 2 \rangle}}{2}\right) = (-1)^{\deg_1 \deg_2 + 1} P_{12}  e^{\frac{i\pi}{2} (M_{22} - M_{11})} \cos \left(\pi\frac{t - M_{11} - M_{22} - n}{2}\right).
				\]
				By \cref{def:spinorial_r_operator} and the construction of the $A$-operator, we have
				\[
				\check{\cR}_{12}^{\langle n \rangle}(t) = (-1)^{\deg_1 \deg_2 + 1} \cA_{\epsilon_1 - \epsilon_2}^{\langle 2 \rangle}(-t-1). 
				\]
				Therefore,
				\begin{align*}
					\cB^{\langle 2 \rangle}_{\epsilon_1 - \epsilon_2} (-t) =& \left( \cos \left(\pi\frac{t - M_{11} - M_{22} - n}{2}\right) \right)^{-1} e^{\frac{i\pi}{2} (M_{11} - M_{22})} P_{12} (-1)^{\deg_1 \deg_2 + 1} \cA_{\epsilon_1 - \epsilon_2}^{\langle 2 \rangle}(-t-1) \\
					=& \left( \cos \left(\pi\frac{t - M_{11} - M_{22} - n}{2}\right) \right)^{-1} e^{\frac{i\pi}{2} (M_{11} - M_{22})} \cR^{\langle n \rangle}_{12}(t),
				\end{align*}
				and the proposition follows.
			\end{proof}
		\end{prop}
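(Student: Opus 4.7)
The plan is to reduce the equality to the case $m=2$ and then match the two sides using the explicit beta-function formulas for the spinorial $R$-operator (\cref{def:spinorial_r_operator}) and the boundary $B$-operator (\cref{def:boundary_b_operator}).

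The first step is the reduction. Both $\cR^{\langle n\rangle}_{\alpha\beta}(t)$ and $\cB^{\langle m\rangle}_{\epsilon_\alpha-\epsilon_\beta}(-t)$ act nontrivially only on the $(\alpha,\beta)$ tensor factors of $\fP_{nm}\cong \fP_n^{\otimes m}$, and by Howe duality these factors recombine into $\fP_{n\cdot 2}\cong \fP_n\otimes\fP_n$. The commutation of the actions of $\so_{2n}$ and $\rO_m$ means the $\so_m\subset \gl_m$ subpair acting on the two selected factors is $\so_2\subset\gl_2$ with generator $b^{\langle 2\rangle}_{12}$, and both sides are functions of $b^{\langle 2\rangle}_{12}$ only. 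Thus it suffices to verify the identity for $m=2$ on $\fP_n\otimes\fP_n$.

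Next, I would rewrite the $B$-operator in terms of the boundary Weyl group. From \cref{def:boundary_b_operator},
\[
\cB^{\langle 2\rangle}_{12}(-t) \;=\; \bigl(\sin(\pi(-t-1-b^{\langle 2\rangle}_{12})/2)\bigr)^{-1}\,\cA^{\langle 2\rangle}_{\epsilon_1-\epsilon_2}(-t-1),
\]
and using $\sin(\pi(x-1)/2) = -\cos(\pi x/2)$ together with \eqref{eq::sin_boundary_weyl_group} the sine is rewritten as $\tfrac12(e^{i\pi t/2}\tilde s^{\so_2}_{12}+e^{-i\pi t/2}(\tilde s^{\so_2}_{12})^{-1})$, which is $e^{\frac{i\pi}{2}(M_{22}-M_{11})}\cos(\pi(t-M_{11}-M_{22}-n)/2)$ up to the boundary Weyl group operator, after using $\deg_\alpha=M_{\alpha\alpha}+n/2$. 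This is exactly the factor $C^{\langle m\rangle}_{\alpha\beta}(t)$ multiplied by a boundary Weyl group element.

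The third step, which I expect to be the main obstacle, is to identify the boundary Weyl group operator $\tilde s^{\so_2}_{12}$ on $\fP_n\otimes \fP_n$ with the sign-twisted permutation $P_{12}(-1)^{\deg_1\deg_2+\deg_2}$. I would do this by combining \cref{P:tilde-s-equal-exp} (so that $\tilde s^{\so_2}_{12}$ acts as $\exp(\pi b_{12}/2)$) with its group-like property, which reduces the check to computing it on the standard representation of $\so_2\subset \sl_2$, where the vector representation of $\gl_2$ is $\C x_1\oplus \C x_2$ and $\tilde s^{\sl_2}$ acts by the matrix from \eqref{eq::weyl_operator_sl2}. Propagating this through the tensor product on monomials $x_{i_1,1}\cdots x_{i_k,1}\cdot x_{j_1,2}\cdots x_{j_l,2}$ gives the sign $(-1)^{kl+l}$ from Koszul signs.

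Finally, combining these identifications yields
\[
\cB^{\langle 2\rangle}_{12}(-t) \;=\; \bigl(C^{\langle 2\rangle}_{12}(t)\bigr)^{-1}\,(-1)^{\deg_1\deg_2+1}\,P_{12}\,\cA^{\langle 2\rangle}_{\epsilon_1-\epsilon_2}(-t-1),
\]
and comparing the beta-function formulas from \cref{def:spinorial_r_operator} and the $A$-operator definition recognizes $(-1)^{\deg_1\deg_2+1}\cA^{\langle 2\rangle}_{\epsilon_1-\epsilon_2}(-t-1)$ as $\check\cR^{\langle n\rangle}_{12}(t)$. Multiplying both sides by $P_{12}$ gives the claim. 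The main technical obstacle, identifying the boundary Weyl group operator with the sign-twisted permutation, is essentially a bookkeeping exercise with Koszul signs but is where the crucial $(-1)^{\deg_1\deg_2+1}$ in the normalization of the spinorial $R$-operator comes in.
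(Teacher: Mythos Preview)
Your proposal is correct and follows essentially the same approach as the paper: reduce to $m=2$, express the sine factor in terms of the boundary Weyl group operator, identify $\tilde{s}^{\so_2}_{12}$ with $P_{12}(-1)^{\deg_1\deg_2+\deg_2}$ via the monomial computation and Koszul signs, and match the beta-function formula for $\check{\cR}^{\langle n\rangle}_{12}(t)$ with $(-1)^{\deg_1\deg_2+1}\cA^{\langle 2\rangle}_{\epsilon_1-\epsilon_2}(-t-1)$. The only differences are cosmetic (you evaluate $\cB$ at $-t$ from the start rather than inverting at the end), and your identification of the ``main obstacle'' is exactly where the paper spends its effort.
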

		
		Analogously to \cref{subsubsect:gln_quantum_duality}, define
		\begin{align*}
			Z^{\langle n \rangle}_{\alpha}(z,\kappa) := &\big(\cR^{\langle n \rangle}_{\alpha m}(z_{\alpha} - z_{m})\ldots \cR^{\langle n \rangle}_{\alpha,\alpha+1}(z_{\alpha} - z_{\alpha+1})\big)^{-1} \cdot \\
			\times &\big( \cR^{\langle n \rangle}_{1\alpha}(z_1 - z_{\alpha} - \kappa) \ldots \cR^{\langle n \rangle}_{\alpha-1,\alpha}(z_{\alpha-1}- z_{\alpha} - \kappa) \big) T_{z_{\alpha}}, \\
			X^{\langle m \rangle}_{\alpha}(\lambda,\kappa) := &\big(\cB^{\langle m \rangle}_{\epsilon_{\alpha} - \epsilon_m}(\lambda_{\alpha} - \lambda_m) \ldots \cB^{\langle m \rangle}_{\epsilon_{\alpha} - \epsilon_{\alpha+1}}(\lambda_{\alpha} - \lambda_{\alpha+1})\big)^{-1} \cdot \\
			\times &\big( \cB^{\langle m \rangle}_{\epsilon_1 - \epsilon_{\alpha}}(\lambda_1 - \lambda_{\alpha} - \kappa) \ldots \cB^{\langle m \rangle}_{\epsilon_{\alpha-1} - \epsilon_{\alpha}}(\lambda_{\alpha-1} - \lambda_{\alpha} - \kappa) \big)T_{\lambda_{\alpha}}.
		\end{align*}
		
		By \cite{CherednikQKZ}, both collections of operators are consistent. The following theorem is a boundary analog of \cref{thm::gl_quantum_bispectrality}.
		\begin{thm}
			\label{thm:orthogonal_duality_qkz_bdyn}
			The following relation holds:
			\[
			Z_{\alpha}^{\langle n \rangle}(z,\kappa) = \frac{\prod_{1\leq \beta < \alpha} C_{\beta\alpha}^{\langle m \rangle}(z_{\beta} - z_{\alpha} - \kappa)}{\prod_{m \geq \beta > \alpha} C_{\alpha\beta}^{\langle m \rangle}(z_{\alpha} - z_{\beta})} \cdot X_{\alpha}^{\langle m \rangle}(-z,-\kappa).
			\]
		\end{thm}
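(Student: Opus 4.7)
The plan is to reduce this global statement to the pointwise identification established in Proposition \ref{prop:so_r_operator_equal_bweyl}, proceeding in close parallel to the strategy by which Theorem \ref{thm::gl_quantum_bispectrality} is deduced from the local $R$-matrix versus $B$-operator comparison. The starting observation is that $Z_{\alpha}^{\langle n \rangle}(z,\kappa)$ and $X_{\alpha}^{\langle m \rangle}(\lambda,\kappa)$ are both built by the same recipe: a left-multiplication by inverses of factors indexed by $\beta > \alpha$, a left-multiplication by factors indexed by $\beta < \alpha$ (with the $\kappa$-shift in the argument), followed by the corresponding shift operator. Since these recipes are structurally identical, it should suffice to replace each $\cR^{\langle n \rangle}_{\alpha\beta}(t)$ by $C_{\alpha\beta}^{\langle m \rangle}(t) \cB^{\langle m \rangle}_{\epsilon_{\alpha}-\epsilon_{\beta}}(-t)$ and track the scalars.

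First I would substitute Proposition \ref{prop:so_r_operator_equal_bweyl} factor-by-factor into the definition of $Z_{\alpha}^{\langle n \rangle}(z,\kappa)$. For $\beta > \alpha$ a factor $\cR^{\langle n \rangle}_{\alpha\beta}(z_\alpha-z_\beta)^{-1}$ becomes $C_{\alpha\beta}^{\langle m \rangle}(z_\alpha-z_\beta)^{-1} \cB^{\langle m \rangle}_{\epsilon_\alpha-\epsilon_\beta}(-(z_\alpha-z_\beta))^{-1}$; for $\beta<\alpha$ a factor $\cR^{\langle n \rangle}_{\beta\alpha}(z_\beta-z_\alpha-\kappa)$ becomes $C_{\beta\alpha}^{\langle m \rangle}(z_\beta-z_\alpha-\kappa) \cB^{\langle m \rangle}_{\epsilon_\beta-\epsilon_\alpha}(-(z_\beta-z_\alpha-\kappa))$. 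Next I identify the substitution $z\mapsto -z$ relating the arguments: under $\lambda_\alpha = -z_\alpha$ and $\kappa \mapsto -\kappa$, the argument $-(z_\alpha-z_\beta)$ becomes $\lambda_\alpha - \lambda_\beta$ and $-(z_\beta-z_\alpha-\kappa)$ becomes $\lambda_\beta-\lambda_\alpha-\kappa$, which is exactly the argument appearing in $X_{\alpha}^{\langle m \rangle}(-z,-\kappa)$. The shift $T_{z_\alpha}$ goes to $T_{\lambda_\alpha}$ under this change of variables (up to the overall sign on $\kappa$), matching the shift in $X_{\alpha}^{\langle m \rangle}(-z,-\kappa)$.

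The remaining task is to verify that the scalar factors $C_{\alpha\beta}^{\langle m \rangle}$ can be collected to the outside of the operator product without interference. The factors $C_{\alpha\beta}^{\langle m \rangle}(t)$ depend only on the diagonal Cartan elements $M_{\alpha\alpha}, M_{\beta\beta}$, which commute with all $\cB^{\langle m \rangle}_{\epsilon_\gamma-\epsilon_\delta}$ when the indices are disjoint and, in the non-disjoint case, produce only weight-dependent conjugation that is absorbed by the shift operator (as in the $(\gl_n,\gl_m)$-case of Theorem \ref{thm::gl_quantum_bispectrality}). After commuting them to the left, the factors of $C_{\alpha\beta}^{\langle m \rangle}$ with $\beta>\alpha$ appear in the denominator (coming from inverses) and those with $\beta<\alpha$ appear in the numerator, reproducing exactly the prefactor in the statement.

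The main obstacle I anticipate is precisely this last bookkeeping: unlike the purely diagonal case, conjugating $C_{\alpha\beta}^{\langle m \rangle}(t)$ past a $\cB^{\langle m \rangle}$ operator shifts $M_{\alpha\alpha}$ or $M_{\beta\beta}$ by a root contribution, and one must check that the net effect of all such commutations combined with the shift $T_{z_\alpha}$ exactly yields the identity statement, with no residual shifts left over. This is the same phenomenon controlled in \cite[Theorem 4.4]{TarasovUvarov} for the $\gl_n$ case, and I expect the argument to proceed by induction on $\alpha$, using the commutativity of the $X_{\alpha}^{\langle m \rangle}$ (from \cite{CherednikQKZ}) to rearrange the factors into the prescribed order without crossing any uncontrolled terms.
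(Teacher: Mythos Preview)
Your approach is exactly what the paper intends: the theorem is stated without proof, as an immediate consequence of Proposition~\ref{prop:so_r_operator_equal_bweyl} via factor-by-factor substitution, in precise analogy with how Theorem~\ref{thm::gl_quantum_bispectrality} follows from its local comparison. You have correctly identified both the method and the one point that needs checking.

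Your concern about commuting the $C_{\alpha\beta}^{\langle m\rangle}$ factors past the boundary $B$-operators resolves more cleanly than you anticipate, and without any induction or appeal to consistency. Expanding the cosine and combining with the exponential prefactor gives
\[
C_{\alpha\beta}^{\langle m\rangle}(t)=\tfrac{1}{2}\Bigl(e^{\frac{i\pi}{2}(t-n)}\,e^{-i\pi M_{\alpha\alpha}}+e^{-\frac{i\pi}{2}(t-n)}\,e^{i\pi M_{\beta\beta}}\Bigr),
\]
a linear combination (with honest scalar coefficients in $t$) of operators of the form $e^{\pm i\pi M_{\gamma\gamma}}$. Conjugation by any such operator sends each $b_{\epsilon_\mu-\epsilon_\nu}$ to $\pm b_{\epsilon_\mu-\epsilon_\nu}$; since $\cB^{\langle m\rangle}_{\epsilon_\mu-\epsilon_\nu}(s)$ is an even function of $b_{\epsilon_\mu-\epsilon_\nu}$ (the Beta function in Definition~\ref{def:boundary_b_operator} is symmetric in its two arguments), every $C_{\alpha\beta}^{\langle m\rangle}(t)$ commutes with every $\cB^{\langle m\rangle}_{\epsilon_\mu-\epsilon_\nu}(s)$. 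The $C$ factors therefore pull straight out to the left with no residual shifts, and the theorem follows directly.
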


		\appendix
		
		\section{Technical lemmas}
		
		\begin{lm}
			\label{lm:rational_functions_shifts}
			Assume that $\xi_1(\mu),\ldots,\xi_n(\mu)$ are rational functions on $\C$ such that 
			\begin{itemize}
				\item For any $i$ there is at most one $j \neq i$ such that $\xi_i(\mu) = \xi_j(\mu)$;
				\item If $\xi_i(\mu) \neq \xi_j(\mu)$, then the number of zeros or poles of the product
				\[
				\prod_{k=0}^N \frac{\xi_i(\mu+2k)}{\xi_j(\mu+2k)}
				\]
				grows to infinity as $N \rightarrow \infty$. 
			\end{itemize}
			
			Denote by $D_i$ the operator $[D_i f](\mu) := \xi_i(\mu) f_i(\mu + 2)$. For any rational functions $f_1(\mu),\ldots,f_n(\mu)$, consider the matrix
			\begin{equation}
				\label{eq::lm_shift_vandermonde_matrix}
				A = \begin{bmatrix} f_1(\mu) & f_2(\mu) & \ldots & f_n(\mu) \\ [D_1 f_1](\mu) & [D_2 f_2](\mu) & \ldots & [D_n f_n](\mu) \\ \vdots & \vdots & \ddots & \vdots \\ [D^{n-1}_1 f_1](\mu) & [D^{n-1}_2 f_2](\mu) & \ldots & [D^{n-1}_n f_n](\mu)\end{bmatrix}.
			\end{equation}
			Then its determinant is zero if and only if:
			\begin{itemize}
				\item Either there is $i,j$ such that $\xi_i(\mu)= \xi_j(\mu)$ and $f_i(\mu) = c f_j(\mu)$ for some constant $c\in \C$;
				\item Or there is $i$ such that $f_i(\mu) = 0$.
			\end{itemize}
			\begin{proof}
				We prove the statement by induction. The case $n=1$ is obvious. For the induction step, assume that $f_1(\lambda) \neq 0$; if there is $j$ such that $\xi_j(\mu) = \xi_1(\mu)$ then assume $j=2$. Let us divide the $i$-th row of $A$ by $[D_1^{i-1} f_1](\lambda)$ for every $i$. The matrix becomes
				\[
				\begin{bmatrix} 1 & \tilde{f}_2(\lambda) & \ldots & \tilde{f}_n(\lambda) \\ 1 & [\tilde{D}_2 \tilde{f}_2](\lambda) & \ldots & [\tilde{D}_n \tilde{f}_n](\lambda) \\ \vdots & \vdots & \ddots & \vdots \\ 1 & [\tilde{D}^{n-1}_2 \tilde{f}_2](\lambda) & \ldots & [\tilde{D}^{n-1}_n \tilde{f}_n](\lambda)\end{bmatrix},
				\]
				where $[\tilde{D}_i f](\lambda):=\frac{\xi_i(\lambda)}{\xi_1(\lambda)} f(\lambda+2)$ and $\tilde{f}_i(\lambda) := \frac{f_i(\lambda)}{f_1(\lambda)}$, and its determinant is still zero. Additionally, let us subtract $(i-1)$-th row from the $i$-th one for every $i$. Then we get the matrix
				\[
				\begin{bmatrix} 1 & \tilde{f}_2(\lambda) & \ldots & \tilde{f}_n(\lambda) \\ 0 & \hat{f}_2(\lambda) & \ldots & \hat{f}_n(\lambda) \\ \vdots & \vdots & \ddots & \vdots \\ 0 & [\tilde{D}^{n-2}_2 \hat{f}_2](\lambda) & \ldots & [\tilde{D}^{n-2}_n \hat{f}_n](\lambda)\end{bmatrix},
				\]
				where 
				\begin{equation}
					\label{eq::lm_new_func_2}
					\hat{f}_i(\lambda) = \frac{\xi_i(\lambda)}{\xi_1(\lambda)} \tilde{f}_i(\lambda+2) - \tilde{f}_i(\lambda).
				\end{equation}
				Observe that the determinant of this matrix is equal to the determinant of the matrix of the form \eqref{eq::lm_shift_vandermonde_matrix}. By induction assumption, there are two cases:
				\begin{itemize}
					\item There are $i\neq j \geq 2$ such that $\xi_i(\mu) = \xi_j(\mu)$ and $\hat{f}_i(\mu) = c\hat{f}_j(\mu)$. By our assumption, $\xi_i(\mu) \neq \xi_1(\mu)$, since there can be at most two indices with the same factor. We get
					\[
					\frac{\xi_i(\mu)}{\xi_1(\mu)} \frac{f_i(\mu+2)}{f_1(\mu+2)} - \frac{f_i(\mu)}{f_1(\mu)} = \hat{f}_i(\mu) = c\hat{f}_j(\mu) = c\frac{\xi_i(\mu)}{\xi_1(\mu)} \frac{f_j(\mu+2)}{f_1(\mu+2)} - c\frac{f_j(\mu)}{f_1(\mu)},
					\]
					which is equivalent to 
					\[
					\frac{g(\mu+2)}{g(\mu)} = \frac{\xi_1(\mu)}{\xi_i(\mu)}, \qquad g(\mu) := \frac{f_i(\mu) - c f_j(\mu)}{f_1(\mu)}.
					\]
					For every $N$, we get
					\[
					\frac{g(\mu+2N)}{g(\mu)} = \prod_{i=0}^{N-1} \frac{\xi_1(\mu+2i)}{\xi_i(\mu+2i)}.
					\]
					By our assumption, the number of zeros and poles of the right-hand side grows to infinity as $N \rightarrow \infty$, however, the number of zeroes and poles on the left-hand side is bounded. Therefore, $g(\mu) = 0$, and so $f_i(\mu) = cf_j(\mu)$.
					
					\item There is $i$ such that $\hat{f}_i(\mu) = 0$. If $i\neq 2$, then 
					\[
					\frac{g(\mu+2)}{g(\mu)} = \frac{\xi_1(\mu)}{\xi_i(\mu)}, \qquad g(\mu) = f_i(\mu)/f_1(\mu),
					\]
					and we proceed as before. If $i=2$, then $\xi_2(\mu) = \xi_1(\mu)$, and so 
					\[
					\frac{f_2(\mu+2)}{f_1(\mu+2)} = \frac{f_2(\mu)}{f_1(\mu)};
					\]
					since these are rational functions, we conclude that $f_2(\mu) = c f_1(\mu)$ for some constant $c$. 
				\end{itemize}
			\end{proof}
		\end{lm}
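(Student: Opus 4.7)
The plan is to proceed by induction on $n$. The base case $n=1$ is immediate: $A = [f_1(\mu)]$, so $\det A = 0$ iff $f_1 = 0$.

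For the inductive step, I first observe that the ``if'' direction is straightforward: if some $f_i = 0$ then column $i$ is zero, and if $\xi_i = \xi_j$ with $f_i = c f_j$ then $D_i = D_j$ as operators, so columns $i$ and $j$ of $A$ are proportional; in either case $\det A = 0$. The ``only if'' direction is the substantive content. Assuming $f_1 \neq 0$ (otherwise we are done), the strategy is to row-reduce $A$ to a matrix whose $(n-1) \times (n-1)$ principal block again has the shape \eqref{eq::lm_shift_vandermonde_matrix} with a smaller system of $\xi$'s and $f$'s, so that the inductive hypothesis applies.

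Concretely, I would divide row $j$ by $[D_1^{j-1} f_1](\mu)$ so that the first column becomes all ones, then subtract each row from the next (moving from bottom to top) to zero out that first column below the top entry. A short computation shows that the resulting $(n-1) \times (n-1)$ block has the form of \eqref{eq::lm_shift_vandermonde_matrix} with rescaled shift coefficients $\tilde{\xi}_i(\mu) := \xi_i(\mu)/\xi_1(\mu)$ and new functions $\hat{f}_i(\mu) := \tilde{\xi}_i(\mu)\,\tilde{f}_i(\mu+2) - \tilde{f}_i(\mu)$, where $\tilde{f}_i := f_i/f_1$. The rescaled $\tilde{\xi}_i$ still satisfy hypotheses (i)--(ii), so the inductive hypothesis gives two cases: either $\hat{f}_i = 0$ for some $i \geq 2$, or $\hat{f}_i = c \hat{f}_j$ for some $i \neq j$ with $\tilde{\xi}_i = \tilde{\xi}_j$ (equivalently $\xi_i = \xi_j$).

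The main obstacle is translating these conclusions back to the original $f_i$. This is exactly where hypothesis (ii) is used. In the first subcase, $\hat{f}_i = 0$ rearranges into the functional equation $g(\mu+2)/g(\mu) = \xi_1(\mu)/\xi_i(\mu)$ for a suitable ratio $g$ built from the $\tilde{f}_i$. Iterating this $N$ times and invoking (ii) shows that the right-hand side accumulates arbitrarily many zeros or poles, while the left-hand side (a ratio of shifts of a fixed rational function) has a bounded number of zeros and poles; the only way out is $g \equiv 0$, which gives the desired dependence $f_i = c f_j$. The second subcase reduces to the same type of functional equation via the same manipulation. The one edge case requiring separate treatment is when the reduction index happens to produce $\xi_i = \xi_1$ (which, by hypothesis (i), can occur for at most one index, say $i = 2$): there the functional equation degenerates to $(f_2/f_1)(\mu+2) = (f_2/f_1)(\mu)$, and a rational function invariant under a shift is forced to be constant, yielding $f_2 = c f_1$ directly.
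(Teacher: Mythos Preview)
Your proposal is correct and follows essentially the same approach as the paper's proof: induction on $n$, row reduction by dividing out $[D_1^{j-1}f_1]$ and subtracting consecutive rows to obtain an $(n-1)\times(n-1)$ block of the same shape with $\tilde{\xi}_i = \xi_i/\xi_1$ and $\hat{f}_i = \tilde{\xi}_i\tilde{f}_i(\mu+2) - \tilde{f}_i(\mu)$, then translating the inductive conclusion back via the functional equation $g(\mu+2)/g(\mu) = \xi_1/\xi_i$ and hypothesis~(ii), with the periodic-rational-function argument for the edge case $\xi_i = \xi_1$. The only presentational difference is that you explicitly note the rescaled $\tilde{\xi}_i$ inherit hypotheses~(i)--(ii), which the paper uses tacitly.
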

		
		Let $A(x,\mu)$ be as in \eqref{E:AxL-notation}:
		\[
		A(x,\mu) = \frac{(-2i)^{\mu+1}}{\mu+1} \B \left( \frac{ix - \mu}{2}, \mu + 1 \right)^{-1}.
		\]
		Recall that it satisfies \eqref{eq::boundary_lambda_shift} and \eqref{eq::boundary_dyn_weyl_minus_weight}:
		\[
		A(x,\mu+2) = \frac{x^2+\mu^2}{\mu(\mu+1)} A(x,\mu), \qquad A(-x,\mu) = -\frac{\sin \left(\pi \frac{ix+\mu}{2} \right)}{\sin \left(\pi \frac{ix-\mu}{2} \right)} A(x,\mu).
		\]
		
		\begin{prop}
			\label{prop:beta_lin_independence_rat_func}
			Let $\{x_1,\ldots,x_n\}$ be a collection of numbers such that $x_i$ are pairwise non-congruent modulo $2\sqrt{-1}$. Assume there are rational functions $f_1(\mu),\ldots,f_n(\mu)$ such that
			\[
			f_1(\mu) A(x_1,\mu) + \ldots + f_n(\mu) A(x_n,\mu) = 0
			\]
			for all integral and sufficiently positive $\mu$. Then $f_i(\mu) = 0$ for all $\mu$ and for every $i$.
			\begin{proof}
				We proceed by induction on $n$. For $n=1$, observe that $A(x,\mu)$ is not zero for infinite number of positive integral values of $\mu$. So, if we have $f(\mu) A(x,\mu) = 0$, then the rational function $f(\mu)$ has infinite number of zeros, therefore, is zero itself.
				
				One can easily see that for every sufficiently positive integral $\mu$, the functions $\{A(x_i,\mu)\}$ are not simultaneously zero. Denote by $\xi_i(\mu):= \frac{x_i^2 + \mu^2}{\mu(\mu+1)}$. Then we have
				\[
				\sum_{i=1}^n \left( \prod_{j=0}^{k-1} \xi(\mu + 2j) \right) f_i(\mu + 2k) A(x_i,\mu) = 0
				\]
				for every $k$ and sufficiently positive $\mu$. In particular, the system given by the matrix \eqref{eq::lm_shift_vandermonde_matrix} with the same meaning of notations has a nonzero solution, hence its determinant zero for infinitely many $\mu$. Since the determinant is a rational function in $\mu$, it is zero for all $\mu$. Therefore, by \cref{lm:rational_functions_shifts}, we get that either one of the functions $f_i(\mu)$ is zero or there are $i,j$ such that $\xi_i(\mu) = \xi_j(\mu)$ (equivalently, $x_i = -x_j$) and $f_i(\mu) = c_if_j(\mu)$. In the former case we can apply the induction assumption. As for the latter, denote by 
				\[
				\tilde{A}_i(x_i,\mu) = c_i A(x_i,\mu) + A(-x_i,\mu).
				\]
				Observe that 
				\[
				\tilde{A}_i(x_i,\mu+2) = \frac{x_i^2+\mu^2}{\mu(\mu+1)}\tilde{A}_i(x_i,\mu).
				\]
				We have a relation
				\begin{equation}
					\label{eq_lm::modified_relation}
					f_i(\mu)\tilde{A}_i(x_i,\mu) + \sum\limits_{k\neq i,j} f_k(\mu) A(x_k,\mu) = 0
				\end{equation}
				for all sufficiently positive integral $\mu$; now there are $n-1$ rational functions. Using property \eqref{eq::boundary_dyn_weyl_minus_weight}, we get for positive integral $\mu$
				\[
				\tilde{A}_i(x_i,\mu) = 0 \Leftrightarrow c_i = \frac{\sin \left(\pi \frac{ix-\mu}{2} \right)}{\sin \left(\pi \frac{ix+\mu}{2} \right)} = (-1)^{\mu}.
				\]
				In particular, we see that it is non-zero for an infinite number of sufficiently positive integral $\mu$. Applying the same arguments as before, we conclude that either one of the functions $f_k(\mu)$ for $k\neq j$ is zero or there exist $k,l$ with $x_k = - x_l$ such that $f_k(\mu) = c_k f_l(\mu)$, moreover, $x_k \neq \pm x_i$. Repeating this process, we can conclude that all functions $f_i(\mu)$ are zero.
			\end{proof}
		\end{prop}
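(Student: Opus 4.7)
The plan is to prove the statement by induction on $n$. The base case $n=1$ is essentially trivial: the explicit formula $A(x,\mu) = \frac{(-2i)^{\mu+1}}{\mu+1}\B\!\left(\frac{ix-\mu}{2},\mu+1\right)^{-1}$ shows $A(x,\mu)\neq 0$ for infinitely many positive integers $\mu$, so if $f(\mu)A(x,\mu)=0$ for all sufficiently positive integral $\mu$, then $f$ has infinitely many zeros and, being rational, vanishes identically.

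For the inductive step, the key idea is to convert the single linear relation into an $n \times n$ homogeneous system. Using the shift recursion $A(x_i,\mu+2) = \xi_i(\mu) A(x_i,\mu)$ with $\xi_i(\mu) = \frac{x_i^2+\mu^2}{\mu(\mu+1)}$, I apply the substitution $\mu \mapsto \mu + 2k$ to the hypothesized relation, obtaining, for each $k \geq 0$:
\[
\sum_{i=1}^n f_i(\mu+2k)\Bigl(\prod_{j=0}^{k-1}\xi_i(\mu+2j)\Bigr) A(x_i,\mu) \;=\; 0.
\]
Taking $k=0,1,\ldots,n-1$ produces a homogeneous linear system in the vector $(A(x_1,\mu),\ldots,A(x_n,\mu))$ whose coefficient matrix has precisely the form of \eqref{eq::lm_shift_vandermonde_matrix} in Lemma \ref{lm:rational_functions_shifts}, with $D_i$ the shift operator by $\xi_i$. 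Since the values $A(x_i,\mu)$ are not all simultaneously zero for infinitely many positive integral $\mu$ (which I verify from the Gamma-function formula and pairwise non-congruence), this determinant must vanish for infinitely many $\mu$, hence identically as a rational function. The hypotheses of Lemma \ref{lm:rational_functions_shifts} hold because $\xi_i=\xi_j$ forces $x_i^2=x_j^2$, so at most two indices share a common $\xi$, and when $\xi_i \neq \xi_j$ the product $\prod_{k=0}^{N-1}\xi_i(\mu+2k)/\xi_j(\mu+2k)$ acquires unboundedly many poles and zeros.

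Applying Lemma \ref{lm:rational_functions_shifts} yields two alternatives. If some $f_i\equiv 0$, the induction hypothesis finishes the proof directly on the remaining $n-1$ summands. Otherwise there exist $i\neq j$ with $\xi_i=\xi_j$ and $f_i=cf_j$ for some $c\in\C$. The condition $\xi_i=\xi_j$ gives $x_i^2=x_j^2$; since the $x_k$ are pairwise non-congruent modulo $2\sqrt{-1}$, we must have $x_i = -x_j$. I then regroup the relation by combining the two offending terms:
\[
f_j(\mu)\bigl(cA(x_j,\mu)+A(-x_j,\mu)\bigr) + \sum_{k\neq i,j} f_k(\mu) A(x_k,\mu) = 0,
\]
and set $\tilde A(\mu):=cA(x_j,\mu)+A(-x_j,\mu)$. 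Because both $A(x_j,\mu)$ and $A(-x_j,\mu)$ obey the same shift recursion $F(\mu+2) = \xi_j(\mu)F(\mu)$, so does $\tilde A$, and the reduced $(n-1)$-term relation fits the same inductive framework.

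The main obstacle will be verifying that $\tilde A(\mu)$ is nonzero for infinitely many positive integer values of $\mu$, which is needed to rerun the Vandermonde argument at the $(n-1)$-level. This is where the reflection identity $A(-x,\mu) = -\frac{\sin(\pi(ix+\mu)/2)}{\sin(\pi(ix-\mu)/2)}A(x,\mu)$ enters: for integer $\mu$ the ratio of sines simplifies to $(-1)^{\mu}$, so $\tilde A(\mu)=0$ exactly when $c=(-1)^\mu$, a condition satisfied by at most one parity class. Hence $\tilde A$ is nonzero on an infinite arithmetic progression of integers, the Vandermonde step still applies, and the induction carries through. Iterating the reduction either peels off a zero $f_k$ (triggering induction) or pairs up another opposite pair $x_k,-x_k$ disjoint from the previous ones; since only finitely many disjoint pairs can occur, the process terminates with all $f_i=0$.
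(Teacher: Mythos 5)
Your proposal follows the paper's own argument essentially step for step: the same induction on $n$, the same shifted Vandermonde system fed into Lemma \ref{lm:rational_functions_shifts}, the same dichotomy between a vanishing $f_i$ and an opposite pair $x_i=-x_j$, the same regrouping into $\tilde{A}(\mu)=cA(x_j,\mu)+A(-x_j,\mu)$, and the same use of the reflection formula \eqref{eq::boundary_dyn_weyl_minus_weight} to see $\tilde A$ is nonzero on an infinite arithmetic progression. The only addition is that you spell out the verification of the hypotheses of Lemma \ref{lm:rational_functions_shifts} (why at most two indices share a $\xi$, and why the product of $\xi_i/\xi_j$ accumulates poles/zeros), which the paper leaves implicit, but this is a detail rather than a different route.
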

		
        \section{Calculations in rank two}
		\label{appendix:rank_two}
		
		In this appendix, we show the braid relation \cref{thm:boundary_braid_relations}.
		
		Let $\g$ be a simple Lie algebra of rank $2$, i.e., $\mathfrak{sl}_3$, $\mathfrak{sp}_4$, or $\mathfrak{g}_2$. Let $\alpha$ (resp. $\beta$) be the short root (resp. the long root), and $\omega_1$ (resp. $\omega_2$) be the dual fundamental weight. Introduce coordinates on $\h^*$ by $\lambda = \lambda_1 \omega_1 + \lambda_2 \omega_2$.
		
		Denote by $\rho = \omega_1 + \omega_2$ the Weyl vector. For $\gamma\in \{\alpha, \beta\}$, write
		\begin{equation}
			\label{eq::atilde_operator}
			\tilde{A}^{\k,\gamma} (\lambda) := A^{\k, \gamma}(\lambda - \rho),
		\end{equation}
		where $A^{\k, \gamma}(\lambda)$ is as defined in Definition \ref{D:A-k-in-general}. We have the following simple fact.
		\begin{prop}
			\label{prop:unshifted_braid_relation}
			The \emph{unshifted} braid relation
			\[
			\ldots \tilde{A}^{\k}_{\alpha}(s_{\beta}s_{\alpha}\lambda)\tilde{A}^{\k}_{\beta}(s_{\alpha} \lambda) \tilde{A}^{\k}_{\alpha}(\lambda) = \ldots \tilde{A_{\beta}}^{\k}(s_{\alpha}s_{\beta}\lambda)\tilde{A}^{\k}_{\alpha} (s_{\beta} \lambda) \tilde{A}^{\k}_{\beta} (\lambda)
			\]
			is satisfied if and only if the operators $\{ A_{\alpha}^{\k}(\lambda) \}$ satisfy the shifted one of \cref{thm:boundary_braid_relations}.
		\end{prop}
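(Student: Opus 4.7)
The proposition is a purely formal change-of-variables statement, and I would prove it by exhibiting the explicit substitution relating the two formulations. The key observation is that the $\rho$-shift built into $\tilde{A}^{\k,\gamma}$ precisely intertwines the dot action with the ordinary linear action of $W$ on $\h^*$.

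First, I would record the following elementary identity for any $w \in W$ and any simple root $\gamma \in \{\alpha,\beta\}$: setting $\mu := \lambda + \rho$, one has
\[
A^{\k,\gamma}(w \cdot \lambda) \;=\; A^{\k,\gamma}\bigl(w(\lambda + \rho) - \rho\bigr) \;=\; A^{\k,\gamma}(w\mu - \rho) \;=\; \tilde{A}^{\k,\gamma}(w\mu),
\]
where in the last equality I used the definition \eqref{eq::atilde_operator}. In particular, when $w$ is the identity this reads $A^{\k,\gamma}(\lambda) = \tilde{A}^{\k,\gamma}(\mu)$.

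Next, I would substitute this identity term-by-term into the shifted braid relation of Theorem \ref{thm:boundary_braid_relations}. Each factor $A^{\k,\gamma}(w \cdot \lambda)$ appearing on either side becomes $\tilde{A}^{\k,\gamma}(w\mu)$, so after the substitution $\mu = \lambda + \rho$ the shifted braid relation becomes exactly the unshifted braid relation evaluated at $\mu$. Since the map $\lambda \mapsto \lambda + \rho$ is a bijection on $\h^*$, an equality holding for all $\lambda$ is equivalent to the transformed equality holding for all $\mu$, proving both directions at once.

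There is no genuine obstacle here; the only thing to double-check is that the length of the braid word (i.e.\ the order of $s_\alpha s_\beta$) is preserved by the substitution, which is automatic since $W$ is unchanged. The content of the proposition is essentially a bookkeeping lemma that allows us to reduce the proof of the dynamical braid relations in Theorem \ref{thm:boundary_braid_relations} (for rank $2$) to checking the technically cleaner unshifted relations of \cref{prop:unshifted_braid_relation}, which will be verified case-by-case for $\sl_3$, $\sp_4$, and $\g_2$ in the remainder of the appendix.
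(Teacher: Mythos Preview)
Your proof is correct and is exactly the standard change-of-variables argument the paper has in mind; the paper in fact gives no proof at all, treating the proposition as a ``simple fact,'' so your write-up simply fills in the obvious details via the substitution $\mu = \lambda + \rho$.
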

		
		It is evident from \eqref{E:A-in-general-defn} that the boundary dynamical Weyl operators depend only on one variable, so we will use the following notation. 
		
		\begin{notation}
			For $\mu\in \C$, we write
			\begin{equation}
				\label{eq::r_operator}
				R_{\alpha}(\mu) := \tilde{A}^{\k,\alpha}(\mu \omega_1) \qquad \text{and} \qquad  R_{\beta} (\mu) := \tilde{A}^{\k,\beta} (\mu \omega_2).
			\end{equation}
		\end{notation}
		
		To prove \cref{prop:boundary_nondyn_braid_rel} and \cref{thm:boundary_braid_relations}, we perform explicit calculations to verify that the operators $R_{\alpha}$ and $R_{\beta}$ satisfy the braid relations in specific integrable $\k$-modules $\{X_i\}$, such that any other irreducible integrable module is a direct $\k$-summand of $X_i \otimes V$ for suitable $i$ and a finite-dimensional $\g$-representation $V$. These calculations, along with arguments in the proof of \cref{prop:boundary_nondyn_braid_rel} and \cref{thm:boundary_braid_relations}, imply the braid relations in general integrable $\k$-modules.
		
		We will also prove \cref{lm:sine_braid_relation} case-by-case. Denote by 
		\begin{equation}
			\label{eq::appendix_shifted_sine_operators}
			S_{\alpha}(\mu):= \sin \left(\pi \frac{\mu - 1 - ib_{\alpha}}{2}\right), \qquad S_{\beta}(\mu):= \sin \left(\pi \frac{\mu - 1 - ib_{\beta}}{2}\right). 
		\end{equation}
		As before, it is enough to prove the unshifted braid relation for these operators. 
		
		\subsection{Type \texorpdfstring{$A_2$}{A2}} 
		\label{subsect:type_A}
		
		The corresponding Lie algebra is $\sl_3$. We denote by $\{E_{ij}\}$ the basis of matrix units. In particular,
		\begin{align*}
			e_{\alpha} &= E_{12}, & \qquad e_{\beta} &= E_{23}, &\qquad e_{\alpha+\beta} &= E_{13}, \\ 
			f_{\alpha} &= E_{21}, & \qquad f_{\beta} &= E_{32}, &\qquad f_{\alpha+\beta} &= E_{31},
		\end{align*}
		where $\alpha,\beta$ are simple roots for $\sl_3$. 
		
		The Lie subalgebra $\k = \g^{\theta}$ is $\so_3$. Let us choose the basis
		\begin{equation}
			\label{eq::sl3_b_to_j}
			J_0 = -b_{\beta}, \qquad J_1 = -b_{\alpha+\beta}, \qquad J_2 = - b_{\alpha},
		\end{equation}
		where we consider all indices modulo 3. They satisfy the relations
		\begin{equation}
			\label{eq::so3_relations}
			[J_i,J_{i+1}] = J_{i+2},\ i\in \mathbb{Z}/3.
		\end{equation}
		
		There is an isomorphism $\so_3 \cong \sl_2$. Namely, if $\{e,h,f\}$ are the standard $\sl_2$-generators, then
		\begin{equation}
			\label{eq::so3_cong_sl2}
			J_0 = \frac{h}{2i}, \qquad J_1 = \frac{e-f}{2}, \qquad J_2 = \frac{e+f}{2i}.
		\end{equation}
		One can easily see that the restriction of the vector representation $\C^3$ of $\g =\sl_3$ to $\k=\so_3$ is the irreducible representation $V(2)$ of $\sl_2\cong \so_3$. Therefore, the only irreducible integrable representation of $\k$ that we need to check is $V(1)$. 
		
		\subsubsection{Braid relation in special cases}
		Denote by $\{v_+,v_-\}$ the standard weight basis of $V(1)$:
		\[
		h\cdot v_+ = v_+, \qquad h\cdot v_- = -v_-.
		\]
		It follows from \eqref{eq::sl3_b_to_j} and \eqref{eq::so3_cong_sl2} that
		\[
		b_{\beta} = \begin{bmatrix} i/2 & 0 \\ 0 & -i/2 \end{bmatrix}, \qquad b_{\alpha} = \begin{bmatrix}  0 & i/2 \\ i/2 & 0 \end{bmatrix}.
		\]
		
		\begin{proof}[Proof of \cref{prop:boundary_nondyn_braid_rel}]
			The boundary braid group operators are 
			\[
			\tilde{s}_{\beta}^{\k} = \begin{bmatrix}
				e^{\frac{i\pi}{4}} & 0 \\ 0 & e^{-\frac{i\pi}{4}}
			\end{bmatrix}, \qquad \tilde{s}^{\k}_{\alpha} = S\cdot \begin{bmatrix}
				e^{\frac{i\pi}{4}} & 0 \\ 0 & e^{-\frac{i\pi}{4}}
			\end{bmatrix}\cdot S^{-1},
			\]
			where 
			\[
			S = \begin{bmatrix} 1 & -1 \\ 1 & 1 \end{bmatrix}
			\]
			is a change of basis matrix such that $S^{-1}\cdot b_{\alpha}\cdot S$ is equal to the diagonal matrix of $b_{\beta}$. The braid relation reads $\tilde{s}^{\k}_{\alpha} \tilde{s}^{\k}_{\beta} \tilde{s}^{\k}_{\alpha} = \tilde{s}^{\k}_{\beta} \tilde{s}^{\k}_{\alpha} \tilde{s}^{\k}_{\beta}$, which can be verified directly in matrix terms. 
		\end{proof}
		
		\begin{proof}[Proof of \cref{thm:boundary_braid_relations}]
			In the dynamical case, we have
			\begin{align*}
				R_{\beta}(\mu) = &\begin{bmatrix} A(i/2,\mu-1) & 0 \\ 0 & A(-i/2,\mu-1) \end{bmatrix}, \\
				R_{\alpha} (\lambda) = S\cdot &\begin{bmatrix} A(i/2,\mu-1) & 0 \\ 0 & A(-i/2,\mu-1) \end{bmatrix} \cdot S^{-1},
			\end{align*}
			in the notations of \eqref{eq::r_operator}. The braid relation reads
			\begin{equation}
				\label{eq::appendix_unshifted_braid_relation_typeA}
				R_{\alpha}(\lambda_2) R_{\beta}(\lambda_1 + \lambda_2) R_{\alpha}(\lambda_1) = R_{\beta}(\lambda_1) R_{\alpha}(\lambda_1 + \lambda_2) R_{\beta}(\lambda_2), \qquad \text{for $\lambda_1, \lambda_2\in \C$}.
			\end{equation}
			By \eqref{eq::boundary_dyn_weyl_minus_weight}, we have
			\[
			A(-i/2,\mu-1) = \frac{\cos \left(\pi \left( -\frac{1}{4} + \frac{\mu}{2} \right)\right)}{\cos \left(\pi \left( -\frac{1}{4} - \frac{\mu}{2} \right)\right)} A(i/2,\mu-1) = \frac{e^{i\pi \mu} + i}{ie^{i\pi \mu} + 1}A(i/2,\mu-1)
			\]
			If we multiply the braid relation by
			\[
			\frac{(ie^{i\pi \lambda_2} + 1)(i e^{i\pi (\lambda_1 + \lambda_2}) + 1) (ie^{i\pi \lambda_2} + 1)}{A(i/2,\lambda_2-1)A(i/2,\lambda_1+\lambda_2-1)A(i/2,\lambda_1-1)},
			\]
			it becomes
			\[
			S\cdot \begin{bmatrix} iz_2 + 1 & 0 \\ 0 & z_2 + i \end{bmatrix} \cdot S^{-1} \cdot \begin{bmatrix} iz_1 z_2 + 1 & 0 \\ 0 & z_1 z_2 + i \end{bmatrix} \cdot S \cdot \begin{bmatrix} iz_1 + 1 & 0 \\ 0 & z_1 + i \end{bmatrix}
			\]
			\[
			=
			\]
			\[
			\begin{bmatrix} iz_1 + 1 & 0 \\ 0 & z_1 + i \end{bmatrix}\cdot S \cdot \begin{bmatrix} iz_1 z_2 + 1 & 0 \\ 0 & z_1 z_2 + i \end{bmatrix} \cdot S^{-1} \cdot \begin{bmatrix} iz_2 + 1 & 0 \\ 0 & z_2 + i \end{bmatrix}\cdot S,
			\]
			where $z_1 = e^{i\pi \lambda_1}$ and $z_2 = e^{i\pi \lambda_2}$. This renormalized braid relation is a product of matrices over polynomials in $z_1,z_2$ and can be verified directly on a computer.
		\end{proof}
		
		\subsubsection{Proof of \cref{lm:sine_braid_relation}}
		Consider the variables $z_j := e^{\frac{i\pi \lambda_j}{2}}$ for $\lambda = \lambda_1 \omega_1 + \lambda_2 \omega_2$. In the notations of \eqref{eq::appendix_shifted_sine_operators}, we need to prove the relation
		\[
		S_{\alpha}(\lambda_1) S_{\beta}(\lambda_1 + \lambda_2) S_{\alpha} (\lambda_2)  = S_{\beta} (\lambda_2) S_{\alpha}(\lambda_1 + \lambda_2) S_{\beta}(\lambda_1).
		\]
		
		By \eqref{eq::sin_boundary_weyl_group}, we have
		\[
		S_{\alpha}(\mu) = -\frac{e^{\frac{i\pi \mu}{2}} \tilde{s}^{\k}_{\alpha} + e^{-\frac{i\pi\mu}{2}} (\tilde{s}^{\k}_{\alpha})^{-1}}{2}, \qquad S_{\beta}(\mu) = -\frac{e^{\frac{i\pi \mu}{2}} \tilde{s}^{\k}_{\beta} + e^{-\frac{i\pi\mu}{2}} (\tilde{s}^{\k}_{\beta})^{-1}}{2}
		\]
		Substituting it in the unshifted braid relation \eqref{eq::appendix_unshifted_braid_relation_typeA} for $S$-operators and using the braid relation for the non-dynamical braid group operators, the lemma would follow from the relation
		\[
		\tilde{s}_{\alpha}^{\k} (\tilde{s}_{\beta}^{\k})^{-1} \tilde{s}_{\alpha}^{\k} = \tilde{s}_{\beta}^{\k} (\tilde{s}_{\alpha}^{\k})^{-1} \tilde{s}_{\beta}^{\k}. 
		\]
		Recall the tensor property $\Delta \tilde{s}^{\k}_{\alpha} = \tilde{s}^{\k}_{\alpha} \otimes \tilde{s}^{\g}_{\alpha}$. By the same arguments as in the proof of \cref{prop:boundary_nondyn_braid_rel} or \cref{thm:boundary_braid_relations}, it is enough to show this relation for $X = V(1)$ and $V$ an arbitrary $\sl_3$-representation $V$. For the latter, it is enough to show it for the vector representation $\C^3$, as any $\sl_3$-representation is a subrepresentation of a suitable tensor product of $\C^3$. Both cases can be verified directly. 
		
		\subsection{Type \texorpdfstring{$C_2$}{C2}} We work with the choice of simple roots $\alpha = \epsilon_1- \epsilon_2$ and $\beta= 2\epsilon_2$. The corresponding Lie algebra is $\mathfrak{g}=\sp_4$. We will work with an explicit matrix realization of $\mathfrak{sp}_4$ as follows. Let $\C^4 = \mathrm{span}(v_1,v_2,v_{-1},v_{-2})$ with the symplectic form 
		\[
		\Omega = \begin{bmatrix} 0 & 0 & 1 & 0 \\ 0 & 0 & 0 & 1 \\ -1 & 0 & 0 & 0 \\ 0 & -1 & 0 & 0  \end{bmatrix}.
		\]
		We can choose a basis of $\g$ satisfying the assumptions of \cref{sect:dynamical_twist} as follows:
		\begin{align*}
			e_{\alpha} = E_{12} - E_{-2,-1}, \qquad e_{\beta} = E_{2,-2}, \qquad e_{\alpha+\beta} = -E_{1,-2} -E_{2,-1}, \qquad e_{2\alpha + \beta} = E_{1,-1}, \\ 
			f_{\alpha} = E_{21} - E_{-1,-2}, \qquad f_{\beta} = E_{-2,2}, \qquad f_{\alpha + \beta} = -E_{-2,1} -E_{-1,2}, \qquad f_{2\alpha + \beta} = E_{-1,1},
		\end{align*}
		and
		\begin{align*}
			h_{\alpha} = E_{11} - E_{22} - E_{-1,-1} + E_{-2,-2}, \qquad h_{\beta} = E_{22} - E_{-2,-2}.
		\end{align*}
		
		The fixed point subalgebra $\k= \mathfrak{sp}_4^{\theta}$ is isomorphic to $\gl_2$. Indeed, consider the elements
		\begin{equation}
			\label{eq::c2_b_as_j}
			J_0 = -\frac{b_{\alpha}}{2}, \qquad J_1 = \frac{-b_{2\alpha + \beta} + b_{\beta}}{2}, \qquad J_2 = -\frac{b_{\alpha + \beta}}{2}, \qquad Z = i\cdot (b_{2\alpha + \beta} + b_{\beta}).
		\end{equation}
		One can check that $Z$ is central and $\{J_i \}$ satisfy the standard $\so_3$-relations \eqref{eq::so3_relations}. Since $\k$ is the Lie-subalgebra of $\mathfrak{g}$ generated by $b_{\alpha}$ and $b_{\beta}$, and 
		\[
		b_{\alpha} = -2J_0 \qquad \text{and} \qquad b_{\beta} = 2J_1 - iZ,
		\]
		it follows that $\{J_0, J_1, J_2, Z\}$ is a basis of $\k$. 
		
		Consider the following basis of $\C^4$: 
		\begin{align*}
			w_1 &= v_1 - iv_2 + iv_{-1} + v_{-2}, \qquad
			w_2 &= iv_1 - v_2 - v_{-1} - iv_{-2}, \\
			w_1^* &= v_1 - iv_2 -iv_{-1} - v_{-2}, \qquad
			w_2^* &= -iv_1 + v_2 - v_{-1} - iv_{-2}.
		\end{align*}
		One can check that $\C^4 \cong \C^2 \oplus (\C^2)^*$ as $\k$-representations, where $\C^2 = \mathrm{span}(w_1,w_2)$ is the vector representation and $(\C^2)^* = \mathrm{span}(w_1^*,w_2^*)$ is its dual. In particular, the only (integrable) representation of $\k \cong \gl_2$ that we need to consider is the one-dimensional representation $\Det^k$ spanned by a vector $\omega^k$ such that
		\[
		J_i \cdot \omega^k = 0,\ i \in \{0,1,2\},  \qquad \text{and} \qquad  Z\cdot \omega^k = k\omega^k,\ k\in \C.
		\]
		It follows that 
		\[
		b_{\alpha} \cdot \omega_k = 0 \qquad \text{and} \qquad b_{\beta}\cdot \omega_k = -ik.
		\]
		\subsubsection{Braid relation in special cases}
		In basis of fundamental weights $\omega_1:=\epsilon_1$ and $\omega_2:=\epsilon_1 + \epsilon_2$, the reflection operators have the form
		\[
		s_{\alpha} = \begin{bmatrix} -1 & 0 \\ 1 & 1 \end{bmatrix}, \qquad s_{\beta} = \begin{bmatrix} 1 & 2 \\ 0 & -1 \end{bmatrix}.
		\]
		
		\begin{proof}[Proof of \cref{thm:boundary_braid_relations}]
			The braid relation has the form 
			\begin{equation}\label{E:C2-braidreln}
				R_{\beta}(\lambda_2) R_{\alpha}(\lambda_1 + 2\lambda_2) R_{\beta} (\lambda_1 + \lambda_2) R_{\alpha}(\lambda_1)
				= R_{\alpha}(\lambda_1) R_{\beta} (\lambda_1 + \lambda_2) R_{\alpha}(\lambda_1 + 2\lambda_2) R_{\beta}(\lambda_2).
			\end{equation}
			In the one-dimensional representation $\C_k$, the operators $R_{\alpha}(\lambda)$ and $R_{\beta}(\mu)$ commute, so the braid relation in equation \eqref{E:C2-braidreln} is obviously satisfied. 
		\end{proof}
		
		The proof of \cref{prop:boundary_nondyn_braid_rel} is similar to the dynamical case. 
		
		\subsubsection{Proof of \cref{lm:sine_braid_relation}}
		Observe that 
		\[
		(\tilde{s}_{\alpha}^{\k})^2 = e^{-2\pi J_0}, \qquad (\tilde{s}_{\beta}^{\k})^2 = e^{2\pi J_1} \cdot e^{-i \pi Z}. 
		\]
		For every irreducible $\k$-module $V(m) \otimes \det^{n}$, the exponential $e^{-2\pi J_0}$ acts as $(-1)^l$, and $e^{-i\pi Z}$ by a scalar $e^{-i \pi n}$. Therefore,
		\begin{align*}
			(\tilde{s}^{\k}_{\alpha})^{-1} &= (-1)^l \tilde{s}^{\k}_{\alpha}, &\qquad S_{\alpha}(\mu) &= (e^{\frac{i\pi \mu}{2}} + (-1)^l e^{-\frac{i\pi \mu}{2}}) \tilde{s}^{\k}_{\alpha}, \\ 
			(\tilde{s}^{\k}_{\beta})^{-1} &= e^{i\pi n} \tilde{s}^{\k}_{\beta}, &\qquad \text{and} \qquad S_{\beta}(\mu) &= (e^{\frac{i\pi \mu}{2}} + e^{i\pi n} e^{-\frac{i\pi \mu}{2}}) \tilde{s}^{\k}_{\beta}.
		\end{align*}
		Therefore, the dynamical braid relation \eqref{E:C2-braidreln} for $S$-operators follows from the non-dynamical one of \cref{prop:boundary_nondyn_braid_rel}. 
		
		\subsection{Type \texorpdfstring{$G_2$}{G2}} The corresponding Lie algebra is $\g_2$. We will use notations and results of \cite{PollackG2}, see also \cite{SpringerVeldkamp}.
		
		Let $\C^3$ be the vector representation of $\sl_3$. Denote by $\Theta$ the space of (split) octonions. As a vector space, 
		\[
		\Theta = \left\{ \begin{bmatrix} a & v \\ \phi & d \end{bmatrix} \Big| a,d \in \C, v\in \C^3, \phi \in (\C^3)^*  \right\}.
		\]
		Since $\C^3$ is an $\sl_3$-representation, we have canonical isomorphisms $\Lambda^2 \C^3 \cong (\C^3)^*$ and $\Lambda^2 (\C^3)^* \cong \C^3$. Then the multiplication on $\Theta$ is given by
		\[
		\begin{bmatrix} a & v \\ \phi & d \end{bmatrix} \begin{bmatrix} a' & v' \\ \phi' & d' \end{bmatrix} = \begin{bmatrix} a a' + \phi'(v) & av' + d'v - \phi \wedge \phi' \\ a' \phi + d \phi' + v \wedge v' & \phi(v') + dd'\end{bmatrix}.
		\]
		Octonions are equipped with a quadratic norm $N(x)$ defined by 
		\[
		N \begin{bmatrix} a & v \\ \phi & d \end{bmatrix} = ad - \phi(v).
		\]
		In particular, it induces a symmetric bilinear form 
		\[
		(x,x') = ad' + a'd - \phi(v') - \phi'(v), \qquad x = \begin{bmatrix} a & v \\ \phi & d \end{bmatrix}, \qquad x' = \begin{bmatrix} a' & v' \\ \phi' & d' \end{bmatrix}. 
		\]
		Octonions are equipped with conjugation $*$ which is an order-reversing involution on $\Theta$:
		\[
		x = \begin{bmatrix} a & v \\ \phi & d \end{bmatrix}  \mapsto x^* := \begin{bmatrix} d & -v \\ -\phi & a \end{bmatrix} 
		\]
		Define the trace
		\[
		\mathrm{tr} \begin{bmatrix} a & v \\ \phi & d \end{bmatrix}  = a + d,
		\]
		and $\C^7:= \{ x\in \Theta| \mathrm{tr}(x) = 0\}$. It is orthogonal to $1\in \Theta$, in particular, the restriction of the bilinear form to $\C^7$ is non-degenerate. 
		
		Denote by $\Im(x) := (x - x^*)/2$ for $x\in \Theta$. Consider the map
		\[
		\C^7 \otimes \C^7 \rightarrow \C^7, \qquad x \otimes y \mapsto \Im(xy).
		\]
		This map is alternating, thus induces a map $H\colon \so(7) \rightarrow \C^7$, where we identified $\so(7) \cong \Lambda^2 \C^7$ by
		\[
		a \wedge b \mapsto (x \mapsto (b,x) a - (a,x) b).
		\]
		Then $\g_2 := \ker(H) \subset \so(7)$.
		
		\begin{remark}
			We will not use it, but the group $G_2$ inside $\SO(7)$ can be identified as the subgroup of transformations preserving the octonion product.
		\end{remark}
		
		Let $\{w_0,w_1,w_2\} \subset \C^3$ be the standard basis and $\{w_{0}^*,w_{1}^*,w_{2}^*\}\subset (\C^3)^*$ be the dual one. In what follows, we consider all indices modulo 3. We have $(w_i,w_j^*) = -\delta_{ij}$ and all other pairings zero. Also, denote $u_0 = \begin{bmatrix} 1 & 0 \\ 0 & -1 \end{bmatrix} \in \C^7$. Define
		\[
		M_{ij} = w_{j}^* \wedge w_i, \qquad
		v_j = u_0 \wedge w_j + w_{j+1}^* \wedge w_{j+2}^*, \qquad
		\delta_j = u_0 \wedge w_j^* + w_{j+1} \wedge w_{j+2}.
		\]
		One can check that $v_j$ and $\delta_j$ are in $\g_2$ for any $j$. Likewise, if $i \neq j$, then $M_{ij}$ is in $\g_2$ as well. A sum $\alpha_1 M_{11} + \alpha_2 M_{22} + \alpha_3 M_{33}$ is in $\g_2$ if $\alpha_1 + \alpha_2 + \alpha_3 = 0$. 
		
		We will choose a positive root system on $\g_2$ by letting
		\begin{align*}
			e_{\alpha} &= v_1, \qquad &f_{\alpha} &= -\delta_1, \qquad &h_{\alpha^{\vee}} &= 2 M_{11} - M_{00} - M_{22}, \\
			e_{\beta} &= M_{01}, \qquad &f_{\beta} &= M_{10}, \qquad &h_{\beta^{\vee}} &= M_{00} - M_{11},
		\end{align*}
		where $\alpha$ is the short simple root and $\beta$ is the long one. 
		
		Denote by $\iota\colon \Theta \rightarrow \Theta$ the involution 
		\[
		\begin{bmatrix} a & v \\ \phi & d \end{bmatrix} \mapsto \begin{bmatrix} d & -\tilde{\phi} \\ -\tilde{v} & a \end{bmatrix},
		\]
		where $\widetilde{w}_i = w_i^*, \widetilde{w}^*_i = w_i$, and $\tilde{\cdot}$ is extended by linearity. It induced an involution $\theta$ on $\Lambda^2 \C^7 \cong \so(7)$ by $\theta(v \wedge w) = \iota(v) \wedge \iota(w)$. It is clear that $\theta$ preserves $\g_2 \subset \so(7)$. 
		
		\begin{prop}\cite[Claim 2.7]{PollackG2}
			The isomorphism $\theta$ is a Cartan involution on $\g_2$.
		\end{prop}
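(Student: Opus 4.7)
The strategy is to use the characterization established in Section~2: for a split simple Lie algebra, an involutive Lie algebra automorphism $\theta$ of $\g$ is (conjugate to) the Cartan involution of the split real form if and only if it acts on Chevalley generators by $\theta(e_{\alpha_i}) = -f_{\alpha_i}$, $\theta(f_{\alpha_i}) = -e_{\alpha_i}$, and $\theta|_{\h} = -\id$. Thus the proof reduces to two steps: (i) verify that $\theta$ is a well-defined involutive Lie algebra automorphism of $\g_2$, and (ii) compute $\theta$ on the given generators $\{e_\alpha, e_\beta, f_\alpha, f_\beta, h_{\alpha^\vee}, h_{\beta^\vee}\}$ and check these identities directly.

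For step (i), first observe that $\iota^2 = \id$ is immediate from its definition. Next, $\iota$ preserves the bilinear form on $\C^7$: it swaps the $a$ and $d$ entries, sends $v \mapsto -\tilde{\phi}$ and $\phi \mapsto -\tilde{v}$, and under the convention $(w_i, w_j^*) = -\delta_{ij}$ one checks $\phi(v') = -\tilde{\phi}(\tilde{v}')$ so that $(\iota(x), \iota(x')) = (x,x')$. Therefore $\theta$ is a Lie algebra automorphism of $\so(7)$. To see that $\theta$ preserves $\g_2 = \ker(H)$, I would verify that $\iota$ is an anti-automorphism of the octonion product---this is a consequence of the fact that octonion conjugation $x \mapsto x^*$ is an anti-automorphism together with compatibility of $\tilde{\phantom{w}}$ with the exterior multiplications $\Lambda^2 \C^3 \cong (\C^3)^*$ appearing in the product formula---from which $\Im(\iota(x)\iota(y)) = \iota(\Im(yx))$, yielding $H \circ \theta = \iota \circ H \circ \sigma$ (where $\sigma$ is the automorphism of $\so(7)$ coming from swapping the order of the wedge) and hence $\theta(\ker H) \subseteq \ker H$.

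For step (ii), the key point computations are $\iota(u_0) = -u_0$ (since $u_0 = \mathrm{diag}(1,-1)$ and $\iota$ swaps the diagonal entries), $\iota(w_i) = -w_i^*$, and $\iota(w_i^*) = -w_i$. Then for the long simple root $\beta$,
\[
\theta(e_\beta) = \theta(w_1^* \wedge w_0) = (-w_1) \wedge (-w_0^*) = w_1 \wedge w_0^* = -M_{10} = -f_\beta,
\]
and symmetrically $\theta(f_\beta) = -e_\beta$. For the short simple root $\alpha$,
\[
\theta(e_\alpha) = \theta(u_0 \wedge w_1 + w_2^* \wedge w_0^*) = u_0 \wedge w_1^* + w_2 \wedge w_0 = \delta_1 = -f_\alpha,
\]
and similarly $\theta(f_\alpha) = -e_\alpha$. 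For the Cartan, $\theta(M_{ii}) = \iota(w_i^*) \wedge \iota(w_i) = w_i \wedge w_i^* = -M_{ii}$, so $\theta$ acts as $-\id$ on $\h = \mathrm{span}(h_{\alpha^\vee}, h_{\beta^\vee})$. These identities exhibit $\theta$ as a Chevalley involution, completing the proof.

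The main obstacle I anticipate is step (i)---specifically, the careful verification that $\iota$ respects the octonion product sufficiently to preserve $\g_2$. The sign conventions in the pairing $(w_i, w_j^*) = -\delta_{ij}$ and in the identifications $\Lambda^2 \C^3 \cong (\C^3)^*$, $\Lambda^2 (\C^3)^* \cong \C^3$, together with the mixing of conjugation $*$ and the transpose $\tilde{\phantom{x}}$, make the bookkeeping delicate. Once this compatibility is established, step (ii) is a mechanical computation as sketched above.
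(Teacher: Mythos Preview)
The paper does not give its own proof of this proposition: it is stated as a citation to \cite[Claim 2.7]{PollackG2} and no proof environment follows. Immediately after the statement the paper simply records the formulas $\theta(M_{ij}) = -M_{ji}$ and $\theta(v_j) = \delta_j$, which are precisely the outputs of your step~(ii) computations.

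Your approach is therefore not ``the same as the paper's'' but rather a self-contained verification of a result the paper outsources. The strategy is sound: identifying $\theta$ with the Chevalley involution via its action on the simple root generators is exactly the criterion spelled out in \cref{subsect:chevalley_involution}, and your computations on $e_\alpha, e_\beta, f_\alpha, f_\beta, h_{\alpha^\vee}, h_{\beta^\vee}$ are correct. Your caution about step~(i) is well placed---showing $\theta$ preserves $\g_2 \subset \so(7)$ via compatibility of $\iota$ with the octonion product is the only nontrivial ingredient, and the paper sidesteps it entirely by deferring to Pollack. If you want a clean self-contained argument, note that once step~(ii) is done you can bypass the octonion-product verification: the formulas $\theta(e_{\alpha_i}) = -f_{\alpha_i}$, $\theta(f_{\alpha_i}) = -e_{\alpha_i}$ already define a Lie algebra automorphism of $\g_2$ by the Serre presentation, and it agrees with your $\theta$ on generators, so they coincide on all of $\g_2$.
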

		
		We have 
		\[
		\theta(M_{ij}) = -M_{ji}, \qquad \theta(v_j) = \delta_j.
		\]
		
		Consider the elements
		\begin{align*}
			J_i = \frac{M_{i+2,i+1} - M_{i+1,i+2} + v_i + \delta_i}{4}, \\
			J'_i = \frac{3M_{i+2,i+1} - 3M_{i+1,i+2} - v_i - \delta_i}{4}.
		\end{align*}
		One can check that $\{J_i\}$ and $\{J'_i\}$ satisfy the $\so_3$-relations \eqref{eq::so3_relations} and $[J_i, J'_k]=0$ for every $i,k$. In particular, we obtain that $\k:=\g_2^{\theta} = \so_3 \oplus \so_3$. In these generators, we have
		\begin{equation}
			\label{eq::g2_b_gens}
			b_{\alpha} = J_0' - 3J_0, \qquad b_{\beta} = J_2' + J_2.
		\end{equation}
		
		The next goal is to describe the vector representation $\C^7$ in terms of $\k$.
		\begin{prop}
			Let
			\begin{gather*}
				Q_{-2,0}  = -i w_1 -w_2 + i w_1^* + w_2^*,\quad
				Q_{0,0} = -w_0 + w_0^*, \quad
				Q_{2,0} = iw_1 - w_2 - i w_1^* + w_2^*, \\
				Q_{-1,-1} = iw_1 + w_2 + i w_1^* + w_2^*, \quad
				Q_{-1,1} = iw_0 + i w_0^* + u_0, \\
				Q_{1,-1} = -iw_0 -i w_0^* + u_0, \quad\text{and} \quad
				Q_{1,1} = -iw_1 + w_2 - iw_1^* + w_2^*.
			\end{gather*}
			Then there are isomorphisms
			\[
			\mathrm{span}(Q_{-2,0},Q_{0,0},Q_{2,0}) \cong V(2) \boxtimes V(0), \qquad \mathrm{span}(Q_{-1,-1},Q_{-1,1},Q_{1,-1},Q_{1,1}) \cong V(1) \boxtimes V(1)
			\]
			of $\so_3 \oplus \so_3$-representations with $Q_{i,j}$ a basis vector of weight $(i,j)$ with respect to $J_0,J'_0$. 
			\begin{proof}
				Direct calculations.
			\end{proof}
		\end{prop}
		
		Therefore, we only need to check the braid relations in the representations $V(1) \boxtimes \C$ and $\C \boxtimes V(1)$. 
		
		\subsubsection{Braid relation in special cases} 
		
		In basis of fundamental weights, the reflection operators have the form
		\[
		s_{\alpha} = \begin{bmatrix} -1 & 0 \\ 1 & 1 \end{bmatrix}, \qquad s_{\beta} = \begin{bmatrix} 1 & 3 \\ 0 & -1  \end{bmatrix},
		\]
		Therefore, the braid relation is 
		\begin{align*}
			R_{\beta}(\lambda_2)R_{\alpha}(\lambda_1 + 3\lambda_2) R_{\beta}(\lambda_1 + 2\lambda_2)R_{\alpha}(2\lambda_1 +3\lambda_2)R_{\beta} (\lambda_1 + \lambda_2)R_{\alpha} (\lambda_1) = \\
			= R_{\alpha}(\lambda_1) R_{\beta}(\lambda_1 + \lambda_2)R_{\alpha} (2\lambda_1 + 3\lambda_2 )R_{\beta} (\lambda_1 + 2\lambda_2) R_{\alpha}(\lambda_1 + 3\lambda_2) R_{\beta} (\lambda_2)
		\end{align*}
		
		We verify it only for $\C \boxtimes V(1)$, the other case is similar. By \eqref{eq::g2_b_gens}, we have $b_{\alpha} = J_0'$ and $b_{\beta} = J_2'$. According to \eqref{eq::so3_cong_sl2}, let us choose a basis in $V(1)$ such that
		\[
		J_0' = \begin{bmatrix}
			-i/2 & 0 \\ 0 & i/2
		\end{bmatrix}, \qquad J_2' = \begin{bmatrix}
			0 & -i/2 \\ -i/2 & 0 
		\end{bmatrix}. 
		\]
		Then we proceed as in \cref{subsect:type_A} for both dynamical braid relation of \cref{thm:boundary_braid_relations} and non-dynamical one \cref{prop:boundary_nondyn_braid_rel}.
		
		\subsubsection{Proof of \cref{lm:sine_braid_relation}}
		Observe that the operator $e^{-\frac{4 \pi J_0}{2}}$ acts by $(-1)^l$ on the irreducible representation $V(k) \boxtimes V(l)$ of $\k = \so_3 \oplus \so_3$. Therefore, in such a representation, we have $e^{\frac{-3\pi J_0}{2}} = (-1)^l e^{\frac{\pi J_0}{2}}$, so that 
		\[
		\exp \left( \frac{\pi b_{\alpha}}{2} \right)= (-1)^l \exp \left( \frac{\pi J_0'}{2} + \frac{\pi J_0}{2} \right), \qquad \exp \left( \frac{\pi b_{\beta}}{2} \right) = \exp \left( \frac{\pi J_2'}{2} + \frac{\pi J_2}{2} \right).
		\]
		Denote by $b_{\alpha}^{\so_3}, b_{\beta}^{\so_3}$ the type $A_2$ generators of \cref{subsect:type_A} and by $\Delta$ the coproduct on $\so_3$. Then we have 
		\[
		S_{\alpha}(\mu) = (-1)^l \sin \left(\pi \frac{\mu - 1 - \Delta b_{\alpha}^{\so_3}}{2}\right), \qquad S_{\beta}(\mu) = \sin \left(\pi \frac{\mu - 1 - \Delta b_{\beta}^{\so_3}}{2}\right).
		\]
		Therefore, it is enough to prove the dynamical braid relation for $\so_3$ acting via the coproduct, and it follows by repeatedly applying the type $A_2$ dynamical braid relation \eqref{eq::appendix_unshifted_braid_relation_typeA}:
		\begin{align*}
			&\underbrace{S_{\beta}(\lambda_2)S_{\alpha}(\lambda_1 + 3\lambda_2) S_{\beta}(\lambda_1 + 2\lambda_2)}S_{\alpha}(2\lambda_1 +3\lambda_2)S_{\beta} (\lambda_1 + \lambda_2)S_{\alpha} (\lambda_1) = \\
			=&S_{\alpha}(\lambda_1 + 2\lambda_2) S_{\beta}(\lambda_1 + 3\lambda_2) S_{\alpha }(\lambda_2) S_{\alpha}(2\lambda_1 +3\lambda_2)S_{\beta} (\lambda_1 + \lambda_2)S_{\alpha} (\lambda_1) = \\
			=&S_{\alpha}(\lambda_1 + 2\lambda_2) S_{\beta}(\lambda_1 + 3\lambda_2) S_{\alpha}(2\lambda_1 +3\lambda_2) \underbrace{S_{\alpha}(\lambda_2)  S_{\beta} (\lambda_1 + \lambda_2)S_{\alpha} (\lambda_1)} = \\
			=& S_{\alpha}(\lambda_1 + 2\lambda_2) \underbrace{S_{\beta}(\lambda_1 + 3\lambda_2) S_{\alpha }(2\lambda_1+3\lambda_2) S_{\beta} (\lambda_1)} S_{\alpha} (\lambda_1 + \lambda_2) S_{\beta}(\lambda_2) = \\
			=& S_{\alpha}(\lambda_1) \underbrace{S_{\alpha}(\lambda_1 + 2\lambda_2) S_{\beta}(2\lambda_1 + 3\lambda_2) S_{\alpha} (\lambda_1 + \lambda_2)} S_{\alpha}(\lambda_1 + 3\lambda_2) S_{\beta}(\lambda_2) = \\
			=& S_{\alpha}(\lambda_1) S_{\beta}(\lambda_1 + \lambda_2)S_{\alpha} (2\lambda_1 + 3\lambda_2 )S_{\beta} (\lambda_1 + 2\lambda_2) S_{\alpha}(\lambda_1 + 3\lambda_2) S_{\beta} (\lambda_2),
		\end{align*}
		where the underbraces indicate to which part we apply the braid relation.

 	\section{Dynamical \texorpdfstring{$K$}{K}-matrix}
	\label{sect::dynamical_k_matrix}
	
	We provide some details concerning how to construct the dynamical $K$-matrix from the boundary fusion operators and the $q=1$ analog of the quantum $K$-matrix. This is a specialization of the approach to quantum dynamical $K$-matrices in \cite[Section 6.4.3]{DeClercq-thesis}. 
 
    \subsection{Motivation from dynamical $R$-matrix}
    
    The usual dynamical fusion operators define a \emph{dynamical $R$-matrix} that solves the \emph{dynamical Yang-Baxter equation} \cite{EtingofVarchenkoExchange}. The corresponding dynamical braiding given by
	\[
	\check{\cR}_{V,W}(\lambda):= J_{WV}(\lambda)^{-1} \circ P \circ J_{VW}(\lambda),
	\]
	where $P:V\otimes W \rightarrow W\otimes V$ is the permutation operator, satisfies the dynamical braid relation
	\begin{gather*}
		\left(\id_W\otimes\check{\cR}_{U,V}(\lambda - h|_W) \right)\circ\left(\check{\cR}_{U,W}(\lambda)\otimes \id_V\right)\circ \left(\id_U\otimes \check{\cR}_{V,W}(\lambda - h|_U)\right)\\
		=\\
		\left(\check{\cR}_{V,W}(\lambda)\otimes \id_U \right)\circ\left(\id_V\otimes\check{\cR}_{U,W}(\lambda - h|_V)\right) \circ \left(\check{\cR}_{U,V}(\lambda)\otimes \id_W\right).
	\end{gather*}
	In this section, we study a boundary analog of this construction giving rise to a \emph{dynamical $K$-matrix}.
	
	
	\subsection{Background on quantum \texorpdfstring{$K$}{K}-matrix}
	In the setting of quantum symmetric pairs, there is a universal $R$-matrix for $U_q(\mathfrak{g})$ and a $1$-tensor universal $K$-matrix for $U_q^{\iota}(\k)\subset U_q(\mathfrak{g})$. These operators \footnote{The operators are modification of the quasi-$R$-matrix\cite{Lusztig} and quasi-$K$-matrix\cite{Bao-Wang-new-approach, BalagovicKolb-qKmatrix}.} are such that\footnote{Ignoring technicalities about completions.}
	\[
	\mathcal{R}_q\in U_q(\g)^{\otimes 2} \quad \text{and} \quad \mathcal{K}_q\in U_q(\mathfrak{g}).
	\]
	They also satisfy the $2$-tensor \emph{reflection equation} \cite[Theorem 3.7]{Kolb-Kmatrixbraided}
	\[
	(\mathcal{K}_q\otimes 1)\circ P\circ \mathcal{R}_q\circ P\circ (1\otimes \mathcal{K}_q)\circ \mathcal{R}_q = P\circ \mathcal{R}_q\circ P \circ (1\otimes \mathcal{K}_q)\circ \mathcal{R}_q \circ (\mathcal{K}_q\otimes 1),
	\]
	where $P$ is the permutation operator. 
	
	\begin{notation}
		If we write $\check{\cR}_q:=P\circ \mathcal{R}_q$, then the $2$-tensor reflection equation becomes
		\[
		(\mathcal{K}_q\otimes 1)\circ \check{\cR}_q\circ (\mathcal{K}_q\otimes 1)\circ \check{\cR}_q = \check{\cR}_q\circ (\mathcal{K}_q\otimes 1)\circ \check{\cR}_q \circ (\mathcal{K}_q\otimes 1).
		\]
	\end{notation}
	
	\begin{remark}
		As explained in \cite[Theorem 3.7]{Kolb-Kmatrixbraided}, the $2$-tensor reflection equation follows from equating two different ways of writing $\Delta(\mathcal{K}_q)$, precisely \cite[equation 3.24]{Kolb-Kmatrixbraided} and \cite[equation 3.25]{Kolb-Kmatrixbraided}.    
	\end{remark}

	There is also a $2$-tensor universal $K$-matrix \cite[equation 1.2]{Kolb-Kmatrixbraided}
	\[
	\kappa_q \in U_q^{\iota}(\k)\otimes U_q(\mathfrak{g}) \quad \text{defined by} \quad \kappa_q = \check{\cR}_q \circ (\mathcal{K}_q\otimes 1) \circ \check{\cR}_q,
	\]
	which satisfies a $3$-tensor reflection equation
	\[
	(\kappa_q\otimes 1)\circ (1\otimes \check{\cR}_q)\circ (\kappa_q\otimes 1)\circ (1\otimes \check{\cR}_q) = (1\otimes \check{\cR}_q) \circ (\kappa_q\otimes 1)\circ (1\otimes \check{\cR}_q)\circ (\kappa_q\otimes 1). 
	\]
	The $1$-tensor universal $K$-matrix is obtained from the two tensor $K$-matrix by applying the counit to the $U_q^{\iota}(\k)$ factor, i.e. $(\epsilon\otimes \id) (\kappa_q) = \mathcal{K}_q\in U_q(\mathfrak{g})$ \cite[Section 2.3]{Kolb-Kmatrixbraided}.
	
	Recall that $-w_0$ permutes the simple roots, inducing an automorphism of the Dynkin diagram. Let $\sigma$ be the order two automorphism\footnote{In general, $\sigma = \theta\theta_0$, where $\theta$ is the diagram automorphism in the data of the Satake diagram and $\theta_0$ is the automorphism of the Dynkin diagram of $\mathfrak{g}$ induced by $-w_0$, in this paper we concentrate on the split case -- so $\theta= \id$ and $\sigma= \theta_0$.} of $U_q(\mathfrak{g})$ such that $\sigma(E_{\alpha}) = E_{-w_0(\alpha)}$ and $\sigma(F_{\alpha}) = F_{-w_0(\alpha)}$ for $\alpha\in R$. The $1$-tensor universal $K$-matrix satisfies the following \cite[equation 1.1]{Kolb-Kmatrixbraided}
	\[
	b\mathcal{K}_q = \mathcal{K}_q\sigma(b), \quad \text{for all} \quad b\in U_q^{\iota}(\k).
	\]
	The $\sigma$-twisted $1$-tensor universal $K$-matrix $\mathcal{K}_q^{\sigma}:=\mathcal{K}_q\sigma$ \cite[equation 3.22]{Kolb-Kmatrixbraided} is such that 
	\[
	b\mathcal{K}_q^{\sigma} = \mathcal{K}_q^{\sigma}b, \quad \text{for all} \quad b\in U_q^{\iota}(\k)
	\]
	\cite[Theorem 3.7]{Kolb-Kmatrixbraided}. Moreover, $\mathcal{K}_q^{\sigma}$ still satisfies the conditions ensuring that representations of $U_q^{\iota}(\k)$ becomes a braided module category over $\sigma$ equivariant $U_q(\mathfrak{g})$ representations \cite[Corollary 3.18]{Kolb-Kmatrixbraided}.
	
	\begin{notation}\label{N:sigma-twist-convention}
		In what follows, we will only consider $\sigma$-twisted $K$-matrices, so will write $\mathcal{K}_q$ in place of $\mathcal{K}_q^{\sigma}$.
	\end{notation}
	
	\subsection{Definition of \texorpdfstring{$q=1$}{q=1} \texorpdfstring{$K$}{K}-matrix}
	
	
	
	
	
	Let $w_0$ denote the longest element in $W$. The operator $-w_0$ permutes simple roots, inducing an automorphism of the Dynkin diagram\footnote{This automorphism may be $\id$, which is the case if $-1\in W\subset GL(\h^*)$.} which we denote by $\sigma$. The Serre-presentation of $\mathfrak{g}$ implies that $\sigma$ determines an associative algebra automorphism of $\U(\mathfrak{g})$, denoted $\mathrm{Ad}_{\sigma}$ such that $\mathrm{Ad}_{\sigma}(x_{\alpha_i}) = x_{\alpha_{\sigma(i)}}$, for $x\in \{f,h,e\}$ and $\alpha_i\in R$. 
	
	\begin{remark}\label{R:w_0-sig=theta}
		As in \cref{remark:simply_connected_group}, let $G$ be the simply connected group with the Lie algebra $\g$. For $g\in G$, we write $\mathrm{Ad}_{g}$ for the automorphism of $\mathfrak{g}$ defined by $x\mapsto gxg^{-1}$. The operators $\widetilde{w}$ can be viewed as elements in $G$ and thus act by conjugation on $\mathfrak{g}$. In particular, one has
		\[
		\widetilde{w_0}e_{\alpha_i}\widetilde{w_0}^{-1} = -f_{\theta(\alpha_i)} \quad \text{and} \quad \widetilde{w_0}f_{\alpha_i}\widetilde{w_0}^{-1} = -e_{\theta(\alpha_i)},
		\]
		c.f. \cite[19.4(a)]{KL-tensor-iii}. 
		It follows that $\mathrm{Ad}_{\widetilde{w_0}}\circ \mathrm{Ad}_{\sigma} = \theta$.
	\end{remark}
	
	\begin{remark}
		Paralleling the relation between $\tilde{s}$ and $\mathrm{Ad}_{\tilde{s}}$, we would like to define an operator $\sigma$, which acts on a finite dimensional $\mathfrak{g}$-module $V$ and has the property: $\mathrm{Ad}_{\sigma}(x)\cdot (\sigma \cdot v) = \sigma\cdot (x\cdot v)$ for all $x\in \mathfrak{g}$ and $v\in V$. However, there is no way to make $\sigma\ne \id$ act in an arbitrary finite dimensional $\mathfrak{g}$-module. In order to fix this, we will work with a slightly different category.
	\end{remark} 
	
	\begin{notation}
		Suppose $\sigma \ne \id$. Consider the algebra $\Usig:=\U(\mathfrak{g})\rtimes \C[\sigma]$,  where $\sigma\cdot u \cdot \sigma^{-1} = \mathrm{Ad}_{\sigma}(u)$, for all $u\in \U(\mathfrak{g})$, and $\sigma^2=\id$. To simplify statements in what follows, we will also write $\Usig = \U(\g)$ when $\sigma = \id$.
	\end{notation}
	
	Since we work over a field where $2\ne 0$, finite dimensional representations of $\Usig$ are completely reducible. If $\sigma \ne \id$, then the irreducibles, up to isomorphism, are: 
	\[
	V(\lambda), \quad V(\lambda)\otimes \mathrm{sgn}, \quad \text{and} \quad V(\mu)\oplus V(\sigma(\mu)),
	\]
	where $\lambda, \mu\in X_+(\mathfrak{g})$ such that $\sigma(\lambda) = \lambda$ and $\sigma(\mu) \ne \mu$ \cite[Lemma 3.4]{Kolb-Kmatrixbraided}. Since $\Usig$ is a Hopf algebra, with $\Delta(\sigma) = \sigma\otimes \sigma$, $S(\sigma) = \sigma^{-1}$, and $\epsilon(\sigma) = 1$, the category of finite dimensional $\Usig$-modules is a symmetric tensor category.
	
	\begin{remark}\label{R:sig-aut-vs-op}
		Now, for a finite dimensional $\Usig$-module $V$, we have an operator $\sigma:V\rightarrow V$ such that $\sigma \circ u \circ \sigma^{-1} = \mathrm{Ad}_{\sigma}(u)$ in $\End(V)$. 
	\end{remark}
	
	\begin{remark}
		The universal $1$-tensor $K$-matrix for a split\footnote{So in the notation of \cite{Kolb-Kmatrixbraided}: $X= \emptyset$ and $T_{w_X}^{-1} = 1$.} quantum symmetric pair is defined in \cite[equation 3.22]{Kolb-Kmatrixbraided} as $\mathcal{K}_q:=\mathfrak{X}\xi T_{w_0}^{-1}\sigma$, see Notation \ref{N:sigma-twist-convention}. The corresponding $2$-tensor $K$-matrix is $\kappa_q=\check{\cR}_q(\mathcal{K}_q\otimes 1)\check{\cR}_q$. It follows from \cite{BalagovicKolb-qKmatrix} that when $q=1$ the operators $\mathfrak{X}$ and $\xi$ act as the identity on finite dimensional representations of $\mathfrak{g}$. 
	\end{remark}
	
	Thus, we arrive at the following definition.
	
	\begin{defn}
		The $q=1$ analog of the $1$-tensor $K$-matrix is defined to be $\mathcal{K}:=\widetilde{w_0}\circ \sigma$, while the $q=1$ analog of the $2$-tensor $K$-matrix is defined as $\kappa:=1\otimes (\widetilde{w_0}\circ \sigma)$.
	\end{defn}
	
	\begin{remark}
		When $q=1$, we have $\mathcal{R}_{q=1}=1\otimes 1$ and $\check{\cR}_{q=1}= P$. Therefore both sides of the $2$-tensor reflection equation become $\mathcal{K}_{q=1}\otimes \mathcal{K}_{q=1}$. Since $\Delta(\widetilde{w_0}) = \widetilde{w_0}\otimes \widetilde{w_0}$ and $\Delta(\sigma) :=\sigma\otimes \sigma$, we have $\Delta(\mathcal{K}) = \mathcal{K}\otimes \mathcal{K}$. This is the $q=1$ analog of \cite[Property (K3')]{BalagovicKolb-qKmatrix}.
	\end{remark}
	
	The following is the $q=1$ analog of \cite[Property (K1')]{BalagovicKolb-qKmatrix}.
	
	\begin{lm}\label{L:Kb=bK}
		If $b\in \U(\k)$, then $b\mathcal{K} = \mathcal{K} b$ in every finite dimensional $\Usig$-module. 
	\end{lm}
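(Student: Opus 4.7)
The plan is to unfold $\mathcal{K} = \widetilde{w_0}\circ \sigma$ and push $b$ through each factor using the appropriate adjoint action, then invoke $b \in \k = \g^\theta$.

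First, I would reduce to the case $b \in \k$ (rather than $\U(\k)$), since both $\mathrm{Ad}_{\widetilde{w_0}}$ and $\mathrm{Ad}_\sigma$ are algebra automorphisms, so if the identity holds for a generating set of $\U(\k)$, it holds for all of $\U(\k)$. Then, working with $V$ a finite-dimensional $\Usig$-module and $v \in V$, I would use the fact (from Remark \ref{remark:simply_connected_group}) that $\widetilde{w_0}$ acts by an element of $G$, hence
\[
\widetilde{w_0}\cdot (x\cdot v) = \mathrm{Ad}_{\widetilde{w_0}}(x)\cdot (\widetilde{w_0}\cdot v) \qquad \text{for all } x\in \g,
\]
together with the analogous identity $\sigma\cdot (x\cdot v) = \mathrm{Ad}_\sigma(x)\cdot (\sigma\cdot v)$ coming from Remark \ref{R:sig-aut-vs-op}.

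Combining these for $b\in \k$, I compute
\[
\mathcal{K}\cdot b \cdot v = \widetilde{w_0}\cdot \sigma \cdot b\cdot v = \widetilde{w_0}\cdot \mathrm{Ad}_\sigma(b)\cdot \sigma\cdot v = \mathrm{Ad}_{\widetilde{w_0}}(\mathrm{Ad}_\sigma(b))\cdot \widetilde{w_0}\cdot \sigma\cdot v.
\]
By Remark \ref{R:w_0-sig=theta}, $\mathrm{Ad}_{\widetilde{w_0}}\circ \mathrm{Ad}_\sigma = \theta$, and since $b\in \k = \g^\theta$ we have $\theta(b) = b$. Therefore the right-hand side equals $b \cdot \mathcal{K}\cdot v$, as required.

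I do not expect a serious obstacle here. The only subtlety is that $\widetilde{w_0}$ and $\sigma$ do not individually lie in $\U(\g)$, so the manipulations must be carried out at the level of the action on a module rather than as an identity in an algebra. The key input is the identification $\mathrm{Ad}_{\widetilde{w_0}}\circ \mathrm{Ad}_\sigma = \theta$ from Remark \ref{R:w_0-sig=theta}, which is precisely what forces $\widetilde{w_0}\sigma$ to intertwine the $\k$-action (and not merely some twist of it).
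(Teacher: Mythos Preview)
Your proof is correct and takes essentially the same approach as the paper: both compute $\mathcal{K}u\mathcal{K}^{-1} = \mathrm{Ad}_{\widetilde{w_0}}\circ \mathrm{Ad}_\sigma(u) = \theta(u)$ via Remarks \ref{R:w_0-sig=theta} and \ref{R:sig-aut-vs-op}, then conclude from $\theta(b)=b$ for $b\in\k$. Your version is slightly more explicit in working at the level of module elements and in reducing first to $b\in\k$, but the argument is the same.
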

	\begin{proof}
		Combining Remark \ref{R:w_0-sig=theta} and Remark \ref{R:sig-aut-vs-op}, we have 
		\[
		\mathcal{K}u\mathcal{K}^{-1} = \mathrm{Ad}_{\widetilde{w_0}}\circ \mathrm{Ad}_{\sigma}(u) = \theta(u),
		\]
		for all $u\in \U(\mathfrak{g})$. The claim follows from noting $\theta(b) = b$, for all $b\in \U(\k)$. 
	\end{proof}
	
	The \emph{3-tensor} reflection equation is satisfied by the $q=1$ analog of the $2$-tensor $K$-matrix.
	
	\begin{prop}\label{P:q=1-reflection}
		Let $X$ be an integrable $\k$-module and let $V$ and $W$ be finite dimensional $\Usig$-modules. Then we have
		\[
		(\kappa\otimes \id_W)(\id_X\otimes P)(\kappa\otimes \id_W)(\id_X\otimes P) = (\id_X\otimes P)(\kappa\otimes \id_V)(\id_X\otimes P)(\kappa\otimes \id_W).
		\]
	\end{prop}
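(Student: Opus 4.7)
The plan is to verify the equation by direct computation on pure tensors $x \otimes v \otimes w \in X \otimes V \otimes W$. The key simplification is that $\kappa = 1 \otimes \mathcal{K}$, so the operator $\kappa \otimes \id$ merely applies $\mathcal{K}$ to whichever module currently sits in the second tensor position and acts as the identity on the other two positions, regardless of the bookkeeping label attached to the third factor.

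The first step I would take is to establish the conjugation identity
\[
(\id_X \otimes P)\,(\kappa \otimes \id)\,(\id_X \otimes P) \;=\; \id_X \otimes \id \otimes \mathcal{K},
\]
which follows immediately: the rightmost swap $P$ moves the relevant module from position $3$ into position $2$ where $\mathcal{K}$ acts, and the outer swap returns it to position $3$ carrying $\mathcal{K}$ along. Substituting this identity into both sides of the $3$-tensor reflection equation then collapses each to the same operator $\id_X \otimes \mathcal{K} \otimes \mathcal{K}$, so they agree. On the left-hand side the outer $\kappa\otimes \id_W$ contributes $\mathcal{K}$ in position $2$ and the conjugated middle piece contributes $\mathcal{K}$ in position $3$; on the right-hand side the innermost $\kappa \otimes \id_W$ contributes $\mathcal{K}$ in position $2$ while the conjugated outer pair contributes $\mathcal{K}$ in position $3$.

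Conceptually, this is the manifestation of the group-like identity $\Delta(\mathcal{K}) = \mathcal{K} \otimes \mathcal{K}$ recorded in the preceding remark: in the $q=1$ limit the braiding $\check{\cR}_{q=1}$ becomes the bare flip $P$, and the $q$-deformed $3$-tensor reflection equation degenerates to the trivial statement that $\mathcal{K}$ is group-like. I do not expect any real obstacle beyond keeping track of which tensor factor contains which module after each swap, which is the same bookkeeping already invoked for the $2$-tensor version.
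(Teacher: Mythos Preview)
Your argument is correct and is exactly the paper's approach: the paper's proof is the one-line observation that since $\kappa = 1\otimes \mathcal{K}$, both sides of the reflection equation are equal to $\id_X\otimes \mathcal{K}\otimes \mathcal{K}$. You have simply unpacked this observation via the conjugation identity $(\id_X\otimes P)(\kappa\otimes \id)(\id_X\otimes P)=\id_X\otimes\id\otimes\mathcal{K}$, which is precisely what the paper means by ``it is easy to see.''
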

	\begin{proof}
		Since $\kappa = 1\otimes \mathcal{K}$, it is easy to see that both sides of the reflection equation are equal to $\id_X\otimes \mathcal{K}\otimes \mathcal{K}$. 
	\end{proof}
	
	\subsection{Construction of Dynamical \texorpdfstring{$K$}{K}-matrix}

	\begin{defn}
		Define 
		\[
		\check{\cR}_{V,W}(\lambda):=J_{W,V}(\lambda)^{-1}\circ P\circ J_{V,W}(\lambda):V\otimes W\rightarrow W\otimes V
		\]
		and 
		\[
		\kappa_{X,V}(\lambda) := G_{X, V}(\lambda)^{-1}\circ \kappa \circ G_{X, V}(\lambda):X\otimes V\rightarrow X\otimes V.
		\]
	\end{defn}

	\begin{example}
		\[
		\kappa_{\C_x, V}(\lambda) = \begin{bmatrix} 1 & \frac{x}{\lambda+1}\\ 
			0 & 1
		\end{bmatrix}\begin{bmatrix} 0 & -1\\ 
			1 & 0
		\end{bmatrix}\begin{bmatrix} 1 & \frac{-x}{\lambda+1}\\ 
			0 & 1
		\end{bmatrix} = \begin{bmatrix} \frac{x}{\lambda+1} & -1 - \frac{x^2}{(\lambda+1)^2}\\ 
			1 & \frac{-x}{\lambda+1}
		\end{bmatrix}
		\]
	\end{example}
	
	\begin{thm}\label{T:dynamical-reflection-eqn}
		The following \emph{dynamical reflection equation} holds for all integrable $\k$-modules $X$ and finite dimensional $\Usig$-modules $V$ and $W$.
		\[
		(\kappa_{X,V}(\lambda- h|_W)\otimes \id_{W})\circ (\id_X\otimes \check{\cR}_{W,V}(\lambda))\circ (\kappa_{X,W}(\lambda - h|_V)\otimes \id_V)\circ (\id_X\otimes \check{\cR}_{V, W}(\lambda))
		\]
		\[
		=
		\] 
		\[
		(\id_X\otimes \check{\cR}_{W,V}(\lambda))\circ (\kappa_{X,W}(\lambda - h|_V)\otimes \id_V)\circ (\id_X\otimes \check{\cR}_{V,W}(\lambda))\circ (\kappa_{X, V}(\lambda - h|_W)\otimes \id_W)
		\]
	\end{thm}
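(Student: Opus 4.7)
The plan is to show that both sides of the dynamical reflection equation coincide with the common expression
\[
\Omega(\lambda) := (\id_X \otimes J_{V,W}(\lambda))^{-1} \circ G_{X, V \otimes W}(\lambda)^{-1} \circ (\id_X \otimes \mathcal{K}_V \otimes \mathcal{K}_W) \circ G_{X, V \otimes W}(\lambda) \circ (\id_X \otimes J_{V,W}(\lambda)),
\]
which is nothing but $(\id_X \otimes J_{V,W}(\lambda))^{-1} \circ \kappa_{X, V \otimes W}(\lambda) \circ (\id_X \otimes J_{V,W}(\lambda))$ once one uses that $\mathcal{K}_{V \otimes W} = \mathcal{K}_V \otimes \mathcal{K}_W$ by group-likeness of $\mathcal{K}$. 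Proposition \ref{P:q=1-reflection} will not be invoked in its full 3-tensor form; its content is absorbed here into the trivial fact that $\mathcal{K}_V$ and $\mathcal{K}_W$ act on disjoint tensor factors and hence commute.

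To reduce the LHS to $\Omega(\lambda)$, I would first substitute the definitions $\kappa_{X, V}(\lambda) = G_{X, V}(\lambda)^{-1} \circ (\id_X \otimes \mathcal{K}_V) \circ G_{X, V}(\lambda)$ and $\check{\cR}_{V, W}(\lambda) = J_{W, V}(\lambda)^{-1} \circ P \circ J_{V, W}(\lambda)$ into each of the four factors. Second, I would use the rearranged boundary dynamical twist equation \eqref{E:re-write-boundary-dyn-twist} to replace each $G_{X, V}(\lambda - h|_W) \otimes \id_W$ by $G_{X \otimes V, W}(\lambda)^{-1} \circ G_{X, V \otimes W}(\lambda) \circ (\id_X \otimes J_{V, W}(\lambda))$, and likewise with $V$ and $W$ interchanged. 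Third, I would use the commutation $[(\id_X \otimes \mathcal{K}_V \otimes \id_W),\, G_{X \otimes V, W}(\lambda)] = 0$ to cancel the pair of $G_{X \otimes V, W}(\lambda)^{\pm 1}$ sandwiching $\id_X \otimes \mathcal{K}_V \otimes \id_W$, together with the analogous commutation for $G_{X \otimes W, V}(\lambda)^{\pm 1}$ around $\id_X \otimes \mathcal{K}_W \otimes \id_V$. Both commutations follow from Lemma \ref{L:Kb=bK}: the universal element $G(\lambda)$ lies in $\U(\k) \otimes \U(\b_+)^{\gen}$, and since its $\U(\k)$-factor acts on $V$ via restriction of the $\g$-action, the fact that $\mathcal{K}_V$ commutes with every $b \in \U(\k)$ in $\End(V)$ produces the desired commutation.

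At this point the $J_{V, W}^{\pm 1}$ and $J_{W, V}^{\pm 1}$ factors produced by the twist step collapse in pairs against the adjacent $J$-factors coming from the $\check{\cR}$'s, leaving only an outer pair $\id_X \otimes J_{V, W}(\lambda)^{\pm 1}$ together with $G_{X, V \otimes W}(\lambda)^{\pm 1}$, $G_{X, W \otimes V}(\lambda)^{\pm 1}$, two factors of $\id_X \otimes P$, and the operators $\mathcal{K}_V$ and $\mathcal{K}_W$. Applying Corollary \ref{L:P-swap-G-identity} in the form $G_{X, W \otimes V}(\lambda) = (\id_X \otimes P) \circ G_{X, V \otimes W}(\lambda) \circ (\id_X \otimes P)$, together with the elementary identities $(\id_X \otimes P) \circ (\id_X \otimes \mathcal{K}_W \otimes \id_V) \circ (\id_X \otimes P) = \id_X \otimes \id_V \otimes \mathcal{K}_W$ and the commutativity of $\id_X \otimes \mathcal{K}_V \otimes \id_W$ with $\id_X \otimes \id_V \otimes \mathcal{K}_W$, brings the LHS to $\Omega(\lambda)$. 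The exact same sequence of manipulations applied to the RHS yields the same final expression.

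The main obstacle is the combinatorial bookkeeping: one must carefully track which tensor slot each operator acts on, apply \eqref{E:re-write-boundary-dyn-twist} in the correct orientation at each occurrence (once with $V$ between $X$ and $W$, once with $V$ and $W$ interchanged), and keep straight the direction of $P$ under the two different tensor orderings $X \otimes V \otimes W$ and $X \otimes W \otimes V$. No new algebraic identities are required beyond Lemma \ref{L:Kb=bK}, Corollary \ref{L:P-swap-G-identity}, and \eqref{E:re-write-boundary-dyn-twist}.
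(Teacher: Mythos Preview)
Your proposal is correct and follows essentially the same strategy as the paper's proof: substitute the definitions, use the rewritten boundary dynamical twist equation \eqref{E:re-write-boundary-dyn-twist} to eliminate the shifted arguments, apply Corollary \ref{L:P-swap-G-identity} to conjugate $G_{X,W\otimes V}$ into $G_{X,V\otimes W}$, and use Lemma \ref{L:Kb=bK} to cancel the inner $G_{X\otimes V,W}^{\pm 1}$ and $G_{X\otimes W,V}^{\pm 1}$ pairs around $\mathcal{K}$. The only cosmetic difference is organizational: the paper keeps the full product intact and reduces the equation to Proposition \ref{P:q=1-reflection}, whereas you first simplify each $\kappa_{X,V}(\lambda-h|_W)\otimes\id_W$ separately and then show both sides equal the common expression $\Omega(\lambda)$; also, the paper justifies the key commutation via the naturality statement of Proposition \ref{P:G-is-natural} rather than directly via the universal form of $G(\lambda)$, but these are equivalent.
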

	\begin{proof}
		To simplify notation, we will leave $\lambda$ dependence implicit, writing $G_{X\otimes V, W}$ instead of $G_{X\otimes V, W}(\lambda)$ etc. By repeatedly using equation \eqref{E:re-write-boundary-dyn-twist} to replace all terms which depend on $\lambda-h|_V$ or $\lambda - h|_W$, one finds that the dynamical reflection equation is equivalent to the equality
		\[
		G_{X,V\otimes W}^{-1}G_{X\otimes V,W}(\kappa\otimes \id_W)G_{X\otimes V,W}^{-1}G_{X,V\otimes W}(\id_X\otimes P)G_{X,W\otimes V}^{-1}G_{X\otimes W, V}(\kappa\otimes \id_W)G_{X\otimes W, V}^{-1}G_{X, W\otimes V}(\id_X\otimes P)
		\]
		\[
		=
		\]
		\[
		(\id_X\otimes P)G_{X, W\otimes V}^{-1}G_{X\otimes W, V}(\kappa\otimes \id_V)G_{X\otimes W, V}^{-1}G_{X, W\otimes V}(\id_X\otimes P)G_{X, V\otimes W}^{-1}G_{X\otimes V, W}(\kappa\otimes \id_W)G_{X\otimes V, W}^{-1}G_{X, V\otimes W}.
		\]
		Using naturality of $G$, one can show the previous equality is equivalent to 
		\[
		G_{X\otimes V,W}(\kappa\otimes \id_W)G_{X\otimes V,W}^{-1}(\id_X\otimes P)G_{X\otimes W, V}(\kappa\otimes \id_W)G_{X\otimes W, V}^{-1}(\id_X\otimes P)
		\]
		\[
		=
		\]
		\[
		(\id_X\otimes P)G_{X\otimes W, V}(\kappa\otimes \id_V)G_{X\otimes W, V}^{-1}(\id_X\otimes P)G_{X\otimes V, W}(\kappa\otimes \id_W)G_{X\otimes V, W}^{-1},
		\]
		which by Lemma \ref{L:Kb=bK} and naturality of $G$ is equivalent to Proposition \ref{P:q=1-reflection}.
	\end{proof}
	
  
		\printbibliography

@article {Watanabe,
    AUTHOR = {Watanabe, Hideya},
     TITLE = {Crystal bases of modified {$\imath$} quantum groups of certain
              quasi-split types},
   JOURNAL = {Algebr. Represent. Theory},
  FJOURNAL = {Algebras and Representation Theory},
    VOLUME = {27},
      YEAR = {2024},
    NUMBER = {1},
     PAGES = {1--76},
      ISSN = {1386-923X,1572-9079},
   MRCLASS = {17B37 (17B10)},
  MRNUMBER = {4712412},
MRREVIEWER = {Xiaoqing\ Yue},
       DOI = {10.1007/s10468-023-10207-z},
       URL = {https://doi.org/10.1007/s10468-023-10207-z},
}

@article {HKRW,
    AUTHOR = {Halacheva, Iva and Kamnitzer, Joel and Rybnikov, Leonid and
              Weekes, Alex},
     TITLE = {Crystals and monodromy of {B}ethe vectors},
   JOURNAL = {Duke Math. J.},
  FJOURNAL = {Duke Mathematical Journal},
    VOLUME = {169},
      YEAR = {2020},
    NUMBER = {12},
     PAGES = {2337--2419},
      ISSN = {0012-7094,1547-7398},
   MRCLASS = {17B10 (17B80)},
  MRNUMBER = {4139044},
MRREVIEWER = {Thomas\ Pietraho},
       DOI = {10.1215/00127094-2020-0003},
       URL = {https://doi.org/10.1215/00127094-2020-0003},
}

@article {FFR,
    AUTHOR = {Feigin, Boris and Frenkel, Edward and Rybnikov, Leonid},
     TITLE = {Opers with irregular singularity and spectra of the shift of
              argument subalgebra},
   JOURNAL = {Duke Math. J.},
  FJOURNAL = {Duke Mathematical Journal},
    VOLUME = {155},
      YEAR = {2010},
    NUMBER = {2},
     PAGES = {337--363},
      ISSN = {0012-7094,1547-7398},
   MRCLASS = {22E46 (17B35 34M40)},
  MRNUMBER = {2736168},
MRREVIEWER = {Pierluigi\ M\"oseneder Frajria},
       DOI = {10.1215/00127094-2010-057},
       URL = {https://doi.org/10.1215/00127094-2010-057},
}

@incollection {MTV,
    AUTHOR = {Mukhin, E. and Tarasov, V. and Varchenko, A.},
     TITLE = {A generalization of the {C}apelli identity},
 BOOKTITLE = {Algebra, arithmetic, and geometry: in honor of {Y}u. {I}.
              {M}anin. {V}ol. {II}},
    SERIES = {Progr. Math.},
    VOLUME = {270},
     PAGES = {383--398},
 PUBLISHER = {Birkh\"auser Boston, Boston, MA},
      YEAR = {2009},
      ISBN = {978-0-8176-4746-9},
   MRCLASS = {17B35},
  MRNUMBER = {2641196},
MRREVIEWER = {Ian\ M.\ Musson},
       DOI = {10.1007/978-0-8176-4747-6\_12},
       URL = {https://doi.org/10.1007/978-0-8176-4747-6_12},
}

@article {RybnikovShiftOfArgument,
    AUTHOR = {Rybnikov, L. G.},
     TITLE = {The shift of invariants method and the {G}audin model},
   JOURNAL = {Funktsional. Anal. i Prilozhen.},
  FJOURNAL = {Funktsional\cprime ny\u i\ Analiz i ego Prilozheniya},
    VOLUME = {40},
      YEAR = {2006},
    NUMBER = {3},
     PAGES = {30--43, 96},
      ISSN = {0374-1990,2305-2899},
   MRCLASS = {17B35 (17B80)},
  MRNUMBER = {2265683},
MRREVIEWER = {V.\ A.\ Golubeva},
       DOI = {10.1007/s10688-006-0030-3},
       URL = {https://doi.org/10.1007/s10688-006-0030-3},
}

@article {HuangMukhin,
    AUTHOR = {Huang, Chenliang and Mukhin, Evgeny},
     TITLE = {The duality of {$\mathfrak{gl}_{m|n}$} and {$\mathfrak{gl}_k$}
              {G}audin models},
   JOURNAL = {J. Algebra},
  FJOURNAL = {Journal of Algebra},
    VOLUME = {548},
      YEAR = {2020},
     PAGES = {1--24},
      ISSN = {0021-8693,1090-266X},
   MRCLASS = {17B67 (81R12)},
  MRNUMBER = {4043386},
MRREVIEWER = {Rutwig\ Campoamor-Stursberg},
       DOI = {10.1016/j.jalgebra.2019.11.026},
       URL = {https://doi.org/10.1016/j.jalgebra.2019.11.026},
}

@article {VicedoYoung,
    AUTHOR = {Vicedo, Beno\^it and Young, Charles},
     TITLE = {Cyclotomic {G}audin models: construction and {B}ethe ansatz},
   JOURNAL = {Comm. Math. Phys.},
  FJOURNAL = {Communications in Mathematical Physics},
    VOLUME = {343},
      YEAR = {2016},
    NUMBER = {3},
     PAGES = {971--1024},
      ISSN = {0010-3616,1432-0916},
   MRCLASS = {17B80 (17B67 37K30)},
  MRNUMBER = {3488550},
       DOI = {10.1007/s00220-016-2601-3},
       URL = {https://doi.org/10.1007/s00220-016-2601-3},
}

@article {VicedoYoundIrregularSing,
    AUTHOR = {Vicedo, Beno\^it and Young, Charles},
     TITLE = {Cyclotomic {G}audin models with irregular singularities},
   JOURNAL = {J. Geom. Phys.},
  FJOURNAL = {Journal of Geometry and Physics},
    VOLUME = {121},
      YEAR = {2017},
     PAGES = {247--278},
      ISSN = {0393-0440,1879-1662},
   MRCLASS = {17B80 (17B67 37K30 81R10)},
  MRNUMBER = {3705394},
MRREVIEWER = {Ali\ Shojaei-Fard},
       DOI = {10.1016/j.geomphys.2017.07.013},
       URL = {https://doi.org/10.1016/j.geomphys.2017.07.013},
}

@book {GaudinBook,
    AUTHOR = {Gaudin, Michel},
     TITLE = {La fonction d'onde de {B}ethe},
    SERIES = {Collection du Commissariat \`a{} l'\'Energie Atomique: S\'erie
              Scientifique. [Collection of the Atomic Energy Commission:
              Science Series]},
 PUBLISHER = {Masson, Paris},
      YEAR = {1983},
     PAGES = {xvi+331},
      ISBN = {2-225-79607-6},
   MRCLASS = {82-02 (81Hxx 82A68)},
  MRNUMBER = {693905},
MRREVIEWER = {John\ H.\ Lowenstein},
}

@misc{Stokman,
    author = {Stokman, J.},
    howpublished= {Private communication},
}

@book {Bump-Liegps-book,
    AUTHOR = {Bump, Daniel},
     TITLE = {Lie groups},
    SERIES = {Graduate Texts in Mathematics},
    VOLUME = {225},
   EDITION = {Second},
 PUBLISHER = {Springer, New York},
      YEAR = {2013},
     PAGES = {xiv+551},
      ISBN = {978-1-4614-8023-5; 978-1-4614-8024-2},
   MRCLASS = {22-01 (22C05 22E46 22E60)},
  MRNUMBER = {3136522},
MRREVIEWER = {Ben\ Lewis\ Cox},
       DOI = {10.1007/978-1-4614-8024-2},
       URL = {https://doi.org/10.1007/978-1-4614-8024-2},
}

@article {Chuang-Rouquier,
    AUTHOR = {Chuang, Joseph and Rouquier, Raphael},
     TITLE = {Derived equivalences for symmetric groups and {$\mathfrak{sl}_2-$}-categorification},
   JOURNAL = {Ann. of Math. (2)},
  FJOURNAL = {Annals of Mathematics. Second Series},
    VOLUME = {167},
      YEAR = {2008},
    NUMBER = {1},
     PAGES = {245--298},
      ISSN = {0003-486X,1939-8980},
   MRCLASS = {20C08 (17B10 18E15 20C33)},
  MRNUMBER = {2373155},
       DOI = {10.4007/annals.2008.167.245},
       URL ={https://doi.org/10.4007/annals.2008.167.245},
}

@incollection {Howe-perspectives,
    AUTHOR = {Howe, Roger},
     TITLE = {Perspectives on invariant theory: {S}chur duality,
              multiplicity-free actions and beyond},
 BOOKTITLE = {The {S}chur lectures (1992) ({T}el {A}viv)},
    SERIES = {Israel Math. Conf. Proc.},
    VOLUME = {8},
     PAGES = {1--182},
 PUBLISHER = {Bar-Ilan Univ., Ramat Gan},
      YEAR = {1995},
   MRCLASS = {13A50 (15A72 20G05 22E46)},
  MRNUMBER = {1321638},
MRREVIEWER = {Frank\ D.\ Grosshans},
}

@article {NazarovKhoroshkin,
    AUTHOR = {Nazarov, M. and Khoroshkin, S.},
     TITLE = {Twisted {Y}angians and {M}ickelsson algebras. {II}},
   JOURNAL = {Algebra i Analiz},
  FJOURNAL = {Rossi\u{\i}skaya Akademiya Nauk. Algebra i Analiz},
    VOLUME = {21},
      YEAR = {2009},
    NUMBER = {1},
     PAGES = {153--228},
      ISSN = {0234-0852},
   MRCLASS = {17B35 (16S35 20C08)},
  MRNUMBER = {2553055},
MRREVIEWER = {Antoni\ Wawrzy\'{n}czyk},
       DOI = {10.1090/S1061-0022-09-01088-7},
       URL = {https://doi.org/10.1090/S1061-0022-09-01088-7},
}

@book {EtingofFrenkelKirillov,
    AUTHOR = {Etingof, Pavel I. and Frenkel, Igor B. and Kirillov, Jr.,
              Alexander A.},
     TITLE = {Lectures on representation theory and
              {K}nizhnik-{Z}amolodchikov equations},
    SERIES = {Mathematical Surveys and Monographs},
    VOLUME = {58},
 PUBLISHER = {American Mathematical Society, Providence, RI},
      YEAR = {1998},
     PAGES = {xiv+198},
      ISBN = {0-8218-0496-0},
   MRCLASS = {32G34 (17B37 17B67 17B68 33D80)},
  MRNUMBER = {1629472},
MRREVIEWER = {V.\ Leksin},
       DOI = {10.1090/surv/058},
       URL = {https://doi.org/10.1090/surv/058},
}

@article{DobsonKolb,
    AUTHOR = {Dobson, Liam and Kolb, Stefan},
     TITLE = {Factorisation of quasi {$K$}-matrices for quantum symmetric
              pairs},
   JOURNAL = {Selecta Math. (N.S.)},
  FJOURNAL = {Selecta Mathematica. New Series},
    VOLUME = {25},
      YEAR = {2019},
    NUMBER = {4},
     PAGES = {Paper No. 63, 55},
      ISSN = {1022-1824,1420-9020},
   MRCLASS = {17B37 (81R50)},
  MRNUMBER = {4021849},
MRREVIEWER = {Weiqiang\ Wang},
       DOI = {10.1007/s00029-019-0508-5},
       URL = {https://doi.org/10.1007/s00029-019-0508-5},
}

@misc{Garrett,
    AUTHOR = {Garrett, Paul},
    TITLE = {Irreducibles as kernels of intertwinings among principal series},
    howpublished={\url{https://www-users.cse.umn.edu/~garrett/m/v/ios.pdf}},
    note = "Lecture notes",
}

@incollection {Wallach,
    AUTHOR = {Wallach, Nolan R.},
     TITLE = {Representations of reductive {L}ie groups},
 BOOKTITLE = {Automorphic forms, representations and {$L$}-functions
              ({P}roc. {S}ympos. {P}ure {M}ath., {O}regon {S}tate {U}niv.,
              {C}orvallis, {O}re., 1977), {P}art 1},
    SERIES = {Proc. Sympos. Pure Math.},
    VOLUME = {XXXIII},
     PAGES = {71--86},
 PUBLISHER = {Amer. Math. Soc., Providence, RI},
      YEAR = {1979},
      ISBN = {0-8218-1435-4},
   MRCLASS = {22E45},
  MRNUMBER = {546589},
MRREVIEWER = {A.\ U.\ Klimyk},
}

@article {Kohno,
    AUTHOR = {Kohno, Toshitake},
     TITLE = {Monodromy representations of braid groups and {Y}ang-{B}axter
              equations},
   JOURNAL = {Ann. Inst. Fourier (Grenoble)},
  FJOURNAL = {Universit\'{e} de Grenoble. Annales de l'Institut Fourier},
    VOLUME = {37},
      YEAR = {1987},
    NUMBER = {4},
     PAGES = {139--160},
      ISSN = {0373-0956,1777-5310},
   MRCLASS = {17B65 (32C40 58A15 58F07 82A69)},
  MRNUMBER = {927394},
       URL = {http://www.numdam.org/item?id=AIF_1987__37_4_139_0},
}

@article {ToledanoLaredoKohnoDrinfeldWeylGroup,
    AUTHOR = {Toledano Laredo, Valerio},
     TITLE = {A {K}ohno-{D}rinfeld theorem for quantum {W}eyl groups},
   JOURNAL = {Duke Math. J.},
  FJOURNAL = {Duke Mathematical Journal},
    VOLUME = {112},
      YEAR = {2002},
    NUMBER = {3},
     PAGES = {421--451},
      ISSN = {0012-7094,1547-7398},
   MRCLASS = {17B37 (16W35 20F36 32G34)},
  MRNUMBER = {1896470},
MRREVIEWER = {Mat\'{\i}as\ A.\ Gra\~{n}a},
       DOI = {10.1215/S0012-9074-02-11232-0},
       URL = {https://doi.org/10.1215/S0012-9074-02-11232-0},
}

@article {ToledanoLaredoQuantumWeyl,
    AUTHOR = {Toledano Laredo, Valerio},
     TITLE = {Quasi-{C}oxeter algebras, {D}ynkin diagram cohomology, and
              quantum {W}eyl groups},
   JOURNAL = {Int. Math. Res. Pap. IMRP},
  FJOURNAL = {International Mathematics Research Papers. IMRP},
      YEAR = {2008},
     PAGES = {Art. ID rpn009, 167},
      ISSN = {1687-3017,1687-3009},
   MRCLASS = {17B37},
  MRNUMBER = {2470574},
MRREVIEWER = {David\ Hernandez},
       DOI = {10.1093/imrp/rpn009},
       URL = {https://doi.org/10.1093/imrp/rpn009},
}

@misc{wenzl2020dualities,
      title={Dualities for spin representations}, 
      author={Hans Wenzl},
      year={2020},
      eprint={2005.11299},
      archivePrefix={arXiv},
      primaryClass={math.QA}
}

@book {KnappLieGroups,
    AUTHOR = {Knapp, Anthony W.},
     TITLE = {Lie groups beyond an introduction},
    SERIES = {Progress in Mathematics},
    VOLUME = {140},
   EDITION = {Second},
 PUBLISHER = {Birkh\"{a}user Boston, Inc., Boston, MA},
      YEAR = {2002},
     PAGES = {xviii+812},
      ISBN = {0-8176-4259-5},
   MRCLASS = {22-01},
  MRNUMBER = {1920389},
}

@article {Bao-Wang-new-approach,
    AUTHOR = {Bao, Huanchen and Wang, Weiqiang},
     TITLE = {A new approach to {K}azhdan-{L}usztig theory of type {$B$} via quantum symmetric pairs},
   JOURNAL = {Ast\'{e}risque},
  FJOURNAL = {Ast\'{e}risque},
      YEAR = {2018},
    NUMBER = {402},
     PAGES = {vii+134},
      ISSN = {0303-1179,2492-5926},
      ISBN = {978-2-85629-889-3},
   MRCLASS = {17B37},
  MRNUMBER = {3864017},
MRREVIEWER = {Li\ Luo},
}

@article {Berman-Wang-formulas-for-divided,
    AUTHOR = {Berman, Collin and Wang, Weiqiang},
     TITLE = {Formulae of $\iota$divided powers in {$U_q(\mathfrak{sl}_2)$}},
   JOURNAL = {J. Pure Appl. Algebra},
  FJOURNAL = {Journal of Pure and Applied Algebra},
    VOLUME = {222},
      YEAR = {2018},
    NUMBER = {9},
     PAGES = {2667--2702},
      ISSN = {0022-4049,1873-1376},
   MRCLASS = {17B10},
  MRNUMBER = {3783013},
MRREVIEWER = {Willem\ Adriaan\ de Graaf},
       DOI = {10.1016/j.jpaa.2017.10.014},
       URL = {https://doi.org/10.1016/j.jpaa.2017.10.014},
}

@phdthesis{Zhang-Weinan-thesis,
  AUTHOR  = {Zhang, Weinan},
  TITLE   = {Relative braid group symmetries on i-quantum groups},
  SCHOOL  = {University of Virginia},
  YEAR    = {2023}
}

@phdthesis{DeClercq-thesis,
  AUTHOR  = {De Clercq, Hadewijch},
  TITLE   = {Graphical calculus for quantum vertex operators},
  SCHOOL  = {Ghent University},
  YEAR    = {2023}
}

@article {ABRR,
    AUTHOR = {Arnaudon, D. and Buffenoir, E. and Ragoucy, E. and Roche, Ph.},
     TITLE = {Universal solutions of quantum dynamical {Y}ang-{B}axter
              equations},
   JOURNAL = {Lett. Math. Phys.},
  FJOURNAL = {Letters in Mathematical Physics},
    VOLUME = {44},
      YEAR = {1998},
    NUMBER = {3},
     PAGES = {201--214},
      ISSN = {0377-9017,1573-0530},
   MRCLASS = {81R50 (82B23)},
  MRNUMBER = {1627497},
MRREVIEWER = {Jun'ichi\ Shiraishi},
       DOI = {10.1023/A:1007498022373},
       URL = {https://doi.org/10.1023/A:1007498022373},
}

@incollection {EtingofSchifmannLecturesDynamical,
    AUTHOR = {Etingof, Pavel and Schiffmann, Olivier},
     TITLE = {Lectures on the dynamical {Y}ang-{B}axter equations},
 BOOKTITLE = {Quantum groups and {L}ie theory ({D}urham, 1999)},
    SERIES = {London Math. Soc. Lecture Note Ser.},
    VOLUME = {290},
     PAGES = {89--129},
 PUBLISHER = {Cambridge Univ. Press, Cambridge},
      YEAR = {2001},
      ISBN = {0-521-01040-3},
   MRCLASS = {17B37 (81R12)},
  MRNUMBER = {1903961},
MRREVIEWER = {V.\ A.\ Golubeva},
}

@article {KnizhnikZamolodchikov,
    AUTHOR = {Knizhnik, V. G. and Zamolodchikov, A. B.},
     TITLE = {Current algebra and {W}ess-{Z}umino model in two dimensions},
   JOURNAL = {Nuclear Phys. B},
  FJOURNAL = {Nuclear Physics. B. Theoretical, Phenomenological, and
              Experimental High Energy Physics. Quantum Field Theory and
              Statistical Systems},
    VOLUME = {247},
      YEAR = {1984},
    NUMBER = {1},
     PAGES = {83--103},
      ISSN = {0550-3213,1873-1562},
   MRCLASS = {81E13 (81D15)},
  MRNUMBER = {853258},
       DOI = {10.1016/0550-3213(84)90374-2},
       URL = {https://doi.org/10.1016/0550-3213(84)90374-2},
}

@article {MudrovTwistedYangians,
    AUTHOR = {Mudrov, A. I.},
     TITLE = {Reflection equation and twisted {Y}angians},
   JOURNAL = {J. Math. Phys.},
  FJOURNAL = {Journal of Mathematical Physics},
    VOLUME = {48},
      YEAR = {2007},
    NUMBER = {9},
     PAGES = {093501, 23},
      ISSN = {0022-2488,1089-7658},
   MRCLASS = {16W30 (81R12)},
  MRNUMBER = {2355089},
MRREVIEWER = {Sonia\ Natale},
       DOI = {10.1063/1.2762176},
       URL = {https://doi.org/10.1063/1.2762176},
}

@article {CherednikQKZ,
    AUTHOR = {Cherednik, Ivan},
     TITLE = {Quantum {K}nizhnik-{Z}amolodchikov equations and affine root
              systems},
   JOURNAL = {Comm. Math. Phys.},
  FJOURNAL = {Communications in Mathematical Physics},
    VOLUME = {150},
      YEAR = {1992},
    NUMBER = {1},
     PAGES = {109--136},
      ISSN = {0010-3616,1432-0916},
   MRCLASS = {17B67 (17B37 81R10)},
  MRNUMBER = {1188499},
MRREVIEWER = {Jie\ Du},
       URL = {http://projecteuclid.org/euclid.cmp/1104251785},
}

@article {Drinfeld,
    AUTHOR = {Drinfeld, V. G.},
     TITLE = {Hopf algebras and the quantum {Y}ang-{B}axter equation},
   JOURNAL = {Dokl. Akad. Nauk SSSR},
  FJOURNAL = {Doklady Akademii Nauk SSSR},
    VOLUME = {283},
      YEAR = {1985},
    NUMBER = {5},
     PAGES = {1060--1064},
      ISSN = {0002-3264},
   MRCLASS = {58F07 (17B20 82A15)},
  MRNUMBER = {802128},
MRREVIEWER = {Alexander\ A.\ Pankov},
}

@book {ChengWang,
    AUTHOR = {Cheng, Shun-Jen and Wang, Weiqiang},
     TITLE = {Dualities and representations of {L}ie superalgebras},
    SERIES = {Graduate Studies in Mathematics},
    VOLUME = {144},
 PUBLISHER = {American Mathematical Society, Providence, RI},
      YEAR = {2012},
     PAGES = {xviii+302},
      ISBN = {978-0-8218-9118-6},
   MRCLASS = {17B10},
  MRNUMBER = {3012224},
MRREVIEWER = {Aleksandr\ Nikolaevich\ Sergeev},
       DOI = {10.1090/gsm/144},
       URL = {https://doi.org/10.1090/gsm/144},
}

@article {TarasovUvarov,
    AUTHOR = {Tarasov, Vitaly and Uvarov, Filipp},
     TITLE = {Duality for {K}nizhnik-{Z}amolodchikov and dynamical
              operators},
   JOURNAL = {SIGMA Symmetry Integrability Geom. Methods Appl.},
  FJOURNAL = {SIGMA. Symmetry, Integrability and Geometry. Methods and
              Applications},
    VOLUME = {16},
      YEAR = {2020},
     PAGES = {Paper No. 035, 10},
      ISSN = {1815-0659},
   MRCLASS = {17B37 (37K30 39A12 81R10 81R50)},
  MRNUMBER = {4090793},
MRREVIEWER = {Yuri\ B.\ Suris},
       DOI = {10.3842/SIGMA.2020.035},
       URL = {https://doi.org/10.3842/SIGMA.2020.035},
}

@book {Humphreys,
    AUTHOR = {Humphreys, James E.},
     TITLE = {Introduction to {L}ie algebras and representation theory},
    SERIES = {Graduate Texts in Mathematics},
    VOLUME = {9},
      NOTE = {Second printing, revised},
 PUBLISHER = {Springer-Verlag, New York-Berlin},
      YEAR = {1978},
     PAGES = {xii+171},
      ISBN = {0-387-90053-5},
   MRCLASS = {17Bxx},
  MRNUMBER = {499562},
MRREVIEWER = {I.\ P.\ Shestakov},
}

@article {FelderMarkovTarasovVarchenko,
    AUTHOR = {Felder, G. and Markov, Y. and Tarasov, V. and Varchenko, A.},
     TITLE = {Differential equations compatible with {KZ} equations},
   JOURNAL = {Math. Phys. Anal. Geom.},
  FJOURNAL = {Mathematical Physics, Analysis and Geometry. An International
              Journal Devoted to the Theory and Applications of Analysis and
              Geometry to Physics},
    VOLUME = {3},
      YEAR = {2000},
    NUMBER = {2},
     PAGES = {139--177},
      ISSN = {1385-0172,1572-9656},
   MRCLASS = {17B67 (17B10 32G34 37K30)},
  MRNUMBER = {1797943},
MRREVIEWER = {V.\ Leksin},
       DOI = {10.1023/A:1009862302234},
       URL = {https://doi.org/10.1023/A:1009862302234},
}

@article {KL-tensor-iii,
    AUTHOR = {Kazhdan, D. and Lusztig, G.},
     TITLE = {Tensor structures arising from affine {L}ie algebras. {III}},
   JOURNAL = {J. Amer. Math. Soc.},
  FJOURNAL = {Journal of the American Mathematical Society},
    VOLUME = {7},
      YEAR = {1994},
    NUMBER = {2},
     PAGES = {335--381},
      ISSN = {0894-0347,1088-6834},
   MRCLASS = {17B67},
  MRNUMBER = {1239506},
MRREVIEWER = {Ya.\ S.\ So\u{\i}bel\cprime man},
       DOI = {10.2307/2152762},
       URL = {https://doi.org/10.2307/2152762},
}

@book {SpringerVeldkamp,
    AUTHOR = {Springer, Tonny A. and Veldkamp, Ferdinand D.},
     TITLE = {Octonions, {J}ordan algebras and exceptional groups},
    SERIES = {Springer Monographs in Mathematics},
 PUBLISHER = {Springer-Verlag, Berlin},
      YEAR = {2000},
     PAGES = {viii+208},
      ISBN = {3-540-66337-1},
   MRCLASS = {17A75 (17C10 17C30 20G15)},
  MRNUMBER = {1763974},
MRREVIEWER = {Plamen\ Koshlukov},
       DOI = {10.1007/978-3-662-12622-6},
       URL = {https://doi.org/10.1007/978-3-662-12622-6},
}

@incollection {PollackG2,
    AUTHOR = {Pollack, Aaron},
     TITLE = {Modular forms on {$G_2$} and their standard {$L$}-function},
 BOOKTITLE = {Relative trace formulas},
    SERIES = {Simons Symp.},
     PAGES = {379--427},
 PUBLISHER = {Springer, Cham},
      YEAR = {2021},
      ISBN = {978-3-030-68505-8; 978-3-030-68506-5},
   MRCLASS = {11F55 (11F66)},
  MRNUMBER = {4611953},
       DOI = {10.1007/978-3-030-68506-5\_12},
       URL = {https://doi.org/10.1007/978-3-030-68506-5_12},
}

@article {Kolb-Kmatrixbraided,
    AUTHOR = {Kolb, Stefan},
     TITLE = {Braided module categories via quantum symmetric pairs},
   JOURNAL = {Proc. Lond. Math. Soc. (3)},
  FJOURNAL = {Proceedings of the London Mathematical Society. Third Series},
    VOLUME = {121},
      YEAR = {2020},
    NUMBER = {1},
     PAGES = {1--31},
      ISSN = {0024-6115,1460-244X},
   MRCLASS = {17B37 (57K10 81R50)},
  MRNUMBER = {4048733},
MRREVIEWER = {Jan\ E.\ Grabowski},
       DOI = {10.1112/plms.12303},
       URL = {https://doi.org/10.1112/plms.12303},
}

@article {BalagovicKolb-qKmatrix,
    AUTHOR = {Balagovi\'{c}, Martina and Kolb, Stefan},
     TITLE = {Universal {K}-matrix for quantum symmetric pairs},
   JOURNAL = {J. Reine Angew. Math.},
  FJOURNAL = {Journal f\"{u}r die Reine und Angewandte Mathematik. [Crelle's
              Journal]},
    VOLUME = {747},
      YEAR = {2019},
     PAGES = {299--353},
      ISSN = {0075-4102,1435-5345},
   MRCLASS = {17B37 (16T25 17B67 81R50)},
  MRNUMBER = {3905136},
MRREVIEWER = {Darren\ Funk-Neubauer},
       DOI = {10.1515/crelle-2016-0012},
       URL = {https://doi.org/10.1515/crelle-2016-0012},
}

@article {EtingofStyrkas,
    AUTHOR = {Etingof, Pavel and Styrkas, Konstantin},
     TITLE = {Algebraic integrability of {M}acdonald operators and
              representations of quantum groups},
   JOURNAL = {Compositio Math.},
  FJOURNAL = {Compositio Mathematica},
    VOLUME = {114},
      YEAR = {1998},
    NUMBER = {2},
     PAGES = {125--152},
      ISSN = {0010-437X,1570-5846},
   MRCLASS = {17B37 (05E05 33D80)},
  MRNUMBER = {1661755},
MRREVIEWER = {Naihuan\ Jing},
       DOI = {10.1023/A:1000498420849},
       URL = {https://doi.org/10.1023/A:1000498420849},
}

@article{ArnaudonMolevRagoucy,
    AUTHOR = {Arnaudon, Daniel and Molev, Alexander and Ragoucy, Eric},
     TITLE = {On the {$R$}-matrix realization of {Y}angians and their
              representations},
   JOURNAL = {Ann. Henri Poincar\'{e}},
  FJOURNAL = {Annales Henri Poincar\'{e}. A Journal of Theoretical and
              Mathematical Physics},
    VOLUME = {7},
      YEAR = {2006},
    NUMBER = {7-8},
     PAGES = {1269--1325},
      ISSN = {1424-0637,1424-0661},
   MRCLASS = {17B37},
  MRNUMBER = {2283732},
MRREVIEWER = {David\ Hernandez},
       DOI = {10.1007/s00023-006-0281-9},
       URL = {https://doi.org/10.1007/s00023-006-0281-9},
}

@article {BrochierCyclotomicKZ,
    AUTHOR = {Brochier, Adrien},
     TITLE = {A {K}ohno-{D}rinfeld theorem for the monodromy of cyclotomic
              {KZ} connections},
   JOURNAL = {Comm. Math. Phys.},
  FJOURNAL = {Communications in Mathematical Physics},
    VOLUME = {311},
      YEAR = {2012},
    NUMBER = {1},
     PAGES = {55--96},
      ISSN = {0010-3616,1432-0916},
   MRCLASS = {81R50 (16T25 17B37 20F36 32G34 81T40)},
  MRNUMBER = {2892463},
MRREVIEWER = {Zhonghua\ Li},
       DOI = {10.1007/s00220-012-1424-0},
       URL = {https://doi.org/10.1007/s00220-012-1424-0},
}

@article{ReshetikhinStokman,
	author = {Stokman, J. and Reshetikhin, N.},
	date-added = {2023-11-10 14:15:59 -0500},
	date-modified = {2023-11-10 14:16:07 -0500},
	doi = {10.1007/s00222-022-01102-3},
	fjournal = {Inventiones Mathematicae},
	issn = {0020-9910,1432-1297},
	journal = {Invent. Math.},
	mrclass = {22E46 (17B80 22E45 81T40)},
	mrnumber = {4438352},
	mrreviewer = {Werner\ Hoffmann},
	number = {1},
	pages = {1--86},
	title = {{$N$}-point spherical functions and asymptotic boundary {KZB} equations},
	url = {https://doi.org/10.1007/s00222-022-01102-3},
	volume = {229},
	year = {2022},
	bdsk-url-1 = {https://doi.org/10.1007/s00222-022-01102-3}}

@article{KarakhanyanKirschner,
	author = {Karakhanyan, D. and Kirschner, R.},
	date-added = {2023-11-08 09:37:11 -0500},
	date-modified = {2023-11-08 09:37:24 -0500},
	doi = {10.1016/j.nuclphysb.2019.114905},
	fjournal = {Nuclear Physics. B. Theoretical, Phenomenological, and Experimental High Energy Physics. Quantum Field Theory and Statistical Systems},
	issn = {0550-3213,1873-1562},
	journal = {Nuclear Phys. B},
	mrclass = {81R12 (17B81)},
	mrnumber = {4046663},
	pages = {114905, 24},
	title = {Spinorial {$R$} operator and {A}lgebraic {B}ethe {A}nsatz},
	url = {https://doi.org/10.1016/j.nuclphysb.2019.114905},
	volume = {951},
	year = {2020},
	bdsk-url-1 = {https://doi.org/10.1016/j.nuclphysb.2019.114905}}

@book{MolevYangians,
	author = {Molev, Alexander},
	date-added = {2023-07-19 13:41:09 -0400},
	date-modified = {2023-07-19 13:41:18 -0400},
	doi = {10.1090/surv/143},
	isbn = {978-0-8218-4374-1},
	mrclass = {17B37 (16W35 82B23)},
	mrnumber = {2355506},
	mrreviewer = {Ian\ M.\ Musson},
	pages = {xviii+400},
	publisher = {American Mathematical Society, Providence, RI},
	series = {Mathematical Surveys and Monographs},
	title = {Yangians and classical {L}ie algebras},
	url = {https://doi.org/10.1090/surv/143},
	volume = {143},
	year = {2007},
	bdsk-url-1 = {https://doi.org/10.1090/surv/143}}

@misc{DalipiFelder,
	author = {Dalipi, Rea and Felder, Giovanni},
	copyright = {Creative Commons Attribution 4.0 International},
	date-added = {2023-03-08 15:36:20 -0500},
	date-modified = {2023-03-08 15:36:32 -0500},
	doi = {10.48550/ARXIV.2208.13055},
	keywords = {Representation Theory (math.RT), FOS: Mathematics, FOS: Mathematics, 17B37},
	publisher = {arXiv},
	title = {Howe duality and dynamical Weyl group},
	url = {https://arxiv.org/abs/2208.13055},
	year = {2022},
	bdsk-url-1 = {https://arxiv.org/abs/2208.13055},
	bdsk-url-2 = {https://doi.org/10.48550/ARXIV.2208.13055}}

@incollection{TarasonVarchenkoDuality,
	author = {Tarasov, V. and Varchenko, A.},
	date-added = {2023-03-08 15:33:48 -0500},
	date-modified = {2023-03-08 15:38:34 -0500},
	doi = {10.1023/A:1019787006990},
	fjournal = {Acta Applicandae Mathematicae},
	issn = {0167-8019},
	journal = {Acta Appl. Math.},
	mrclass = {17B37 (39A12 81R10 81R50)},
	mrnumber = {1926498},
	mrreviewer = {Zhong Qi Ma},
	note = {The 2000 Twente Conference on Lie Groups (Enschede)},
	number = {1-2},
	pages = {141--154},
	title = {Duality for {K}nizhnik-{Z}amolodchikov and dynamical equations},
	url = {https://doi.org/10.1023/A:1019787006990},
	volume = {73},
	year = {2002},
	bdsk-url-1 = {https://doi.org/10.1023/A:1019787006990}}

@article{TarasovVarchenko,
	archiveprefix = {arXiv},
	author = {Tarasov, V. and Varchenko, A.},
	date-added = {2020-08-03 17:04:55 +0200},
	date-modified = {2020-08-03 17:05:22 +0200},
	doi = {10.1155/S1073792800000441},
	eprint = {math/0002132},
	fjournal = {International Mathematics Research Notices},
	issn = {1073-7928},
	journal = {Internat. Math. Res. Notices},
	mrclass = {32G34 (17B99 39A10)},
	mrnumber = {1780748},
	mrreviewer = {V. A. Golubeva},
	number = {15},
	pages = {801--829},
	title = {Difference equations compatible with trigonometric {KZ} differential equations},
	year = {2000},
	bdsk-url-1 = {https://doi.org/10.1155/S1073792800000441}}

@article{EnriquezEtingof,
	archiveprefix = {arXiv},
	author = {Enriquez, Benjamin and Etingof, Pavel},
	date-added = {2020-05-25 12:35:04 +0200},
	date-modified = {2020-05-25 12:35:30 +0200},
	doi = {10.1007/s00220-004-1243-z},
	eprint = {math/0311224},
	fjournal = {Communications in Mathematical Physics},
	issn = {0010-3616},
	journal = {Comm. Math. Phys.},
	mrclass = {17B37 (81R12)},
	mrnumber = {2126485},
	mrreviewer = {Erik Koelink},
	number = {3},
	pages = {603--650},
	title = {Quantization of classical dynamical {$r$}-matrices with nonabelian base},
	volume = {254},
	year = {2005},
	bdsk-url-1 = {https://doi.org/10.1007/s00220-004-1243-z}}

@book{JantzenLectures,
	author = {Jantzen, Jens Carsten},
	date-added = {2020-05-05 16:58:25 +0200},
	date-modified = {2020-05-05 16:58:29 +0200},
	isbn = {0-8218-0478-2},
	mrclass = {17B37 (16W30)},
	mrnumber = {1359532},
	mrreviewer = {Arun Ram},
	pages = {viii+266},
	publisher = {American Mathematical Society, Providence, RI},
	series = {{Graduate Studies in Mathematics}},
	title = {{Lectures on quantum groups}},
	volume = {6},
	year = {1996}}

@article{EtingofVarchenkoDynamicalWeyl,
	archiveprefix = {arXiv},
	author = {Etingof, P. and Varchenko, A.},
	date-added = {2020-04-25 17:05:23 +0200},
	date-modified = {2020-08-07 17:02:29 +0200},
	eprint = {math/0011001},
	fjournal = {Advances in Mathematics},
	issn = {0001-8708},
	journal = {Adv. Math.},
	mrclass = {17B10 (17B20 17B37 39A12 81R50)},
	mrnumber = {1901247},
	mrreviewer = {Anjan Kundu},
	number = {1},
	pages = {74--127},
	title = {{Dynamical {W}eyl groups and applications}},
	volume = {167},
	year = {2002},
	bdsk-url-1 = {https://doi.org/10.1006/aima.2001.2034}}

@article{EtingofVarchenkoExchange,
	archiveprefix = {arXiv},
	author = {Etingof, P. and Varchenko, A.},
	date-added = {2020-04-25 17:04:48 +0200},
	date-modified = {2020-04-25 17:05:14 +0200},
	doi = {10.1007/s002200050665},
	eprint = {math/9801135},
	fjournal = {Communications in Mathematical Physics},
	issn = {0010-3616},
	journal = {Comm. Math. Phys.},
	mrclass = {17B37 (81R12 82B23)},
	mrnumber = {1706916},
	mrreviewer = {Ian M. Musson},
	number = {1},
	pages = {19--52},
	title = {{Exchange dynamical quantum groups}},
	volume = {205},
	year = {1999},
	bdsk-url-1 = {https://doi.org/10.1007/s002200050665}}

@book{Lusztig,
	author = {Lusztig, George},
	date-added = {2020-04-13 21:29:06 +0200},
	date-modified = {2020-04-13 21:29:06 +0200},
	doi = {10.1007/978-0-8176-4717-9},
	isbn = {978-0-8176-4716-2},
	mrclass = {17B37 (16T05 17-02 17B35)},
	mrnumber = {2759715},
	note = {Reprint of the 1994 edition},
	pages = {xiv+346},
	publisher = {Birkh{\"a}user/Springer, New York},
	series = {{Modern Birkh{\"a}user Classics}},
	title = {{Introduction to quantum groups}},
	year = {2010},
	bdsk-url-1 = {https://doi.org/10.1007/978-0-8176-4717-9}}

	\end{document}